\newtheorem {theorem}{Theorem}[section]
\newtheorem {lemma}[theorem]{Lemma}
\newtheorem {proposition}[theorem]{Proposition}
\newtheorem {conjecture}[theorem]{Conjecture}
\newtheorem {definition}[theorem]{Definition}
\theoremstyle{remark}
\newtheorem {remark}[theorem]{Remark}
\newtheorem {example}[theorem]{Example}
\def\Q {{\mathbb{Q}}}
\def\C{\mathbb{C}}
\def\R{\mathbb{R}}
\def\Z {\mathbb{Z}}
\def\Tor{\operatorname{Tor}}
\def\lk{\mathit{\ell k}}
\def\cN{\mathcal N}
\def\cH{\mathcal{H}}
\def\cM{\mathcal{M}}
\def\cW{\mathcal{W}}
\def\unknot{U}
\def\BZ{\mathit{BZ}}
\def\tBZ{\widetilde{\mathit{BZ}}}
\def\Zhat{\widehat{Z}}
\def\ZZhat{\widehat{\mathbf{Z}}}
\def\Zpert{Z^{\operatorname{pert}}}
\def\tPsi {\widetilde{\Psi}}
\def\Spinc {\operatorname{Spin}^c}
\def\CS{\operatorname{CS}}
\def\cA{\mathcal{A}}
\def\su{\mathit{SU}(2)}
\def\sl{\mathit{SL}(2, \C)}
\def\SL{\mathit{SL}(2, \Z)}
\def\RHT{\mathbf{3}^r_{\mathbf{1}}}
\def\LHT{\mathbf{3}^\ell_{\mathbf{1}}}
\def\CL{\mathcal{L}}
\def\del{\partial}
\def\V{\mathcal{V}}
\def\Hab{\operatorname{Hab}}
\def\Vert{\operatorname{Vert}}
\def\eps{\varepsilon}
\def\k{\mathbf{k}}
\def\vn{\vec{n}}
\def\va {\vec{a}}
\def\hA{\widehat{A}}
\def\vell{\vec{\ell}}
\def\vz{\vec{z}}
\def\vk {\vec{k}}
\def\vb{\vec{b}}
\def\tJ{\tilde{J}}
\def\Jhat{\hat{J}}
\def\Fd{F^{\dagger}} 
\def\FKU{\tilde{F}_K}
\def\Spr{\mathbb{S}_{p/r}}
\def\Sp{\mathbb{S}_{p}}
\def\vdelta{\vec{\delta}}
\def\ve{\vec{e}}
\def\se {\operatorname{s.e.}}
\def\vw{\vec{w}}
\def\vm{\vec{m}}
\def\vu{\vec{u}}
\def\tA{\widetilde{A}}
\def\rk{\operatorname{rk }}
\def\mK{m(K)}
\def\Char{\mathcal{X}}
\def\vphi{\vec{\varphi}}
\def\sign{\operatorname{sign}}
\def\lambdabb{\lambda_{\operatorname{BB}}}
\def\lambdasf{\lambda_{\operatorname{SF}}}
\def\Span{\operatorname{Span}}
\def\hGamma{\hat{\Gamma}}
\def\unred{\operatorname{unred}}
\newcommand{\bea}{\begin{eqnarray}}
\newcommand{\eea}{\end{eqnarray}}
\newcommand{\be}{\begin{equation}}
\newcommand{\ee}{\end{equation}}
\begin{document}
\title{A two-variable series for knot complements}

\author[Sergei Gukov]{Sergei Gukov}
\address {California Institute of Technology, Pasadena, CA 91125, USA; and Max-Planck-Institut fur Mathematik, Vivatsgasse 7, D-53111 Bonn, Germany}
\email {gukov@theory.caltech.edu}

\author[Ciprian Manolescu]{Ciprian Manolescu}
\address {Department of Mathematics, Stanford University, 450 Jane Stanford Way\\ 
Stanford, CA 94305}
\email {cm5@stanford.edu}

\begin{abstract}
The physical 3d $\cN = 2$ theory $T[Y]$ was previously used to predict the existence of some $3$-manifold invariants  $\Zhat_{a}(q)$ that take the form of power series with integer coefficients, converging in the unit disk. Their radial limits at the roots of unity should recover the Witten-Reshetikhin-Turaev invariants. In this paper we discuss how, for complements of knots in $S^3$, the analogue of the invariants $\Zhat_{a}(q)$ should be a two-variable series $F_K(x,q)$ obtained by parametric resurgence from the asymptotic expansion of the colored Jones polynomial. The terms in this series should satisfy a recurrence given by the quantum A-polynomial. Furthermore, there is a formula that relates $F_K(x,q)$ to the invariants $\Zhat_{a}(q)$ for Dehn surgeries on the knot. We provide explicit calculations of $F_K(x,q)$ in the case of knots given by negative definite plumbings with an unframed vertex, such as torus knots. We also find numerically the first terms in the series for the figure-eight knot, up to any desired order, and use this to understand $\Zhat_a(q)$ for some hyperbolic $3$-manifolds. 
\end {abstract}

\maketitle
\tableofcontents

\newpage \section{Introduction}
Khovanov homology \cite{Khovanov} is by now a well-known invariant of knots and links in $\R^3$, with a number of striking applications, e.g. to concordance and four-ball genus \cite{Rasmussen, Pic}, contact geometry \cite{Ng} and unknot detection \cite{KMUnknot}. Although its original definition is combinatorial in nature, Khovanov homology has properties similar to those of the Floer homologies coming from gauge theory (instanton, Seiberg-Witten). Since Floer theory gives invariants not just for classical knots, but also for closed $3$-manifolds (and knots in those), it is natural to ask if Khovanov homology can be extended to general $3$-manifolds. This is one of the major open problems in quantum topology.

In fact, the Euler characteristic of Khovanov homology is the Jones polynomial, which does have an extension to $3$-manifolds: the Witten-Reshetikhin-Turaev (WRT) invariant \cite{WittenCS, ReshetikhinTuraev}. Thus, one would like to categorify the WRT invariant. However, this invariant is  only defined at roots of unity, and does not have obvious integrality properties to make it the Euler characteristic of a vector space. One strategy pursued in the mathematical literature is to develop categorification at roots of unity; see \cite{Hopfological}, \cite{Qi}, \cite{EliasQi}. 

Different strategies can be pursued from physics. For example, Witten \cite{Witten5B} proposed a gauge-theoretic interpretation of Khovanov homology, in terms of counts of solutions to certain differential equations: the Kapustin-Witten and Haydys-Witten equations. In principle, one can study the solutions to these equations in settings where $\R^3$ is replaced by another three-manifold; see Taubes \cite{Taubes1, Taubes2} for analytical results in this direction. 

In recent work, Gukov-Putrov-Vafa \cite{GPV} and Gukov-Pei-Putrov-Vafa \cite{GPPV} considered the 6d $(0,2)$ theory (describing the dynamics of M5-branes in M-theory) and its reduction on a three-manifold $Y$. The result is a 3d $\cN=2$ theory, denoted $T[Y]$. The BPS sector of its Hilbert space should give rise to homological invariants of $Y$, denoted $\mathcal{H}^{i,j}_{\text{BPS}} (Y; a)$, similar in structure to Khovanov homology. This picture is related by S-duality to Witten's proposal from \cite{Witten5B}; see \cite[Section 2.10]{GPPV}. Furthermore, a similar set-up, in terms of BPS states, was used in \cite{Gukov:2004hz} to describe Khovanov homology and HOMFLY-PT homology for knots in $\R^3$.

The theory $T[Y]$ depends on the choice of a Lie group $G$ but, for simplicity, in this paper we will limit our discussion to $G=\mathit{SU}(2)$, which is the case corresponding to the Jones polynomial.

A rigorous mathematical definition of the invariants predicted in \cite{GPV}, \cite{GPPV} is yet to be found. In fact, such a definition is lacking even for the Euler characteristic of these invariants, which is a power series 
$$\Zhat_a(Y; q) = \sum_{i,j} (-1)^i q^j \rk \mathcal{H}^{i,j}_{\text{BPS}} (Y; a) \; \in \; 2^{-c} q^{\Delta_a} \Z[[q]],$$
for some $c \in \mathbb{Z}_+$ and $\Delta_a \in \Q$. Apart from the three-manifold $Y$ (which in this paper will always be assumed to be a rational homology sphere), the series depends on the choice of a $\Spinc$ structure $a$ on $Y$, up to conjugation; this can also be thought of as (non-canonically) the choice of an Abelian flat connection on $Y$ or, equivalently, of a value $a \in H_1(Y; \Z)/\Z_2$. Up to multiplication by a factor, the invariant $\Zhat_a(Y; q)$ is a power series in $q$ with integer coefficients, which converges for $q$ in the unit disk. When $Y$ is understood from the context, we write $\Zhat_a(q)$ for $\Zhat_a(Y; q)$.

A general conjecture was formulated in \cite{GPV} which relates $\Zhat_a(q)$ to the WRT invariants of $Y$. Specifically, if we consider a certain linear combination of $\Zhat_a(q)$ over different $a$ and then take the limit as $q$ goes to a root of unity, we should obtain the WRT invariant. See Conjecture~\ref{conj:GPV} below for the precise statement. 

Apart from a few trivial cases ($S^3$, lens spaces, $S^1 \times S^2$), the conjecture 
was also verified mathematically in the case of Brieskorn homology spheres with three singular fibers: this is the older work of Lawrence-Zagier \cite{LawrenceZagier}; see also the work of Hikami \cite{Hikami}, \cite{HikamiDecomposition}. However, the physics literature gives several methods for computing $\Zhat_a(q)$ for other $3$-manifolds: 
\begin{itemize}
\item In principle, for any $3$-manifold, one could construct $\Zhat_a(q)$ from the partition function of Chern-Simons theory using resurgence. This is a general method but challenging to put into practice. Some examples (for Seifert fibered spaces) are presented in \cite{GMP}, \cite{Chung} and \cite{CCFGH};

\item An explicit formula for $\Zhat_a(q)$ for all negative definite plumbed three-manifolds is given in Appendix A of \cite{GPPV};

\item Modularity properties can help compute the series for a manifold $Y$ when we know it for $-Y$, the manifold with the opposite orientation. See \cite{CCFGH}.
\end{itemize}

The purpose of this paper is to propose an analogue of the invariants $\Zhat_a(q)$ for three-manifolds with torus boundary, as well as a formula for gluing along tori. In particular, we are interested in knot complements, and in Dehn surgery (gluing a solid torus). One motivation for this work is to understand the theory $T[Y]$ in the case of knot complements. Another motivation is that, in the long term, one could hope to give mathematical definitions of $\Zhat_a(q)$ and its categorification in terms of surgery presentations. Indeed, this was exactly the strategy that worked for the WRT invariants, in that it enabled Reshetikhin and Turaev to give a mathematical definition of Witten's theory. Every closed oriented three-manifold $Y$ can be obtained from $S^3$ by surgery on a link, and Reshetikhin and Turaev expressed the WRT invariants of $Y$ in terms of invariants associated to the link (the colored Jones polynomials). A similar story exists in Heegaard Floer theory, where there are surgery formulas for knots and links \cite{IntSurg, RatSurg, LinkSurg}. In our case, the analogues of $ \Zhat_a(q)$ for links in $S^3$ are also related to colored Jones polynomials, but in a more subtle fashion. The colored Jones polynomial has been categorified \cite{KhovanovColored, BeliakovaWehrli, CooperKrushkal, Webster}, and this should play a role in categorifying $\Zhat_a(q)$ for three-manifolds.

We start with knot complements $Y = \hat Y \setminus \nu K$ that are represented by plumbing graphs with one distinguished vertex. (Examples of knots with such complements include the algebraic knots in $S^3$, i.e., iterated torus knots.) If the plumbing graph satisfies a certain {\em weakly negative definite} condition, we can imitate the formula for $\Zhat_a(q)$ of closed plumbed manifolds from \cite{GPPV} and obtain an invariant
$$ \Zhat_a(Y; z, n, q).$$
This is a series in two variables $z$ and $q$, which depends on the choice of a relative $\Spinc$ structure $a \in \Spinc(Y, \del Y)$, as well as on another variable $n \in \Z$. Furthermore, we have the following gluing result.

\begin{theorem}
\label{thm:glue}
Let $Y^-$ and $Y^+$ be knot complements represented by weakly negative definite plumbing graphs, and $Y = Y^- \cup_{T^2} Y^+$ the result of gluing them along their common torus boundary. Let also $a^-$ and $a^+$ be relative $\Spinc$ structures on $Y^-$ and $Y^+$, which glue together to a $\Spinc$ structure $a$ on $Y$. Then
\[
\Zhat_a(Y; q) = (-1)^{\tau} q^{\xi} \sum_n \oint_{|z|=1} \frac{dz}{2\pi i z} \Zhat_{a^-}(Y^-; z, n, q)  \Zhat_{a^+}(Y^+; z, n, q),
\]
for some $\tau \in \Z$ and $\xi \in \Q$. (See Section~\ref{sec:gluing} for the exact values of $\tau$ and $\xi$.)
\end{theorem}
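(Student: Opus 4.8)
The plan is to trace through the definition of $\Zhat_a$ for closed plumbed manifolds from \cite[Appendix A]{GPPV} and compare it term by term with the two-variable series obtained by cutting the plumbing graph at the edge realizing the decomposition $Y = Y^- \cup_{T^2} Y^+$. First I would fix plumbing graphs $\Gamma^\pm$ for $Y^\pm$, each with one distinguished (unframed) vertex $v_\pm$, and note that $Y$ is presented by the graph $\Gamma$ obtained by joining $v_-$ and $v_+$ by an edge (i.e., identifying the two unframed vertices into a single framed internal vertex, or adding a connecting edge, depending on conventions); this is the gluing of the two solid-torus-neighborhoods along $T^2$. I would then write down the linking matrix $M_\Gamma$ in block form relative to this decomposition, observing that $M_\Gamma$ is obtained from $M_{\Gamma^-}$ and $M_{\Gamma^+}$ by the obvious ``connect along the boundary'' operation, and record how the weakly negative definite hypotheses on $\Gamma^\pm$ interact with negative definiteness (or the needed positivity of the relevant Gram data) for $\Gamma$ — this is where the sign $(-1)^\tau$ and the overall $q$-power $q^\xi$ originate, coming from the difference in the Gaussian-integral prefactors ($\det$, signature, and the $3\sigma - \operatorname{tr} M$ framing correction) between $\Gamma$ and the disjoint $\Gamma^-\sqcup\Gamma^+$.

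The heart of the argument is a Fubini/Gaussian-integral manipulation. The formula for $\Zhat_a(Y;q)$ from \cite{GPPV} is a contour integral over one variable per vertex of a product of $(z_v - z_v^{-1})$-type factors times a theta-function (lattice sum) imposing the quadratic form $M_\Gamma$; the factor $\Zhat_{a^\pm}(Y^\pm; z, n, q)$ is by construction exactly the analogous integral for $\Gamma^\pm$ but with the variable $z_{v_\pm}$ at the distinguished vertex \emph{not} integrated out — instead left as the free variable $z$, with $n$ recording the summation index in the lattice sum that is conjugate to that vertex. Therefore I would: (i) separate the contour integral for $\Zhat_a(Y;q)$ into the $Y^-$-variables, the $Y^+$-variables, and the shared variable $z$ attached to the gluing edge; (ii) use that, because $v_\pm$ are leaves attached only through the connecting edge, the integrand factorizes as a product of a $\Gamma^-$-part and a $\Gamma^+$-part once $z$ is held fixed, the only coupling being through the off-diagonal entry of $M_\Gamma$ joining the two distinguished vertices; (iii) recognize this coupling as precisely the pairing $\oint \frac{dz}{2\pi i z}$ together with the matching condition $\sum_n$ on the conjugate lattice index, which is what appears on the right-hand side. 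Performing the $Y^-$- and $Y^+$-integrals first (Fubini) then yields $\sum_n \oint \frac{dz}{2\pi i z}\, \Zhat_{a^-}(Y^-;z,n,q)\,\Zhat_{a^+}(Y^+;z,n,q)$ up to the prefactor identified above.

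Two bookkeeping points need care. First, the $\Spinc$ decomposition: I would check that relative $\Spinc$ structures $a^\pm$ on $Y^\pm$ restricting to fixed data on $T^2$ and gluing to $a$ correspond exactly to a compatible choice of residue classes for the lattice sums in $\Gamma^-$ and $\Gamma^+$ that agree on the index $n$ associated to the gluing edge — this is a standard Mayer--Vietoris-type statement for $H_1$ and $\Spinc$ under gluing along $T^2$, and it is what makes the single sum $\sum_n$ (rather than an unconstrained double sum) the correct expression. Second, convergence and the legitimacy of interchanging $\sum_n$ with the contour integrals: here one uses that each $\Zhat_{a^\pm}(Y^\pm; z, n, q)$ is, after the prefactor, a genuine power series in $q$ with only finitely many terms below any fixed $q$-degree once $n$ is bounded, and that the $q$-degree grows quadratically in $n$, so for the purpose of extracting any fixed coefficient of $q$ only finitely many $n$ contribute and all manipulations are finite.

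I expect the main obstacle to be step (ii)--(iii): verifying that the integrand for the glued graph really does factorize with the inter-factor coupling being \emph{exactly} the $\oint \frac{dz}{2\pi i z}\sum_n$ pairing, with no leftover cross-terms, and pinning down $\tau$ and $\xi$ precisely. Concretely, one must show that the Gaussian (theta-function) piece of $\Gamma$ splits as the product of those for $\Gamma^\pm$ after introducing the auxiliary integration in $z$ — i.e., that the ``cut'' of the quadratic form along the gluing edge is reproduced by a one-dimensional Gaussian convolution — and then absorb the resulting determinant and signature discrepancies into $(-1)^\tau q^\xi$. This is essentially the gluing behavior of the Gauss sum/Reidemeister--Turaev torsion normalization under Dehn filling, and getting the rational power $\xi$ (which involves the framing-anomaly term and the continued-fraction data at the gluing edge) exactly right is the delicate part; the rest is formal rearrangement.
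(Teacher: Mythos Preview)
Your approach is correct and is essentially the one the paper takes; the proof in Section~\ref{sec:gluing} is exactly the Fubini-type factorization of the contour integral and lattice sum that you describe, with $\tau = \pi(M) - \pi(M^-) - \pi(M^+)$ and $\xi$ coming from the signature and $(\va,M^{-1}\va)$ terms. However, your ``main obstacle'' is not an obstacle at all, and your discussion of Gaussian convolution and cross-terms is beside the point, because the paper has engineered the definitions so that the factorization is literal rather than something to be verified.

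Two things to notice. First, the standard gluing (Figure~\ref{fig:glue}) is by \emph{identifying} the two distinguished vertices and adding their framings, not by inserting an edge between them; you should commit to this convention. Second, and more importantly, the paper writes both the closed-manifold invariant \eqref{eq:newformula} and the knot-complement invariant \eqref{eq:newformula2} as a product of local vertex factors \eqref{vertexrulenew2} and edge factors \eqref{edgerulenew2}, with the single exception that the distinguished vertex $v_0$ contributes \eqref{v0rule}, carrying one power of $(z-1/z)$ instead of two. When you identify $v_0^-$ with $v_0^+$, the two half-contributions multiply to give exactly the standard vertex factor $(z-1/z)^2$ for the merged internal vertex, and every other factor in the integrand for $\Gamma$ is already a factor in either the $\Gamma^-$ or the $\Gamma^+$ product. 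So there are no cross-terms to chase and no quadratic form to ``cut'': the integrand for $Y$ is, on the nose, the product of the integrands for $Y^\pm$, and integrating/summing over the shared $(z,n)$ is the only remaining step. The prefactor discrepancy is then read off directly from the $(-1)^\pi q^{(3\sigma - (\va,M^{-1}\va))/4}$ normalizations, giving the stated $\tau$ and $\xi$.
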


For a given $Y$ and $a$, we can view the set of invariants $\Zhat_a(Y; z, n, q)$ as an element in a vector space $\V$ associated to the torus $T^2$. Roughly, $\V$ is the space of functions
$$ (\Z+\tfrac{1}{2}) \times \Z \to \k$$
with certain properties, where $\k$ is a field consisting of Novikov-type series in $q$. Theorem~\ref{thm:glue} can then be interpreted as an aspect of a TQFT for plumbed three-manifolds. 

There is an interesting action of $H_1(T^2) = \Z^2$ on $\V$, which allows us to relate the invariants $\Zhat_a(Y; z, n, q)$ for different $a$ and $n$. For example, when the weakly negative definite plumbed manifold $Y$ is the complement of a knot in an integral homology sphere $\hat Y$, all the different $\Zhat_a(Y; z, n, q)$ can be read from a single two-variable series
$$ F_K(x, q) \;  \in \; 2^{-c} q^{\Delta} \Z[x^{1/2}, x^{-1/2}][q^{-1},q]],$$
which corresponds to choosing $x = z^2$, $a=0$ and $n=0$. Here, $c \in \Z_+$ and $\Delta \in \Q$ are some constants.

By computing the invariants associated to the solid torus, and applying Theorem~\ref{thm:glue}, we can prove a Dehn surgery formula. A formula of this type was already conjectured in \cite{GMP}. For $p/r$ surgery, it involves the ``Laplace transform'' 
\be
\label{eq:Laplace}
\CL_{p/r}^{(a)} \; : \ x^u q^v \ \mapsto \begin{cases} q^{-u^2r/p} \cdot q^v & \text{if } \; ru - a \in p\Z, \\ 0 &\text{otherwise.} \end{cases}
\ee

\begin{theorem}
\label{thm:Dehn}
Let $Y$ be the complement of a knot $K$ in an integer homology $3$-sphere $\hat Y$, and let $Y_{p/r}$ the result of Dehn surgery along $K$ with coefficient $p/r \in \Q^*$. Suppose that both $\hat Y$ and $Y_{p/r}$ are represented by negative definite plumbings. Let $F_K(x,q)$ be the series associated to $K$. Then, the invariants of $Y_{p/r}$ are given by
$$\Zhat_a(Y_{p/r})= \eps  q^{d} \cdot \CL_{p/r}^{(a)} \left[ (x^{\frac{1}{2r}} - x^{-\frac{1}{2r}}) F_K (x,q) \right],$$
for some $\eps \in \{\pm 1\}$ and $d \in \Q$. (See Section~\ref{sec:surgeryformula} for the values of $\eps$ and $d$.)
\end{theorem}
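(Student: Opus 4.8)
The plan is to realize $Y_{p/r}$ as a gluing of the knot complement $Y$ with a parametrized solid torus and to apply Theorem~\ref{thm:glue}. Write $p/r = [b_1, b_2, \dots, b_k]^-$ as a negative continued fraction; then the solid torus, attached with slope $p/r$ and equipped with the induced parametrization of its boundary, is represented by a linear plumbing chain $Y^+$ consisting of a distinguished vertex joined to a string of framed vertices of weights $-b_1, \dots, -b_k$, where the $b_i$ are chosen so that identifying the distinguished vertices of the plumbings for $Y$ and $Y^+$ produces the negative definite plumbing for $Y_{p/r}$ furnished by the hypothesis. Setting $Y^- = Y$, Theorem~\ref{thm:glue} gives
\[
\Zhat_a(Y_{p/r}; q) = (-1)^\tau q^\xi \sum_n \oint_{|z|=1} \frac{dz}{2\pi i z}\, \Zhat_{a^-}(Y; z, n, q)\, \Zhat_{a^+}(Y^+; z, n, q),
\]
so it remains to (i) compute $\Zhat_{a^+}(Y^+; z,n,q)$, (ii) perform the $n$-sum and $z$-integral, and (iii) rewrite the result in terms of $F_K$.

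For step (i), I would evaluate $\Zhat_{a^+}$ of the chain $Y^+$ directly from the plumbed-manifold formula of \cite{GPPV}, as adapted in the earlier sections of this paper, eliminating the inner vertices of the chain one at a time. The outcome is a finite Gaussian-type sum whose dependence on $z$ factors as $(z^{1/r} - z^{-1/r})$ times a theta-like factor in $n$ and $z$ supported on a single residue class determined by $a^+$; after the elimination, the relevant quadratic form depends on $b_1, \dots, b_k$ only through $p$ and $r$, reflecting the change-of-basis matrix $\begin{pmatrix} p & * \\ r & * \end{pmatrix}$ realized by the continued fraction. Substituting $x = z^2$, the distinguished end vertex of $Y^+$ contributes exactly the prefactor $x^{1/(2r)} - x^{-1/(2r)}$; the cleanest way to isolate it is to treat the integer-surgery case first, where $Y^+$ is a single solid torus and this factor is the known value $F_{\unknot}(x,q) = x^{1/2} - x^{-1/2}$, and then note that the chain rescales the boundary variable by $1/r$.

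For step (ii), substituting the expression above into the gluing integral, the contour integral $\oint \frac{dz}{2\pi i z}$ extracts the $z^0$-coefficient, and together with the sum over $n$ and the residue data this collapses --- by a reciprocity manipulation for quadratic Gauss sums --- to precisely the rule $x^u q^v \mapsto q^{-u^2 r/p} q^v$ on monomials with $ru - a \in p\Z$ and $0$ otherwise, i.e.\ to $\CL_{p/r}^{(a)}$ of \eqref{eq:Laplace}, once the $\Spinc$ label of $Y_{p/r}$ is matched with the residue class named by $a$. For step (iii), the $H_1(T^2) = \Z^2$ action discussed before the statement lets us replace the whole family $\{\Zhat_{a^-}(Y; z, n, q)\}$ by the single series $F_K(x,q)$, i.e.\ reduce to $a^- = 0$, $n = 0$, $x = z^2$, yielding
\[
\Zhat_a(Y_{p/r}; q) = \eps\, q^d \cdot \CL_{p/r}^{(a)}\!\left[(x^{1/(2r)} - x^{-1/(2r)})\, F_K(x,q)\right].
\]

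The main obstacle I anticipate is step (ii): verifying that the double sum/integral over the explicit $\Zhat_{a^+}$ of $Y^+$ really telescopes to the Laplace transform with quadratic exponent $-u^2 r / p$, uniformly in the continued-fraction data (so that all dependence on $b_1, \dots, b_k$ cancels), and pinning down the sign $\eps$ and the shift $d$. I would fix $\eps$ and $d$ by comparing with a case where both sides are independently known --- e.g.\ $\hat Y = S^3$ with $K$ the unknot, so that $Y_{p/r} = L(p,r)$ and $\Zhat_a(L(p,r);q)$ is computable from \cite{GPPV} --- and then check invariance under slam-dunk and Rolfsen-twist moves on the surgery description.
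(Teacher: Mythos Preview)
Your outline is essentially the paper's own strategy: realize $Y_{p/r}$ as the standard gluing of $Y$ with the solid torus $\mathbb{S}_{p/r}$, apply Theorem~\ref{thm:glue}, and match the result with the Laplace transform. A few of your mechanics, however, diverge from how the argument actually runs, and tightening them will make the proof go through cleanly.

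First, step (iii) should come \emph{before} step (ii), not after. The crucial simplification is that with the Seifert framing, $\Zhat_0(Y;z,n,q)=F_K(z^2,q)$ is literally independent of $n$ (this is the content of the $\lambda$-action in Section~\ref{sec:eliminate}). So the gluing integral factors immediately as
\[
\Zhat_a(Y_{p/r}) = (-1)^\tau q^\xi \oint \frac{dz}{2\pi i z}\, F_K(z^2,q)\cdot \sum_n \Zhat_a(\mathbb{S}_{p/r};z,n,q),
\]
and only the solid-torus factor carries the $n$-sum. There is no need to invoke the full $\Z^2$ action as a post-processing reduction.

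Second, your step (i) is already done in the paper as the explicit formula \eqref{eq:Zcases} for $\Zhat_a(\mathbb{S}_{p/r};z,n,q)$; the linear-chain computation there shows directly that the answer depends on the continued-fraction data only through $p,r$ (and a Dedekind-sum constant $\alpha(p,r)$), so you do not need a separate invariance check under slam-dunk or Rolfsen moves.

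Third, the factor $x^{1/(2r)}-x^{-1/(2r)}$ does not come from ``rescaling the unknot's $F_U$ by $1/r$''; it arises from the two-branch structure of \eqref{eq:Zcases}. Summing over $n$ produces two sums over index sets $J_\pm$ with opposite signs, and after the contour integral these become the four terms $x^{t\pm 1/2 \pm 1/(2r)}$ weighted by $q^{-(r/p)u^2}$ with the correct congruence condition $ru+a\in p\Z$---which is exactly $\CL_{p/r}^{(a)}$ applied to $(x^{1/(2r)}-x^{-1/(2r)})(x^{1/2}-x^{-1/2})f_K(x,q)$. No Gauss-sum reciprocity is needed; the computation is a direct match of exponents and supports. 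The values of $\eps$ and $d$ then fall out as $\eps=\operatorname{sign}(p)\cdot(-1)^{\tau+1}$ and $d=\xi+\alpha(p,r)$, rather than by comparison with lens spaces.
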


We have an explicit formula for $F_K(x, q)$ in the case of torus knots in $S^3$:
 
\begin{theorem}
\label{thm:TorusKnots}
Let $s, t > 1$ with $\gcd(s,t)=1$. For the positive torus knot $K=T(s,t)$, the series $F_K(x,q)$ is given by
\be
F_K(x,q) =   q^{\frac{(s-1)(t-1)}{2}} \cdot \frac{1}{2}\sum_{m\geq 1} \eps_m \cdot \bigl(x^{\frac{m}{2}} - x^{-\frac{m}{2}} \bigr) q^{\frac{m^2 - (st-s-t)^2}{4st}}
\label{eq:GLTst}
\ee
where
\be
\label{eq:epsem}
\eps_m = \begin{cases}
-1 & \operatorname{if } \ m \equiv st+s+t \ \operatorname{ or } \ st-s-t \!\! \pmod{2st}\\
+1 & \operatorname{if } \ m \equiv st+s-t \ \operatorname{ or }\  st-s+t \!\! \pmod{2st}\\
0 & \operatorname{otherwise.}
\end{cases}
\ee
\end{theorem}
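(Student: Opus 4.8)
The plan is to use the fact that the complement of $T(s,t)$ is Seifert fibered, hence admits a plumbing description with a single unframed vertex, and then to feed this plumbing into the construction of $\Zhat_a(Y;z,n,q)$ from the earlier sections and evaluate the resulting contour integral explicitly.

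First I would write down the standard star-shaped plumbing graph $\Gamma$ for $Y = S^3 \setminus \nu K$ with $K = T(s,t)$: two legs coming from the negative continued fraction expansions of the Seifert invariants attached to the two exceptional fibers of orders $s$ and $t$, a central vertex, and the distinguished unframed vertex. I would check that $\Gamma$ is negative definite (equivalently, that capping off the framed part bounds a negative definite plumbed four-manifold), so that the construction of $\Zhat_a(Y;z,n,q)$ applies, and I would record the signature and framing corrections, which are responsible for the prefactor $q^{(s-1)(t-1)/2}$.

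Next I would specialize the plumbing formula to $a=0$, $n=0$ and substitute $x = z^2$, so that $F_K(x,q) = \Zhat_0(Y; z, 0, q)$. This presents $F_K$ as a $(|V|-1)$-dimensional contour integral of $\prod_v (z_v - z_v^{-1})^{2-\deg v}$ against the theta function built from the (suitably modified, to account for the unframed vertex) inverse linking matrix of $\Gamma$, with the variable $z = z_{v_0}$ of the unframed vertex left unintegrated. Since the legs are linear chains, every interior vertex has degree $2$ and contributes a trivial factor while each leaf contributes $(z_v - z_v^{-1})$, so the residue integrals over the leg variables are exactly of the geometric-series/Gaussian type occurring in the Lawrence--Zagier computation of $\Zhat$ for the Brieskorn spheres $\Sigma(s,t,u)$. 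Carrying them out collapses the multi-index lattice sum to a single sum over $m \ge 1$ with Gaussian weight $q^{m^2/(4st)}$; the factor $(x^{m/2} - x^{-m/2})$ is what remains from the unintegrated vertex, and the sign $\eps_m$ records which residues survive, namely the lattice points with $m \equiv \pm(st \pm s \pm t) \pmod{2st}$, which are the ``Weyl'' images of the characters of the two exceptional fibers. Completing the square in the theta function at the $\Spinc$ structure $a = 0$ produces the shift $m^2 \mapsto m^2 - (st - s - t)^2$ in the exponent.

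Finally I would fix the normalization constants and cross-check by two independent routes: (i) substituting $p/r$ of the form $\pm 1/n$ into Theorem~\ref{thm:Dehn} and verifying that $\CL_{\pm 1/n}^{(a)}\bigl[(x^{1/(2n)} - x^{-1/(2n)}) F_K(x,q)\bigr]$ reproduces the known series $\Zhat_a$ of the Brieskorn spheres obtained by these surgeries on $K$ (the Lawrence--Zagier/Hikami computations), and (ii) comparing the first several coefficients of the $q$-expansion of \eqref{eq:GLTst} with the loop (Melvin--Morton--Rozansky) expansion of the colored Jones polynomial of $T(s,t)$, as well as with the already computed trefoil case $T(2,3)$. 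The main obstacle I expect is purely technical bookkeeping: organizing the multidimensional residue for a general star-shaped graph, handling the principal-value prescription near $z_v = \pm 1$, inserting the unframed-vertex modification of the linking form consistently, and tracking all the $(-1)$ and fractional $q$-power normalizations --- rather than any conceptual difficulty, since the analogous closed computation is well understood.
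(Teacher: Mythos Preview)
Your proposal is correct but takes a different route from the paper's actual proof. You propose to compute $F_K(x,q)$ \emph{directly} from the plumbing formula \eqref{eq:plumbing1open}, carrying out the residue integrals over the leg variables by hand (paralleling the Brieskorn computation in Proposition~\ref{prop:brieskorn}) and leaving the distinguished vertex unintegrated. The paper explicitly remarks that this would work, but instead argues by \emph{reverse engineering}: it first proves a uniqueness lemma (Lemma~\ref{lem:uniqueLaplace}) saying that an antisymmetric series $F$ is determined by the family of Laplace transforms $\CL_{-1/r}\bigl[(x^{1/(2r)}-x^{-1/(2r)})F\bigr]$ for all $r>0$; then, since Theorem~\ref{thm:Dehn} gives $\Zhat_0(\Sigma(s,t,rst+1))$ as exactly such a Laplace transform of $F_K$, and since those $\Zhat_0$ are already known from Proposition~\ref{prop:brieskorn}, it suffices to check (via a short computational Lemma~\ref{lem:abcde}) that the candidate formula \eqref{eq:GLTst} reproduces the Brieskorn answers.

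The trade-off is that the paper's argument avoids redoing the multidimensional residue bookkeeping you anticipate as the main obstacle, at the cost of needing the Dehn surgery theorem and the uniqueness lemma as inputs; your direct approach is more self-contained but duplicates the Brieskorn calculation. One technical point to watch in your version: when the graph longitude equals the Seifert longitude the linking matrix $M$ is singular, so the theta function must be set up as in Lemma~\ref{lem:KnotCo} rather than via $M^{-1}$ --- your phrase ``unframed-vertex modification of the linking form'' suggests you have this in mind, but it should be made explicit.
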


If $K= T(s,t)$, its mirror $\mK$ is the negative torus knot $T(s, -t)$. In this case it makes sense to define 
$$ F_{\mK}(x, q) := F_{K}(x, q^{-1})=
 \frac{1}{2} (\Psi(x,q) - \Psi(x^{-1},q)),$$
where 
\be
 \Psi(x,q) =q^{-\frac{(s-1)(t-1)}{2}} \sum_{m\geq 1} \eps_m \cdot x^{\frac{m}{2}}  q^{-\frac{m^2 - (st-s-t)^2}{4st}}
\label{eq:Psixq}
\ee

The series \eqref{eq:Psixq} can be related to the colored Jones polynomials of negative torus knots as follows. In \cite{GaroufalidisLe}, Garoufalidis and Le defined the {\em stability series} of a sequence of power series $Q_N(q) \in \Z[[q]]$ to be a series of the form 
\be
\label{eq:stability1}
\Phi(x,q)=\sum_j \Phi_j(q)x^j,
\ee
such that
\be
\label{eq:stability2}
 \lim_{N \to \infty} q^{-kN } (Q_N(q) - \sum_{j=0}^k \Phi_j(q) q^{jn})=0, \text{ for all } k \geq 0.
 \ee
This encapsulates the asymptotic behavior of $Q_N(q)$, as $N \to \infty$.

It was proved in \cite{GaroufalidisLe} that, for any alternating knot, its colored Jones polynomials (suitably normalized) admit stability series. This is also true for negative knots (those that can be represented by a diagram with only negative crossings), such as the torus knots $T(s,-t)$.

\begin{theorem}
\label{thm:EvenStability}
Let $s, t > 1$ with $\gcd(s,t)=1$. The stability series for the colored Jones polynomials of the negative torus knot $T(s,-t)$ is $(q^{1/2} - q^{-1/2})^{-1} \cdot \Psi(x,q)$, where $\Psi(x,q)$ is the series from \eqref{eq:Psixq}.
\end{theorem}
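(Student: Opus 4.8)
The plan is to compute the stability series directly from the Rosso--Jones (Morton) formula for the colored Jones polynomial of a torus knot; since a stability series is known to exist for the negative knot $T(s,-t)$ (as for all negative knots), it suffices to identify it. For $s,t>1$ with $\gcd(s,t)=1$, the colored Jones polynomial of $T(s,t)$, normalized so that the unknot is sent to the quantum integer $[N]=(q^{N/2}-q^{-N/2})/(q^{1/2}-q^{-1/2})$, can be rewritten (by completing the square in Morton's two ``half-character'' Gaussian sums) in the ``false theta'' shape
\[
\tJ_{T(s,t),N}(q)\;=\;\frac{q^{\,\alpha(N)}}{q^{1/2}-q^{-1/2}}\ \sum_{n\in W_N}\chi(n)\,q^{\frac{n^2}{4st}},
\]
where $\alpha$ is a fixed quadratic quasi-polynomial in $N$, $W_N$ is an interval of integers whose length grows linearly in $N$ and whose upper endpoint $n_{\max}(N)$ is a multiple of $2st$ with $n_{\max}(N)=stN+O(1)$, and $\chi$ is the \emph{even}, period-$2st$, $\{0,\pm1\}$-valued function supported on the four residue classes $st\pm s\pm t$, with the signs of \eqref{eq:epsem}. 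The denominator is $q^{1/2}-q^{-1/2}$ precisely because of the $[N]$-normalization, which is the source of the factor $(q^{1/2}-q^{-1/2})^{-1}$ in the statement. Since $T(s,-t)=\mK$ for $K=T(s,t)$, applying $q\mapsto q^{-1}$ gives the analogous formula for $\tJ_{T(s,-t),N}(q)$, with $q^{-n^2/(4st)}$ in place of $q^{n^2/(4st)}$.

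Next I would read off the stable coefficients by reindexing the sum according to the distance $m\geq 0$ from the upper endpoint, i.e.\ by $n=n_{\max}(N)-m$. Completing the square, $-n^2/(4st)=-n_{\max}(N)^2/(4st)+\tfrac{n_{\max}(N)\,m}{2st}-m^2/(4st)$; after dividing out the quadratic-in-$N$ normalization $\delta_K(N)=\min\deg_q\tJ_{T(s,-t),N}(q)$ and absorbing the $N$-linear terms into the variable $x=q^N$, the summand indexed by $m$ becomes exactly the contribution to $\Phi_{m/2}(q)\,x^{m/2}$ in \eqref{eq:stability1}. Because $\chi$ is even and $n_{\max}(N)$ is a multiple of $2st$, one has $\chi(n_{\max}(N)-m)=\chi(m)=\eps_m$ independently of $N$; tracking the surviving constant power of $q$ yields $q^{-(s-1)(t-1)/2}q^{(st-s-t)^2/(4st)}$. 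The conclusion is that the coefficient of $x^{m/2}$ in the stability series is $(q^{1/2}-q^{-1/2})^{-1}\,\eps_m\,q^{-(s-1)(t-1)/2}q^{-(m^2-(st-s-t)^2)/(4st)}$, which is precisely the $x^{m/2}$-coefficient of $(q^{1/2}-q^{-1/2})^{-1}\Psi(x,q)$ by \eqref{eq:Psixq}.

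The per-term manipulations here are routine Gaussian reciprocity; the real work, and the main obstacle, is the bookkeeping needed to make \eqref{eq:stability2} precise. One must show that the terms of $W_N$ away from the upper endpoint migrate to arbitrarily large $q$-degree after the normalization, so they do not affect any fixed $\Phi_{m/2}(q)$; this requires careful control of the two endpoints and length of $W_N$, and of the quasi-polynomial $\delta_K(N)$, including its quasi-periodic corrections in $N\bmod 2$ (the claim that $n_{\max}(N)$ is a multiple of $2st$, rather than such a multiple shifted by $st$, is exactly what singles out the \emph{even} stability series on the classes $st\pm s\pm t$ rather than the complementary classes). One must also reconcile the $[N]$-normalization and the precise $q$-power defining ``$Q_N(q)$'' with the conventions of \cite{GaroufalidisLe}. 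An alternative that sidesteps the window estimates would be to verify that $(q^{1/2}-q^{-1/2})^{-1}\Psi(x,q)$ is annihilated by the ``boundary'' recurrence the quantum $A$-polynomial of $T(s,t)$ imposes on the $\Phi_j$, and to pin down the solution by its lowest term, computed from the ordinary Jones polynomial of $T(s,-t)$; but making the relevant uniqueness statement precise is itself delicate for torus knots (whose $A$-polynomial is reducible), so I would proceed via Morton's formula.
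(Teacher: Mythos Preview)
Your approach is correct and is essentially the same route the paper takes: start from Morton's formula for the positive torus knot, apply $q\mapsto q^{-1}$ to pass to the mirror, and reindex the sum by distance from the upper endpoint so that each fixed index contributes a term $\eps_m\,x^{m/2}\,q^{-(m^2-(st-s-t)^2)/(4st)}$ independently of $N$.

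The main difference is that you significantly overestimate the work involved. The paper uses Hikami's closed-form rewriting of Morton's formula,
\[
J_{T(s,t),n}(q)\;=\;-\,\frac{q^{-\frac{stn}{4}}\,q^{\frac{(s-1)(t-1)}{2}}}{q^{n/2}-q^{-n/2}}\sum_{k=0}^{stn}\eps_{stn-k}\,q^{\frac{k^2-(st-s-t)^2}{4st}},
\]
which already has the summation window exactly equal to $[0,stn]$ and the $q$-exponents explicit. After replacing $q$ by $q^{-1}$ and multiplying by $[n]$, the single substitution $m=stn-k$ gives
\[
(q^{1/2}-q^{-1/2})\,\tJ_{T(s,-t),n}(q)\;=\;q^{-\frac{(s-1)(t-1)}{2}}\sum_{m=0}^{stn}\eps_m\,q^{mn/2}\,q^{-\frac{m^2-(st-s-t)^2}{4st}},
\]
so the stability series can be read off term by term: for every fixed $m$, the summand is present as soon as $n\geq m/(st)$ and is literally independent of $n$ except through the factor $q^{mn/2}=x^{m/2}$. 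In particular:
\begin{itemize}
\item There is no $O(1)$ ambiguity in $n_{\max}$; it is exactly $stn$.
\item There are no quasi-periodic parity corrections. Since the substitution gives $\eps_m$ directly (not $\chi(n_{\max}-m)$), the residue class of $n_{\max}$ modulo $2st$ never enters.
\item The ``window estimate'' is trivial: the only terms missing for finite $n$ are those with $m>stn$, which contribute only beyond degree $stn^2/4$ after normalization.
\end{itemize}
So the alternative via the quantum $A$-polynomial is unnecessary; once you quote Hikami's form of Morton's formula, the whole proof is three lines.
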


This direct connection between $F_K(x,q)$ and the stability series is specific to negative torus knots; for example, it even fails for the positive trefoil. For arbitrary knots, the relation between $F_K(x,q)$ and the colored Jones polynomials $J_n(q)$ is more complicated. What we have to do is to start with Rozansky's asymptotic expansion of $J_n(q)$ from \cite{RozanskyMMR}. This is in terms of the variables $x$ and $\hbar$, where $q=e^{\hbar}$ and $x=q^n$:
\be
\label{eq:rozanski}
J_n (e^{\hbar})
 = \frac{1}{\Delta_K (x)} + \frac{P_1 (x)}{\Delta_K (x)^{3}} \hbar + \frac{P_2 (x)}{\Delta_K (x)^{5}} \hbar^2 + \ldots = \sum_{m=0}^{\infty} \sum_{j=0}^m c_{m,j} \, n^j \hbar^m.
\ee
Here, $\Delta_K(x)$ is the Alexander polynomial of $K$, and the coefficients $c_{m,j}$ are Vassiliev invariants of the knot $K$. The series $F_K(x, q)$, or more precisely its normalized version
$$ f_K(x,q) :=\frac{F_K(x, q)}{x^{1/2} - x^{-1/2}}$$
 should be a repackaging of the invariants $c_{m,j}$, in a similar manner to how the $\Zhat_a(q)$ invariants for closed manifolds are obtained from the WRT invariants via resurgence in \cite{GMP}.

\begin{conjecture}
\label{conj:Borel}
For any knot $K \subset S^3$, the Borel resummation of the double series \eqref{eq:rozanski} gives a knot invariant $f_K(x,q)$ with integer coefficients (up to some monomial):
\be
\label{eq:BorelResum}
J_n (e^{\hbar})
\; = \; \sum_{m=0}^{\infty} \sum_{j=0}^m c_{m,j} \, n^j \hbar^m
\; \;\;\stackrel{\text{Borel resum}}{=\joinrel=\joinrel=}\;\; \; f_K (x,q) 
\ee
where $q = e^{\hbar}$ and $x = e^{n \hbar} = q^n$.
\end{conjecture}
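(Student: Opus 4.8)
The plan is to read Conjecture~\ref{conj:Borel} as a statement about \emph{parametric resurgence} in $\hbar$, with $x=e^{n\hbar}$ treated as a parameter, and to prove two things: that the Borel--Laplace sum of \eqref{eq:rozanski} along a canonical direction is well defined, and that its output lies in $2^{-c}q^{\Delta}\Z[x^{\pm1/2}][[q]]$. I would work with the loop form $\sum_{k\ge0}P_k(x)/\Delta_K(x)^{2k+1}\hbar^k$ of Rozansky's expansion, which is equivalent to the double series after the substitution $n=(\log x)/\hbar$. The first step is a Gevrey-$1$ estimate: I would show that the loop coefficients grow no faster than $C(x)^k k!$ on a fixed annulus $1/R<|x|<R$ disjoint from the zeros of $\Delta_K$, so that the Borel transform $B(x,\zeta)=\sum_k\frac{1}{k!}P_k(x)\Delta_K(x)^{-2k-1}\zeta^k$ is holomorphic near $\zeta=0$. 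This is the two-variable refinement of the expected Gevrey-$1$ property of perturbative Chern--Simons on a knot complement, and should follow from Rozansky's rationality structure \cite{RozanskyMMR} together with polynomial bounds on the Vassiliev invariants $c_{m,j}$.

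The second step is to control the singularities of $B(x,\cdot)$ in the $\zeta$-plane and to choose the resummation ray. Here the main tool is the quantum $A$-polynomial: the operator $\hA_K(\hat\ell,\hat m,q)$ annihilating the colored Jones function descends, after $q=e^{\hbar}$, $x=q^n$, to a holonomic recursion for the loop expansion and hence for $B$, whose characteristic variety is the $A$-polynomial curve. I expect the Borel singularities to sit over the branch points of that curve --- the critical values of $\CS$ at the irreducible $\sl$ flat connections on the complement --- together with the abelian lattice $2\pi i\Z$. One then picks a ray $e^{i\theta}\R_{>0}$ missing these, sets $f_K(x,q)=\hbar^{-1}\int_{0}^{e^{i\theta}\infty}e^{-\zeta/\hbar}B(x,\zeta)\,d\zeta$, and checks by a genericity argument that the answer is independent of the admissible $\theta$, so the resummation is canonical; being built from the colored Jones function, it is automatically a knot invariant.

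The third step --- and the one I expect to be the real obstacle --- is integrality: showing that $f_K$ has integer coefficients up to a monomial. This is the exact analogue of the integrality of $\Zhat_a(q)$ for closed three-manifolds, for which the only known general explanation is the count of BPS states, so a soft argument is unlikely. The concrete route I would try is to verify that the resummed $f_K$ satisfies the quantum $A$-polynomial recursion with the correct initial data, and then exploit that, after clearing denominators, $\hA_K$ has coefficients in $\Z[q^{\pm1}]$: integrality of finitely many leading terms then propagates to the whole series, which is precisely the mechanism underlying the integrality of the plumbing formula \eqref{eq:GLTst}. Along the way one must track the normalizing monomial $2^{-c}q^{\Delta}$ and remember that the $\hbar^0$ term forces $f_K$ to be the \emph{balanced} expansion of $1/\Delta_K(x)$, i.e. the one symmetric under $x\leftrightarrow x^{-1}$ --- the Melvin--Morton--Rozansky limit, a good consistency check throughout.

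Finally, since Steps 2 and 3 are open in full generality, a realistic intermediate target is to prove the conjecture for every knot whose complement is a weakly negative definite plumbing, in particular for iterated torus knots, by matching Theorem~\ref{thm:TorusKnots} and the gluing formula of Theorem~\ref{thm:glue} against \eqref{eq:rozanski} order by order in $\hbar$; this reduces the claim to the already-established integrality of $F_K$ in \eqref{eq:GLTst}. One can then confirm the figure-eight case to arbitrary order directly from its explicit quantum $A$-polynomial, which is essentially what the numerical computations in this paper accomplish.
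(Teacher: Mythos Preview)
The statement you are addressing is a \emph{conjecture} in the paper, not a theorem: the paper offers no proof of Conjecture~\ref{conj:Borel} and does not claim one. What the paper does provide is evidence --- a complete resurgence computation for the right-handed trefoil (Section~\ref{sec:ResurgentTrefoil}), where the Borel transform is identified explicitly with a rational function built from the Alexander polynomial and the inverse transform recovers \eqref{Fxq31unnorm}, together with the recursion-based computations in Section~\ref{sec:recursion} for the trefoil and the figure-eight. So there is no ``paper's own proof'' to compare against.

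Your proposal is not a proof either, and you say as much: the Gevrey-$1$ bound on the loop coefficients, the control of Borel-plane singularities via the $A$-polynomial, and above all the integrality step are each genuine open problems. A few specific gaps are worth flagging. First, the Gevrey-$1$ estimate does not obviously follow from Rozansky's rationality plus ``polynomial bounds on Vassiliev invariants''; the growth of $P_k(x)/\Delta_K(x)^{2k+1}$ involves both the degree and the coefficient size of $P_k$, and no such bound is established in \cite{RozanskyMMR}. Second, your integrality argument via propagation through the $\hA$-recursion is circular as stated: the recursion \eqref{newrecursion} has a leading coefficient $(1-q^{\text{something}})^{-1}$, so integer initial data need not stay integral --- this is exactly why the paper treats integrality as a nontrivial prediction rather than a consequence of the recursion. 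Third, even your ``realistic intermediate target'' for plumbed knot complements would require showing that the plumbing-defined $F_K$ actually coincides with the Borel sum of \eqref{eq:rozanski}, which the paper does only for the trefoil by an explicit computation, not by a general matching argument.

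In short: your outline is a sensible research program aligned with the paper's philosophy, but the paper neither proves the conjecture nor claims the ingredients you invoke; the honest status is that Conjecture~\ref{conj:Borel} remains open.
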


Physically, the series $f_K(x,q)$ is a count of BPS states for the $T[Y]$ theory on the knot complement $Y=S^3 \setminus \nu K$. The exponential change of variables $\hbar \leadsto q = e^{\hbar}$ that ``magically'' leads to integrality from a series with non-integer coefficients is, in fact, rather common in the study of BPS states. A well-known example of this is the relation between the (non-integral) Gromov-Witten invariants and the (integral) Donaldson-Thomas invariants; see \cite{MNOP}.

While the resurgence procedure in Conjecture~\ref{conj:Borel} is general, in practice it is hard to work out. We will explain how it is done in a simple example, that of the right-handed trefoil $T(2,3)$, in which case we recover the corresponding series \eqref{eq:GLTst}. 

A better method to compute the series $f_K(x,q)$ is to take advantage of a recurrence relation. The AJ Conjecture says that that the colored Jones polynomials satisfy a difference equation given by the quantization $\hA$ of the $A$-polynomial of the knot; cf. \cite{Garoufalidis}, \cite{Gukov:2003na}. We conjecture that the same recurrence is satisfied by the series $f_K(x,q)$, with initial conditions inspired by Equation~\eqref{eq:rozanski}:

\begin{conjecture}
\label{conj:Aq}
For any knot $K \subset S^3$, the quantum $A$-polynomial of $K$ annihilates the series $f_K(x,q)$:
\be
\label{eq:hA}
 \hA \; f_K(x,q) \; = \; 0.
\ee
 Furthermore, we have 
 \be
\label{eq:limq1}
 \lim_{q \to 1} f_K(x,q) = \se\Bigl(\frac{1}{\Delta_K(x)}\Bigr),\ee where the ``symmetric expansion'' $\se$ denotes the average of the expansions of the given rational function as $x \to 0$ (as a Laurent power series in $x$) and as $x \to \infty$ (as a Laurent power series in $x^{-1}$). 
\end{conjecture}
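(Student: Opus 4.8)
The plan is to separate the two assertions of the conjecture — the $q$-difference equation $\hA\, f_K(x,q) = 0$ and the classical limit \eqref{eq:limq1} — and to prove each by combining a direct verification for the families in which $f_K$ is known in closed form with a structural argument, resting on the AJ conjecture \cite{Garoufalidis} and Conjecture~\ref{conj:Borel}, that should cover all knots $K$. For torus knots, Theorem~\ref{thm:TorusKnots} exhibits $f_K = F_K/(x^{1/2} - x^{-1/2})$ as an explicit bilateral series: a sum over $m \geq 1$ of the blocks $\eps_m \bigl( x^{(m-1)/2} + x^{(m-3)/2} + \dots + x^{-(m-1)/2} \bigr) q^{(m^2 - (st-s-t)^2)/(4st)}$. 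Grouping $m$ according to its residue modulo $2st$, each block is a false-theta-type series whose exponent of $q$ is quadratic in $m$, and any such series obeys an elementary two-term recursion under $m \mapsto m + 2st$. The quantum $A$-polynomial of $T(s,t)$ is known in closed form (\cite{Hikami}, \cite{GaroufalidisLe}); substituting $f_K$ and invoking the recursion, the identity $\hA\, f_K = 0$ collapses to a finite linear identity among the periodic signs $\eps_m$ of \eqref{eq:epsem}, which one checks by hand. The same argument handles the other knots whose complements are negative definite plumbings with an unframed vertex; for the figure-eight knot one instead verifies $\hA\, f_{4_1} = 0$ order by order in $q$ against the numerically computed initial terms, using the known operator $\hA$ for $4_1$.

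For the limit \eqref{eq:limq1}, the structural input is that $F_K(x,q)$ is antisymmetric under $x \mapsto x^{-1}$ (as it is for all the $F_K$ considered here); since $x^{1/2} - x^{-1/2}$ is also antisymmetric, $f_K(x,q)$ is symmetric. On the other hand the $\hbar^0$ term of Rozansky's expansion \eqref{eq:rozanski} is $1/\Delta_K(x)$, so $\lim_{q \to 1} f_K$ should be a bilateral Laurent series in $x^{1/2}$ that is symmetric under $x \mapsto x^{-1}$ and represents the rational function $1/\Delta_K(x)$. The only such series is the average of the two one-sided expansions of $1/\Delta_K(x)$, which is exactly $\se(1/\Delta_K(x))$. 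For torus knots one can see this by inspection: the sign pattern $\eps_m$ of \eqref{eq:epsem} is precisely the one for which $\sum_m \eps_m x^{m/2}$ is the symmetrized expansion of $1/\Delta_{T(s,t)}(x)$.

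For a general knot $K$, the idea is to derive $\hA\, f_K = 0$ from the AJ conjecture, $\hA\, J_n(q) = 0$, together with Conjecture~\ref{conj:Borel}. Rozansky's expansion \eqref{eq:rozanski} is the asymptotic expansion of $J_n(e^\hbar)$ in $\hbar$ with $x = q^n$ held fixed; applying $\hA$ and collecting powers of $\hbar$ turns $\hA\, J_n = 0$ into a hierarchy of linear relations among the Vassiliev coefficients $c_{m,j}$. These are precisely the relations asserting that the Borel--Laplace resummation $f_K(x,q)$ of \eqref{eq:rozanski} is annihilated by $\hA$ — \emph{provided} the resummation map commutes with the operators $\hat x$ (multiplication by $x$) and $\hat y$ (the $q$-shift $x \mapsto qx$) generating $\hA$, i.e.\ provided it is a morphism of $q$-difference modules. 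The initial condition is then pinned down by the $\hbar^0$ relation, which reproduces \eqref{eq:limq1}, and this together with the recursion determines $f_K$ uniquely inside the relevant space of bilateral series.

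The main obstacle is exactly this last step for general $K$: making the resurgence rigorous. One must first establish that the Borel transform of the double series \eqref{eq:rozanski} is Borel summable along $\R_+$ — this is the content of Conjecture~\ref{conj:Borel} itself, which is open — and then, more delicately, that resummation intertwines the action of $\hA$; this is the knot-complement counterpart of the (still largely conjectural) statement that resurgence of the perturbative Chern--Simons partition function produces $\Zhat_a(q)$ \cite{GMP}. A secondary, more technical difficulty is that the recursion actually satisfied by the colored Jones polynomial is typically \emph{inhomogeneous}, so one has to argue either that the inhomogeneous term drops out under the normalization passing from $J_n$ to $f_K$, or else work with the homogenized operator and re-check that the classical limit \eqref{eq:limq1} is unchanged. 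For these reasons a proof in full generality is beyond current techniques; what is realistic — and what I would actually carry out — is the explicit verification above for torus knots and the plumbed examples, together with a high-order numerical confirmation for $4_1$ and a few other hyperbolic knots.
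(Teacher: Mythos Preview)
The statement you are addressing is a \emph{conjecture}; the paper does not prove it and never claims to. There is no ``paper's own proof'' to compare against. What the paper does is (i) motivate both parts of the conjecture from the physics and from Rozansky's expansion \eqref{eq:rozanski} (Sections~\ref{sec:resurgence} and \ref{sec:recursion}), (ii) verify for the right-handed trefoil that the explicit $f_m(q)$ from Theorem~\ref{thm:TorusKnots} satisfy the recursion coming from $\hA$ (Section~9.2, stated as an a posteriori check), and (iii) \emph{use} the conjecture as a computational device to produce $F_{\bf 4_1}(x,q)$, then cross-check via Conjecture~\ref{conj:Dehn} against the independently known $\Zhat_0(-\Sigma(2,3,7))$.

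Your proposal is honest about this: you correctly identify that the general case rests on Conjecture~\ref{conj:Borel} (Borel summability) and on the AJ conjecture, both open, and that what is actually achievable is explicit verification for torus knots plus numerical confirmation for ${\bf 4_1}$. That is precisely the evidentiary status the paper assigns to the conjecture, so your plan is aligned with the paper --- though your torus-knot verification is more ambitious (all $T(s,t)$ rather than just the trefoil) and would be a genuine addition if carried out.

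One structural difference worth flagging: your general-knot sketch runs the implication as ``AJ for $J_n$ $+$ Borel resummation $\Rightarrow$ $\hA f_K = 0$,'' which requires showing that resummation is a morphism of $q$-difference modules. The paper runs things in the opposite direction: it \emph{postulates} $\hA f_K = 0$ together with the boundary condition \eqref{eq:limq1}, and uses this pair to determine $f_K$ recursively from a finite set of initial $f_m(q)$ extracted from \eqref{FP}. So in the paper the conjecture is a definition-generating principle, not a theorem to be derived. Your argument for \eqref{eq:limq1} (``the only symmetric bilateral series representing $1/\Delta_K$ is $\se(1/\Delta_K)$'') is the right heuristic, but note it presupposes the symmetry $f_K(x,q)=f_K(x^{-1},q)$, which for general $K$ is itself part of what Conjecture~\ref{conj:Borel} would deliver.
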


Equation~\eqref{eq:hA} sets up a recursion for the coefficients $f_m(q)$ of each power of $x$ in the series $f_K(x,q)$. Equation~\eqref{eq:limq1} is a ``boundary value'' which is supposed to determine $f_K(x,q)$ uniquely in combination with \eqref{eq:hA}. In practice, this is done by first calculating the polynomials $P_k(x)$ from \eqref{eq:rozanski}, and then reading off the coefficients of each power of $x$ in \eqref{eq:rozanski}. These coefficients are power series in $\hbar$, and (through resurgence) we can turn them into series in $q$; in fact, for  simple knots the resurgence procedure is trivial, because we happen to obtain polynomials in $q=e^{\hbar}$. In this fashion we get the first few series $f_m(q)$, which act as initial conditions for the recursion given by \eqref{eq:hA}. This gives an effective procedure to compute the first terms of $f_K(x,q)$ (or, if we prefer, of $F_K(x,q)$) to any desired order of precision.  

Experimentally, for the trefoil, the recursion produces the first terms of the series  \eqref{eq:GLTst}, as expected. We can also obtain the first terms of the series for a hyperbolic knot, the figure-eight knot ${\bf 4_1}$:
\be
\label{eq:41}
 F_{\bf 4_1}(x, q) =  \frac{1}{2} (\Xi(x,q) - \Xi(x^{-1},q)),\ee
where 
$$ \Xi(x,q)=x^{1/2} + 2 x^{3/2} + (q^{-1} + 3+ q) x^{5/2} + (2q^{-2} + 2q^{-1}+ 5 +  2 q + 
    2 q^2) x^{7/2}  +\dots $$

To check that we are on the right track, it is helpful to formulate another conjecture, which is inspired by Theorem~\ref{thm:Dehn}. 

\begin{conjecture}
\label{conj:Dehn}
Let $K \subset S^3$ be a knot, and $S^3_{p/r}(K)$ the result of Dehn surgery on $K$ with coefficient $p/r$. Then, there exist $\eps \in \{\pm 1\}$ and $d \in \Q$ such that
 $$\Zhat_a(Y_{p/r})= \eps q^d \cdot \CL_{p/r}^{(a)} \left[ (x^{\frac{1}{2r}} - x^{-\frac{1}{2r}}) F_K (x,q) \right],$$
 provided that the right hand side of this equation is well-defined. 
\end{conjecture}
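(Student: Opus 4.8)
Since the statement is a conjecture, any proof will have to be conditional: I will assume Conjectures~\ref{conj:Borel} and~\ref{conj:Aq}, which together provide the series $f_K(x,q)$ and $F_K(x,q) = (x^{1/2}-x^{-1/2}) f_K(x,q)$ for an \emph{arbitrary} knot $K\subset S^3$, with integer coefficients and satisfying the quantum $A$-polynomial recurrence. Granting this, the plan is to reprove the argument behind Theorem~\ref{thm:Dehn} in a purely resurgent language, since the plumbing calculus underlying Theorems~\ref{thm:glue} and~\ref{thm:Dehn} is simply unavailable for hyperbolic knots such as $\mathbf{4_1}$. First I would write the WRT invariant of $Y_{p/r}=S^3_{p/r}(K)$, via the Reshetikhin--Turaev surgery formula, as a finite sum over colors of the colored Jones polynomials $J_n$ of $K$ at a root of unity, weighted by the Gaussian factor $q^{-n^2 r/p}$ and by $S$- and $T$-matrix entries for the torus; the Laplace transform $\CL_{p/r}^{(a)}$ of~\eqref{eq:Laplace} is exactly the ``continuum'' form of this Gaussian sum, with the congruence $ru-a\in p\Z$ encoding the $\Spinc$/residue decomposition of the surgered manifold.

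The heart of the argument is then to resurge both sides and check that Borel resummation commutes with this surgery operation. On one side, following the GMP philosophy used throughout \cite{GMP}, $\Zhat_a(Y_{p/r})$ is the Borel resummation of the perturbative Chern--Simons expansion of the WRT invariant of $Y_{p/r}$; on the other, by Conjecture~\ref{conj:Borel}, $f_K(x,q)$ is the Borel resummation of Rozansky's expansion~\eqref{eq:rozanski} of $J_n$. I would therefore aim to show that resumming the Gaussian-weighted sum over $n$ of the \emph{perturbative} $J_n$ equals $\CL_{p/r}^{(a)}$ applied to the resummed $f_K(x,q)$, after inserting the Weyl-type factor $(x^{1/2r}-x^{-1/2r})$ (which arises from the solid-torus/unknot contribution and the Weyl denominator of the Jones normalization). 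Tracking the Chern--Simons framing anomaly under the change of framing implicit in $p/r$-surgery, together with the usual normalization shifts, produces the prefactor $\eps q^d$; these are the same constants computed in Section~\ref{sec:surgeryformula}, and that computation does not see whether $\hat Y=S^3$ or $Y_{p/r}$ is plumbed.

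Two checks should be woven into the proof. When both $S^3$ and $Y_{p/r}$ happen to be negative definite plumbed manifolds, the output must reduce to Theorem~\ref{thm:Dehn}; since that theorem is itself deduced from Theorem~\ref{thm:glue} and the solid-torus calculation, the two derivations share all of their bookkeeping and should match identically, which both validates the formula and fixes $\eps,d$. Secondly, the right-hand side must land in $2^{-c}q^{\Delta_a}\Z[[q]]$: integrality is inherited from the integrality of $F_K$ in Conjecture~\ref{conj:Borel} because $\CL_{p/r}^{(a)}$ merely rescales exponents of $q$ and annihilates some monomials, while convergence in the unit disk requires that the Gaussian reweighting $x^u\mapsto q^{-u^2 r/p}$ turn the a priori only formal sum over $u$ into a genuinely convergent one --- this is precisely where the hypothesis ``provided the right hand side is well-defined'' enters, and where a weak-negative-definiteness-type restriction on $p/r$ is hiding.

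The main obstacle is the analytic content of the second paragraph: justifying the exchange of Borel resummation with the sum over colors. Rozansky's expansion~\eqref{eq:rozanski} is only asymptotic, so one must control the analytic continuation and growth of its Borel transform well enough that the Gaussian-damped sum over $n$ is resummable term by term and independent of the resummation contour. Even in the plumbed case this step is handled only indirectly, through the closed-form plumbing expression rather than by a direct resurgence estimate, and doing it honestly for a hyperbolic knot --- where no such shortcut exists --- is exactly what keeps the statement a conjecture. A realistic intermediate target is to verify the identity to all finite orders in $q$ for $\mathbf{4_1}$ using the series~\eqref{eq:41}, comparing with $\Zhat_a$ of the surgered manifolds computed independently (e.g.\ by resurgence from their WRT invariants); this would confirm the structural form and the values of $\eps$ and $d$ without resolving the convergence issue.
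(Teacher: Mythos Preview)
The paper does not prove this statement: it is genuinely a conjecture, with no proof offered. What the paper does is (i) prove the formula in the special case where both $\hat Y$ and $Y_{p/r}$ are negative definite plumbed (Theorem~\ref{thm:Dehn}, via the gluing formula and the explicit solid-torus computation), (ii) state Conjecture~\ref{conj:Dehn} as the natural extrapolation, and (iii) gather numerical evidence by applying it to the figure-eight knot and checking that the output for $S^3_{-1}(\mathbf{4_1})=-\Sigma(2,3,7)$ matches the independently known answer~\eqref{eq:Zm237} from~\cite{CCFGH}. There is no attempt at a general argument.

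Your proposal is therefore not competing with a proof in the paper; it is a sketch of why one might believe the conjecture. As such it is reasonable, and you correctly isolate the real obstruction: commuting Borel resummation with the Gaussian-weighted sum over colors is exactly the analytic step nobody knows how to do. But there is a more basic circularity you should flag. Even granting Conjectures~\ref{conj:Borel} and~\ref{conj:Aq}, the left-hand side $\Zhat_a(Y_{p/r})$ has no independent definition for a general (say hyperbolic) $Y_{p/r}$: outside the plumbed world, $\Zhat_a$ is itself only specified via the resurgence procedure of Conjecture~\ref{conj:GPV}. So your ``proof'' would be showing that two resurgence constructions --- one on the closed manifold, one on the knot complement followed by $\CL_{p/r}^{(a)}$ --- agree. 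That is a meaningful consistency statement, but it is not a proof of Conjecture~\ref{conj:Dehn} in the sense of deriving it from established definitions; it is closer to a reformulation of what the conjecture asserts. Your final paragraph, proposing order-by-order verification for $\mathbf{4_1}$, is exactly what the paper does in Section~\ref{sec:surgeries8}, and is the appropriate level of claim here.
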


The proviso of well-definedness in Conjecture~\ref{conj:Dehn} is due to the fact that we can only apply the Laplace transform to $F_K(x,q)$ for some surgery coefficients. The range of applicability depends on the growth properties of the series.

For the figure-eight knot, Conjecture~\ref{conj:Dehn} can be applied, for example, to the $-1$ surgery, which gives the Brieskorn sphere $c$.  We get 
\be
\label{eq:Zm237}
\Zhat_0(-\Sigma(2,3,7)) = -q^{-1/2} ( 1 + q + q^3 + q^4 + q^5 + 2 q^7 + q^8 + 2 q^9 + q^{10} + 2 q^{11} +\dots )
\ee
Observe that \eqref{eq:Zm237} agrees with the answer that was obtained from modularity analysis in \cite[Equation (7.21)]{CCFGH}. This gives some evidence for Conjectures~\ref{conj:Aq} and \ref{conj:Dehn}.

The same conjectures yield predictions for the invariants $\Zhat_a(q)$ of some closed hyperbolic manifolds. For example, for $-1/2$ surgery on the figure-eight knot we have:
\be
\Zhat_0(S^3_{-1/2}({\bf 4_1})) = -q^{-1/2}( 1 - q + 2 q^3 - 2 q^6 + q^9 + 3 q^{10} + q^{11} - q^{14} - 3 q^{15} - q^{16} + 
 \dots)
 \ee
 As far as we know, these are the first computations of $\Zhat_a(q)$ for hyperbolic manifolds in the literature.

\medskip
{\bf Remark.} In addition to presenting the new results, we have written this paper with the goal to better familiarize the mathematical audience with the invariants $\Zhat_a(q)$. Thus, we include a fair amount of background material (Sections~\ref{sec:WRT} and \ref{sec:plumb}), and present the proofs of some ``folklore'' results, such as the invariance of $\Zhat_a(q)$ for plumbed $3$-manifolds (Proposition~\ref{prop:invariance}). 

\medskip
{\bf Organization of the paper.} In Section~\ref{sec:conventions} we list the notational conventions that we will use in this paper.

 In Section~\ref{sec:WRT} we review some known facts about the WRT invariants and the $q$-series $\Zhat_a(q)$ for closed $3$-manifolds. 
 
 In Section~\ref{sec:plumb} we recall the formula for the $\Zhat_a(q)$ invariants of negative definite plumbed $3$-manifolds, and prove that they are independent of the plumbing presentation; we also explain how the labels $a$ can be identified with $\Spinc$ structures. Moreover, we give a more concrete formula for the invariants of Brieskorn spheres with three singular fibers.
 
  In Section~\ref{sec:plumbedKnot} we describe plumbing representations for manifolds with toroidal boundary (knot complements). 
  
  In Section~\ref{sec:invariant} we define the invariants $\Zhat_a(Y; z, n, q)$ for plumbed knot complements, and in particular the series $F_K(x,q)$. Here we prove Theorems~\ref{thm:glue} and \ref{thm:Dehn}. 
  
  In Section~\ref{sec:torus} we study the invariants for torus knots, proving Theorems~\ref{thm:TorusKnots} and \ref{thm:EvenStability}.
  
 In Section~\ref{sec:resurgence} we discuss the resurgence procedure from Conjecture~\ref{conj:Borel}, and 
 apply it to the trefoil.
 
In Section~\ref{sec:recursion} we set up the recursion for the terms in the series $F_K(x,q)$, as in Conjecture~\ref{conj:Aq}. We show how it works in practice for the trefoil and the figure-eight knot.

Finally, in Section~\ref{sec:categorify} we discuss the physical interpretation of $F_K(x,q)$, and make some speculations about how one can approach the categorification of the series $F_K(x,q)$ and of the invariants $\Zhat_a(q)$.

\medskip
{\bf Acknowledgements.} We would like to thank Steve Boyer, Yoon Seok Chae, Nathan Dunfield, Gerald Dunne, Francesca Ferrari, Stavros Garoufalidis, Matthias Goerner, Sarah Harrison, Slava Krushkal, Thang L\^{e}, Jeremy Lovejoy, Satoshi Nawata, Du Pei, Pavel Putrov, Marko Sto\v{s}i\'{c}, Cumrun Vafa, Ben Webster, Don Zagier and Christian Zickert for helpful conversations.

The first author was supported by the U.S. Department of Energy, Office of Science, Office of High Energy Physics, under Award No. DE-SC0011632, and by the National Science Foundation under Grant No. DMS 1664240. The second author was supported by the National Science Foundation under Grant No. DMS-1708320.

\newpage \section{Conventions}
\label{sec:conventions}
With regard to knots, we denote by $U$ be the unknot, by $\RHT$ and $\LHT$  the right-handed resp. left-handed trefoil, and by ${\bf 4_1}$ the figure-eight knot. We also let $T(s,t)$ be the positive $(s,t)$-torus knot, such that, for example, $T(2,3)= \RHT.$ We let $\mK$ denote the mirror of the knot $K$.

We let $S^3_{p/r}(K)$ denote the result of Dehn surgery along a knot $K \subset S^3$, with coefficient $p/r \in \Q$.

With regard to $3$-manifolds, we will follow the orientation conventions in Saveliev's book \cite{SavelievBook}. In particular, we will orient Brieskorn spheres as boundaries of negative definite plumbings, so that, for example,
$$\Sigma(2,3,5)= S^3_{-1} (\LHT)= - S^3_1(\RHT),$$
$$ \Sigma(2,3,7)= S^3_{-1} (\RHT)= - S^3_1(\LHT)= S^3_1({\bf 4_1}) = - S^3_{-1}({\bf 4_1}).$$
These are the same conventions as in Heegaard Floer theory \cite{AbsGraded, Plumbed}, and opposite to the ``positive Seifert orientation" conventions in other sources.

For lens spaces, we let
$$ L(p,r) = S^3_{-p/r}(U),$$
which is the usual convention but different from the one is \cite{KMOS} or in \cite{OSlens}.

Note that $L(p,r)$ depends only on $r$ mod $p$, and there are symmetries $L(p,r)=L(p, r^{-1})=-L(p,p-r)$.

For example, we have
$$ L(5,1)= S^3_5(\RHT)= - S^3_{-5}(\LHT)= -S^3_5(U) = S^3_{-5}(U) = -L(5,4),$$
$$ L(7,1) = S^3_7(\RHT)=-  S^3_{-7}(\LHT)= -S^3_7(U) = S^3_{-7}(U) = -L(7,6), $$
$$ S^3_6(\RHT)=-  S^3_{-6}(\LHT)= L(3,2)\# L(2,3) = -L(3,1)\# L(2,1).$$

With regard to quantum invariants, if we use the Kauffman bracket with variable $A$, we let $q=A^{4}$ in the definition of the Jones polynomial. Thus, for example,
$$ J_{\RHT}(q)= q^{-1}+q^{-3}-q^{-4}.$$
This is the convention used in most of the literature on WRT invariants of 3-manifolds, for example in \cite{LawrenceZagier}, \cite{BeliakovaChenLe} or \cite{GPV}. However, it is the opposite of the convention in the categorification literature and in most knot theory books, for example in \cite{Lickorish}, and also in \cite{GaroufalidisLe}, where $q$ is replaced by $q^{-1}$, i.e. $q=A^{-4}$.

The colored Jones polynomial of a knot $K$ is denoted $J_{K,n}$ or just $J_n$ (when $K$ is implicit), so that $J_2 = J$ is the usual Jones polynomial. (Some sources call that $J_1$.) The colored Jones polynomial is  {\em normalized} so that for the unknot we have $J_{U,n}=1$.  Thus, $$ \tJ_{n}(q) :=[n] \cdot J_{n}(q)$$ is the {\em unnormalized} or {\em unreduced} version of the colored Jones polynomial, where $$[n]=\frac{q^{n/2}-q^{-n/2}}{q^{1/2}-q^{-1/2}}$$ is the ``quantum integer.''
Our conventions here follow \cite{ArmondDasbach}, but are opposite to those used by Khovanov in \cite{KhovanovColored}, where $J_n$ was the unnormalized Jones polynomial and $\tJ_n$ the normalized (or reduced) version.

The Alexander polynomial of a knot $K$ is denoted $\Delta_K(x)$, and it is normalized so that it symmetric with respect to $x \leftrightarrow x^{-1}$ and
$$\Delta_K (1) = 1, \ \ \ \Delta_{\unknot} (x)=1.$$

We will also make use of the $q$-Pochhammer symbol 
$$ (x; q)_{n} := (1-x)(1-xq)(1-xq^2)\dots(1-xq^{n-1}).$$
We usually just write $(x)_n$ for $(x; q)_n$. We allow for $n = \infty$, in which case the $q$-Pochhammer symbol is an infinite product (which can be expanded into a power series).

As noted in Conjecture~\ref{conj:Aq}, given a rational function $Q(x)$, we define the symmetric expansion $\se(Q(x))$ to be the average of the expansions of $Q(x)$ as $x \to 0$ and as $x \to \infty$. For example,
$$ \se\Bigl (\frac{1}{x+x^{-1}} \Bigr) = \frac{1}{2} \bigl( (x - x^3 + x^5 - \dots ) + (x^{-1} - x^{-3} + x^{-5} - \dots)\bigr).$$

\newpage \section{WRT invariants and $q$-series for closed $3$-manifolds}
\label{sec:WRT}

\subsection{WRT invariants}
Let $Y$ be a closed, connected, oriented three-manifold. We denote by $\cA$ the space of $\su$ connections over $Y$ modulo gauge equivalence. Let $ \CS : \cA \to \R/\Z$ be the Chern-Simons functional. The Chern-Simons path integral is given by
$$Z_{\CS}(Y; k) = \int_{\cA} e^{2\pi i (k-2) \CS(A)} DA.$$
See \cite{WittenCS}. We denote $\hbar = 2\pi i / k$ and set 
$$ q = e^{\hbar} = e^{2\pi i/k}.$$
For example, 
\[
Z_\text{CS}(S^2\times S^1; k)=1,
\qquad Z_\text{CS}(S^3; k)=\sqrt\frac{2}{k}\,\sin\frac{\pi}{k}
=\frac{q^{1/2}-q^{-1/2}}{i\sqrt{2k}}
\]

The Witten-Reshetikhin-Turaev (WRT) invariant is a normalization of $Z_{\CS}$ used in the math literature:
$$\tau_k(Y) = \frac{i \sqrt{2k}}{q^{1/2} - q^{-1/2}} Z_{\CS}(Y; k),$$
so that $ \tau_k(S^3) = 1$. A mathematical definition of $\tau_k$ was given in \cite{ReshetikhinTuraev}. The definition of $\tau$ can be extended to $q$ being any root of unity, giving a map
$$\tau(Y): \{\text{roots of unity} \} \to \C.$$  Strictly speaking, in the definition, one also needs to choose a fourth root of $q$, denoted $A$. This is not necessary when $Y$ is an integral homology sphere. Furthermore, in that case, Habiro \cite{Habiro} showed that one can express $\tau(Y)$ as the evaluation (at any desired root of unity) of an element
$$ \Hab(Y) = \sum_{n \geq 0} a_n(q) \cdot (q)_n \in \widehat{\Z[q]},$$
where $$ (q)_n = (1-q)(1-q^2)\dots(1-q^n)$$ and
$\widehat{\Z[q]}=\varprojlim \Z[q]/((q)_n)$ is called the Habiro ring. (The polynomials $a_n(q)$ are not unique.)

One consequence is that the values of $\tau(Y)$ at any root of unity $q$ are algebraic integers in $\Z[q]$. If we know them at the standard root of unity $\xi = e^{2\pi i/k}$, then we know them at any other primitive $k$th root of unity, by acting with the Galois group $\operatorname{Gal}(\Q(\xi)/\Q)$. See \cite{BeliakovaChenLe}, \cite{BeliakovaBuhlerLe}, \cite{BeliakovaLe} for extensions of these results to other three-manifolds. 

When we reverse the orientation of the manifold, we have
\be
\label{eq:reversal}
 \tau(-Y)(q) = \tau(Y)(q^{-1}).
 \ee

\subsection{The $q$-series}
In \cite{GPV}, \cite{GPPV}, a new set of three-manifold invariants was predicted from physics. They have integrality properties, and are in fact ordinary power series in $q$ (as opposed to elements in the Habiro ring). For rational homology spheres, their relation to the WRT invariants should be as follows.

\begin{conjecture}
\label{conj:GPV}
Let $Y$ be a closed $3$-manifold $Y$ with $b_1(Y)=0$. Let $\Spinc(Y)$ be the set of $\Spinc$ structures on $Y$, with the action of $\Z_2$ by conjugation.  Set
$$ T:=\Spinc(Y)/\Z_2.$$
Then, for every $a \in T$, there exist invariants 
$$\Delta_a \in \Q, \ \ c \in \mathbb{Z}_+, \ \ \Zhat_{a}(q) \in 2^{-c}q^{\Delta_a} \Z[[q]],$$ 
with $\Zhat_a(q) $ converging in the unit disk $\{|q| < 1\}$, such that, for infinitely many $k$, the radial limits $\lim_{q \to e^{2\pi i/k}} \Zhat_a(q)$ exist and can be used to recover the Chern-Simons path integral in the following way:
\be
\label{eq:convergence}
 Z_{\CS}(Y; k)=(i \sqrt{2k})^{-1} \sum_{a,b \in T} e^{2\pi i k \cdot \lk(a,a)} |\cW_b|^{-1} S_{ab}\Zhat_b(q)|_{q \to e^{2\pi i/k}}.
 \ee
Here, the coefficients $S_{ab}$ are given by
\be
\label{eq:Sab}
 S_{ab} = \frac{e^{2\pi i \lk(a,b)} + e^{-2\pi i \lk(a,b)}}{|\cW_a|  \cdot \sqrt{|H_1(Y; \Z)|}},
 \ee
where the group $\cW_x=\operatorname{Stab}_{\Z_2}(x)$ is $\Z_2$ if $x= \bar x$ and is $1$ otherwise.
\end{conjecture}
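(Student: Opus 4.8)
The plan is to prove Conjecture~\ref{conj:GPV} not in full generality --- where it remains open --- but for the classes of $3$-manifolds on which $\Zhat_a(q)$ has an explicit closed form, organizing the argument in three stages, and I would begin by fixing such a construction. For negative definite plumbed $3$-manifolds (in particular, all Seifert fibered rational homology spheres with negative orbifold Euler number) I would take as the definition of $\Zhat_a(q)$ the contour-integral/constant-term formula of \cite[Appendix A]{GPPV}: expand a product of the form $\prod_v (z_v - z_v^{-1})^{2 - \deg v}$ over the vertices, pair the result against the lattice theta function cut out by the plumbing matrix and the class $a$, and read off the $q$-series. The first things to verify are the structural claims in the statement: integrality up to a power of $2$ (the power of $2$ coming from the ``quantum integer'' factors at high-valence vertices, the rest following by expanding each factor as a Laurent series and noting that the theta pairing restricts the summation to a coset of a sublattice), the rational prefactor $\Delta_a$ (a framing/linking correction), and convergence in $|q| < 1$ (each coefficient is a finite sum, and the whole series is a finite $\Z$-combination of false theta functions $\sum_{n>0}\chi(n)q^{n^2/4M}$, whose partial sums stay bounded, so the radius of convergence is $1$). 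Independence of the plumbing presentation is Proposition~\ref{prop:invariance}, proved by checking invariance under the Neumann moves, which at the same time matches the labels $a$ with $\Spinc$ structures.

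Next I would produce the radial limits. The key analytic input is the theory of Eichler integrals of (vector-valued) theta functions, following Lawrence--Zagier and Hikami: a false theta function $\widetilde{\Psi}^{(M)}_{\chi}(q)$ continues to a function holomorphic on the cut disk whose radial limit at a root of unity $q = e^{2\pi i/k}$ exists and is computed by an Euler--Maclaurin expansion --- a polynomial in $1/k$ whose leading term is a finite Gauss sum. Assembling the plumbed formula out of such building blocks shows that $\lim_{q \to e^{2\pi i/k}} \Zhat_a(q)$ exists for every $k$, hence certainly for infinitely many.

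Finally I would match with the Chern--Simons integral: substitute the radial limits and the explicit $S_{ab}$ into the right-hand side of \eqref{eq:convergence} and compare with the Reshetikhin--Turaev state sum for the plumbed surgery diagram, which is itself a product of one-variable Gauss sums over the vertices. The bridge is Gauss-sum reciprocity (Landsberg--Schaar), which turns the small-$q$ asymptotics of a false theta function into its value at a root of unity; bookkeeping the $\sqrt{2k}$, the stabilizer factors $|\cW_x|$, and the self-linking phases $e^{2\pi i k\, \lk(a,a)}$ then reproduces \eqref{eq:convergence} exactly. For Brieskorn spheres $\Sigma(p_1,p_2,p_3)$ this is precisely the Lawrence--Zagier computation, and the general negative-definite plumbed case is its multivariable extension.

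For $3$-manifolds outside this class the only route I know is resurgence: one would have to show that the Borel transform of the perturbative Chern--Simons series around the trivial connection has singularities indexed by the nontrivial flat connections, and that the lateral Borel sums reorganize into the $\Zhat_a(q)$. Obtaining the required analytic control --- Borel summability, the location of the singularities, the Stokes data --- is the main obstacle, and it is exactly what keeps the conjecture open in general. Even within the plumbed setting the delicate point is the last two stages for high-valence graphs, where the theta functions are genuinely vector-valued and the Euler--Maclaurin and Gauss-sum bookkeeping becomes heavy; that, rather than the construction itself, is where the real difficulty lies.
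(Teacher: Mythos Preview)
The statement in question is a \emph{conjecture}, and the paper contains no proof of it. It is stated as Conjecture~\ref{conj:GPV}, attributed to \cite{GPV,GPPV}, and the surrounding discussion makes clear that it remains open: the paper explicitly remarks that ``a rigorous proof of the convergence of $\Zhat_a(q)$ to the WRT invariants, as $q$ approaches a root of unity, has not yet appeared in the literature,'' and that the only case verified mathematically is that of Brieskorn spheres $\Sigma(b_1,b_2,b_3)$ with three singular fibers, via Lawrence--Zagier \cite{LawrenceZagier}.

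Your proposal is honest about this, and your outline for the Brieskorn case is essentially the Lawrence--Zagier argument, which the paper cites as the known verification. However, you overstate matters when you say that ``the general negative-definite plumbed case is its multivariable extension'' as if this were routine bookkeeping. That step is precisely what has not been carried out rigorously: for graphs with several high-valence vertices the relevant theta functions are of higher depth, the radial-limit analysis via Euler--Maclaurin is genuinely more delicate than in the rank-one case, and the matching with the Reshetikhin--Turaev sum via Gauss reciprocity has not been written down in full. The paper does not claim this is done, and neither should you. Your final paragraph correctly flags this as ``where the real difficulty lies,'' but the earlier paragraphs read as if the plumbed case were essentially settled, which it is not.

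In short: there is nothing to compare against, because the paper offers no proof. Your sketch is a reasonable research plan for the plumbed case, aligned with the literature the paper cites, but it is not a proof, and the gap between the three-fiber Brieskorn computation and the general plumbed case is real and acknowledged as open in the paper itself.
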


Conjecture~\ref{conj:GPV} is basically Conjecture 2.1 in \cite{GPPV}, but updated to take into account various developments that have increased our understanding since then:\\
\begin{enumerate}[(a)]
\item Conjecture 2.1 in \cite{GPPV} is stated to hold for {\em any} value of $k$. This should be true, for example, for negative definite plumbings. However, recent insight from the theory of mock modular forms suggests that, for other $3$-manifolds (e.g., positive definite plumbings), the relation to the WRT invariants only holds as stated for the values of $k$ in some congruence classes. At the remaining values, there are certain corrections to the formula; see \cite{CCFGH}.\\

\item We restricted here to rational homology spheres. For manifolds $Y$ with $b_1(Y) > 0$, the analogue of $T$ proposed in \cite{GPV} and \cite{GPPV} was the set of connected components of the space of Abelian flat $\su$ connections on $Y$ (modulo conjugation), which can be identified with $(\Tor H_1(Y; \Z))/\Z_2.$ However, it is now believed that, for some $3$-manifolds, there should also be series $\Zhat_a(q)$ associated to certain non-Abelian flat connections of special type; cf. \cite{CGPS}.\\

\item Even for rational homology spheres, our set of indices $T$ differs from the one proposed in \cite{GPPV}, where it was $H_1(Y; \Z)/\Z_2$ (the space of Abelian flat connections on $Y$). The two sets can be identified, using an affine isomorphism between $\Spinc(Y)$ and $H_1(Y; \Z) \cong H^2(Y; \Z)$ that takes conjugation of $\Spinc$ structures to the symmetry $a \to -a$ on $H_1(Y; \Z)$. However, this identification is not canonical. For an explanation of why it is more natural to consider $\Spinc$ structures (in the case of plumbed manifolds), see Section~\ref{sec:spinc}. Note that $\Spinc$ structures also appear naturally in Seiberg-Witten (or Heegaard Floer) theory, and this theory is related to $T[Y]$; cf. \cite[Section 3]{GPV}.\\

\item
We also made a change of convention compared to \cite[Conjecture 2.1]{GPPV}. There, the factor $|\cW_b|^{-1}$ from \eqref{eq:convergence} did not appear, but was rather incorporated into $\Zhat_b(q)$ itself. We chose this different convention because it makes the gluing and Dehn surgery formulas more elegant, and because it is consistent with the formula for plumbings in \cite[Appendix A]{GPPV}. 
\end{enumerate}
\medskip

Let us make a few more comments about Conjecture~\ref{conj:GPV}.

\begin{remark}
The linking numbers $\lk(a, b)$ in \eqref{eq:convergence}, \eqref{eq:Sab} appeared in \cite[Conjecture 2.1]{GPPV} as the usual linking numbers on $H_1(Y; \Z)$, cf. \cite[Equation (2.1)]{GPPV}. Here, we define them on $\Spinc$ structures by using a $\Z_2$-equivariant identification between $\Spinc$ structures and $H_1(Y; \Z)$ as in point (c) above. The linking numbers are independent of this identification. 
\end{remark}

\begin{remark}
After an identification as in (c), the numerator in the formula  \eqref{eq:Sab} for $S_{ab}$ admits a geometric interpretation: It is the trace of the holonomy of the flat connection labeled by $a$ along a $1$-cycle representing the homology class $b$. 
\end{remark}

\begin{remark} \label{rem:ZHS}
The simplest version of Conjecture~\ref{conj:GPV} is for $Y$ an integral homology $3$-sphere. Then, $T=\{0\}$, $S_{00}=1$, and the conjecture predicts the existence of a single series $\Zhat_0(q)$ that converges (up to a power of $q$) to $$2(q^{1/2} - q^{-1/2})\tau_k(Y) = 2i \sqrt{2k} \cdot Z_{\CS}(Y; k)$$ as $q \to e^{2\pi i/k}$.
\end{remark}

\begin{remark}
One can conjecture that the limit on the right hand side of \eqref{eq:convergence} also exists at primitive $k^{\operatorname{th}}$ roots of unity different from $e^{2\pi i/k}$. The analogue of \eqref{eq:convergence} should hold for many such values, with the left hand side replaced by the (suitably normalized) WRT invariant at that root of unity. \end{remark}

\begin{remark}
Equation~\eqref{eq:convergence} does not characterize the series $\Zhat_a(q)$ uniquely. Indeed, consider Euler's pentagonal series
\be
\label{eq:Euler}
 (q)_{\infty} = \prod_{n=1}^{\infty} (1-q^n) = \sum_{m \in \Z} (-1)^m q^{m(3m-1)/2}.
 \ee
This series converges in the unit disk $|q| < 1$, and the result approaches $0$ near each root of unity.
Thus, to every $\Zhat_a(q)$ we could add $(q)_{\infty}$ (times any polynomial in $q$, if we prefer) and get a new series that has the same limits at the roots of unity. However, physics predicts that there is a particular series $\Zhat_a(q)$, among all those that satisfy \eqref{eq:convergence}, which is the one that should be categorified; cf. Remark~\ref{rem:hblock} below.
\end{remark}

\begin{remark} \label{rem:hblock}
The name ``homological block'' is used in \cite{GPV}, \cite{GPPV} to refer to the series $\Zhat_a(q)$. This is due to the fact that we expect $\Zhat_a(q)$ to have a homological refinement, a bi-graded vector space $\mathcal{H}^{*,*}_{\text{BPS}}(Y)$ whose Euler characteristic is $\Zhat_a(q)$. We can think of this as an extension of Khovanov homology to three-manifolds. We denote the Poincar\'e polynomial of $\mathcal{H}^{*,*}_{\text{BPS}}(Y)$ by
$\ZZhat_a(q, t)$
so that $$\ZZhat_a(q, -1) = \Zhat_a(q). $$
\end{remark}

As a simple example, when $Y=S^3$, the relevant series is $$\Zhat_0(q) = q^{-1/2}(-2+2q),$$
with $\Delta_0 = 1/2$ and $c=1$. Its categorification is more complicated; according to \cite[Equation (6.80)]{GPV}, its Poincar\'e series is
\begin{align}
 \ZZhat_0 (S^3; q, t)&= -2q^{-1/2}\frac{(-tq)_{\infty}}{(t^2q^2)_{\infty}} \label{ZS3qtq1} \\
&= -2q^{-1/2}\bigl(1+tq+ (t+t^2)q^2 + (t+2t^2+t^3)q^3 + (t+2t^2+2t^3+t^4)q^4 + \dots \bigr) \notag
\end{align}

\begin{remark}
\label{rem:unred}
There are also ``unreduced'' versions of the invariants $\Zhat_a$ and $\ZZhat_a$. For example, for $Y=S^3$, according to \cite[Equation (6.49)]{GPV} and \cite[Equation (3.6)]{GPPV}, we have
$$ \Zhat^{\unred}_0(S^3; q) = \frac{\Zhat_0(S^3; q)}{(q)_{\infty}}= \frac{-2q^{-1/2}}{(q^2)_{\infty}}= -2q^{1/2}(1+q^2+q^3+2q^4+\dots)$$
and
$$
\ZZhat^{\unred}_0(S^3; q,t)= \frac{\ZZhat_0(S^3; q,t)}{(-tq)_{\infty}}= \frac{-2q^{-1/2}}{(t^2q^2)_{\infty}}= -2q^{-1/2}(1+t^2q^2 + t^2q^3 + (t^2+t^4)q^4 + \dots)
$$
\end{remark}

\subsection{Surgery and Laplace transforms}
The definition of the WRT invariant $\tau_k(Y)$ in  \cite{ReshetikhinTuraev} is  based on representing a three-manifold by integral surgery on a link in $S^3$. In particular, for a knot $K \subset S^3$ and $p \in \Z$ non-zero, the formula is
\begin{equation}
\label{eq:tausurgery}
 \tau_k(S^3_p(K))= \frac{\sum_{n=1}^{k-1} [n]^2 q^{\frac{p(n^2-1)}{4}}J_{K, n}(q)}{\sum_{n=1}^{k-1} [n]^2 q^{\sign(p)\frac{(n^2-1)}{4}}}
\end{equation}
where $J_{K,n}(q)$ is the colored Jones polynomial and $\text{sign}(p)\in \{\pm 1\}$.

In \cite{BeliakovaBlanchetLe} and \cite{BeliakovaLe}, Beliakova, Blanchet and L\^e used this formula to express the Habiro series of certain $3$-manifolds in terms of ``Laplace transforms.''  As mentioned in the Introduction, the Laplace transform $\CL_{p/r}^{(a)}$ takes a series in two variables $x, q$ into a series in a single variable $q$, by the formula~\eqref{eq:Laplace}.

In our situation, Habiro \cite{HabiroKnots}, \cite{Habiro} showed that there are Laurent polynomials $C_m(q) \in \Z[q, q^{-1}]$ such that
\begin{equation}
\label{eq:cyclotomic}
 J_{K, n}(q)= \sum_{m \geq 0} C_m(q) (q^{n+1})_m (q^{1-n})_m.
 \end{equation}
This is called the cyclotomic expansion of the colored Jones polynomial. If we set $x = q^n$ and write 
\begin{equation}
\label{eq:Fhabiro}
 C_K(x,q)= \sum_{m\geq 0} C_m(q) (qx)_m (qx^{-1})_m
\end{equation}
and plug \eqref{eq:cyclotomic} into \eqref{eq:tausurgery}, we find that $ \tau_k(S^3_p(K))$ is a linear combination of expressions of the form $\CL_p^{(a)} ((x+x^{-1}-2)\cdot C_K(x,q)).$ This can be generalized to rational surgeries $S^3_{p/r}(K)$, using the formula for the WRT invariants of rational surgeries in \cite{BeliakovaLe}. What we get is that the Laplace transforms
$$ \CL_{p/r}^{(a)} \left[ (x^{\frac{1}{2}} - x^{-\frac{1}{2}}) (x^{\frac{1}{2r}} - x^{-\frac{1}{2r}}) C_K(x,q)\right]$$
can be combined to give the Habiro expansion $\Hab(S^3_{p/q}(K))$.

Inspired by this, in \cite[Section 5.3]{GMP}, Gukov, Mari{\~{n}}o and Putrov conjectured that the $q$-series of $Y$ can be obtained by a Laplace transform from a two-variable series associated to the knot: 
\be
\Zhat_a \big( S^3_{p/r} (K); q \big)
\; = \; \CL_{p/r}^{(a)} \left[ (x^{\frac{1}{2}} - x^{-\frac{1}{2}}) (x^{\frac{1}{2r}} - x^{-\frac{1}{2r}}) f_K (x,q) \right].
\label{prLaplace2}
\ee
In contrast to what was claimed in \cite{GMP}, our new understanding is that $f_K$ is not directly related to the cyclotomic expansion (just as the $q$-series $\Zhat_a$ is not directly related to the Habiro series). In this paper we will explore different ways of constructing the series $f_K(x,q)$.

Note that $C_K(x,q)$ is not a true power series in $q$ and $x$, because in the summation in \eqref{eq:Fhabiro} there may be infinitely contributions from the same monomial $x^u \cdot q^v$. Rather, it is a cyclotomic expansion, and by applying the Laplace transform to each of the summands and then summing up, we obtain an element of the Habiro ring. On the other hand, our new object $f_K(x,q)$ will be a Laurent power series, and by applying the Laplace transform we will obtain $\Zhat_a(q)$, which is a true power series (up to a monomial). Roughly, the cyclotomic expansion and the Habiro series are tailored to $q$ being a root of unity, whereas $f_K(x,q)$ and $\Zhat_a(q)$ deal with $|q| <1$.

\subsection{An example: the Poincar\'e sphere} The Poincar\'e sphere is obtained by $(-1)$ surgery on the left-handed trefoil:
\be
P = \Sigma (2,3,5) = S^3_{-1} (\LHT)
\label{trefoil235}
\ee
The cyclotomic expansion of the colored Jones polynomial for $\LHT$ is
\be
\label{eq:LHTcyclo}
J_{\LHT,n}(q)= \sum_{m \geq 0} q^m (q^{n+1})_m (q^{1-n})_m.
\ee
Applying the surgery formula \eqref{eq:tausurgery}, we find that the WRT invariant of $P$ is
\be
\label{eq:taup}
\tau_k(Y)= \frac{1}{1-q} \sum_{m=1}^{\infty} q^{m-1} (q^{m})_{m}.
\ee
The above expression can be evaluated at any root of unity, and in fact it is the Habiro series for $P$. 

The $q$-series $\Zhat_0$ for $P$ was computed in \cite[Section 3.4]{GMP} using resurgence, and can also be deduced from the general case of negative definite plumbings (since $P$ is the boundary of the $-E_8$ plumbing). We have
\be
\label{eq:zhp}
\Zhat_0(P; q)= q^{-3/2}(2-A(q)),
\ee
where 
\be
A(q)  =  \sum_{n=0}^{\infty} q^n (q^n)_n= \sum_{n=1}^{\infty} \chi_+(n) q^{(n^2-1)/120} =1+ q + q^3 + q^7 - q^8 - q^{14} -q^{20}-q^{29}+q^{31}+\dots
\label{chichirel}
\ee
where
\be
\chi_+(n) = \begin{cases} 1 & \text{if } n \equiv 1, 11, 19, 29  \pmod{60},\\
-1 & \text{if } n \equiv 31, 41, 49, 59 \pmod{60},\\
0 & \text{otherwise.}\end{cases}
\ee
Thus, we have
$$\Zhat_0(P; q)=q^{-3/2}(1-q-q^3-q^7+q^8+q^{14}+q^{20}+q^{29}-q^{31}+\cdots)$$

This expression (which is easily seen to converge for $|q| < 1$) had already appeared in the math literature, in the older work of Lawrence and Zagier, where they proved the following.
\begin{theorem}[Lawrence-Zagier \cite{LawrenceZagier}] \label{thm:LZ} For every root of unity $\xi$, the radial limit of $\Zhat_0(P; q)$ as $q \to \xi$ gives the (renormalized) WRT invariant of $P$:
$$ \lim_{q \to \xi} \Zhat_0(P; q) = 2(q^{1/2}-q^{-1/2})\tau(P)(\xi).$$
\end{theorem}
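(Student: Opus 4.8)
The plan is to prove this as a special case of the Lawrence--Zagier result reformulated in the language of Eichler integrals of weight-$3/2$ modular forms. The series $A(q)$ in \eqref{chichirel} is, up to a power of $q$, the Eichler integral of a weight-$3/2$ theta function built from the sign function $\chi_+$, which is an odd periodic function modulo $60$ arising from the lattice embedding data of $E_8$ (equivalently, from the three singular fibers $(2,3,5)$, where $4 \cdot 2 \cdot 3 \cdot 5 = 120$). Concretely, one sets $\Theta_{\chi_+}(\tau) = \sum_{n \geq 1} n\, \chi_+(n)\, q^{n^2/120}$, a cusp form of weight $3/2$ on a congruence subgroup, and checks that $\widetilde{\Theta}_{\chi_+}(\tau) := \sum_{n\geq 1}\chi_+(n) q^{n^2/120}$ is (a truncation-type limit of) its Eichler integral, so that $\Zhat_0(P;q) = q^{-3/2}(2 - A(q))$ with $A(q) = \widetilde{\Theta}_{\chi_+}$ up to normalization.

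First I would record the two facts that do the work. \emph{Fact A (Lawrence--Zagier):} the WRT invariant $\tau_k(P)$, suitably renormalized, equals the limit at $\xi = e^{2\pi i/k}$ of the Eichler integral $\widetilde{\Theta}_{\chi_+}$; this is exactly the content of their paper, obtained by manipulating the surgery formula \eqref{eq:taup}, writing $(q^m)_m$ sums in terms of the periodic function $\chi_+$, and applying a modular (or asymptotic) argument. \emph{Fact B (radial limits of Eichler integrals):} if $g(\tau) = \sum \chi(n) q^{n^2/(2N)}$ is the Eichler integral of a weight-$3/2$ cusp form associated to an odd periodic function $\chi$ mod $N$, then for every root of unity $\xi$ the radial limit $\lim_{q\to\xi} g(q)$ exists and is computed by the ``nearly modular'' transformation law, matching the value of the associated $L$-function / the WRT invariant. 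Granting these, the theorem is essentially a bookkeeping identity: substitute $A(q) = q \cdot (\text{Eichler integral})$ into $\Zhat_0(P;q) = q^{-3/2}(2-A(q))$, take the radial limit using Fact B, and compare with $2(q^{1/2}-q^{-1/2})\tau(P)(\xi)$ using Fact A and the surgery formula \eqref{eq:tausurgery} together with \eqref{eq:LHTcyclo}.

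The key steps, in order, would be: (1) Verify the explicit identity $A(q) = \sum_{n\geq 0} q^n (q^n)_n = \sum_{n\geq 1}\chi_+(n) q^{(n^2-1)/120}$ by the standard $q$-series manipulation (expanding $(q^n)_n = (q^n;q)_n$ and resumming; this is the Andrews--Gordon / Hikami-type identity underlying the $-E_8$ plumbing computation). (2) Identify $\chi_+$ as an odd periodic function modulo $60$ and $\Theta_{\chi_+}$ as a weight-$3/2$ unary theta series, so that $\widetilde{\Theta}_{\chi_+}$ is its Eichler integral in the sense of \cite{LawrenceZagier}. (3) Invoke the ``quantum modularity'' property: as $q = e^{2\pi i/k}$, the Eichler integral $\widetilde{\Theta}_{\chi_+}$ has a well-defined limit given by a finite Gauss-type sum, and this limit agrees (after the normalization $q^{-3/2}$, $2 - A$) with $2(q^{1/2}-q^{-1/2})\tau(P)$. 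Here one uses the surgery formula \eqref{eq:tausurgery} for $p=-1$, $K = \LHT$, plugging in \eqref{eq:LHTcyclo}, and the identification \eqref{eq:taup} of the result with the Habiro series $\sum q^{m-1}(q^m)_m$, whose asymptotics at roots of unity are precisely what Lawrence--Zagier compute. (4) Finally extend from $\xi = e^{2\pi i /k}$ to an arbitrary root of unity $\xi$ by Galois equivariance, using that both sides are algebraic and transform compatibly under $\operatorname{Gal}(\Q(\xi)/\Q)$ (cf. the Habiro-ring remarks in Section~\ref{sec:WRT}).

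The main obstacle is Step (3): controlling the radial limit of a non-modular object. The series $\widetilde{\Theta}_{\chi_+}(q)$ converges only inside the unit disk and is genuinely \emph{not} modular; its behaviour at roots of unity is governed by the Eichler integral's ``period function'' defect. The technical heart of Lawrence--Zagier is to show that this defect contributes nothing to the radial limit (it is $O((1-|q|)^{\infty})$ in an appropriate sense, analogous to the vanishing of $(q)_\infty$ noted in the Remark after Conjecture~\ref{conj:GPV}), so that the limit is exactly the modular-transform value, which in turn is a finite sum matching the WRT invariant. Reproducing this rigorously requires either the asymptotic-expansion argument via the Euler--Maclaurin / Mellin transform of $\widetilde{\Theta}_{\chi_+}$ (expressing the limit through special values $L(-n,\chi_+)$ and matching the stationary-phase expansion of the WRT surgery sum \eqref{eq:tausurgery}), or the modern ``quantum modular form'' framework of Zagier. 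Since Theorem~\ref{thm:LZ} is attributed directly to \cite{LawrenceZagier}, for the purposes of this paper it suffices to cite that analysis and carry out only Steps (1), (2), and (4) in detail, with Step (3) being a reference to their main theorem.
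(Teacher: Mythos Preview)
The paper does not give its own proof of Theorem~\ref{thm:LZ}; it is stated with attribution to Lawrence--Zagier \cite{LawrenceZagier} and used as a black box to verify Conjecture~\ref{conj:GPV} for the Poincar\'e sphere. Your outline is a faithful sketch of the Lawrence--Zagier argument itself (Eichler integral of a weight-$3/2$ theta function, radial limits via the quantum-modularity/Euler--Maclaurin mechanism, matching against the surgery formula), so there is nothing to compare against in this paper---you are reconstructing the cited source rather than an argument present here.
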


This shows that Conjecture~\ref{conj:GPV} is satisfied for the Poincar\'e sphere; cf. Remark~\ref{rem:ZHS}.

Interestingly, note that $A(q)$, when written as the sum 
$A(q) = \sum_{n=0}^{\infty} q^n (q^n)_n$, can also be viewed as an element of the Habiro ring, and thus evaluated at roots of unity. As we take radial limits towards a root of unity $\xi$, in view of \eqref{eq:taup}, \eqref{eq:zhp} and Theorem~\ref{thm:LZ}, we get
$$ \lim_{q \to \xi} A(q) = 2A(\xi).$$
This relation is specific to the Poincar\'e sphere. In general, for an arbitrary $3$-manifold, we cannot interpret its  Habiro series as an actual power series in $q$.

The expression $A(q)$ is the false theta function associated to the following Ramanujan mock modular form of order 5:
$$\chi_0(q)= \sum_{n=0}^{\infty} \frac{q^n}{(q^{n+1})_n}= 1+ q+q^2+ 2q^3+ q^4 + 3q^5+2q^6+3q^7+3q^8+5q^9+\dots$$
The $q$-series $\Zhat_0(q)$ for $-P$ (the Poincar\'e sphere with the opposite orientation) is in fact
$$ \Zhat_0(-P; q) = q^{3/2}(2-\chi_0(q)).$$

\begin{remark}
For any three-manifold $Y$, recall from \eqref{eq:reversal} that the WRT invariant of $-Y$ is obtained from the one for $Y$ by taking $q \to q^{-1}$. The series $\Zhat_a(Y)$ and $\Zhat_a(-Y)$ are not as easily related. Rather, one needs to find an analytic continuation of $ \Zhat_a(Y)$ outside the unit disk, and then take $q \to q^{-1}$. This is how one can obtain $A(q)$ from $\chi_0(q)$, and vice versa. For other examples of such relations (using modularity properties of the respective series), we refer to \cite{CCFGH}.
\end{remark}

\newpage \section{Plumbed manifolds}
\label{sec:plumb}

\subsection{Plumbings} \label{sec:PlumbedDef}
Let $\Gamma$ be a weighted graph, that is, a graph together with the data of integer weights associated to vertices. Throughout this paper we will always assume (for simplicity) that $\Gamma$ is a tree. 

If $\Vert$ is the set of vertices of $\Gamma$, we let $m_v \in \Z$ be the weight of a vertex $v \in \Vert$, and $\text{deg}(v)$ be the degree of $v$, that is, the number of edges meeting at that vertex. Let $s$ be the cardinality of $\Vert$. Consider the $s \times s$ matrix $M$ given by
\begin{equation}
 M_{v_1,v_2}=\left\{
\begin{array}{ll}
 1,& v_1,v_2\text{ are connected}, \\
 m_v, & v_1=v_2=v, \\
0, & \text{otherwise}.
\end{array}
\right.\qquad v_i \in \Vert.
\label{linking}
\end{equation}

From $\Gamma$ we can construct a framed link $L(\Gamma)$ made of one unknot component for each vertex $v \in \Vert$, with framing $m_v$, and with the components corresponding to $v_1$ and $v_2$ chained together whenever we have an edge from $v_1$ and $v_2$. (See Figure~\ref{fig:Sigma237} for an example.) We let $W(\Gamma)$ be the four-dimensional manifold obtained by attaching two-handles to $B^4$ along $L(\Gamma)$ or, equivalently, by plumbing together disk bundles over $S^2$ with Euler numbers $m_v$. Let $Y=Y(\Gamma)$ be the boundary of $W(\Gamma)$. This is a closed, oriented three-manifold whose first homology is
$$ H_1(Y) \cong \Z^s / M \Z^s.$$
The manifolds $Y$ obtained this way are always graph manifolds, that is, made of Seifert fibered pieces glued along tori (in the JSJ decomposition). We will mostly be interested in the case where $M$ is nondegenerate, so that $Y$ is a rational homology sphere. When $M$ is negative definite, we will say that $Y$ is a {\em negative definite plumbed three-manifold}.

\begin {figure}
\begin {center}
\input{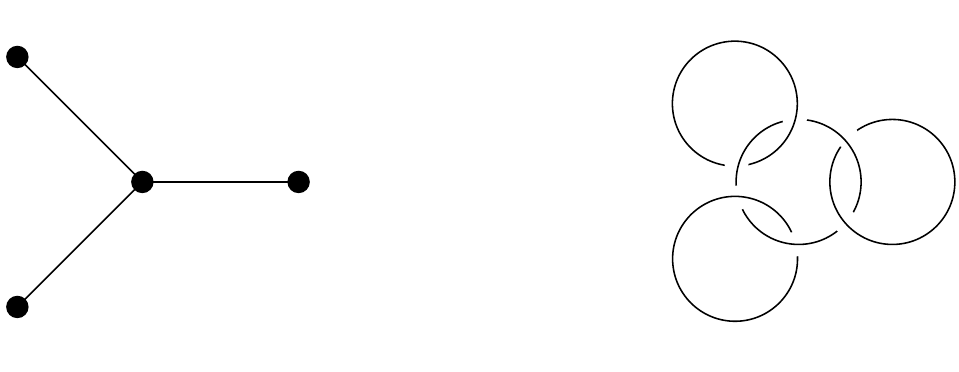_t}
\caption {On the left we show a plumbing graph representing the Brieskorn sphere $\Sigma(2,3,7)$. The corresponding surgery diagram is on the right.}
\label{fig:Sigma237}
\end {center}
\end {figure}

There is a set of {\em Neumann moves} (sometimes also called ``3d Kirby moves'') on weighted trees that change the graph but not the manifold $Y(\Gamma)$: see Figure~\ref{fig:kirby}. In \cite[Theorem 3.2]{NeumannCalculus}, Neumann showed that two plumbed trees represent the same three-manifold if and only if they are related by a sequence of these moves.

\begin {figure}
\begin {center}
\input{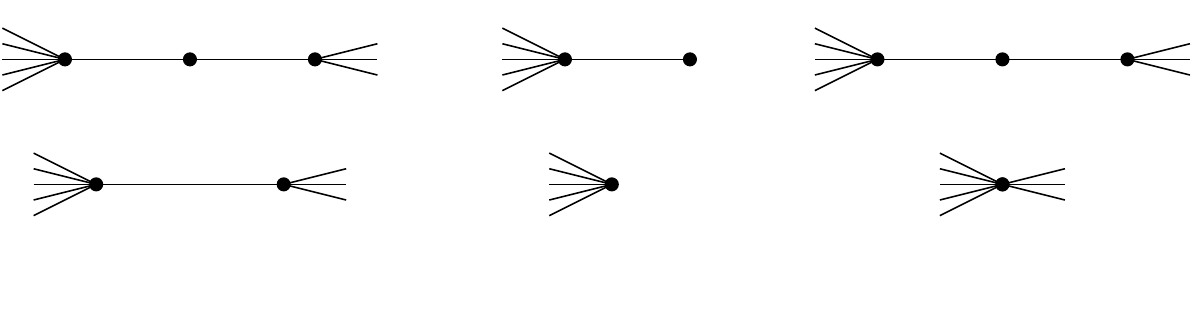_t}
\caption {Moves on plumbing trees that preserve the $3$-manifold.}
\label{fig:kirby}
\end {center}
\end {figure}

One large class of plumbed three-manifolds is obtained as follows. Consider the Seifert bundle over $S^2$ with orbifold Euler number $e \in \Q$, and Seifert invariants
$$ (b_1, a_1), \dots, (b_n, a_n),$$
with $\gcd(a_i, b_i) = 1$. 
This manifold is usually denoted
$$M \left( b ; \frac{a_1}{b_1} , \ldots , \frac{a_n}{b_n} \right),$$
where
$$ b=e- \sum_{i=1}^n \frac{a_i}{b_i} \in \Z.$$

With respect to changing orientations, we have
$$ -M \left( b ; \frac{a_1}{b_1} , \ldots , \frac{a_n}{b_n} \right)= M\left( -b ; -\frac{a_1}{b_1} , \ldots , -\frac{a_n}{b_n} \right).$$

Write $b_i/a_i$ as a continued fraction
\be
\label{eq:pf}
\frac{b_i}{a_i} = k_1^i - \cfrac{1}{ k_2^i - \cfrac{1} {\ddots \, - \cfrac{1}{ k_{s_i}^i}}}
\ee
Let $\Gamma$ be the star-shaped graph with $n$ arms, such that the decoration of the central vertex is $b$ 
and along the $i^{\operatorname{th}}$ arm we see the decorations $-k_1^i, -k_2^i, \dots, -k_{s_i}^i$, in that order starting from the central vertex. Then $Y(\Gamma)$ is the given Seifert fibration. We will mostly focus on negative definite plumbings, so we will usually take $b < 0$ and $0 < a_i < b_i$.

In some cases (for example, if $n \leq 2$, the manifold $M(b; a_1/b_1, \dots, a_n/b_n)$ is a lens space, $S^1 \times S^2$, or a connected sum. We call such cases {\em special}, and the other Seifert manifolds {\em generic}. Lens spaces can be represented by both negative definite and positive definite plumbings. For generic Seifert manifolds, the orbifold Euler number $e$ determines whether the Seifert fibration admits such plumbings.

\begin{theorem}[Neumann-Reymond  \cite{NeumannRaymond}]
\label{thm:euler}
Let $M$ be a generic Seifert bundle over $S^2$, with orbifold number $e$. Then, $M$ can be represented by a positive definite plumbing iff $e > 0$, and by a negative-definite one iff $e < 0$.
\end{theorem}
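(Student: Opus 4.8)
The plan is to reduce the whole question to the model star-shaped plumbing $\Gamma$ built before the theorem — central weight $b$, with arms carrying the decorations $-k_1^i,\dots,-k_{s_i}^i$ coming from the continued fractions \eqref{eq:pf}, so that every $k_j^i\ge 2$ and $b = e - \sum_i a_i/b_i$ — and to detect definiteness of its intersection matrix $M_\Gamma$ by a Schur-complement computation. Write $M_\Gamma = \left(\begin{smallmatrix} b & c^{T}\\ c & A\end{smallmatrix}\right)$, ordering the central vertex first; here $A$ is block diagonal, one tridiagonal block $A_i$ per arm, and $c$ has a single entry $1$ in each arm-block, at the vertex adjacent to the centre. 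First I would check that each $A_i$, being the matrix of a linear chain with all weights $\le -2$, is negative definite: the standard leading-principal-minor induction applies, with the successive minor-ratios being exactly the convergents of \eqref{eq:pf}, which stay $\ge 1$. Thus $A$ is negative definite and invertible, and the only remaining eigenvalue question is the sign of the Schur complement $b - c^{T}A^{-1}c$.

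The key computation is that $c^{T}A^{-1}c = \sum_i (A_i^{-1})_{11} = -\sum_i a_i/b_i$: the entry $(A_i^{-1})_{11}$ is the ratio of the determinant of the truncated chain $(-k_2^i,\dots,-k_{s_i}^i)$ to that of the full chain, and unwinding the recursion that defines the continued fraction \eqref{eq:pf} shows this ratio equals $-a_i/b_i$. Hence the Schur complement equals $b + \sum_i a_i/b_i = e$, and since $A$ is negative definite, $M_\Gamma$ is negative definite if and only if $e<0$. This already gives the two ``if'' implications: when $e<0$ the graph $\Gamma$ is a negative definite plumbing for $M$, and when $e>0$ the orientation-reversed plumbing — obtained via $-M(b;a_i/b_i)=M(-b;-a_i/b_i)$, which negates every weight, hence the whole intersection form, while sending $e\mapsto -e$ — is positive definite.

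For the ``only if'' implications, suppose $M$ is represented by some negative definite plumbing tree $\Gamma'$. Blowing down $(-1)$-vertices of valence $\le 2$, we may take $\Gamma'$ minimal. Here the genericity hypothesis enters: for a generic Seifert manifold the unique Seifert fibration is carried by a star-shaped normal-form graph, and the minimal negative definite plumbing is unique, so $\Gamma'$ must be this star-shaped graph; the Schur-complement computation then forces its orbifold Euler number, which equals $e$, to be negative. (Genericity rules out lens spaces, $S^1\times S^2$ and connected sums, where the minimal form has a different shape and where the statement genuinely fails — lens spaces bound plumbings of either sign.) The positive definite ``only if'' follows once more by orientation reversal. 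The step I expect to be the real obstacle is precisely this identification of the minimal negative definite plumbing with the star-shaped normal form: it rests on Neumann's plumbing calculus \cite{NeumannCalculus} together with the uniqueness of the minimal good resolution of the associated weighted-homogeneous surface singularity, and it demands a careful accounting of exactly which small Seifert manifolds ``generic'' must exclude. A route avoiding resolution theory would be to track directly how Neumann's three moves (Figure~\ref{fig:kirby}) change the numbers of positive and negative eigenvalues of the intersection form, and to show one can never pass from $\Gamma$ (which has a single positive eigenvalue when $e>0$) to a negative definite graph; but making that bookkeeping airtight, in particular for the $0$-absorption move, looks no easier.
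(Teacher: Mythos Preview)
The paper does not prove this theorem: it is stated with attribution to Neumann--Raymond \cite{NeumannRaymond} and used as a black box. There is therefore no proof in the paper to compare your proposal against.

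On its own merits, your Schur-complement argument for the ``if'' direction is correct and is in fact the standard computation: with the arms written as continued fractions with all $k_j^i\ge 2$, the arm block $A$ is negative definite, the $(1,1)$ entry of each $A_i^{-1}$ equals $-a_i/b_i$ by the usual minor-ratio/convergent identification, and the Schur complement at the central vertex is exactly $e$. This cleanly gives a negative definite star-shaped plumbing when $e<0$ and, via orientation reversal, a positive definite one when $e>0$.

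Your assessment of the ``only if'' direction is also accurate: this is where the real content lies, and it is not a computation but a classification statement. One does need Neumann's calculus \cite{NeumannCalculus} (or the equivalent resolution-of-singularities normal form) to know that any plumbing tree for a generic Seifert manifold can be brought, by moves that do not change the sign of the form in the relevant way, to the star-shaped normal form. Your caveat about which small Seifert manifolds must be excluded by ``generic'' is exactly the subtlety Neumann--Raymond handle. The alternative route you sketch---tracking eigenvalue signatures through the three Neumann moves---would essentially be reproving a piece of Neumann's theorem and is not easier. So your proposal is a correct outline, with the honest acknowledgment that the converse direction is where one must invoke the cited reference rather than a self-contained argument.
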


Finally, we note that if
\begin{equation}
\label{eq:e}
 b_1 b_2 \dots b_n e = -1,
 \end{equation}
then $Y$ is an integral homology sphere, denoted $\Sigma(b_1, \dots, b_n)$. The values of $a_i$ are uniquely determined by the values of $b_i$, the fact that $b \in \Z$, and the condition \eqref{eq:e}. In fact, any Seifert fibered integral homology sphere is of the form $\pm \Sigma(b_1, \dots, b_n)$ for some $b_i$.

\subsection{Identification of $\Spinc$ structures}
\label{sec:idSpinc}
A $\Spinc$ structure on an oriented $n$-dimensional manifold $Y$ is a lift of the structure group of its tangent bundle from $\operatorname{SO}(n)$  to $$\Spinc(n) = \operatorname{Spin}(n) \times_{\Z/2} S^1.$$ When they exist (which they always do for $n \leq 4$), $\Spinc$ structures form an affine space over $H^2(Y; \Z)$. We are interested in describing the space $\Spinc(Y)$ of such structures in the case where $Y$ is $Y(\Gamma)$ for a plumbing tree $\Gamma$. Such a description has already appeared in the literature on  Heegaard Floer homology \cite{Plumbed}, but we will give a slightly different description, tailored to our purposes.

Let us first consider $\Spinc$ structures on the four-manifold $W=W(\Gamma)$ with boundary $Y$. Note that $H_1(W)=0$ and $H_2(W) \cong \Z^s$, with a basis given by the $2$-spheres associated to the vertices of $\Gamma$. $\Spinc$ structures on $W$ can be canonically identified (via the first Chern class $c_1$) with characteristic vectors $K \in H^2(W)$, that is, those such that
$$ K(v) \equiv \langle v, v \rangle, \ \ \text{ for all } v \in H_2(W).$$
If we identify $H^2(W) \cong \Z^s$ using Poincar\'e duality, we see that characteristic vectors are those $K \in \Z^s$ such that
$$ K \equiv \vm\!\! \pmod{2\Z^s},$$
where $\vm$ is the vector made of the weights $m_v$ for $v \in \Vert$. Therefore, we have a natural identification
\be
\label{eq:spincm}
 \Spinc(W) \cong 2\Z^s + \vm.
 \ee
This identification has the nice property that conjugation of $\Spinc$ structures corresponds to the involution $a \leftrightarrow -a$ on the right-hand side.

For the three-manifold $Y$, Poincar\'e duality and the long exact sequence
$$ H^2(W, \del W) \to H^2(W) \to H^2(Y) \to 0$$
gives the identification $H_1(Y) \cong H^2(Y) \cong \Z^s/M\Z^s$. Further, we see that the map 
$$\Spinc(W) \to \Spinc(Y)$$
is surjective and, using \eqref{eq:spincm}, we obtain a natural identification
\be
\label{eq:s1}
\Spinc(Y) \cong (2\Z^s + \vm)/(2M\Z^s),
\ee
again taking the conjugation symmetry to $a \leftrightarrow -a$.

We claim that there is also a natural identification
\be
\label{eq:s2}
\Spinc(Y) \cong (2\Z^s + \vdelta)/(2M\Z^s),
\ee
where $\delta \in \Z^s$ is the vector given by the degrees (valences) of the vertices of $\Gamma$, that is
\be
\label{eq:vdelta}
\vdelta = (\delta_v)_{v \in \Vert}, \ \ \ \delta_v = \text{deg}(v).
\ee

Observe that 
$$\vm + \vdelta = M \vu,$$
where $\vu = (1, 1, \dots, 1).$ To go from \eqref{eq:s1} to \eqref{eq:s2}, we will use the map
\begin{align}
\label{eq:mdelta}
\phi: (2\Z^s + \vm)/(2M\Z^s)  \ & \xrightarrow{\cong} (2\Z^s + \vdelta)/(2M\Z^s),\\ [\vell] &\to [\vell - M \vu ] \notag
\end{align}
taking $[\vm]$ to $[-\vdelta].$ Note that \eqref{eq:mdelta} commutes with the conjugation symmetry $a \leftrightarrow -a$.

To be justified in calling the resulting identification \eqref{eq:s2} {\em natural}, we should check that it does not depend on how we represent the manifold $Y$ by a plumbing tree. Specifically, for each of the Neumann moves from Figure~\ref{fig:kirby}, we should describe some (reasonably simple) isomorphisms between the spaces $(2\Z^s + \vdelta)/(2M\Z^s)$ before and after the move. Similar isomorphisms already exist between the spaces $(2\Z^s + \vm)/(2M\Z^s)$ before and after the move, due to their identifications  \eqref{eq:s1} with $\Spinc(Y)$. These before / after isomorphisms should commute with \eqref{eq:mdelta}. Further, all our isomorphisms should commute with the conjugation symmetry.

Let us explain how this is done for the Kirby move from Figure~\ref{fig:kirby} (a), with the signs being $-1$. We use $M$ to denote the adjacency matrix for the bottom graph in that figure, and $M'$ to denote the one for the top graph. Similarly, we use $\vm$ and $\vdelta$ for the bottom graph, and $\vm'$ and $\vdelta'$ for the top. For the bottom graph, we write a vector $\vell \in \Z^s$ as a concatenation $$\vell =(\vell_1, \vell_2),$$ where $\vell_1$ corresponds to the part of the graph on the left of the edge where we do the blow-up (including the vertex labeled $m_1$), and $\vell_2$ corresponds to the part on the right (including the vertex labeled $m_2$). From $\vell$ we can construct a vector for the top graph of the form
$$ T\vell := (\vell_1, 0, \vell_2) \in \Z^{s+1},$$
with the $0$ entry corresponding to the newly introduced vertex. Let also
$$ \ve_0 = (0, \dots, 0, 0, 1, 0, 0, \dots, 0)$$
be the vector with the $1$ entry in the position of the new vertex. Note that
$$ M' \ve_0 = (0, \dots, 0, 1, -1, 1, 0, \dots, 0)$$
with the nonzero entries being for the three vertices shown in the figure.

Note that
$$ \vm' = T\vm + (0, \dots, 0, -1, -1, -1, 0, \dots, 0) = T\vm - M' \ve_0 - 2\ve_0$$
and
$$ \vdelta' = T\vdelta + (0, \dots, 0, 0, 2, 0, 0, \dots, 0) = T\vdelta + 2 \ve_0.$$

With this in mind, the before / after isomorphisms are given by
\begin{align}
\label{eq:mm}
\psi^m: (2\Z^s + \vm)/(2M\Z^s)  \ & \xrightarrow{\cong} (2\Z^{s+1} + \vm')/(2M'\Z^{s+1}),\\ [\vell] &\to [T\vell + M'\ve_0] \notag
\end{align}
and
\begin{align}
\label{eq:deltadelta}
\psi^{\delta} : (2\Z^s + \vdelta)/(2M\Z^s)  \ & \xrightarrow{\cong} (2\Z^{s+1} + \vdelta')/(2M'\Z^{s+1}),\\ [\vell] &\to [T\vell]. \notag
\end{align}
We claim that these commute with the identification $\phi$ from \eqref{eq:mdelta}, i.e.,
\be
\label{eq:psph}
 \psi^{\delta} \circ \phi = \phi \circ \psi^m.
 \ee
Since all our maps are affine, it suffices to check this when evaluated on a single element, say $[\vm].$ We have
$$(\psi^{\delta} \circ \phi)([\vm]) = \psi^{\delta}([-\vdelta]) = [-T\vdelta]=[2\ve_0 - \vdelta']$$
and
$$(\phi \circ \psi^{m})([\vm]) = \phi([T\vm + M'\ve_0]) = \phi([\vm' + M'\ve_0 + 2\ve_0 + M'\ve_0]) = \phi([\vm' +  2\ve_0]) = [2\ve_0 - \vdelta'],$$
which proves \eqref{eq:psph}.

The other Neumann moves can be treated similarly.

The isomorphisms between the different spaces $(2\Z^s + \vdelta)/(2M\Z^s)$ will appear again later, in the proof of Proposition~\ref{prop:invariance}.

\subsection{The $q$-series} \label{sec:PlumbedSeries} Let us review the formula for the $q$-series $\Zhat_a (q)$ for the closed $3$-manifolds given by negative definite plumbings along trees. This formula was proposed in \cite[Appendix A]{GPPV}, by applying Gauss reciprocity and a regularization procedure to the WRT invariants of those manifolds. It was shown in \cite{CCFGH} that the same formula also works for some graphs that are not negative definite; see Definition~\ref{def:wnd} below.

We keep the notation from Subsections~\ref{sec:PlumbedDef} and ~\ref{sec:idSpinc}. The series $\Zhat_a(q)$ will be canonically indexed by $\Spinc$ structures on $Y$; or, if we prefer, by $\Spinc$ structures modulo the conjugation $a \leftrightarrow -a$, since we will have  $$\Zhat_a(q) = \Zhat_{-a}(q).$$ See  Section~\ref{sec:spinc} for a discussion of the identification between $\Spinc$ structures and Abelian flat connections.

As in Equation \eqref{eq:s2}, we have identifications 
\be
\label{eq:spincid}
  \Spinc(Y) \cong (2\Z^s + \vdelta)/(2M\Z^s) \cong 2 \operatorname{Coker } M  + \vdelta.
  \ee
Pick some $\va \in 2\Z^s + \vdelta$ that represents a class
$$a \in (2\Z^s + \vdelta)/(2M\Z^s) \cong \Spinc(Y).$$  Then, the formula in \cite{GPPV} reads as follows:
\be
\Zhat_a(q) = (-1)^{\pi} q^{\frac{3\sigma-\sum_v m_v}{4}} \cdot \text{v.p.} \oint\limits_{|z_v|=1} \prod_{v\in \Vert} \frac{dz_v}{2\pi i z_v}  \left( z_v - \frac{1}{z_v} \right)^{2-\deg(v)} \cdot \Theta_a^{-M}(\vz)
\label{eq:plumbing1}
\ee
where
\be
\Theta_a^{-M}(\vz)= \sum_{\vell \in 2M\Z^s + \va} q^{-\frac{(\vell, M^{-1}\vell)}{4}} \prod_{ v\in \operatorname{Vert}} z_v^{\ell_v}.
\label{eq:Thetaa}
\ee
Here, v.p. denotes taking the principal value of the integral. This is given by the average of the integrals over the circles $|z_v|=1+\epsilon$ and $|z_v|=1-\epsilon$, for $\epsilon > 0$ small. (For simplicity, we will drop v.p. from notation from now on.) Also, $\pi=\pi(M)$ denotes the number of positive eigenvalues of $M$, and $\sigma=\sigma(M)$ is the signature of the matrix $M$, that is, the number of positive minus the number of negative eigenvalues. We have $\sigma = 2\pi-s$. Further, when $M$ is negative definite, we simply have $\pi=0$ and $\sigma=-s$. 

\begin{remark}
The sign $(-1)^{\pi}$ in \eqref{eq:plumbing1} was missing in \cite{GPPV}, since that paper only dealt with the negative definite case. The sign is necessary for the formula to give an invariant; cf. Proposition~\ref{prop:invariance} below.
\end{remark}

Let us give two other formulas for $\Zhat_a(q)$, easily obtained from \eqref{eq:plumbing1}.

First, note that $$\oint\limits_{|z_v|=1} \frac{dz_v}{2\pi i z_v}$$ applied to a Laurent series in $z_v$ or $z_v^{-1}$ simply has the effect of taking the constant coefficient of that series. With this in mind, we can turn \eqref{eq:plumbing1} into the formula
\begin{equation}
\Zhat_a(q) \; := \; 
2^{-s} (-1)^{\pi} q^{\frac{3\sigma-\sum_v m_{v}}{4}}
\sum_{\vell \in 2M\Z^s+\va}F_{\vell} \,q^{-\frac{(\vell,M^{-1}\vell)}{4}}
\label{eq:plumbing2}
\end{equation}
where $F_{\vell}$ are the expansion coefficients of
\begin{multline}
\label{eq:Fell}
F (z_1,\ldots,z_s)=\sum_{\vell} F_{\vell} \prod_v z_v^{\ell_v}=\\
= \prod_{v\,\in\,\Vert}\left\{
{\scriptsize \begin{array}{c} \text{Expansion} \\ \text{at } x\rightarrow 0 \end{array} }
\frac{1}{(z_v-1/z_v)^{\text{deg}\,v-2}}
+
{\scriptsize \begin{array}{c} \text{Expansion} \\ \text{at } x\rightarrow \infty \end{array} }
\frac{1}{(z_v-1/z_v)^{\text{deg}\,v-2}}\right\}.
\end{multline}

Second, let us transform the formula \eqref{eq:plumbing1} into one made from local contributions to each edge and vertex. We write
\be
\vell = 2M \vn + \va, \ \ \vn = (n_v)_{v \in \operatorname{Vert}}, \ \va=  (a_v)_{v \in \operatorname{Vert}} \in \Z^s,
\ee
so that
\be
\frac{(\vell, M^{-1}\vell)}{4} = (\vn, M\vn) + (\va, \vn) + \frac{(\va, M^{-1}\va)}{4}
\ee
From here we get a new formula
\be
\label{eq:newformula}
\Zhat_a(q) = (-1)^{\pi} q^{ \frac{3\sigma-(\va, M^{-1}\va)}{4}} \sum_{n_v} \, \oint\limits_{|z_v|=1} \, \frac{dz_v}{2 \pi i z_v} \, \prod_{\text{Vert}} \big( \ldots \big)
\, \prod_{\text{Edges}} \big( \ldots \big)
\ee
where the factor associated to a vertex $v$ with framing coefficient $m_{v}$ is
\be
 q^{-m_vn_v^2-\frac{m_v}{4}-a_vn_v} z_v^{2m_v n_v+a_v} \left( z_v - \frac{1}{z_v} \right)^2
\label{vertexrulenew}
\ee
and the factor for an edge $(u, v)$ is
\be
 q^{-2n_u n_v} \frac{z_u^{2n_v} z_v^{2n_u}}{\big( z_u - \tfrac{1}{z_u} \big) \big( z_v - \tfrac{1}{z_v} \big)}.
\label{edgerulenew}
\ee

These factors have a physical meaning. Each vertex $v$ in the plumbing graph contributes to
the 3d $\cN=2$ theory $T[Y]$ a vector multiplet with $G = SU(2)$ and supersymmetric Chern-Simons coupling at level $a_v$. Similarly, each edge $(u, v)$ of the plumbing graph contributes to $T[Y]$ matter charged under gauge groups $SU(2)_u$ and $SU(2)_v$.

In \cite[Appendix A]{GPPV}, the formula \eqref{eq:plumbing1} was introduced under the assumption that $M$ is negative definite. This condition guarantees that there is a lower bound on the exponents of $q$ that appear in  \eqref{eq:plumbing1}, and that there are only finitely many terms involving the same exponent of $q$. Hence, the right hand side of \eqref{eq:plumbing1} is well-defined. 

The negative definite condition can be relaxed as follows. (Compare \cite[Section 6.1]{CCFGH}.)

\begin{definition}
\label{def:wnd}
We say that a plumbing graph $\Gamma$ is {\em weakly negative definite}\footnote{This is not to be confused with {\em negative semi-definite}.} if the corresponding matrix $M$ is invertible, and $M^{-1}$ is negative definite on the subspace of $\Z^s$ spanned by the vertices of degree $\geq 3$. 
\end{definition}

If $\Gamma$ is weakly negative definite, then $\Zhat_a(Y)$ is still well-defined. Indeed, if a vertex $v$ has degree $\leq 2$, then the corresponding expansions in \eqref{eq:Fell} are finite, which means that only finitely many values of $\ell_v$ produce nonzero contributions. Hence, what we need is a lower bound on the exponent $-\frac{(\vell,M^{-1}\vell)}{4}$ with $\ell_v$ taking values in a finite set for vertices $v$ of degree $\leq 2$. The weakly negative definite condition ensures this.

Examples of weakly negative definite graphs that are not negative definite can be obtained using the Neumann move (c) from Figure~\ref{fig:kirby}. Observe that the diagram on the top has a vertex labeled $0$, and thus cannot be negative definite. Nevertheless, if the bottom graph is negative definite, the top graph can be seen to be weakly negative definite. Then, the formula \eqref{eq:plumbing1} still makes sense for the top graph, and gives the same result as for the bottom graph. (See Proposition~\ref{prop:invariance} below for the invariance result.)

\begin{remark}
\label{rem:0nd}
In some cases, one can even define $\Zhat_a(q)$ when $M$ is not invertible, provided that $\va$ is in the image of $M$. Then, we could write $\vell = M \vk$ for some $\vk$, and replace $(\vell, M^{-1}\vell)$ with $(\vk, M\vk)$ in \eqref{eq:Thetaa}. This allows one to consider manifolds with $b_1 > 0$. For a simple example of this, take the graph with a single vertex, labeled by $0$, and $\va=(0)$. The graph represents $S^1 \times S^2$, and the formula \eqref{eq:Thetaa} gives $$\Zhat_0(S^1 \times S^2)=-2,$$ 
in our normalization.
\end{remark}

\begin{remark}
A rigorous proof of the convergence of $\Zhat_a(q)$ to the WRT invariants, as $q$ approaches a root of unity, has not yet appeared in the literature. In the special case where $Y=\Sigma(b_1, b_2, b_3)$ is a Seifert fibered integer homology sphere with three singular fibers, convergence to the WRT invariants follows from the work of Lawrence and Zagier \cite[Theorem 3]{LawrenceZagier}, combined with Proposition~\ref{prop:brieskorn} below.
\end{remark}

\subsection{Invariance}
To make sure that the formula \eqref{eq:plumbing1} gives an invariant of the plumbed manifold $Y$ and the $\Spinc$ structure $a$, we need to check that it does not depend on the presentation of $Y$ as a plumbing.  This fact is well-known to experts, but we include a proof here for completeness.

\begin{proposition}
\label{prop:invariance}
The series $\Zhat_a(q)$ defined in \eqref{eq:plumbing1} is unchanged by the Neumann moves from Figure~\ref{fig:kirby}.
\end{proposition}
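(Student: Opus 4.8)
The plan is to verify invariance separately for each of the three Neumann moves in Figure~\ref{fig:kirby}. For each move, I would compare the integral expression \eqref{eq:plumbing1} (or equivalently the local edge/vertex formulation \eqref{eq:newformula}--\eqref{edgerulenew}) for the graph before and after the move, and show that the extra vertex and its incident edges contribute a factor that, after carrying out the corresponding $z$-integral, collapses to $1$ up to the monomial and sign prefactors $(-1)^\pi q^{(3\sigma - \sum m_v)/4}$. The main bookkeeping point is that these prefactors themselves change: under move (a) or (b) one adds a vertex of weight $\pm 1$, so $\sum_v m_v$ changes by $\pm 1$, $s$ changes by $1$, and $\sigma$ changes by $\mp 1$ (since blowing up with $\mp 1$ adds an eigenvalue of that sign) — hence $\pi$ changes by $0$ or $1$ accordingly; one checks $(3\sigma - \sum m_v)/4$ shifts by exactly the power of $q$ pulled out of the new local integral, and the new $(-1)^\pi$ absorbs whatever sign appears. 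For move (c) (the $0$-framed vertex connecting to an $m_2$-vertex, versus adding $m_2$ to the $m_1$-vertex), $M$ is no longer definite but is weakly negative definite, so one must invoke Definition~\ref{def:wnd} to know both sides converge, and then the computation is of the same flavor.

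The key steps, in order: (1) Fix notation as in Subsection~\ref{sec:idSpinc} — let $M'$, $\vm'$, $\vdelta'$ be the data after the move and $M$, $\vm$, $\vdelta$ before; use the relations already recorded there, e.g. $\vm' = T\vm - M'\ve_0 - 2\ve_0$ and $\vdelta' = T\vdelta + 2\ve_0$ for move (a) with sign $-1$, and the analogous identities for the other moves. (2) Identify the $\Spinc$ labels across the move via the isomorphism $\psi^\delta$ from \eqref{eq:deltadelta}, so that the class $a$ on $Y$ corresponds consistently; here is where \eqref{eq:psph} and the commutation with conjugation from Subsection~\ref{sec:idSpinc} get used. (3) Write out \eqref{eq:newformula} for the larger graph, isolate the new summation variable $n_0$ and the new integration variable $z_0$, and perform the $z_0$-contour integral: the new vertex has degree $2$ (for move (a)) so its $(z_0 - 1/z_0)^{2 - \deg} = (z_0-1/z_0)^0 = 1$, the two incident edge factors \eqref{edgerulenew} supply $z_0^{2n_u + 2n_w}$ over $(z_u - 1/z_u)(z_w-1/z_w)^{\pm}\cdots$, and taking the constant term in $z_0$ forces a linear relation among the $n$'s; then the $n_0$-sum becomes a Gaussian sum in $q$ that evaluates to a monomial (this is essentially Gauss reciprocity, exactly as in the derivation of \eqref{eq:plumbing1} in \cite{GPPV}). (4) Match the surviving expression to \eqref{eq:newformula} for the smaller graph, checking that the residual change of variables on the remaining $n$'s and $z$'s is a bijection and that the $(\va, M^{-1}\va)$ term transforms correctly. (5) Verify the prefactor accounting described above. (6) Repeat (3)--(5) for move (a) with sign $+1$, for move (b) (where the new vertex is a leaf, degree $1$, so $(z_0 - 1/z_0)^{2-1} = z_0 - 1/z_0$ appears and the integral/sum is a slightly different but still elementary Gaussian), and for move (c).

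I expect the main obstacle to be move (c) and, more generally, keeping the convergence/regularization under control when $M$ is only weakly negative definite rather than negative definite. In the negative definite case every $z_v$-integral is over a genuine Laurent series and the Gaussian $q$-sums are honestly convergent, so the manipulations in step (3) are formal-but-safe; once a $0$-framed vertex appears, one of the expansions in \eqref{eq:Fell} is only a one-sided (finite, since $\deg \le 2$) expansion and one must argue that interchanging the $n$-summations with the contour integrals is still legitimate, using precisely the lower bound on $-(\vell, M^{-1}\vell)/4$ guaranteed by Definition~\ref{def:wnd}. A secondary, purely technical nuisance is the ``v.p.'' (principal value): one should check that the average over $|z_0| = 1 \pm \epsilon$ commutes with the reduction, which it does because the new local factor has no pole on $|z_0| = 1$ for move (a) and a controlled one for (b). Everything else is routine Gaussian-sum algebra together with the $\Spinc$-label identifications already set up in Subsection~\ref{sec:idSpinc}, so the proof should reduce, after the move-(c)/convergence subtlety is handled, to three parallel short computations.
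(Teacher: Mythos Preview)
Your overall plan---handle each Neumann move separately, track how $\pi$, $\sigma$, and $\sum m_v$ change, integrate out the new variable $z_0$, and match the result to the smaller graph using the $\Spinc$ identifications from Subsection~\ref{sec:idSpinc}---is the same strategy the paper follows. However, there is a conceptual misstep in your step (3) that you should correct before writing it up.

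You say that after the $z_0$-integral forces a linear relation among the $n$'s, ``the $n_0$-sum becomes a Gaussian sum in $q$ that evaluates to a monomial (this is essentially Gauss reciprocity).'' This is not what happens: once the constant-term condition fixes $\ell_0$ (equivalently, fixes $n_0$ as an affine function of the neighboring $n_u$'s), there is \emph{no remaining $n_0$-sum at all}---only a single term survives. No Gaussian summation or Gauss reciprocity is needed anywhere in this proof. The actual content of the computation is pure linear algebra: for move (a) with sign $-1$, one checks that when $\vell' = (\vell_1, 0, \vell_2)$ one has $(\vell, M^{-1}\vell) = (\vell', (M')^{-1}\vell')$; this is exactly \eqref{eq:elmatrix} in the paper. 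The paper carries this out in the $\vell$-variables via \eqref{eq:plumbing1} rather than in the $\vn$-variables via \eqref{eq:newformula}, which makes the identity transparent (it is one line of block-matrix manipulation). Your chosen formulation will also work, but you will end up re-deriving the same identity after substituting $n_0$ back in, with extra bookkeeping from the cross-terms $(\va,\vn)$.

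Two further small points where the paper's execution differs from your sketch. First, for move (a) with sign $+1$ and for move (c), an additional sign appears because one must substitute $z_v \mapsto z_v^{-1}$ on one side of the graph; this sign is what gets absorbed by the change in $(-1)^\pi$, and you should make this explicit rather than leaving it as ``whatever sign appears.'' Second, your concern about convergence for move (c) is legitimate but simpler than you suggest: since the new vertex has degree $2$, its contribution to \eqref{eq:Fell} is a finite expression, so only finitely many $\ell_0$ enter and weak negative-definiteness (which is preserved under the moves, cf.\ \cite[Appendix A]{CCFGH}) gives the needed lower bound on the $q$-exponents without any delicate interchange of limits.
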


\begin{proof}
Consider the move (a), with the signs on top being $-1$. We keep the notation from Section~\ref{sec:PlumbedSeries} for the quantities associated to the bottom graph, and we use a prime to denote those for the top graph. For example, $M$ is the matrix for the bottom graph, and $M'$ the one for the top graph. The quadratic form associated to $M'$ has an extra negative term compared to the one for $M$, namely
$$-x_1^2 - 2x_0x_1 - x_0^2 - 2x_0x_2 - x_2^2 - 2x_1x_2 = - (x_1 + x_0 + x_2)^2,$$
where $x_1, x_0, x_2$ are the variables for the three vertices shown in the figure (in this order, from left to right). Therefore, the signature $\sigma'$ is $\sigma-1$, and the number $\pi$ of positive eigenvalues does not change. The quantity $3\sigma - \sum_v m_v$ does not change either, so we have the same factors in front of the integral in   \eqref{eq:plumbing1}.

 For the bottom graph, as in Section~\ref{sec:idSpinc}, let us write a vector $\vell \in \Z^s$ as a concatenation $$\vell =(\vell_1, \vell_2),$$ where $\vell_1$ corresponds to the part of the graph on the left of the edge where we do the blow-up (including the vertex labeled $m_1$), and $\vell_2$ corresponds to the part on the right (including the vertex labeled $m_2$). From $\vell$ we can construct a vector for the top graph
$$\vell' := (\vell_1, 0, \vell_2) \in \Z^{s+1}.$$
(This was denoted $T\vell$ in Section~\ref{sec:idSpinc}.)

If a $\Spinc$ structure $a$ is represented by the vector $\va$ for the bottom graph, we will represent it by $\va'$ for the top graph. If $z_0$ denotes the variable for the newly introduced vertex of degree $2$ in the top graph, observe that the integral in   \eqref{eq:plumbing1} only picks up the constant coefficient from the powers of $z_0$ in the theta function \eqref{eq:Thetaa}. Thus, we can just sum over vectors in $2M' \Z^{s+1} + \va$ that have a $0$ in the respective spot; that is, those of the form $\vell'$ for some $\vell \in 2M \Z^s + \va$. By simple linear algebra, if
$$ M^{-1}\vell = \vw = (\vw_1, \vw_2)$$
then
$$ (M')^{-1} \vell' = (\vw_1, w_0, \vw_2),$$
where $w_0$ is the sum of the entries of $\vw$ at the two vertices abutting the edge where we do the blow-up.
This implies that
\be
\label{eq:elmatrix}
 (\vell, M^{-1}\vell) = (\vell', (M')^{-1}\vell').
\ee
From here we get that the integrals give the same result for the two graphs, and hence the series $\Zhat_a(q)$ are the same.

The case of the move (a) with the sign $+1$ is similar, but with some differences. The quantity $3\sigma - \sum_v m_v$ is unchanged by the move, but $\pi' = \pi+1$ so the sign switches. Given $\vell = (\vell_1, \vell_2)\in \Z^s$, this time we define
\be
\label{eq:vellpplus}
 \vell' = (\vell_1, 0, -\vell_2)
\ee
and then \eqref{eq:elmatrix} still holds. We choose $\va'$ from $\va$ by \eqref{eq:vellpplus}. To compare the theta functions for the two graphs, given the change of sign in $\vell_2$ we need to do the substitutions $z_v \to z_v^{-1}$ for all the vertices $v$ on the right hand side. Let $\Vert_2$ be the set of these vertices. In the formula \eqref{eq:plumbing1}, all the expressions $z_v - \frac{1}{z_v}$ pick up a sign for $v \in \Vert_2$. This produces an overall sign of $(-1)^g$, where
$$ g = \sum_{v \in \Vert_2} (2-\deg(v))$$
is odd. The resulting $-1$ sign cancels with the one that gives the discrepancy in $(-1)^{\pi}$.

Next, we consider the move (b), with the sign of the blow-up being $-1$. Then, we have $\pi'=\pi$, $\sigma' = \sigma -1$ and the quantity $3\sigma - \sum_v m_v$ is $1$ lower for the top graph as for the bottom one. Thus, doing the blow-up gives an extra factor of $q^{-1/4}$ in front of the integral in \eqref{eq:plumbing1}.

For the bottom graph let us write vectors as 
$$\vell=(\vell_0, \ell_1) \in \Z^s,$$ with the entry $\ell_1$ being for the vertex labeled $m_1$. For the top graph we have corresponding vectors of the form
$$ \vell'_{\pm} = (\vell_0, \ell_1 \pm 1, \mp 1).$$
Given a vector $\va$ for the bottom graph, the same $\Spinc$ structure $a$ can be represented by either $\va'_+$ or $\va'_-$ in the top graph. Let $z_0$ be the variable for the newly introduced terminal vertex in the top graph, and $z_1$ for the vertex labeled by $m_1$ in the bottom graph and $m_1 -1$ in the top graph. For the top graph, the integrand in \eqref{eq:plumbing1} has new factors
$$ \left(z_1 - \frac{1}{z_1}\right)^{-1} \cdot  \left(z_0 - \frac{1}{z_0}\right).$$
Thus, the integral only picks up expressions from the theta function of the form $\prod_v z_v^{\ell_v}$ where $\ell_0=\mp 1$, and these come with a sign $\pm 1$. We have
$$ (\vell, M^{-1}\vell) = (\vell'_{\pm}, (M')^{-1}\vell'_{\pm})  + 1.$$   
This means that the relevant terms (those with $\ell_0=\mp 1$) in the theta function for $-M'$ sum up to
$$ q^{1/4}   \left(z_1 - \frac{1}{z_1}\right) \cdot \Theta^{-M}_a(\vz).$$
Putting everything together, we get the same answer for $\Zhat_a(q)$ as computed from the two graphs.

Move (b) with the sign of the blow-up being $+1$ is similar, but now we use vectors of the form
$$ \vell'_{\pm} = (\vell_0, \ell_1 \pm 1, \pm 1).$$

For move (c), the top graph has an extra negative and an extra positive eigenvalue, so $\pi' = \pi+1$ and $3\sigma - \sum_v m_v$ is unchanged. Let $z_1, z_0, z_2$ be the variables for the vertices $v_1, v_0, v_2$ shown in the top graph (in this order), and $z_b$ the one for the vertex $v_b$ shown in the bottom graph. In the integral for the top graph, since the middle vertex has degree $2$, we only care about contributions from vectors $\vell'$ with $\ell'_0=0$. We write these vectors as
$$ \vell'=(\vell_1, \ell_1, 0, \ell_2, \vell_2)$$
where $\vell_1$ has the entries for vertices on the left side of the graph (not including $v_1$), and $\vell_2$ has the entries for vertices on the left side of the graph (not including $v_2$). From $\vell'$ we can create a vector for the bottom graph
$$ \vell = (\vell_1, \ell_1 - \ell_2, - \vell_2).$$
A linear algebra exercise shows that
\be
\label{eq:vem0}
 (\vell, M^{-1}\vell) = (\vell', (M')^{-1}\vell').
 \ee
Given a vector 
$$\va = (\va_1, a_b, -\va_2)$$ representing the $\Spinc$ structure $a$ for the bottom graph, we will use $\va'$ for the top graph, where $\va'$ is any vector of the form
$$ \va'=(\va_1, a_1, 0, a_2, \va_2)$$
with $a_1-a_2 = a_b$ and $a_1$, $a_2$ of the correct parity (determined by the degrees of those vertices). 

Using \eqref{eq:vem0}, we see that in the theta function for $-M'$, we have identical powers of $q$ from all vectors of the form $\vell'$ with $\vell_1, \vell_2$ and $\ell_1 - \ell_2=\ell_b$ fixed. Thus, we are integrating an expression of the form
$$ \Bigl( \sum_{n \in \Z} (z_1 z_2)^{2n} \Bigr) \cdot G(\vz').$$
When we integrate this over the circle with respect to $z_1$ and $z_2$, we  pick up the constant terms in $z_1$ and $z_2$. In particular, we only get contributions from the terms in $G(\vz')$ where the exponents of $z_1$ and $z_2$ are the same even number, and any such term contributes once. Thus, we could get the same result by setting $z_1z_2=1$ in the expression above and just integrate over a single variable $z_b :=z_1 = z_2^{-1}$. Let us also change variables from $z_v$ to $z_v^{-1}$ for all vertices $v$ on the right hand side of the graph. Then, observing that
$$ (2-\deg(v_1)) + (2-\deg(v_2)) = 2-\deg(v_b),$$
we get that the integral in the formula  \eqref{eq:plumbing1} for the top graph recovers the one for the bottom graph, up to a sign of $-1$. This sign is canceled by the one coming from the discrepancy between $(-1)^{\pi}$ and $(-1)^{\pi'}$.
\end{proof}

\begin{remark}
The Neumann moves preserve the property of a graph being weakly negative definite (so that $\Zhat_a(q)$ is well-defined). For a proof of this fact, see \cite[Appendix A]{CCFGH}. Thus, if $Y$ admits a weakly negative definite plumbing diagram, then all of its plumbing diagrams are weakly negative definite.  For example, in view of Theorem~\ref{thm:euler}, a generic Seifert bundle over $S^2$ with positive orbifold Euler number ($e > 0$) admits a positive definite plumbing (which is clearly not weakly negative definite), and therefore cannot be represented by any weakly negative definite plumbing. It follows that generic Seifert bundles over $S^2$ have weakly negative definite representations if and only if $e < 0$. 
\end{remark}

\subsection{$\Spinc$ structures versus Abelian flat connections}
\label{sec:spinc}
In \cite{GPV}, \cite{GPPV}, the invariants $\Zhat_a(q)$ were indexed by Abelian flat connections (modulo conjugation). In the case of rational homology spheres, these connections correspond to elements of $H_1(Y; \Z)/\Z_2$. However, in Section~\ref{sec:PlumbedSeries}, the labels were $\Spinc$ structures on $Y$ (modulo conjugation). Let us discuss this discrepancy.

By Poincar\'e duality, we have $H_1(Y; \Z) \cong H^2(Y; \Z)$. Further, there is a well-known affine (non-canonical) identification
$$\Spinc(Y) \cong H^2(Y; \Z).$$
This identification can be made canonical after choosing a $\Spinc$ structure $a_0$ that should correspond to $0 \in H^2(Y; \Z)$. Concretely, in the case of a negative definite plumbed manifold $Y=Y(\Gamma)$ as in Section~\ref{sec:PlumbedDef}, by \eqref{eq:spincid}, we have $\Spinc(Y) \cong (2\Z^s + \vdelta)/(2M\Z^s)$. On the other hand, $H_1(Y; \Z) \cong H^2(Y; \Z)$ is canonically $\Z^s/M\Z^s$. Choosing an $\va_0 \in 2\Z^s + \delta$ will give the identification 
$$\Spinc(Y) \cong H_1(Y; \Z), \ \ \ [\va] \to [(\va-a_0)/2].$$

In order for this identification to commute with the conjugation on the two sides, we need that $\va_0 \in M \Z^s$. One can prove (by induction on the number of vertices in the plumbing graph) that such an $\va_0$ always exists, i.e. $$(2\Z^s + \vdelta) \cap (M \cdot \Z^s) \neq \emptyset.$$ 
If this intersection contained elements in a single equivalence class modulo $2M\Z^s$ and conjugation, then the identification between $\Spinc$ structures and Abelian flat connections would be uniquely determined (canonical). However, this is not always the case, as the following example shows.

Let $\Gamma$ be the following graph:
$$\input{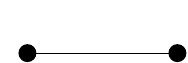_t}$$

 The resulting plumbed manifold $Y=Y(\Gamma)$ is the lens space $L(8,3)$. There is a self-diffeomorphism $h: Y \to Y$ given by interchanging the two vertices. We have
$$ H_1(Y; \Z) \cong \Z^2/\text{Span}\langle (-3,1), (1, -3) \rangle.$$
The quotient $H_1(Y; \Z)/\Z_2$ consists of $5$ elements $\alpha_j$, with representatives $(j,0) \in \Z^2$, for $j=0, \dots, 4$. These in fact correspond to the Abelian flat connections given by sending the generator of $\pi_1(Y)$ to the diagonal matrix $\mathit{diag}(\omega^j, \omega^{-j})$, where $\omega = e^{\pi i /4}.$ The diffeomorphism $h$ preserves $\alpha_0$, $\alpha_2$ and $\alpha_4$, but interchanges $\alpha_1$ with $\alpha_3$.

On the other hand, we have
$$ \Spinc(Y) \cong (2\Z + 1)^2/\text{Span}\langle (-6,2), (2, -6) \rangle.$$
The quotient $\Spinc(Y)/\Z_2$ also consists of $5$ elements, represented by $(1,1)$, $(3,1)$, $(-1,3)$, $(3,-1)$ and $(5, -1)$. Interestingly, the diffeomorphism $h$ induces the identity map on $\Spinc(Y)/\Z_2$.

There are two possible choices of $\va_0$ (modulo $2M\Z^2$ and conjugation), namely $(-1,3)$ and $(3,-1)$. This shows that there is no canonical identification between $\Spinc$ structures and Abelian flat connections.
Indeed, the two possible identifications are interchanged under the diffeomorphism $h$. 

With regard to the invariants $\Zhat_a$, we can compute 
$$ \Zhat_{(1,1)}=q^{1/4}, \ \ \Zhat_{(5,-1)}=q^{-1/8},$$
and the other three series are zero. Under one of the two possible identifications, the element $(5,-1) \in \Spinc(Y)/\Z_2$ corresponds to the flat connection $\alpha_1$, and under the other identification, to the flat connection $\alpha_3$. Thus, if we wanted to label $\Zhat_a$ by Abelian flat connections, we would need to first make a choice between $\alpha_1$ and $\alpha_3$ (the two connections interchanged by $h$).

In conclusion, this example shows that it is more natural to use labels by $\Spinc$ structures instead of Abelian flat connections. It would be interesting to understand how $\Spinc$ structures arise in the resurgence picture (cf. Section~\ref{sec:resurgence} below), where we more naturally encounter Abelian flat connections.

\subsection{Brieskorn spheres}
We now present a simplification of the formula \eqref{eq:plumbing1} in the case of the integral homology Seifert fibrations $\Sigma(b_1, b_2, b_3)$ with three singular fibers. The same simplified formula appeared in the work of Lawrence and Zagier \cite[Section 6]{LawrenceZagier}. Yet another derivation of this formula was found by Chung \cite{Chung}, using resurgence analysis.

Let us first introduce the following false theta functions (Eichler integrals of weight $3/2$ vector-valued modular forms):
\be
\tPsi^{(a)}_p (q)  :=  \sum_{n=0}^\infty \psi^{(a)}_{2p}(n) q^{\frac{n^2}{4p}} \ \ \in q^\frac{a^2}{4p}\,\Z[[q]]
\label{Psis}
\ee
where
\be
\psi^{(a)}_{2p}(n)  =  \left\{
\begin{array}{cl}
 \pm 1, & n\equiv \pm a~\mod~ 2p\,, \\
0, & \text{otherwise}.
\end{array}\right.
\ee
We will use $\tPsi^{n_a (a) + n_b (b) + \cdots}_p (q)$ as a shorthand notation for a linear combination
\be
\tPsi^{n_a (a) + n_b (b) + \cdots}_p (q) \; := \;
n_a \tPsi^{(a)}_p (q)
+ n_b \tPsi^{(b)}_p (q)
+ \ldots
\label{linearPsitilde}
\ee

\begin{proposition}
\label{prop:brieskorn}
Consider the Brieskorn sphere $Y=\Sigma(b_1, b_2, b_3)$, where the positive integers $b_1 < b_2 < b_3$ are pairwise relatively prime. Then
\be
\label{eq:Zbrieskorn}
  \Zhat_0(Y) = q^{\Delta} \cdot (C- \tPsi^{(\alpha_1)-(\alpha_2)-(\alpha_3)+(\alpha_4)}_{b_1b_2b_3}(q))
\ee
where
\begin{align*}
 \alpha_1 &=b_1b_2b_3 - b_1b_2-b_1b_3-b_2b_3, \\
\alpha_2 &=b_1b_2b_3 + b_1b_2-b_1b_3-b_2b_3, \\
\alpha_3 &=b_1b_2b_3 - b_1b_2 + b_1b_3-b_2b_3, \\
\alpha_4 &=b_1b_2b_3 + b_1b_2 + b_1b_3-b_2b_3,
\end{align*}
$\Delta$ is some rational number, and
\[
C =\begin{cases}
2q^{1/120} & \text{if } (b_1, b_2, b_3)= (2,3,5),\\
0 & \text{otherwise.}
\end{cases}
\]
\end{proposition}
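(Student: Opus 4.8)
The plan is to start from the plumbing formula \eqref{eq:plumbing1} applied to the standard star-shaped plumbing graph $\Gamma$ representing $\Sigma(b_1,b_2,b_3)$, and carry out the integral explicitly using the structure of that graph. Recall that $\Gamma$ has one central vertex of degree $3$ and three arms, each arm consisting of vertices of degree $\leq 2$. Along the arms, the factors $(z_v - 1/z_v)^{2-\deg v}$ have nonnegative exponent, so the corresponding contour integrals amount to extracting a coefficient; integrating out the arm variables one at a time collapses each arm to a simple geometric-type constraint, and the upshot is that only the central variable, call it $z_0$, survives, with the theta function $\Theta^{-M}_0(\vz)$ reduced to a one-variable theta-like series whose exponents of $q$ are governed by the quadratic form $M^{-1}$ restricted to the central direction. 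The key arithmetic input is that, because $b_1b_2b_3 e = -1$, the relevant entry of $M^{-1}$ works out so that the $q$-exponents take the form $n^2/(4b_1b_2b_3)$ up to an overall shift, which is exactly what appears in $\tPsi_{b_1b_2b_3}$.

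Concretely, I would first set up the reduced integral: after integrating out arm variables, the integrand for $z_0$ is $(z_0 - 1/z_0)^{-1}$ times a sum over lattice points, and the residue at $z_0$ picks out a signed count of solutions modulo $2b_1b_2b_3$. The next step is the bookkeeping of these residues: the factor $(z_0 - 1/z_0)^{-1} = \sum_{k\ge 0} z_0^{-1-2k}$ (or its symmetric counterpart under the v.p. prescription) combined with the three arm constraints produces, by the standard Chinese-remainder / counting argument (as in Lawrence--Zagier), the characteristic function $\psi^{(a)}_{2b_1b_2b_3}$ for the four residues $\alpha_1,\dots,\alpha_4$, with signs $+,-,-,+$ coming from the three arms. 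The shifts $\pm b_ib_j$ in the $\alpha_i$ are exactly the contributions of the three Seifert pairs, and the overall $b_1b_2b_3 - b_1b_2 - b_1b_3 - b_2b_3 = \alpha_1$ base point is the one dictated by the $\vdelta$-shift in \eqref{eq:Thetaa} (equivalently by the canonical $\Spinc$ structure). The prefactor $(-1)^{\pi} q^{(3\sigma - \sum m_v)/4}$ is absorbed into $q^\Delta$, and one checks $\pi = 0$ since $\Gamma$ is negative definite.

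The one genuinely special point is the appearance of the constant $C$. For $(b_1,b_2,b_3)=(2,3,5)$ the manifold is the Poincar\'e sphere, and as already recalled in the excerpt (see \eqref{eq:zhp}, \eqref{chichirel}) we have $\Zhat_0(P;q) = q^{-3/2}(2 - A(q))$ with $A(q) = \sum_{n\ge 1}\chi_+(n)q^{(n^2-1)/120}$; here $\chi_+$ is precisely $\psi^{(\alpha_1) - (\alpha_2) - (\alpha_3)+(\alpha_4)}_{30}$ on the relevant residues, so the $2$ in front is a genuine leftover constant term of the contour integral. For all other Brieskorn spheres one must verify that the analogous constant term vanishes; this is where one has to be careful with the principal-value (v.p.) prescription and the fact that $(z_0-1/z_0)^{-1}$ is expanded symmetrically, so that the ``$n=0$'' contribution either cancels or fails to satisfy the residue constraint unless $\{b_i\} = \{2,3,5\}$ (equivalently, unless one of the $\alpha_i$ is $0$, which forces the exponent $n^2/(4b_1b_2b_3)$ to start at $0$). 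I expect this case analysis — showing the constant $C$ is exactly $2q^{1/120}$ for $(2,3,5)$ and $0$ otherwise, consistently with the v.p. convention — to be the main subtlety; the rest is the (somewhat lengthy but routine) linear algebra of inverting the star-shaped matrix $M$ and the Chinese-remainder counting, which I would either do directly or simply cite from \cite[Section 6]{LawrenceZagier} and \cite{Chung}.

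Finally, I would double-check the normalization against Remark~\ref{rem:ZHS}: since $\Sigma(b_1,b_2,b_3)$ is an integral homology sphere, $T = \{0\}$ and there is a single series $\Zhat_0$, and the identification of the $q$-exponent $\Delta$ can be pinned down (if desired) by matching leading terms, though the statement as given only claims $\Delta \in \Q$, so this last step is optional for the proof of the proposition as stated.
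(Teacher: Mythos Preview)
Your overall strategy matches the paper's: start from the plumbing formula~\eqref{eq:plumbing1} for the star-shaped graph, integrate out the arm variables (which forces $\ell_v=0$ on degree-$2$ vertices and $\ell_v=\pm1$ on the three terminal vertices), expand the central factor $(z_0-1/z_0)^{-1}$ using the principal-value prescription, and compute the relevant entries of $M^{-1}$ to identify the $q$-exponents with $n^2/(4b_1b_2b_3)$ up to a shift. The paper does exactly this computation of the minors directly (finding $(M^{-1})_{00}=-b_1b_2b_3$, etc.) rather than invoking a Chinese-remainder argument, but that is a matter of presentation.

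However, your account of where the constant $C$ comes from is wrong, and this is precisely the point you flagged as the main subtlety. You write that $C\ne 0$ should occur ``unless one of the $\alpha_i$ is $0$.'' For $(b_1,b_2,b_3)=(2,3,5)$ the actual values are $\alpha_1=-1$, $\alpha_2=11$, $\alpha_3=19$, $\alpha_4=31$; none of them vanishes. The correct mechanism is that $(2,3,5)$ is the unique coprime triple with $\tfrac{1}{b_1}+\tfrac{1}{b_2}+\tfrac{1}{b_3}>1$, which forces $\alpha_1<0$. The plumbing integral produces two sums with fixed ranges $n\ge 0$ and $n\ge 1$ (the paper's \eqref{eq:eps3m}--\eqref{eq:eps3p}); matching these to the definition of $\tPsi^{(\alpha_k)}_p$ requires $0<\alpha_k<2p$, so that the ranges in \eqref{eq:2psi} agree. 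When $\alpha_1=-1$ the ranges in $\tPsi^{(\alpha_1)}$ are shifted by one in each sum, and the mismatch contributes exactly the extra $2q^{\alpha_1^2/(4p)}=2q^{1/120}$. So the case analysis you anticipate is not about a residue class hitting zero, but about the sign of $\alpha_1$, equivalently about whether $\sum 1/b_i$ exceeds $1$.
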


\begin{proof}
The Brieskorn sphere $Y$ is the Seifert manifold $M(b;  a_1/b_1, a_2/b_2, a_3/b_3),$ where $b < 0$ and $ a_1, a_2, a_3 > 0$ are chosen such that
$$ b + \sum_{i=1}^3 \frac{a_i}{b_i} =  -\frac{1}{b_1b_2b_3}.$$

As a negative definite plumbed manifold, $Y$ comes from a tree $\Gamma$ with central vertex labelled $b$ and three legs with labels $-k_{i1}, \dots, -k_{is_i}$, $(i=1, \dots, 3)$, giving the continued fraction decompositions of $b_i/a_i$ as in \eqref{eq:pf}. The total number of vertices is
$$ s = s_1 + s_2 + s_3 + 1.$$

Recall the formula  \eqref{eq:plumbing1} for $\Zhat_a$ of a negative definite plumbed manifold. In our case, since $H_1(Y; \Z)=0$, there is a unique value $a=0$ to consider. We have
$$ \Zhat_0(Y) =  q^{-\frac{3s+\sum_v m_v}{4}}   \oint\limits_{|z_v|=1} \prod_{v\in \Vert} \frac{dz_v}{2\pi i z_v}  \left( z_v - \frac{1}{z_v} \right)^{2-\deg(v)} \cdot \sum_{\vell \in 2M \Z^s + \vdelta} q^{-\frac{(\vell, M^{-1}\vell)}{4}} \prod_{v \in \Vert} z_v^{\ell_v}.$$

Concretely, each integral $\oint_{|z_v|=1} \tfrac{dz_v}{2\pi i z_v}$ gives the constant coefficient in the  expansion in $z_v$. For the three terminal vertices (of degree $1$), this means that the values of $\ell_v$ should be $\eps_i=\pm 1$, for $i=1,2,3$; and doing the integral results in a sign of $-\eps_i$ in front of the expression. For the degree two vertices (i.e. all but the central vertex and the three terminal vertices), we get $\ell_v =0$. Letting $m$ be the value $\ell_v$ on the central vertex, note that (since $\det(M)=\pm 1$) the condition $\vell \in 2M \Z^L + \vdelta$ means that $m$ should be odd.

Furthermore, the central vertex has degree $3$. Writing
$$\left(z_v - \frac{1}{z_v}\right )^{-1} = -\frac{1}{2} \sum_{k \in 2\Z + 1} \operatorname{sign}(k) z_v^k$$
we find that the principal value of the integral over $z_v$ for the central vertex gives $1/2$ times a sum over all expressions with $\ell_v= m$ odd, with coefficients $\operatorname{sign}(m)$. Thus, we have
\be
\label{eq:zoya}
 \Zhat_0(Y) =  -\frac{1}{2}\cdot {q^{-\frac{3s+\sum_v m_v}{4}}} \cdot \sum_{m  \text{ odd}} \sum_{\eps_i \in \{\pm 1\}} \eps_1\eps_2\eps_3\operatorname{sign}(m) \cdot q^{-\frac{(\vell, M^{-1}\vell)}{4}},
 \ee
where $\vell$ is the vector with coordinates $m$ for the central vertex, $\eps_i$ ($i=1,2,3$) for the final vertices, and $0$ for all intermediate vertices.

Let $\lambda$ denote the $(1,1)$ entry in the matrix $M^{-1}$, corresponding to the central vertex, and $\mu_i$ the entry in the first row, in the column for the terminal vertex on the $i$th leg,  $i=1,2,3$. Let also $\tau_{ij}$ be the diagonal entry in the row for the terminal vertex on the $i$th leg and the column for the terminal vertex on the $j$th leg. Then, the exponent of $q$ in the last factor in \eqref{eq:zoya} is
\be
-\frac{(\vell, M^{-1}\vell)}{4}= - \frac{\lambda m^2 + 2m(\sum_i \mu_i \eps_i) +\sum_{i,j} \tau_{ij}}{4}.
\ee

 To compute the values $\lambda$, $\mu_i$ and $\tau_{ij}$, we consider the corresponding $(s-1)\times(s-1)$ minors in the matrix 
$$ M =  \left( \begin{array}{c|ccccc|ccccc|ccccc}
b & 1 & 0 & \dots     & 0	& 1 & 0 & \dots  & 0 & 0 & 1 & 0 & \dots & 0& 0 \\
\hline
1 & -k_{11} & 1 & & & & & & & & & & & & &  \\
0 & 1& -k_{12} & 1& & & & & & & & & & & & \\
\vdots & & 1&\ddots  & & & & & & & & & & & &   \\
0 & & & &  \ddots & 1& && & & & & & & &   \\
0 & & & &  1 & -k_{1s_1} & & & & & & & & & &  \\
\hline
1 & & &  & &	 	& -k_{21} & 1& & & & & & & &  \\
0 & & & & 	&	& 1& -k_{22} & 1& & & & & & &  \\
\vdots  & & & & &	& & 1 & \ddots& & & & &  &  &\\
0 & &  & & 	&	& & & & \ddots &1 & & & & &  \\
0 & &  & & 	&	& & & & 1& -k_{2s_2}& & & & &  \\
\hline
1 & &  & & & & & & & & 		& -k_{31} & 1  & & &  \\
0 & &  & & & & & & & & 		& 1& -k_{32}& 1 & &  \\
\vdots  & & & & & & & & & & 	& & 1& \ddots  &  &\\
0 & &  & & & & & & & &		& & & & \ddots & 1\\
0 & &  & & & & & & & &		& & & & 1 & -k_{3s_3}
\end{array} \right).$$
Note that $\det(M) = (-1)^s$ and therefore $\det(-M)=1$. Furthermore, the three large diagonal blocks have  determinants $(-1)^{s_i}b_i$, $i=1,2,3$. With this in mind, we find that
$$ \lambda = -b_1b_2b_3, \ \mu_1 =-b_2b_3, \ \mu_2 = -b_1 b_3, \ \mu_3 = -b_1 b_2,$$
$$ \tau_{ij} = - b_k \ \text{ for } \ \{i,j,k\}=\{1,2,3\},$$
whereas 
$\tau_{ii}$ is (up to a sign) the determinant of the linking matrix for the graph where we delete the terminal vertex on the $i$th leg. In fact, 
$$h_i := -\tau_{ii} > 0$$ 
equals the the cardinality of $H_1$ of the corresponding plumbed manifold.

Thus, the exponent of $q$ is
$$ \frac{b_1b_2b_3}{4} \Bigl(m^2 + 2m\sum_i  \frac{\eps_i}{b_i} + 2\sum_{i < j} \frac{\eps_i\eps_j}{b_i b_j} \Bigr) + \frac{\sum_i h_i}{4} = \frac{b_1b_2b_3}{4}\Bigl(m+\sum_i  \frac{\eps_i}{b_i} \Bigr)^2 - \frac{b_1b_2b_3}{4} \sum \frac{1}{b_i^2} +\frac{\sum_i h_i}{4}.$$
From here we get
$$\Zhat_0(Y) = -\frac{q^{\Delta}}{2}  \cdot \sum_{m  \text{ odd}} \sum_{\eps_i \in \{\pm 1\}} \eps_1\eps_2\eps_3 \operatorname{sign}(m) \cdot q^{\frac{b_1b_2b_3}{4} \left(m+\sum_i  \frac{\eps_i}{b_i} \right)^2 },$$
where
\be
\label{eq:Delta}
\Delta =  \frac{1}{4} \Bigl( \sum_i h_i - 3s- \sum_v m_v -\frac{b_2b_3}{b_1} - \frac{b_1b_3}{b_2} - \frac{b_1b_2}{b_3} \Bigr).
\ee

By making use of the symmetry that reverses the signs of all $\eps_i$ and $m$ at once, we can turn the sum over $m \in 2\Z +1$ into one over $m=2n+1 > 0$. Thus
\be
\label{eq:zoy}
\Zhat_0(Y)= -q^{\Delta} \cdot \sum_{\eps_i \in \{\pm 1\}}  \sum_{n \geq 0} \eps_1\eps_2\eps_3\cdot q^{{b_1b_2b_3}\left(n^2 +n+ \frac{1}{4}+ (n+ \frac{1}{2})\sum_i  \frac{\eps_i}{b_i} + \frac{1}{4} (\sum_i  \frac{\eps_i}{b_i})^2 \right)}.
\ee

Let $p=b_1b_2b_3$. If we fix $\eps_1, \eps_2$ and let $\eps_3=-1$, note that 
$$b_1b_2b_3\Bigl(1+\sum_i  \frac{\eps_i}{b_i}\Bigr) = \alpha_k$$ for some $k\in\{1,2,3,4\}$. Therefore, the corresponding summation over $n$ in \eqref{eq:zoy} becomes
\be
\label{eq:eps3m}
 - \sum_{n \geq 0} \eps_1\eps_2\cdot q^{p n^2 +\alpha_k n+ \frac{\alpha_k^2}{4p}}.
 \ee
On the other hand, for $\eps_3=1$ (after replacing $n$ with $n-1$) we get the contribution
\be
\label{eq:eps3p}
\sum_{n \geq 1}\eps_1\eps_2 \cdot q^{p n^2 -\alpha_k n+ \frac{\alpha_k^2}{4p}}.
 \ee
When $(b_1, b_2, b_3)\neq (2,3,5)$, since the $b_i$ are relatively prime, it is easy to see that
\be
\frac{1}{b_1} + \frac{1}{b_2} + \frac{1}{b_3} < 1
\ee
and therefore 
$$0 <\alpha_k < 2p, \ \ \text{for } k=1,\dots,4.$$ 
In this case, by replacing $a$ with $\alpha_k$ and $n$ with $2pn\pm \alpha_k$ in \eqref{Psis}, we can write
\be
\label{eq:2psi}
\tPsi^{(\alpha_k)}_{p}= \sum q^{pn^2 + k\alpha_i + \frac{\alpha_k^2}{4p}} - \sum q^{pn^2 - n\alpha_k + \frac{\alpha_k^2}{4p}},
\ee
where the first sum is over all $n$ with $n + \frac{\alpha_k}{2p} \geq 0$, and the second is over $n$ with $n - \frac{\alpha_k}{2p} \geq 0$. When $0 <\alpha_k < 2p$, this happens exactly when $n \geq 0$ for the first sum, and when $n \geq 1$ for the second sum.

When $(b_1, b_2, b_3)\neq (2,3,5)$, since the $b_i$ are relatively prime, it is easy to see that
\be
\frac{1}{b_1} + \frac{1}{b_2} + \frac{1}{b_3} < 1
\ee
and therefore $0 <\alpha_k < 2p$ for all $k=1,\dots,4$. Therefore, the sum of the two expressions \eqref{eq:eps3m} and \eqref{eq:eps3p} is exactly $-\eps_1\eps_2 \tPsi^{(\alpha_k)}_{p}$. The eight kinds of terms in the sum in \eqref{eq:zoy} combine in pairs to give four different $\tPsi^{(\alpha_k)}_{p}$ (up to some signs), and we arrive at the formula \eqref{eq:Zbrieskorn}, with $C=0$. 

For the Poincar\'e sphere $P=\Sigma(2,3,5)$, we have $\alpha_1=-1,$ $\alpha_2=19$, $\alpha_3=11$, $\alpha_4=31$. In this case, in \eqref{eq:2psi}, the first summation is over $n \geq 1$ and the second is over $n \geq 0$. This gives the extra term $C=2q^{1/120}$ in the formula \eqref{eq:Zbrieskorn}. Further, the expression $\tPsi^{(\alpha_1)-(\alpha_2)-(\alpha_3)+(\alpha_4)}_{b_1b_2b_3}(q)$ in  \eqref{eq:Zbrieskorn} agrees with $q^{1/120}\cdot A(q)$ from \eqref{chichirel}. Also, we can compute $h_1=8$, $h_2=4$,  $h_3=2$, $s=8$, $m_v =-2$ for all $v$, and hence $\Delta = -181/120$. This gives
$$\Zhat_0(P)= q^{-3/2}(2-A(q)),$$
in agreement with \eqref{eq:zhp}.
\end{proof}

In principle, the same method can be used to simplify the formula for more general plumbings. If there is more than one vertex of degree three, we end up with a sum over several different indices $m_i$ instead of $m$. If there is a vertex of index more than three, we have to factor out $(z_v - 1/z_v)^{\deg(v)-2}$ in the integral, and the formula gets more complicated. Also, if the manifold is not a homology sphere, we would have to split the sum according to elements in $\Spinc(Y)/ \Z_2$.

\newpage \section{Knot complements from plumbings}
\label{sec:plumbedKnot}

In this section we study three-manifolds with torus boundary that arise from (negative definite) plumbings. Manifolds of this type have previously appeared in \cite{OSS}, the context of Heegaard Floer homology and lattice homology. 

\subsection{Plumbing representations}
\label{sec:PlumbedKnot}
Let us keep the notation from Subsection~\ref{sec:PlumbedDef}, but now assume that in the weighted tree $\Gamma$ we distinguish one particular vertex $v_0$. We will mostly be interested in the case where $v_0$ has degree one, but this condition is not necessary for most of the discussion. Let $\hGamma$ be obtained from $\Gamma$ by deleting $v_0$ and the edges incident to it. (Note that $\hGamma$ is disconnected if $v_0$ has degree at least $2$. In that case, we still have a plumbed manifold $Y(\hGamma)$, which is the connected sum of the plumbings associated to each component of $\hGamma$.) 

The component associated to $v_0$ in the link $L(\Gamma)$ represents a knot $K \subset Y(\hGamma)$. We let $Y(\Gamma, v_0)$ denote the complement of a tubular neighborhood of $K$ in $Y(\hGamma)$. This is a three-manifold with torus boundary. Moreover, the boundary is parameterized, in the following sense.

\begin{definition}
A compact, oriented three-manifold $Y$ is said to have {\em parametrized torus boundary} if $\del Y$ is homeomorphic to $T^2$ and, furthermore, we have specified an orientation-preserving homeomorphism $f: T^2 \to \del Y$, where $T^2 = \R^2/\Z^2$ is the standard torus.
\end{definition}

A parametrization of a torus boundary $\del Y$ produces two simple closed curves on $\del Y$, the images of $f(\{0\} \times S^1)$ and $f(S^1 \times \{0\})$ under the homeomorphism $f$. We call these the {\em meridian}  and the {\em longitude}. Conversely, choosing two simple closed curves on $\del Y$ whose classes span $H_1(\del Y)$ determines the parametrization, up to isotopy. (The curves should be oriented such that the orientation on $\del Y$ induced by $f$ agrees with its orientation from being the boundary of $Y$. In practice, this means we should specify the orientation on one curve, and then the other is automatically determined.)

If we have a manifold $Y$ with parametrized torus boundary, we can form a closed manifold
$$ \hat Y = Y \cup_{\del Y} (S^1 \times D^2),$$
by gluing the boundaries such that the meridian of $\del Y$ gets matched to the meridian $\mathit{pt} \times \del D^2$ on the solid torus. Thus, we can view $Y$ as the complement of (a neighborhood of) the knot $K \subset \hat Y$:
$$ Y = \hat Y \setminus \nu K,$$ 
 where $K=S^1 \times \{0\}$ is the core of the solid torus and $\nu K = S^1 \times D^2$ is a tubular neighborhood of $K$. Further, the longitude on $\del Y$ specifies a framing of the knot $K$.

In the case $Y=Y(\Gamma, v_0)$, we take $\hat Y = Y(\hGamma)$. The plumbing representation specifies  the meridian $\mu$ of the knot, as well as a longitude $\lambda$ given by the framing of the knot $K$. The framing is determined by the weight $m_{v_0}$ of $v_0$ in the graph $\Gamma$. We will call it the {\em graph framing}. We orient the longitude $\lambda$ counterclockwise. This gives a parametrization of $\del Y(\Gamma, v_0)$.

It is important to distinguish $\lambda$ from two other natural choices of longitude: One is the {\em blackboard (zero) framing}   of $K$, which we denote by $\lambdabb$, so that in $H_1(\del Y)$ we have the relation
$$ \lambda = \lambdabb + m_{v_0} \mu.$$
The other is the (rational) {\em Seifert framing}, which is the combination $$\lambdasf = \lambdabb + m_{\operatorname{SF}}\cdot  \mu$$ that gets sent to zero under the map $H_1(\del Y; \Q) \to H_1(Y; \Q)$. This exists, and is unique, provided that $Y(\hGamma)$ is a rational homology sphere; for example, if $\hGamma$ is negative definite. The exact value of $m_{\operatorname{SF}}$ depends on the graph.

\begin{example}
\label{ex:unknot01}
Both diagrams in Figure~\ref{fig:unknot01} represent the unknot in $S^3$. The distinguished vertex $v_0$ is marked by an empty circle. On the left, the blackboard and the Seifert framings coincide ($m_{\operatorname{SF}}=0$), and the graph framing differs by $p \mu$ from them. On the right, we have $  \lambda = \lambdabb + (p-1) \mu = \lambdasf + p \mu$ and $m_{\operatorname{SF}}=-1$.
\end{example}

\begin {figure}
\begin {center}
\input{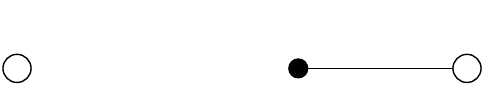_t}
\caption {Plumbing diagrams for the unknot in $S^3$.}
\label{fig:unknot01}
\end {center}
\end {figure}

\begin{example}
\label{ex:trefoil6}
Consider the graph $\Gamma$ on the left of Figure~\ref{fig:trefoil6}. The manifold $Y(\hGamma)$ is just $S^3$. By doing three successive blow-downs on the corresponding Kirby diagram, we get a trefoil. The graph framing $p$ on the unknot from $v_0$ becomes the $p+6$ framing on the trefoil. Thus, $Y(\Gamma, v_0)$ is the complement of the trefoil in $S^3$, with framing $p+6$ compared to the Seifert framing; that is,  $\lambda = \lambdabb + p \mu = \lambdasf+ (p+6) \mu$ and $m_{\operatorname{SF}}=6$.
\end{example}

\begin {figure}
\begin {center}
\input{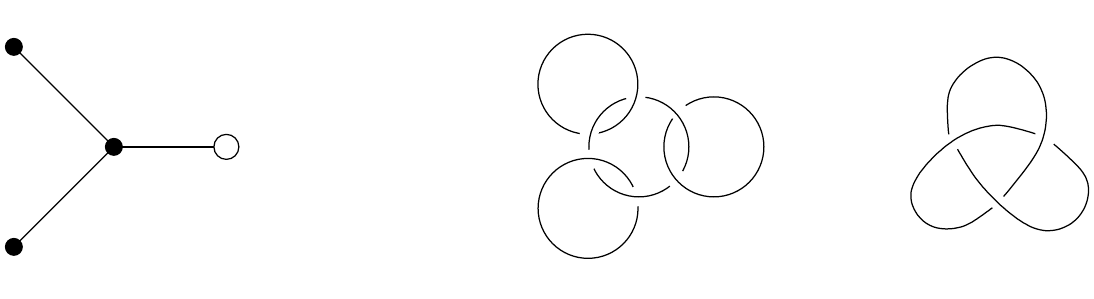_t}
\caption {A plumbing diagram for the trefoil in $S^3$.}
\label{fig:trefoil6}
\end {center}
\end {figure}

The Neumann moves from Figure~\ref{fig:kirby} apply equally well to graphs with a distinguished vertex, and they can involve this vertex (as, for example, in the blow-up move from Figure~\ref{fig:unknot01}); the only restriction is that we do not allow blowing down the distinguished vertex. Any two diagrams of the same plumbed manifold with parametrized boundary are related by these Neumann moves. Note that such moves leave the graph and Seifert framings unchanged, but may change the blackboard framing.

We can change the parametrization of the boundary in a plumbing graph as follows. First, we can change the longitude (the graph framing) by simply changing the weight of $v_0$. This does not change the meridian, so the underlying manifold $\hat Y= Y(\hGamma)$ is the same. Second, we can also change the meridian, by adding to the graph an extra leg, starting at the distinguished vertex, and making the end of the leg the new distinguished vertex, as in Figure~\ref{fig:ChangeFraming}. (The weights on the new leg can be arbitrary.) This usually changes the manifold $\hat Y$.

\begin {figure}
\begin {center}
\input{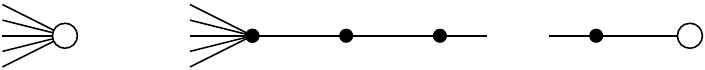_t}
\caption {Changing the parametrization of the boundary for a plumbed knot complement.}
\label{fig:ChangeFraming}
\end {center}
\end {figure}

\begin{example}
The graph in Figure~\ref{fig:trefoil237} is obtained from the one in Figure~\ref{fig:trefoil6} by a move of the kind in Figure~\ref{fig:ChangeFraming}. It still represents the trefoil complement, but the meridian has changed such that $\hat Y = \Sigma(2,3,7).$
\end{example}

\begin {figure}
\begin {center}
\input{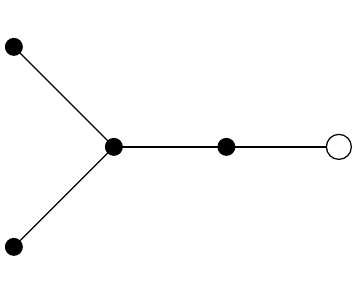_t}
\caption {A plumbing diagram for the trefoil complement, viewed as the complement of a knot in the Brieskorn sphere $\Sigma(2,3,7)$.}
\label{fig:trefoil237}
\end {center}
\end {figure}


Note that the knot complements that are obtained from plumbing diagrams are all graph manifolds. If it happens that $Y(\hGamma) = S^3$, this means that all the knots in $S^3$ that can be obtained this way are algebraic, i.e., iterated torus knots. For example, we cannot obtain hyperbolic knots such as the figure-eight in this way. Plumbing diagrams for torus knots will be shown in Section~\ref{sec:PlumbedTorus}.


We can glue together two plumbed knot complements $Y_1$ and $Y_2$ to produce a (closed) plumbed manifold. The simplest way to do so is to glue the boundaries so that the graph longitude $\lambda_1$ of $Y_1$ is glued to the graph longitude $\lambda_2$ of $Y_2$, and the meridian $\mu_1$ of $Y_1$ is glued to the meridian $-\mu_2$ of $Y_2$. (The minus sign on $\mu_2$ is needed so that the orientations are consistent.) We call this the {\em standard gluing}. A plumbing diagram for the resulting manifold $Y_1 \cup Y_2$ is obtained from those for $Y_1$ and $Y_2$ by identifying their distinguished vertices and adding up the weights there, as shown in Figure~\ref{fig:glue}. 


\begin {figure}
\begin {center}
\input{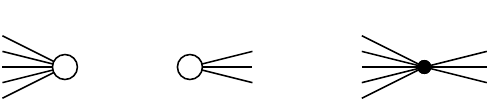_t}
\caption {Standard gluing of knot complements, shown by plumbing diagrams. }
\label{fig:glue}
\end {center} 
\end {figure}




\subsection{$\Spinc$ structures on $3$-manifolds with boundary}
\label{sec:spincY}
We now discuss $\Spinc$ and relative $\Spinc$ structures on manifolds with boundary, and how they behave under gluing. Of course, we are mostly interested in the case of gluing plumbed knot complements.

Suppose, in general, that we have a compact oriented three-manifold $Y$ with boundary a surface $\Sigma$. Note that $\Spinc$ structures on $\Sigma$ are uniquely characterized by their first Chern class $c_1 \in H^2(\Sigma; \Z)$. The $\Spinc$ structure with $c_1=0$ is called trivial. This gives also a $\Spinc$ structure (still called trivial) on $\Sigma \times [0,1]$.  We define a {\em relative $\Spinc$ structure} on $Y$ to be a choice of extending the trivial $\Spinc$ structure on a collar neighborhood of $\del Y =\Sigma$ to a $\Spinc$ structure on $Y$. The space of relative $\Spinc$ structures, $\Spinc(Y, \del Y)$, is affinely isomorphic to $H^2(Y, \del Y) \cong H_1(Y)$. On the other hand, the space of (ordinary)  $\Spinc$ structures on $Y$, denoted $\Spinc(Y)$, is affinely isomorphic to $H^2(Y) \cong H_1(Y, \del Y)$. There is a natural map
\be
\label{eq:spincrel}
\Spinc(Y, \del Y) \to \Spinc(Y)
\ee
and this map is surjective, since (up to affine isomorphism) it comes from the long exact sequence of a pair
$$ \dots \to H^2(Y, \del Y) \to H^2(Y) \to H^3(\del Y) =0.$$

There is an action of $H_1(\Sigma)$ on $\Spinc(Y, \del Y)$ given as follows. Consider the map
\be
\label{eq:c}
c: H_1(\Sigma) \cong H^1(\Sigma) \to H^2(Y, \del Y)
\ee
obtained by composing Poincar\'e duality with the connecting homomorphism from the exact sequence in cohomology for $(Y, \del Y)$. We let $\gamma\in H_1(\Sigma)$ act on $a \in \Spinc(Y, \del Y)$ by 
\be
\label{eq:actc}
a \mapsto a + c(\gamma).
\ee

Next, suppose we have two three-manifolds $Y^+$ and $Y^-$ with boundaries $\del Y^+ = -\del Y^-= \Sigma$. We can glue them to get a closed three-manifold
$$Y = Y^- \cup_\Sigma Y^+.$$
A $\Spinc$ structure $a \in \Spinc(Y)$ has restrictions 
 $$a|_{Y_\pm} \in \Spinc(Y_\pm).$$ 
 We also have a map
\be
\label{eq:sp12}
 \Spinc(Y^-, \del Y^-) \oplus \Spinc(Y^+, \del Y^+) \to \Spinc(Y)
 \ee
given by gluing relative $\Spinc$ structures.  Given a pair $(a^-, a^+)$ in the preimage of $a$, note that $a^{\pm}$ are always lifts of $a|_{Y^{\pm}}$ under the map \eqref{eq:spincrel}. Further, \eqref{eq:sp12} is affinely isomorphic to the map in the Mayer-Vietoris sequence
$$ \dots \to H_1(\Sigma) \to H_1(Y^-) \oplus H_1(Y^+) \to H_1(Y) \to 0. $$
This shows that the map \eqref{eq:sp12} is surjective, and that any two pairs $(a^-, a^+)$ with the same image $a$ are related via an element $\gamma \in H_1(\Sigma)$, by 
\be
\label{eq:aa}
(a^-, a^+) \to (a^- + c(\gamma), a^+ + c(\gamma)).
\ee

\subsection{$\Spinc$ structures on plumbed knot complements}
\label{sec:spincplumbed}
Let us specialize the discussion from Section~\ref{sec:spincY} to plumbed knot complements. Let $\Gamma$, $v_0$, $\hGamma$, $Y=Y(\Gamma, v_0)$ and $\hat Y = Y(\hGamma)$ be as in Section~\ref{sec:PlumbedKnot}. We let $\Vert$ be the set of vertices in $\Gamma$ (including $v_0$). We will denote the elements of $\Vert$ by 
$$v_1, \dots, v_s,$$ with $v_s=v_0$. When considering vectors labeled by the vertices of $\Gamma$, we will put the entry for $v_i$ in position $i$. Thus, $\Z^s$ will be the set of such vectors with integer entries, and $\Z^{s-1} = \Z^{s-1} \times 0 \subset \Z^s$ the subset consisting of those vectors such that the entry labeled by $v_0$ is zero. 

As in Sections~\ref{sec:PlumbedDef} and \ref{sec:PlumbedSeries}, we let $\vdelta = (\delta_v)_{v \in \Vert}$ be the vector made of the degrees of the vertices, and $M$ be the linking matrix of $\Gamma$. If  $\hat M$ denotes the framing matrix for  for the closed manifold $\hat Y = Y(\hGamma)$, we have
\be
\label{eq:MhatM}
 M =  \left( \begin{array}{ccc|c}
& & &   * \\
& {\hat M} &     &\vdots \\
& &    &* \\
\hline
* & \dots  &  * & m_{v_0}\\
\end{array} \right).
\ee


Let us identify $\Z^s$ with $H_2(W(\Gamma))$, where $W(\Gamma)$ is the four-dimensional cobordism given by surgery along the link coming from the graph $\Gamma$. The basis element $\ve_i$ ($i=1, \dots, r$) of $\Z^s$ corresponds to the two-handle attached along the unknot component from the vertex $v_i$. From here, we get natural identifications
\be
\label{eq:relH}
 H^2(Y, \del Y) \cong H_1(Y)  \cong \Z^s/M \Z^{s-1}.
 \ee
The first isomorphism is Poincar\'e duality and the second is given by taking $\ve_i \in \Z^s$ into the element of $H_1(Y)$ represented by the belt sphere of the two-handle associated to $v_i$. In particular, the meridian $\mu$ and the (graph) longitude $\lambda$ of the knot $K \subset Y(\hGamma)$ correspond to the vectors $\ve_s$ and $M\ve_{s}$, respectively. Thus, we can identify 
$$ H_1(\Sigma)\cong \Span\langle \ve_s, M\ve_{s} \rangle \subset \Z^s$$
and the map $c: H_1(\Sigma) \to H^2(Y, \del Y)$ from \eqref{eq:c} with the composition
$$ \Span\langle \ve_s, M\ve_{s} \rangle \hookrightarrow \Z^s \twoheadrightarrow \Z^s/M\Z^{s-1}.$$
If we are interested in $H^2(Y) \cong H_1(Y, \del Y)$, this would be identified with 
$$\Z^s/(  \Span\langle  \ve_s, M\ve_{s} \rangle + M \Z^{s-1}) = \Z^s /(  \Span\langle  \ve_s \rangle + M \Z^{s}). $$

With regard to $\Spinc$ structures, we can identify those on the cobordism $W(\Gamma)$ with elements of $2\Z^s + \vdelta$, by considering their first Chern class. From here, in the spirit of \eqref{eq:spincid}, we get identifications
\be
\label{eq:spincYrel}
\Spinc(Y, \del Y) \cong  (2\Z^s + \vdelta)/(2M \Z^{s-1})
\ee
and
$$  \Spinc(Y) \cong  (2\Z^s + \vdelta)/( \Span\langle 2\ve_{s}, 2M\ve_s \rangle + 2M \Z^{s-1}).$$
The action of $H_1(\Sigma)$ on $\Spinc(Y, \del Y)$ is by adding multiples of $2\ve_{s}$ and $2M\ve_s$.

\subsection{$\Spinc$ structures and gluing}
\label{sec:spincglued}
Suppose we have two plumbed knot complements 
$$Y^-= Y(\Gamma^-, v_0^-), \ \ \ Y^+=Y(\Gamma^+, v_0^+)$$
with framing matrices $M^-$ and $M^+$. The standard gluing described at the end of Section~\ref{sec:PlumbedKnot} yields a closed three-manifold
$$ Y = Y^- \cup_\Sigma Y^+ = Y(\Gamma),$$
where $\Gamma$ is obtained from the graphs $\Gamma^-$ and $\Gamma^+$ as in Figure~\ref{fig:glue}. 

We order the vertices in $\Gamma^-$ as before, with $v_0^-$ being $v_s^-$. On the other hand, for convenience, we order the vertices in $\Gamma^+$ by starting with $v_0^+$. Then, the framing matrix $M$ for $\Gamma$ takes the form
$$ M =  \left( \begin{array}{ccc|c|ccc}
& & & * & & & \\
& {\hat M}^- &  & \vdots  & & 0 \\
& & & * & & & \\
\hline
*& \dots  & * & m_{v_0^-} + m_{v_0^+} &* & \dots & *\\[.5em]
\hline
& & & * & & & \\
& 0 & & \vdots & & {\hat M}^- & \\
& & & * & & & \\
\end{array} \right)$$
where $\hat M^-$ and $\hat M^+$ are the framing matrices for $\hGamma^- = \Gamma^-\setminus \{v_0^-\}$ and $\hGamma^+=\Gamma^+ \setminus \{v_0^+\}$. Thus, $M$ is ``almost block diagonal,'' being obtained by joining the diagonal blocks $M^-$ and $M^+$ at the central entry.

Under our identifications, the map 
$$ \Spinc(Y^-, \del Y^-) \oplus \Spinc(Y^+, \del Y^+) \to \Spinc(Y)$$
from \eqref{eq:sp12} is given by 
\be
\label{eq:aaa}
 \bigl([(a^-_1, \dots, a^-_{s-1}, a^-_s)], [(a^+_1, a^+_2, \dots, a^+_t)] \bigr) \mapsto [ (a^-_1, \dots, a^-_{s-1},  a^-_s+ a^+_1, a^+_2, \dots, a^+_t)].
 \ee
As a consistency check, let us see directly that the map in \eqref{eq:aaa} is well-defined, i.e., its effect does not depend on the representatives $\va^- = (a^-_1, \dots, a^-_{s-1}, a^-_s)$ and $\va^+= (a^+_1, a^+_2, \dots, a^+_t)$ that we chose for the relative $\Spinc$ structures. Let us write
$$ \va^- * \va^+ = (a^-_1, \dots, a^-_{s-1},  a^-_s+ a^+_1, a^+_2, \dots, a^+_t).$$
In view of \eqref{eq:aa}, we just need to check the effect of acting simultaneously on both $\va^-$ and $\va^+$ by an element of $H_1(\Sigma)$. It suffices to consider acting by the meridian $\mu^- = -\mu^+$ and the longitude $\lambda^-=\lambda^+$. (Recall that when identifying the boundaries of $Y^-$ and $Y^+$, we have a change of orientations in the meridians.) Acting by the meridian $\mu^-$ adds $2$ to $a_s^-$, and subtracts $2$ from $a_1^+$, so it leaves $\va^- * \va^+$ unchanged. Acting by the longitude means adding the vector $2M^- \ve_s^-$ to $\va^-$ and the vector $2M^+ \ve_1^+$ to $\va^+$, which results in adding $2M\ve_s$ to $\va^- * \va^+$. Adding an element in the image of $2M$ does not change the resulting $\Spinc$ structure on $Y$. 

\subsection{Dehn surgeries}
\label{sec:DehnS}
Suppose we have a manifold $Y$ with parametrized torus boundary, where the parametrization is specified by a longitude $\lambda$ and a meridian $\mu$. The result of Dehn surgery on $Y$ with coefficient $p/r \in \Q \cup \{\infty\}$ is  
$$Y_{p/r} := Y \cup (S^1 \times D^2),$$
where the gluing is by a diffeomorphism of the boundaries such that $p \mu + r \lambda$ gets identified with  $\mathit{pt} \times \del D^2$. The typical situation is that $\hat Y = Y_{\infty}$ is an integer homology three-sphere, $K \subset \hat Y$ is the knot whose complement gives $Y$, as in Section~\ref{sec:PlumbedKnot}, and $\lambda$ is the Seifert longitude. In that case, we write $\hat Y_{p/r}(K)$ for $Y_{p/r}$.

However, here we are concerned with the setting in which $Y = Y(\Gamma, v_0)$ is plumbed, and $\lambda$ is the graph longitude. (This may or may not agree with the Seifert longitude.) Then, Dehn surgery with coefficient $p/r$ can be thought of as the result of standard gluing between $\Gamma$ and the linear plumbing graph showed in Figure~\ref{fig:UnknotRational}, where the labels $k_1, \dots, k_s$ give the continued fraction expression
\be
 \frac{p}{r} \; = \; k_1 - \cfrac{1}{k_2 - \cfrac{1}{\ddots - \cfrac{1}{k_s}}}.
\label{contfract}
\ee

\begin {figure}
\begin {center}
\input{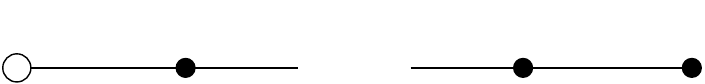_t}
\caption {A plumbing diagram for the solid torus $\Spr$.}
\label{fig:UnknotRational}
\end {center}
\end {figure}

We denote by $\Spr$ the solid torus with parametrized boundary, as represented by the graph in Figure~\ref{fig:UnknotRational}. When writing vectors labeled by the vertices of the graph, let us keep the ordering $1, \dots, s$, that is, the distinguished vertex comes first. The relative $\Spinc$ structures on $\Spr$ form an affine space over $H_1(\Spr) =\Z$. 


On the other side, if the manifold $\hat Y$ is an integer homology sphere, then the relative $\Spinc$ structures on $Y$ form an affine space over $\Z$, too. The generator of $H_1(Y)$ can be taken to be the meridian and, in terms of \eqref{eq:spincYrel}, relative $\Spinc$ structures differ from each other by multiples of the vector $[(0, \dots, 2)]$. Under our identifications, the gluing map
$$ \Spinc(Y, \del Y) \oplus \Spinc(\Spr, \del \Spr) \to \Spinc(Y_{p/r})$$
corresponds to
\be
\label{eq:Zp}
 \Z \oplus \Z \to \Z/p, \ \ (a^-, a^+) \to a^- + a^+ \pmod{p}
 \ee
up to an affine transformation.

\newpage \section{An invariant of plumbed knot complements}
\label{sec:invariant}

In this section we will define an analogue of $\Zhat_a(q)$ for the manifolds with toroidal boundary introduced in Section~\ref{sec:plumbedKnot}.

\subsection{The definition}
\label{sec:ZhatKnot}
Let $Y = Y(\Gamma, v_0)$ be a plumbed knot complement. (We keep the notation from Section~\ref{sec:PlumbedKnot}.) We assume that the pair $(\Gamma, v_0)$ is weakly negative definite, in the following sense. (Compare with Definition~\ref{def:wnd}.) 

\begin{definition}
\label{def:WND}
Let $\Gamma$ be a plumbing tree and $v_0 \in \Vert$ a distinguished vertex. The pair $(\Gamma, v_0)$ is called {\em weakly negative definite} if if the corresponding matrix $M$ is invertible, and $M^{-1}$ is negative definite on the subspace of $\Z^s$ spanned by the non-distinguished vertices of degree $\geq 3$. 
\end{definition}

Pick a relative $\Spinc$ structure
$$ a \in \Spinc(Y, \del Y) = (2\Z^s + \vdelta)/(2M \Z^{s-1})$$
with a representative 
$$ \va = (a_v)_{v \in \Vert} \in 2\Z^s + \vdelta.$$

We define the invariant $\Zhat_a$ for $Y$ by analogy with the formula  \eqref{eq:newformula} in the closed case, in terms of contributions from the vertices and edges:
\be
\label{eq:newformula2}
\Zhat_a(Y; z, n, q) = (-1)^{\pi} q^{\frac{3\sigma-(\va, M^{-1}\va)}{4}} \sum_{n_v} \, \oint\limits_{|z_v|=1} \, \frac{dz_v}{2 \pi i z_v} \, \prod_{\text{Vert}} \big( \ldots \big)
\, \prod_{\text{Edges}} \big( \ldots \big)
\ee
Here, the sums and the integrals are only over the variables $n_v$ and $z_v$ with $v\neq v_0$. For $v_0$, we set
$$ n = n_{v_0}, \ \ \ z = z_{v_0},$$
and let $n$ and $z$ be part of the input in $\Zhat_a$.

In \eqref{eq:newformula2}, the factor for a vertex $v \neq v_0$ with framing coefficient $m_{v}$ is
\be
 q^{-m_vn_v^2-\frac{m_v}{4}-a_vn_v} z_v^{2m_v n_v+a_v} \left( z_v - \frac{1}{z_v} \right)^2
\label{vertexrulenew2}
\ee
and the factor for an edge $(u, v)$ is
\be
 q^{-2n_u n_v} \frac{z_u^{2n_v} z_v^{2n_u}}{\big( z_u - \tfrac{1}{z_u} \big) \big( z_v - \tfrac{1}{z_v} \big)}.
\label{edgerulenew2}
\ee
The contribution from the vertex $v_0=v_s$ is set to be
\be
q^{-m_{v_0} n^2-\frac{m_{v_0}}{4}-a_{v_0}n} z^{2m_{v_0} n+a_{v_0}} \left( z - \frac{1}{z} \right).
\label{v0rule}
\ee
This is almost as in \eqref{vertexrulenew2}, except we don't square the factor $z-1/z$. We choose this contribution to be as such with an eye toward the standard gluing, where $v_0$ gets joined to another unframed vertex.

If we prefer, we can also write 
$$\vell = 2M \vn + \va \in \Z^s$$
and express $\Zhat_a(Y; z, n, q)$ in a manner similar to the formulas \eqref{eq:plumbing1} or \eqref{eq:plumbing2} for closed manifolds. For example, the analogue of \eqref{eq:plumbing1} reads
\be
\Zhat_a(Y; z, n, q) = (-1)^{\pi} \left( z - \frac{1}{z} \right)^{1-\deg(v_0)} q^{\frac{3\sigma-\sum_v m_v}{4}}  \oint\limits_{|z_v|=1} \prod_{\substack{v\in \Vert \\ v \neq v_0} } \frac{dz_v}{2\pi i z_v} \left( z_v - \frac{1}{z_v} \right)^{2-\deg(v)} \! \! \! \! \Theta_a^{-M}(\vz)
\label{eq:plumbing1open}
\ee
where
\be
\label{eq:ThetaYopen}
\Theta_a^{-M}(\vz)= \sum_{\vell} q^{-\frac{(\vell, M^{-1}\vell)}{4}} \prod_{ v\in \operatorname{Vert}} z_v^{\ell_v}.
\ee
In \eqref{eq:plumbing1open} we only integrate and sum over $z_v$ with $v\neq v_0$, and the sum in \eqref{eq:ThetaYopen} is over $\vell =2M \vn + \va$ where the last entry of $\vn$ is fixed to be $n$. The formula \eqref{eq:plumbing1open} makes it clear that $\Zhat_a(z, n, q)$ does not depend on which  representative $\va$ we choose for the relative $\Spinc$ structure $a$. Also, the weakly negative condition ensures that $\Zhat_a$ is well-defined.

With regard to conjugation of $\Spinc$ structures, observe that we have the symmetry:
\be
\label{eq:conjsymm}
 \Zhat_{-a}(Y; z, q, n) = -\Zhat_a(Y; z^{-1}, q, -n).
 \ee
 
 \begin{proposition}
The series $\Zhat_a(q)$ defined in \eqref{eq:newformula2} is unchanged by the Neumann moves from Figure~\ref{fig:kirby}.
\end{proposition}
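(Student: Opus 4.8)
The plan is to imitate the proof of Proposition~\ref{prop:invariance} for the closed case, tracking the extra features introduced by the distinguished vertex $v_0$. The crucial point is that all three Neumann moves (a), (b), (c) can be performed away from $v_0$, or involving $v_0$ only in ways that do not alter its distinguished status; in particular, we are never allowed to blow down $v_0$, so $v_0$ is always a vertex of the ``bottom'' graph as well as of the ``top'' graph. We will keep the ordering conventions from Section~\ref{sec:spincplumbed}, with $v_0=v_s$, and use the formula \eqref{eq:plumbing1open}. Writing $\Theta_a^{-M}(\vz)$ as a sum over $\vell = 2M\vn + \va$ with the last entry of $\vn$ fixed to equal $n$, we must check that this constraint is preserved under the natural correspondences $\vell \mapsto \vell'$ between lattice vectors for the two graphs.

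First I would treat the case where the move takes place entirely inside $\hGamma$, i.e.\ none of the vertices $v_1, v_0, v_2$ (for move (a)) or the blown-up vertex (for move (b)), or the identified vertices (for move (c)) is $v_0$. In that situation the local linear-algebra identities from the proof of Proposition~\ref{prop:invariance} — namely $(\vell, M^{-1}\vell) = (\vell', (M')^{-1}\vell')$, the bookkeeping of $\pi$, $\sigma$, and $\sum_v m_v$, and the sign cancellations — go through verbatim, because they are statements about the minors of $M$ not involving the row/column of $v_0$. One only has to observe that under the correspondence $\vell\mapsto\vell'$ the entry in the $v_0$-slot is unchanged (for moves (a) with sign $-1$, (b), and (c)), or is negated (for move (a) with sign $+1$ and the part of move (c) that flips the right-hand side of the graph); in the latter situations the constraint ``last entry of $\vn$ equals $n$'' is replaced by ``last entry of $\vn$ equals $-n$,'' which is exactly the input substitution $z \to z^{-1}$, $n \to -n$ that is built into the symmetry \eqref{eq:conjsymm}, so one can either restrict to the moves that keep $v_0$ on the ``unflipped'' side (always possible, since the graph is a tree and $v_0$ is a single vertex) or absorb the flip into \eqref{eq:conjsymm}. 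One also checks that the prefactor $(z-1/z)^{1-\deg(v_0)}$ is unaffected, since these moves do not change $\deg(v_0)$.

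Next I would handle the moves that do involve $v_0$. The relevant ones are the blow-up move of Figure~\ref{fig:kirby}(b) applied at $v_0$ (adding a $\pm 1$-framed leaf adjacent to $v_0$ and changing $m_{v_0}$ by $\mp 1$), and the move (a) applied along an edge incident to $v_0$. For move (b) at $v_0$: the newly created leaf is an ordinary (squared) leaf vertex, $v_0$ stays distinguished with its degree increased by one and framing shifted, and the computation is identical to the closed-case treatment of move (b), except that the factor attached to $v_0$ carries only a single $(z-1/z)$ rather than a square — but this single factor is precisely what is needed so that, after the leaf integration contributes its $(z_1 - 1/z_1)$, the net change to the $v_0$-prefactor matches the change in $\deg(v_0)$; the extra $q^{-1/4}$ and the theta-function identity $(\vell, M^{-1}\vell) = (\vell', (M')^{-1}\vell') + 1$ are as before. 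For move (a) along an edge at $v_0$: the inserted degree-two vertex is handled exactly as the $z_0$-variable in the closed proof (its integral picks out the constant term, forcing its $\vell$-entry to be zero), and the $v_0$-entry of $\vell$ is carried along unchanged, so the fixed-last-entry constraint survives.

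The main obstacle I anticipate is not any single hard computation but the careful organization of sign conventions and the role of \eqref{eq:conjsymm}: when a Neumann move forces a reflection $\vell_2 \to -\vell_2$ on a part of the graph, one must decide whether $v_0$ lies in the reflected part, and if so, reconcile the resulting $z\to z^{-1}$, $n\to -n$ with the claimed invariance statement. The clean way around this is to note that, because $\Gamma$ is a tree and each move is local, we may always arrange the move so that $v_0$ is on the fixed side; the reflected-side case then follows formally by conjugating with \eqref{eq:conjsymm} on both graphs. Granting this, the proof reduces to: (i) the prefactor analysis ($\pi$, $\sigma$, $\sum_v m_v$, and $(z-1/z)^{1-\deg v_0}$), (ii) the quadratic-form identities on the relevant minors, and (iii) the constant-term/integration argument for the auxiliary degree-$\le 2$ vertices — all of which are present, in the closed setting, in the proof of Proposition~\ref{prop:invariance}, and require only the observation that everything not touching $v_0$ behaves as before while the $v_0$-slot is merely transported.
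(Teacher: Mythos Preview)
Your proposal is correct and takes essentially the same approach as the paper: the paper's own proof is the single sentence ``This is entirely similar to the proof of Proposition~\ref{prop:invariance},'' and you have simply spelled out what ``similar'' means---carrying the $v_0$-slot through the lattice correspondences, checking that the prefactor $(z-1/z)^{1-\deg(v_0)}$ behaves correctly when a move touches $v_0$, and using the tree structure (or \eqref{eq:conjsymm}) to keep $v_0$ on the unflipped side for the sign-$+1$ version of move (a) and for move (c). There is nothing missing or incorrect in your outline.
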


 \begin{proof}
 This is entirely similar to the proof of Proposition~\ref{prop:invariance}.
 \end{proof}
 
 \begin{remark}
 \label{rem:Mdeg}
 We can sometimes make sense of $\Zhat_a$ even when $M$ is not invertible. (Compare Remark~\ref{rem:0nd} in the closed case.) An important example of this will appear in Lemma~\ref{lem:KnotCo}.
 \end{remark}

\subsection{The solid torus}
Let us compute the invariants $\Zhat_a$ for the solid torus $\Spr$ represented by the diagram in Figure~\ref{fig:UnknotRational}. We assume that
$$p \neq 0,  \ \ r > 0, \ \ \gcd(p, r)=1.$$

Let us start with the simple case where $r=1$, $s=1$ and $k_1=p$. Then, $\Gamma$ is the graph with a single (distinguished) vertex, labeled by $p \neq 0$. There are infinitely many relative $\Spinc$ structures, labeled by $a \in \Z$, and represented by the vectors $\va = (2a)$.  Starting from \eqref{eq:plumbing1open}, we  compute
\be
\label{eq:Sp}
 \Zhat_{a}(\Sp; z, n, q)=-\operatorname{sign}(p) \cdot q^{\frac{3\operatorname{sign}(p)-p}{4}} q^{-\frac{1}{p}(pn+a)^2} z^{2pn+2a} \bigl( z - z^{-1} \bigr).
 \ee

Let us now assume that we have at least two vertices ($s \geq 2$). Recall from Section~\ref{sec:DehnS} that the $\Spinc$ structures on $\Spr$ are an affine space over $\Z$. Pick a $\Spinc$ structure $a$ represented by some vector $\va = (a_1, \dots, a_s)$. From the formula  \eqref{eq:plumbing1open} we get
$$ \Zhat_{a}(\Spr; z, n, q)= (-1)^{\pi} q^{\frac{3\sigma-\sum k_i}{4}} \Bigl( - \sum_{\vell \in \Lambda^+_{a,n}} q^{-\frac{(\vell, M^{-1}\vell)}{4}} + \sum_{\vell\in \Lambda^-_{a,n} } q^{-\frac{(\vell, M^{-1}\vell)}{4}}   \Bigr)$$
where
$$ \Lambda^{\pm}_{a,n} = \{ \vell = (2j+1, 0, \dots, 0, \pm 1) \mid j \in \Z, \ \vell = 2M \vn + \va \text{ for some } \vn = (n, n_2, \dots, n_s) \in \Z^s \}.$$
We have
$$ M = \begin{pmatrix} k_1 & 1 &   & & & \\
1 & k_2 & 1 & &   &  \\
 & 1 & k_3 &  & & \\
& & & \ddots & & \\
 & & & & k_{s-1} & 1\\
   & & & & 1 & k_s
 \end{pmatrix}.$$
The determinant of $M$ is either $p$ or $-p$. In fact, 
$$ \det(M) = (-1)^{\nu} |p| = (-1)^{s+\pi} \sign(p) p$$
where $\nu$ is the number of negative eigenvalues in $M$.

To get a hold of the exponents $-(\vell, M^{-1}\vell)/4$ for $\vell \in \Lambda^{\pm}_{a,n}$, we only need to know the entries of $M^{-1}$ in the positions $(1,1)$, $(1,s)$ and $(s, s)$. They are
$$ (M^{-1})_{11} = r/p, \ \ (M^{-1})_{1s} = \eps/{p}, \ \ (M^{-1})_{ss} = {D}/{p},$$
where
$$\eps = \sign(p) \cdot (-1)^{\pi+1} \in \{\pm 1\}$$ 
and $D$ is the determinant of the top-left $(s-1)\times (s-1)$ minor in $M$. Therefore, for $\vell = (2j+1, 0, \dots, 0, \pm 1)$, we find that
$$-\frac{(\vell, M^{-1}\vell)}{4} = -\frac{r}{p}\Bigl(j+\frac{1}{2}\pm \frac{\eps}{2r}\Bigr)^2 + \frac{1}{4pr} - \frac{\tau}{4p}.$$
Moreover, one can check that the values of $j$ that contribute to $\vell \in  \Lambda^{\pm}_{a,n}$ are those such that 
$$ rj \pm \frac{\eps}{2}= pn +\beta,$$
where $\beta \in  \Z + \frac{1}{2}$ is some value depending on $k_1, \dots, k_s$ and $a$. Also, as we keep $k_1, \dots, k_s$ fixed and vary the relative $\Spinc$ structure $a$, the values of $\beta$ vary affinely with respect to $a$.

From here we get 
\be
\label{eq:zzz}
 \Zhat_{a}(\Spr; z, n, q) = (-1)^{\pi} q^{\frac{3\sigma-\sum k_i}{4} + \frac{1}{4pr} - \frac{D}{4p}} \cdot \begin{cases}
 -  q^{-\frac{r}{p}(j+\frac{1}{2}+\frac{\eps}{2r})^2}z^{2j+1} & \text{if } \frac{pn+\beta}{r}-\frac{\eps}{2r}   = j \in \Z,\\
  q^{-\frac{r}{p}(j+\frac{1}{2}-\frac{\eps}{2r})^2} z^{2j+1} & \text{if } \frac{pn+\beta}{r}+\frac{\eps}{2r}  = j \in \Z,\\
0 & \text{otherwise}.
\end{cases}
\ee
In order for this formula to satisfy the symmetry \eqref{eq:conjsymm}, we see that we must have
$$ \beta = a -\frac{r}{2}.$$
This fixes a canonical identification 
\be
\label{eq:ahalf}
\Spinc(\Spr, \del \Spr) \cong \begin{cases} \Z & \text{for } r \text{ odd,}\\
\Z + \frac{1}{2} & \text{for } r \text{ even,}\end{cases}\ee
so that conjugation of relative $\Spinc$ structures corresponds to the map $a \to -a$.
Note that there exists a self-conjugate relative $\Spinc$ structure on $\Spr$ if and only if $r$ is odd.

Now, the formula \eqref{eq:zzz} can be written more simply as
\be
\label{eq:Zcases}
 \Zhat_{a}(\Spr; z, n, q) = \begin{cases} 
 \pm \sign(p) \cdot q^{\alpha(p,r)} \cdot q^{-\frac{(pn+a)^2}{pr}} 
  z^{2j+1} & \text{if } \frac{pn+a}{r}\mp \frac{1}{2r} - \frac{1}{2}  = j \in \Z,\\
0 & \text{otherwise}
 \end{cases}
\ee
where
\be
\label{eq:alpha}
 \alpha(p,r) =\frac{3\sigma-\sum k_i}{4} + \frac{1}{4pr} - \frac{D}{4p} \in \Q.
 \ee

Equation \eqref{eq:Zcases} comes with the caveat that the two contributions (from the two choices of sign) add up in the special case where we have $$ \frac{pn+a}{r}\mp \frac{1}{2r} - \frac{1}{2} \in \Z$$ for both choices of sign. This happens if and only if $r=1$. In that case, we can recover the formula \eqref{eq:Sp} from \eqref{eq:Zcases}, with $\alpha(p,1) = (3\sign(p)-p)/4$.

The expression $\alpha(p,r)$ from \eqref{eq:alpha} is independent of the presentation of $p/r$ as a continued fraction. Indeed, one can check it is invariant under the Neumann moves. Better yet, this expression can be related to the Dedekind sums 
$$ s(p,r) = \sum_{i=1}^{r-1} \Bigl(\Bigl(\frac{i}{r}\Bigr)\Bigr) \Bigl(\Bigl(\frac{ip}{r}\Bigr)\Bigr), \ \ \ \text{where} \ ((x)) := \begin{cases}x-\lfloor x \rfloor - 1/2 & \text{if } x \in \R - \Z,\\
0 & \text{if } x \in \Z.\end{cases}$$
Using Barkan's evaluation \cite{Barkan} for Dedekind sums in terms of continued fractions, we find that
\be
\alpha(p,r) = 3\sign(p) \Bigl( s(p,r) + \frac{1}{4} \Bigr) - \frac{p}{4r}.
\ee

\subsection{Gluing formula}
\label{sec:gluing}
In this section we prove Theorem~\ref{thm:glue}. Consider the standard gluing of two plumbed knot complements $Y^-$ and $Y^+$. We use the notation from  Section~\ref{sec:spincglued}, and assume that $(\Gamma^-, v_0^-)$, $(\Gamma^+, v_0^+)$ and $\Gamma$ are weakly negative definite. Pick
$$ a^- \in \Spinc(Y^-, \del Y^-), \ \ a^+ \in  \Spinc(Y^+, \del Y^+)$$
with representatives $\va^-$ and $\va^+$ such that the entries corresponding to the unframed vertices are zero. Let $a$ be the image of $(a^-, a^+)$ in $\Spinc(Y)$ under the map \eqref{eq:sp12}, and let $\va$ be the representative of $a$ given by joining $\va^-$ and $\va^+$, as in \eqref{eq:aaa}. 

Now, from the formulas \eqref{eq:newformula} and \eqref{eq:newformula2} for the $\Zhat$ invariants, we  deduce the desired gluing formula:
\be
\label{eq:glue}
\boxed{\Zhat_a(Y; q) = (-1)^{\tau} q^{\xi} \sum_n \oint_{|z|=1} \frac{dz}{2\pi i z} \Zhat_{a^-}(Y^-; z, n, q)  \Zhat_{a^+}(Y^+; z, n, q)}
\ee
where
$$\tau = \pi(M) - \pi(M^-) - \pi(M^+)$$
and
$$\xi =  \frac{3}{4} \bigl (\sigma(M) -\sigma(M^-) - \sigma(M^+) \bigr)-  \frac{1}{4}\bigl((\va, M^{-1} \va)- (\va^-, (M^{-})^{-1}\va^-) - (\va^+, (M^{+})^{-1}\va^+)  \bigr)  \in \Q.$$
One can check that $\xi=\xi(a^-, a^+)$ depends only on the $\Spinc$ structures $a^-$ and $a^+$, and not on their representatives $\va^-$ and $\va^+$.

\subsection{TQFT properties}
\label{sec:TQFT}
Let $Y = Y(\Gamma, v_0)$ with $(\Gamma, v_0)$ weakly negative definite.  From now on we will assume that the distinguished vertex $v_0$ has degree $1$. Then, for any $a \in \Spinc(Y, \del Y) \cong  (2\Z^s + \vdelta)/(2M \Z^{s-1})$ represented by a vector $\va \in 2\Z^s + \vdelta$, we have that $a_{v_0}$ must be odd. Looking at the factors containing the variable $z$ in \eqref{edgerulenew2} and \eqref{v0rule}, we see that $z$ always appears with an odd exponent in $\Zhat_a(Y; z, n, q)$. Thus, it is convenient to introduce a new variable
 $$x = z^2.$$
We can write
\be
\label{eq:bmn}
\Zhat_a(Y; z, n, q)= \sum_{m \in \Z+\frac{1}{2}}  b(m,n) x^m, \ \ b(m,n) \in \k.
\ee
Here, $\k$ is the Novikov-type field consisting of power series of the form
$$ \sum_{s \in \Q} b_\omega q^\omega, \ \ b_\omega \in \Q$$
such that the set $\Omega = \{\omega \mid b_\omega \neq 0\} \subset \Q$ is bounded below, and its projection to $\Q/\Z$ is finite.

\begin{remark}
Physically, $m$ and $n$ are interpreted as electric and magnetic fluxes. The basis with discrete variables $(m,n)$ is sometimes called the {\em flux basis}, whereas the choice of variables $(x,n)$ is called the {\em fugacity basis}. 

Geometrically, the variable $m$ corresponds to the meridian of the boundary torus, and the variable $n$ to the longitude. This can be seen in terms of the $H_1(T^2)$ action that will be defined in  Section~\ref{sec:action}.
\end{remark}

\begin{definition}
A function 
$$b: (\Z+\tfrac{1}{2}) \times \Z \to \k, \ \ b(m, n) = \sum b(m, n, \omega) q^\omega$$ is said to be {\em well-behaved} if the set $\Omega = \{ \omega \in \Q \mid b(m, n, \omega) \neq 0 \text{ for some } m, n \}$ is bounded below, and its projection to $\Q/\Z$ is finite. 
The space of well-behaved functions is denoted by $\V$.
\end{definition}

Observe that $\V$ is a vector space over $\k$. Furthermore, on $\V$ we have a (partially defined) bilinear pairing $\langle \cdot , \cdot \rangle$ given by
\be
\label{eq:paired}
 \langle b^-, b^+\rangle = \sum_{m \in \Z+\frac{1}{2}} \sum_{n \in \Z}  b^-(m, n) b^+(m, n)  \in \k.
\ee

Observe that, for a closed negative definite plumbed manifold $Y$, the invariants $\Zhat_a$ take values in $\k$. Moreover, for a negative definite plumbed manifold $Y = Y(\Gamma, v_0)$ with torus boundary, the function $b$ giving the coefficients of $\Zhat_a(Y;z, n, q)$ in \eqref{eq:bmn} is well-behaved. Hence, we can view the invariants associated to $Y$ and $a \in \Spinc(Y, \del Y)$ as elements
$$ \Zhat_a(Y) \in \V.$$

The gluing formula \eqref{eq:glue} reads
\be
\label{eq:glue2}
 \Zhat_a(Y; q) = (-1)^{\tau} q^{\xi} \left\langle  \Zhat_{a^-}(Y^-),  R \Zhat_{a^+}(Y^+)\right \rangle,
 \ee
 where the map $R$ corresponds to reversing the orientation of the meridian:
 $$ R: \V \to \V, \ \ (Rb)(m, n) = b(-m,n).$$ 

We see here some of the structure of a $2+1$ topological quantum field theory (TQFT), decorated by $\Spinc$ structures. In this theory, to the torus $T^2$ we associated the vector space $\V$, and to $3$-manifolds with boundary $T^2$ (equipped with relative $\Spinc$ structures) we associate elements of $\V$. To closed $3$-manifolds (equipped with $\Spinc$ structures) we associate elements of the underlying vector field $\k$. Our invariants satisfy a gluing formula, given by the bilinear pairing as in \eqref{eq:glue2}.

The mapping class group of the torus is the modular group $\SL$. This acts on $\V$ by pre-composition: if $b \in \V$, $X \in \SL$ and 
$$ X\begin{pmatrix} m \\ n \end{pmatrix} = \begin{pmatrix} m' \\ n' \end{pmatrix},$$
then $X \cdot b \in \V$ is such that
$$ (X \cdot b)(m, n) = b(m', n').$$ 

While our discussion so far has been limited to trees with a single unframed vertex (i.e., knot complements), one can define similar invariants for negative definite plumbing trees with several unframed vertices, all of degree one. These correspond to link complements. If we have such a graph, with $k$ unframed vertices, we will have invariants of the form
$$ \Zhat_a(Y; z_1, \dots, z_k, n_1, \dots, n_k, q),$$
labeled by relative $\Spinc$ structures, and with one pair of variables $(z_i, n_i)$ for each boundary component. Thus, the invariants take values in $\V^{\otimes k}$. If we glue two such manifolds along a single boundary component, we have a gluing formula similar to \eqref{eq:glue2}.

Of course, we do not yet have a TQFT.  We have only defined our invariants for three-manifolds coming from negative definite plumbing graphs, and for the surface of genus $1$. In Section~\ref{sec:recursion} we will also construct invariants for other knot complements in $S^3$, and for some surgeries on knots. We hope that in the future our theory will be extended to a true ($\Spinc$-decorated) TQFT in $2+1$ dimensions.


\subsection{The $\Z^2$ action}
\label{sec:action}
Looking at the gluing formula \eqref{eq:glue2}, it is interesting that the right-hand side gives the same answer, for any choice of pair $(a^-, a^+)$ that maps to a fixed $a \in \Spinc(Y)$. We can explain this fact from the point of view of a $\Spinc$-decorated TQFT, by introducing the following action of $H_1(T^2) \cong \Z^2$ on the vector space $\V$.

We will denote the action by $A$. We let the generator $(1, 0) \in \Z^2$ correspond to the meridian $\mu \in H_1(T^2)$, and the generator $(0,1)$ to the longitude $\lambda \in H_1(T^2)$. We let these generators act on $\V$ by
\be
\label{eq:Tmu}
 (A_\mu b)(m, n) =  b(m-1, n)
 \ee
 and
\be
\label{eq:Tlambda}
 (A_\lambda b)(m, n) = b(m, n+1).
 \ee

If $\gamma = (u, v) \in \Z^2 \cong H_1(T^2)$, we simply set
$$ A_{\gamma} = A_{\mu}^u A_{\lambda}^v.$$

It is easy to see that this action is orthogonal with respect to the pairing \eqref{eq:paired}:
\be
\label{eq:Tsymm}
  \langle  b^-, b^+\rangle = \langle  A_{\gamma} b^-, A_{\gamma} b^+\rangle.
  \ee

For a three-manifold $Y$ with boundary $\del Y = T^2$, recall that we also have the action \eqref{eq:actc} of $H_1(T^2)$ on $\Spinc(Y)$: $a \mapsto a + c(\gamma)$. Let us describe how the invariants $\Zhat_a(Y)$ behave with respect to this action. For simplicity, we will not keep track of overall factors of $q$. For $b, b' \in \V$, we will write 
$$ b \simeq b' $$
if, for all $m \in \Z + \tfrac{1}{2}, n \in \Z$, there exists $\beta(m,n) \in \Q$ such that
$$ b'(m, n)=  q^{\beta(m,n)} b(m, n).$$
We will also use the symbol $\simeq$ to denote elements of $\k$ that differ by an overall factor of $q^\beta$ for some $\beta \in \Q$.

\begin{proposition}
\label{prop:TQFTaction}
Let $Y=Y(\Gamma, v_0)$ be a negative definite plumbed manifold with torus boundary. Then, for any $a \in \Spinc(Y, \del Y)$ and $\gamma \in H_1(T^2)$, we have
 \be
\label{eq:translateSpinc}
 \Zhat_{a+c(\gamma)}(Y) \simeq A_\gamma \Zhat_a(Y).
 \ee
\end{proposition}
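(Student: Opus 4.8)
The plan is to prove the proposition by tracking, separately for $\gamma=\mu$ and $\gamma=\lambda$, how the defining formula \eqref{eq:newformula2} (equivalently \eqref{eq:plumbing1open}) changes when we replace a representative $\va$ of $a$ by a representative of $a+c(\gamma)$, then observe that the change is exactly the shift operator $A_\gamma$ up to an overall monomial in $q$. Since the two generators $(1,0)$ and $(0,1)$ of $\Z^2$ generate the whole group and the maps $A_\gamma$ and $a\mapsto a+c(\gamma)$ are both homomorphic in $\gamma$, it suffices to check \eqref{eq:translateSpinc} for $\gamma=\mu$ and $\gamma=\lambda$.

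First I would treat $\gamma=\mu$. By the description in Section~\ref{sec:spincplumbed}, the action of the meridian on $\Spinc(Y,\del Y)\cong(2\Z^s+\vdelta)/(2M\Z^{s-1})$ is given by adding $2\ve_s$, i.e. we may take as a representative of $a+c(\mu)$ the vector $\va' = \va + 2\ve_s$, which only changes the last coordinate $a_{v_0}\mapsto a_{v_0}+2$. Now inspect where $a_{v_0}$ enters \eqref{eq:plumbing1open}: it appears only in the theta function \eqref{eq:ThetaYopen}, through the constraint $\vell = 2M\vn+\va$ with the last entry of $\vn$ fixed to be $n$. Changing $\va$ to $\va+2\ve_s$ while keeping $n$ fixed is the same as keeping $\va$ fixed and changing the summation so that the $z$-exponent shifts by $2$ (since $z=z_{v_0}$ and $\ell_{v_0}=2m_{v_0}n+a_{v_0}$ in the notation of \eqref{v0rule}), while the quadratic form $(\vell,M^{-1}\vell)$ picks up a correction that is a fixed quadratic polynomial in $a_{v_0}$ and hence contributes only an overall power of $q$ (depending on $m,n$ in the bounded way allowed by $\simeq$), together with cross terms that, when expressed in the variable $x=z^2$, amount precisely to the substitution $m\mapsto m-1$ in \eqref{eq:bmn}. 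This is the content of $(A_\mu b)(m,n)=b(m-1,n)$ from \eqref{eq:Tmu}, so $\Zhat_{a+c(\mu)}(Y)\simeq A_\mu\Zhat_a(Y)$.

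Next, for $\gamma=\lambda$, the longitude acts on $\Spinc(Y,\del Y)$ by adding $2M\ve_s$, so a representative of $a+c(\lambda)$ is $\va' = \va + 2M\ve_s$. Writing $\vell = 2M\vn+\va$, adding $2M\ve_s$ to $\va$ is the same as shifting $\vn \mapsto \vn + \ve_s$, i.e. shifting the fixed last coordinate $n\mapsto n+1$ while leaving the rest of the sum, and in particular the $z$-exponents and the whole integrand over the remaining variables, structurally the same. Comparing with the vertex factor \eqref{v0rule} and the edge factor \eqref{edgerulenew2}, one sees the dependence on $n=n_{v_0}$ is again through a fixed quadratic/bilinear expression, so the net effect is the substitution $n\mapsto n+1$ in the arguments of $\Zhat_a$, up to an overall $q$-power of the type absorbed by $\simeq$. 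This matches $(A_\lambda b)(m,n)=b(m,n+1)$ from \eqref{eq:Tlambda}, giving $\Zhat_{a+c(\lambda)}(Y)\simeq A_\lambda\Zhat_a(Y)$. Finally, since $c(\mu+\nu)=c(\mu)+c(\nu)$ and $A_{\gamma_1+\gamma_2}=A_{\gamma_1}A_{\gamma_2}$, composing these two cases yields \eqref{eq:translateSpinc} for arbitrary $\gamma=(u,v)\in H_1(T^2)$.

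The main obstacle I anticipate is bookkeeping: showing cleanly that the correction to $-(\vell,M^{-1}\vell)/4$ under the shifts of $\va$ really does split as (an $(m,n)$-dependent power of $q$ that is uniformly bounded below, hence allowed under $\simeq$) times (a pure reindexing of the $x$- or $n$-variable), and in particular verifying that no genuine mixing of $x$ and $q$ occurs beyond what $\simeq$ permits. Concretely, one needs the identities $(M^{-1})$ evaluated on $\ve_s$ pairing correctly with $\va$ — i.e. that $(\va+2M\ve_s, M^{-1}(\va+2M\ve_s)) = (\va,M^{-1}\va) + 4(\ve_s,\va) + 4(\ve_s,M\ve_s)$ and the analogous computation with $2\ve_s$ in place of $2M\ve_s$ — and then to check that the $z^{\ell_{v_0}}$ factor absorbs exactly the right shift. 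This is routine linear algebra of the same flavor as \eqref{eq:elmatrix} in the proof of Proposition~\ref{prop:invariance}, but it must be done carefully to confirm the exponent shift is by $1$ (not $2$) in the variable $x=z^2$ for the meridian, and by $1$ in $n$ for the longitude.
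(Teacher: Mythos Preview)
Your proposal is correct and follows essentially the same route as the paper: reduce to the two generators $\mu,\lambda$, use the explicit description of the $H_1(T^2)$-action on representatives ($\va\mapsto\va+2\ve_s$ and $\va\mapsto\va+2M\ve_s$), and track the effect on the defining formula. Two small sharpenings are worth noting. For $\gamma=\mu$, the paper computes the effect cleanly by using \eqref{eq:newformula2}: since $a_{v_0}$ enters only through the global prefactor $q^{-(\va,M^{-1}\va)/4}$ and the $v_0$-vertex factor \eqref{v0rule}, adding $2$ to $a_{v_0}$ multiplies $\Zhat_a(Y;z,n,q)$ by exactly $q^{-2n}z^2=q^{-2n}x$ (times a constant power of $q$ from the prefactor), which is $A_\mu$ up to $\simeq$; you don't need the vaguer ``quadratic polynomial correction'' language. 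For $\gamma=\lambda$, the identity is actually exact, $\Zhat_{a+c(\lambda)}(Y)=A_\lambda\Zhat_a(Y)$, because the substitution $\vell=2M\vn+\va=2M(\vn-\ve_s)+(\va+2M\ve_s)$ changes nothing in \eqref{eq:plumbing1open} except the fixed value of $n$; no residual $q$-power appears.
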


\begin{proof}
It suffices to check this when $\gamma$ is one of the two generators $\mu$ and $\lambda$.

Suppose $\gamma$ is the meridian $\mu$. Pick a representative $\va = (a_1, \dots, a_s) \in 2\Z^s + \vdelta$ of $a$. Recall from Section~\ref{sec:spincplumbed} that the action of $\mu$ is given by adding  $2\ve_{s}=(0, \dots, 0, 2)$ to the vector $\va$, that is, changing $a_{v_0}=a_s$ to $a_s + 2$. Let us use the formula \eqref{eq:newformula2} or $\Zhat_a(Y; z, n, q)$ in terms of contributions from edges and vertices. Observe that $a_{v_0}$ appears in this formula only through the overall factor $q^{- \frac{(\va, M^{-1}\va)}{4}}$ and through the contribution from $v_0$ given by \eqref{v0rule}. Adding $2$ to $a_{v_0}$ multiplies $\Zhat_a(Y; z, n, q)$ by a factor of $q^{-2n} z^2 = q^{-2n}x$. Up to a power of $q$, this corresponds to acting by $A_{\mu}$, according to \eqref{eq:Tmu}.

Next, suppose $\gamma$ is the graph longitude $\lambda$. Pick a representative $\va = (a_1, \dots, a_s) \in 2\Z^s + \vdelta$ of $a$. As noted in Section~\ref{sec:spincplumbed}, the action of $\lambda$ is given by adding  $2M\ve_{s}$ to the vector $\va$. This time we will use the formula \eqref{eq:plumbing1open} for $\Zhat_a(Y; z, n, q)$. The Theta function in \eqref{eq:ThetaYopen} is defined by summing over
$$ \vell =2M \vn + \va = 2M(\vn - \ve_s) + (\va + 2M\ve_{s}).$$
Therefore, adding $2M\ve_{s}$ to $\va$ yields the same result, provided we replace $n$ by $n-1$. This corresponds to the action of $A_{\lambda}$ in \eqref{eq:Tlambda}. Note that in this case we have 
 \be
\label{eq:translatelambda}
 \Zhat_{a+c(\lambda)}(Y) = A_\lambda \Zhat_a(Y)
 \ee
on the nose, rather than up to a power of $q$.
\end{proof}

Let us now go back to the gluing formula \eqref{eq:glue2}. Recall from \eqref{eq:aa} that any two choices of $(a^-, a^+)$ that produce the same $a \in \Spinc(Y)$ are related by
\be
\label{eq:aa2}
(a^-, a^+) \to (a^- + c(\gamma), a^+ + c(\gamma)).
\ee
for some $\gamma \in H_1(T^2)$. Using Proposition~\ref{prop:TQFTaction} and the orthogonality property  \eqref{eq:Tsymm}, we have the following consistency check:
$$\left\langle \Zhat_{a^-+ c(\gamma)}(Y^-), R \Zhat_{a^+ + c(\gamma)}(Y^+) \right \rangle \simeq\left \langle A_\gamma \Zhat_{a^-}(Y^-), RA_{R(\gamma)}  \Zhat_{a^+}(Y^+) \right \rangle =$$
$$= 
\left \langle A_\gamma \Zhat_{a^-}(Y^-),  A_{\gamma}R  \Zhat_{a^+}(Y^+) \right \rangle =
\left\langle  \Zhat_{a^-}(Y^-),  R \Zhat_{a^+}(Y^+)\right \rangle.$$
Here, by $R(\gamma)$ we meant the image of $\gamma$ under the map that reverses the orientation of the meridian
$$ R : H_1(T^2) \to H_1(T^2), \ \ R(u, v) = (-u, v).$$
When applying Proposition~\ref{prop:TQFTaction} to $Y^+$, we got the action $A_{R(\gamma)}$ instead of $A_{\gamma}$, because in the standard gluing we identified the meridian of $\del Y^-$ (which we took to be $\mu=(1, 0) \in H_1(T^2)$) with the reverse of the meridian of $\del Y^+$.

\subsection{Eliminating some variables}
\label{sec:eliminate}
The invariants $\Zhat_a$ for plumbed knot complements involve several variables: the relative $\Spinc$ structure $a$, as well as $z=x^{1/2}$, $n$ and $q$ (or, if we prefer, $a$, $m$, $n$ and $q$). However, because of the symmetry \eqref{eq:translateSpinc}, we can reduce these variables by two, using the action of $H_1(\del Y) \cong \Z^2$. A well-known fact in three-dimensional topology (a consequence of Poincar\'e duality and the long exact sequence of a pair) says that the kernel $S$ of the map $H_1(\del Y) \to H_1(Y)$ has rank one (is a copy of $\Z$ in $\Z^2$). If $\gamma \in S$, then
$$ a + c(\gamma) = a \in \Spinc(Y, \del Y).$$
Thus, if we apply Proposition~\ref{prop:TQFTaction} to elements of $H_1(\Sigma)$ that are in $S$, we obtain a symmetry of the invariants $\Zhat_a(Y; z, n, q)$ for fixed $a$. On the other hand, if we apply it to elements of $H_1(\Sigma)$ that are not in $S$, we get a relation between the invariants $ \Zhat_a(Y; z, n, q)$ for different relative $\Spinc$ structures $a$.

There is also the dependence of $\Zhat_a$ on the way we parametrized the boundary. We usually fix what the meridian is, since this determines the closed-up manifold $\hat Y$ and the knot $K \subset \hat Y$. We may want to vary the graph longitude $\lambda$, which we can do by changing the value of $m_{v_0}$. Looking at the formula \eqref{eq:newformula2}, we see that adding $1$ to $m_{v_0}$ changes the contribution \eqref{v0rule} from $v_0$ by 
$ q^{-n^2 - \frac{1}{4}} x$
and also changes the overall factor $q^{- \frac{(\va, M^{-1}\va)}{4}}$. In any case, knowing the invariants for one choice of $m_{v_0}$ allows us to know them for all the other choices.

Let us make this more concrete in the case when $\hat Y$ is an integral homology sphere, so that $Y = \hat Y \setminus \nu K$ has the homology of a solid torus. Then, the space $\Spinc(Y, \del Y)$ is an affine copy of $H^2(Y, \del Y) \cong H_1(Y) \cong \Z$. The kernel $S \subset H_1(\Sigma)$ is the span of the Seifert longitude $\lambdasf$. Acting by the meridian $\mu$ (which is not in $S$) allows us to determine the invariants for all $a \in \Spinc(Y, \del Y)$ structures from the invariant for any $a$. Also, acting by the graph longitude $\lambda$ (which is some combination of $\lambdasf$ and $\mu$) tells us that to know the invariants $\Zhat_a(Y; z, n, q)$ it suffices to know them for $n=0$, when they give a power series in $z=x^{1/2}$ and $q$. Finally, knowing the invariants for one choice of graph longitude allows us to know them for all possible choices.

A natural choice of graph longitude is the Seifert longitude itself. However, this is exactly the case where the matrix $M$ is not invertible, because the result $Y_0$ of zero surgery on $K$ has $b_1(Y_0) =1$, and 
$$ H_1(Y_0; \Z) \cong \Z^s/M \Z^s.$$
Therefore, in that case $(\Gamma, v_0)$ is not weakly negative definite in the sense of Definition~\ref{def:WND}, so we do not expect all the invariants $\Zhat_a$ to be well-defined. Nevertheless, they are defined for $a=0$, in the following setting. (Compare Remark~\ref{rem:Mdeg}.)

\begin{lemma}
\label{lem:KnotCo}
Let $Y = Y(\Gamma, v_0)$ for some plumbed tree $\Gamma$ with distinguished vertex $v_0$. Suppose that $\hat Y$ is an integer homology sphere, $\hat \Gamma$ is negative definite, and that the graph longitude is the Seifert longitude. Let $a=0 \in \Spinc(Y, \del Y)$ be self-conjugate. Then, any representative $\va$ of $a$ is of the form $\va=M\vb$ for some $\vb \in \Z^{s-1}$, and the invariants $\Zhat_0(Y; z, n, q)$ are well-defined by the formula \eqref{eq:newformula2}, where in the power of $q$ we use the exponent
$ -(\vb, M\vb)/4$
instead of $- (\va, M^{-1}\va)/4$. 
\end{lemma}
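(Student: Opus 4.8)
The plan is to exhibit $\va$ explicitly as $M\vb$, check that this determines $\vb$ and hence the proposed exponent unambiguously, and then verify well-definedness of the resulting series. First I would note that, by hypothesis, the graph longitude equals the Seifert longitude, which by definition is the class in $H_1(\del Y;\Q)$ that maps to zero in $H_1(Y;\Q)$. In the plumbing model from Section~\ref{sec:spincplumbed}, the graph longitude is $M\ve_s$, so this condition says $M\ve_s \in M\Z^{s-1}$ after we invert; more precisely, since $\hat\Gamma$ is negative definite and $\hat Y$ is an integer homology sphere, the matrix $\hat M$ has determinant $\pm 1$, and one computes directly from the block form \eqref{eq:MhatM} that the Seifert framing condition forces $m_{v_0}$ to be exactly the value making the last row of $M$ lie in the span of the first $s-1$ rows. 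This is the degenerate case $\det M = 0$ flagged in Section~\ref{sec:eliminate}. Since $a = 0$ is self-conjugate and lies in the image of $M$ (indeed $\vdelta$ itself can be checked to lie in $M\Z^{s-1} + (\text{image of }M)$ in this setting, using that $\vm + \vdelta = M\vu$), any representative $\va \in 2\Z^s + \vdelta$ of the class $0 \in \Spinc(Y,\del Y) = (2\Z^s+\vdelta)/(2M\Z^{s-1})$ is of the form $\va = M\vb$ for some $\vb \in \Z^{s-1}\times 0 \cong \Z^{s-1}$.

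Next I would check that the exponent $-(\vb, M\vb)/4$ is well-defined, i.e. independent of the choice of $\vb$. Since $M\vb = \va = M\vb'$ forces $\vb - \vb' \in \ker(M|_{\Z^{s-1}})$, and $M$ restricted to $\Z^{s-1}$ (the non-distinguished vertices) is $\hat M$, which is negative definite and hence injective, we get $\vb = \vb'$. So $\vb$ is actually unique given $\va$, and changing $\va$ within its class modulo $2M\Z^{s-1}$ changes $\vb$ by an element of $2\Z^{s-1}$; the standard computation $(\vell, M^{-1}\vell)/4 = (\vn, M\vn) + (\va,\vn) + \ldots$ from Section~\ref{sec:PlumbedSeries}, adapted here with $\vell = 2M\vn + M\vb = M(2\vn + \vb)$, shows the series is unchanged up to the usual reindexing, exactly as in the invariance arguments. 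Concretely, substituting $\vell = M(2\vn+\vb)$ into \eqref{eq:plumbing1open}--\eqref{eq:ThetaYopen} replaces $(\vell, M^{-1}\vell) = (2\vn+\vb, M(2\vn+\vb))$, whose leading piece in the sum over $\vn$ is governed by $\hat M$ on the non-distinguished coordinates and by $m_{v_0}$ and the edge terms on the distinguished coordinate; this is why the formula \eqref{eq:newformula2} can be used verbatim with the stated replacement.

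Finally I would verify that $\Zhat_0(Y; z, n, q)$ so defined is actually a well-behaved element of $\V$, i.e. that for each fixed $(m,n)$ only finitely many terms contribute a given power of $q$ and that the exponents are bounded below. The quadratic form appearing is $-(\vn', \hat M \vn')/4$ restricted to the non-distinguished coordinates (with $\vn'$ the truncation of $2\vn + \vb$), plus cross-terms with the distinguished coordinate, which is now a \emph{fixed} input (through $n$) rather than being summed. Since $\hat M$ is negative definite, $-(\cdot, \hat M \cdot)$ is positive definite on $\Z^{s-1}$, so the exponents of $q$ are bounded below and grow quadratically, while the vertices of degree $\le 2$ contribute only finitely many values of $\ell_v$ as in Definition~\ref{def:WND}; hence only finitely many monomials share any given power of $q$. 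This is essentially the weakly-negative-definite estimate from Section~\ref{sec:ZhatKnot}, now applied to $\hat\Gamma$ instead of $\Gamma$.

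The main obstacle I expect is the bookkeeping in the second step: carefully tracking how the substitution $\vell = M\vb$ (with the distinguished coordinate of $\vn$ frozen at $n$) interacts with the principal-value integrals over the degree-$\ge 3$ vertices and with the edge rule \eqref{edgerulenew2}, to confirm that the convergence input really is $\hat M$ being negative definite rather than the (degenerate) $M$. Everything else is linear algebra of the kind already carried out in the proof of Proposition~\ref{prop:invariance} and in Section~\ref{sec:spincglued}.
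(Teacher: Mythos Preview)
Your overall structure matches the paper's, and your well-definedness argument (write $\vn = \vw + n\ve_s$ with $\vw \in \Z^{s-1}$, then the quadratic form is controlled by the negative definite $\hat M$) is exactly what the paper does. Two points are worth flagging.

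First, your justification that $\va = M\vb$ with $\vb \in \Z^{s-1}$ is muddled. The parenthetical about $\vdelta$ lying in $M\Z^{s-1} + (\text{image of }M)$ via $\vm + \vdelta = M\vu$ does not lead anywhere: $\vu \in \Z^s$, not $\Z^{s-1}$, and ``image of $M$'' from $\Z^s$ already contains $M\Z^{s-1}$, so the statement is vacuous. You mention self-conjugacy but do not actually use it. The paper's argument is a one-liner you should adopt: $a$ self-conjugate means $[\va] = [-\va]$ in $(2\Z^s+\vdelta)/(2M\Z^{s-1})$, hence $2\va \in 2M\Z^{s-1}$, hence $\va \in M\Z^{s-1}$. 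No appeal to the Seifert framing or to $\vm+\vdelta = M\vu$ is needed for this step. (Your uniqueness check for $\vb$ is fine but not actually necessary: the form $(\vb, M\vb)$ depends only on $M\vb$, since any ambiguity in $\vb$ lies in $\ker M$.)

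Second, the paper establishes a slightly stronger bound than you do: it shows that $(\vn, M\vn) + (\vn, M\vb)$ is bounded above as $\vn$ ranges over \emph{all} of $\Z^s$, not just with $n$ fixed. The argument diagonalizes $M$ with one zero eigenvalue and the rest negative, and observes that the linear term along the null direction vanishes because $M\vphi_1 = 0$. This stronger statement is used immediately afterward to conclude that $F_K(x,q)$ has an overall lower bound on $q$-exponents (i.e.\ lies in $q^{\Delta}\Z[x^{1/2},x^{-1/2}][q^{-1},q]]$), so you will want to include it.
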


\begin{proof}
Let $\va \in 2\Z^s + \vdelta$ be a representative for $a$. Since $a$ is self-conjugate, we have that the classes of $\va$ and $-\va$ modulo $2M\Z^{s-1}$ are the same, so $2\va \in 2M \Z^{s-1}$. It follows that we can write $\va = M \vb$ for some $\vb \in M \Z^{s-1}$.

Recall that, in the case where the pair $(\Gamma, v_0)$ is (weakly) negative definite, by setting $\vell =  2M \vn + \va$, we saw that the formula \eqref{eq:newformula2} for $\Zhat_a$ is equivalent to the formula \eqref{eq:plumbing1open} involving the theta function from \eqref{eq:ThetaYopen}. The same is true in our setting, with the theta function being
\be
\label{eq:Theta3}
\Theta_a^{-M}(\vz)= \sum_{\vell} q^{-\frac{(2\vn+\vb, M(2\vn+\vb))}{4}} \prod_{ v\in \operatorname{Vert}} z_v^{\ell_v}
\ee
where $$\vell = 2M \vn + \va = M (2\vn + \vb).$$

The exponent of $q$ in \eqref{eq:Theta3} is
$$ - (\vn, M\vn) - (\vn, M \vb) - \frac{(\vb, M\vb)}{4}.$$

To see that the formula \eqref{eq:plumbing1open} makes sense, it suffices to check that:
\begin{enumerate}[(i)]
\item As we vary $\vn \in \Z^s$ such that $n = (\vn, \ve_s)$ is fixed, the expression $(\vn, M\vn) + (\vn, M \vb)$ is bounded above;
\item As we vary $\vn \in \Z^s$ such that $n= (\vn, \ve_s)$ and $(\vn, M\vn) + (\vn, M \vb)$ are fixed, there are only finitely many possible values for $\vell = 2M\vn + \va.$
\end{enumerate}

By writing $\vn = \vw + n \ve_s$ with $\vw \in \Z^{s-1}$, we see that
$$ (\vn, M\vn) + (\vn, M \vb)= (\vw, M \vw) + (\vw, 2Mn \ve_s + M \vb) + (n\ve_s, nM\ve_s + M\vb)$$
is the sum of a quadratic, a linear, and a constant term in $\vw$. Since the quadratic term is negative definite (because $\hat \Gamma$ is negative definite), the desired claims (i) and (ii) follow.

In fact, it is worth noting that we can prove a stronger claim than (i):

\smallskip
(iii)  As we vary $\vn \in \Z^s$ arbitrarily, the expression $(\vn, M\vn) + (\vn, M \vb)$ is bounded above.
\smallskip

Indeed, since $M$ is symmetric, degenerate, and its restriction to $\Z^{s-1} \subset \Z^s$ is negative definite, it follows that $M$ admits an eigenbasis $\vphi_1, \dots,\vphi_s$ with eigenvalues 
$$\lambda_1=0, \lambda_2, \dots, \lambda_s\  \text{ with }\ \lambda_i < 0 \text{ for } i \geq 2.$$
Writing $\vn = \sum c_i \vphi_i$, we get
$$ (\vn, M\vn) + (\vn, M \vb) = \sum_{i \geq 2}\bigl( \lambda_i c_i^2 + c_i(\vphi_i, M \vb)\bigr) + c_1(\vphi_1, M\vb).$$
The terms in the sum are bounded above (because $\lambda_i< 0$) and the last term is zero, because $M$ is symmetric and $M\vphi_1=0$. This proves (iii).
\end{proof}

\subsection{Simpler knot invariants} 
Let $Y = Y(\Gamma, v_0)$ for some plumbed tree $\Gamma$ with distinguished vertex $v_0$. This gives a closed-up manifold $\hat Y = Y(\Gamma -v_0)$ and a knot $K \subset \hat Y$. Suppose that $H_1(\hat Y; \Z)=0$, the graph $\hat \Gamma$ is negative definite, and that $\lambda = \lambdasf$, as in Lemma~\ref{lem:KnotCo}. In view of that lemma and the discussion preceding it, in order to know the invariants $\Zhat_a(Y; z, n, q)$ (for any relative $\Spinc$ structure, and even for other choices of graph longitude), it suffices to know them for 
$$a=0, \ \ n=0.$$
In fact, observe that in this case, the kernel of the map $H_1(\del Y) \to H_1(Y)$ is spanned by $\lambda$, so acting by $\lambda$ relates the invariants $\Zhat_0(Y; z, n, q)$ to each other. By \eqref{eq:translatelambda}, we have
$$ \Zhat_0(Y; z, n, q) = A_{\lambda} \Zhat_0(Y; z, n, q) = \Zhat_0(Y; z, n+1, q),$$
so $\Zhat_0(Y; z, n, q)$ is independent of $n \in \Z$.
We denote
\be
\label{eq:FK}
 \boxed{F_K(x, q) := \Zhat_0(Y; x^{1/2}, n, q)}
 \ee
This is the simplest knot invariant associated to $K$ from the 3d $\cN = 2$ theory $T[Y]$. It involves just two variables $x$ and $q$. We have 
$$ F_K \in 2^{-c} q^{\Delta} \Z[x^{1/2}, x^{-1/2}][q^{-1},q]],$$
for some $c \in \Z_+$ and $\Delta \in \Q$. Here, $\Z[x^{1/2}, x^{-1/2}][q^{-1}, q]]$ denotes the ring of Laurent power series in $q$ with coefficients in the polynomial ring $\Z[x^{1/2}, x^{-1/2}]$. The fact that there is an overall lower bound on the exponents of $q$ follows from the claim (iii) that was established in the proof of Lemma~\ref{lem:KnotCo}. The fact that, if we fix the power of $q$, then the coefficient is a Laurent polynomial in $x^{1/2}$ (rather than a power series) is a consequence of the claim (ii) from the same proof.

Since $a$ is self-conjugate, the symmetry \eqref{eq:conjsymm} gives
\be
\label{eq:Fantisymm}
F_K(x, q) = - F_K(x^{-1}, q).
\ee
Therefore, we can write
\be
\label{eq:FKf}
 F_K(x, q) = \frac{1}{2} \sum_{m \geq 1} f_m (q) \cdot (x^{\frac{m}{2}} - x^{-\frac{m}{2}}),
 \ee
where $f_m(q)$ are (roughly) Laurent power series in $q$ or, more precisely, elements of the field $\k$ defined in Section~\ref{sec:TQFT}. Moreover, the exponents of $x^{1/2}$ that appear in $F_K$ are all odd, so the sum in \eqref{eq:FKf} can be taken over $m=2j+1$ only, with $j \geq 0$.
 
It is sometimes convenient to use a different normalization
\be
\label{eq:fK}
 \boxed{f_K(x, q)  := \frac{F_K(x, q)}{x^{1/2} - x^{-1/2}}}
 \ee
In terms of the coefficients $f_{2j+1}(q)$, we have
$$ f_K(x, q) = \frac{1}{2} \sum_{j \geq 0} f_{2j+1} (q) \cdot (x^{-j} + x^{-j+1} + \dots + x^{j-1} + x^j).$$
The function $f_K$ satisfies the symmetry 
\be
\label{eq:fsymm}
f_K(x, q) = f_K(x^{-1}, q).
\ee
This symmetry is expected from the physical interpretation of $f_K(x,q)$ as a count of BPS states for the theory $T[Y]$ on the knot complement. Indeed, there $x$ and $x^{-1}$ are interpreted as the eigenvalues of the holonomy around the meridian of an $\sl$ flat connection.

\begin{example}
If $K=U \subset S^3$ is the unknot, the condition $\lambda = \lambdasf$ can be ensured by taking the graph with a single vertex labeled $0$, so that $Y$ is the solid torus $\mathbb{S}_0$ in the notation from Section~\ref{sec:DehnS}. Observe that the formula \eqref{eq:Sp} cannot be applied to $p=0$ and arbitrary $a$, since we cannot make sense of the exponent $(pn+a)^2/p$. However, when $a=0$ that exponent can be set to $0$, and $\Zhat_0(\mathbb{S}_0)$ is well-defined, in agreement with Lemma~\ref{lem:KnotCo}. We get
$$ F_U(x, q) = x^{1/2} - x^{-1/2}, \ \ \  f_U(x, q) = 1.$$
\end{example}

\subsection{The Dehn surgery formula}
\label{sec:surgeryformula}
Let us recall from \eqref{eq:Laplace} that the Laplace transform $\CL_{p/r}^{(a)}$, applied to a power series in $x$ and $q$, takes a monomial $x^u \cdot q^v$ to $q^{-\frac{r}{p}u^2} \cdot q^v$, provided $ru - a \in p\Z$, and to zero otherwise. 

We will identify the relative $\Spinc$ structures on the solid torus $\Spr$ with elements in $\Z + \frac{r+1}{2}$
as in \eqref{eq:ahalf}. For $p/r$ surgery on a knot $K$ in a homology sphere $\hat Y$, we identify relative $\Spinc$ structures on $Y = \hat Y \setminus \nu K$ with $\Z$, by mapping the self-conjugate structure to $0$. In view of \eqref{eq:Zp}, the $\Spinc$ structures on the surgery $Y_{p/r}$ will be canonically identified with elements
$$ a \in \Z + \frac{r+1}{2} \pmod{p \Z}.$$

We will consider Laplace transforms $\CL_{p/r}^{(a)}$ for this kind of values of $a$. When $p=\pm 1$, since there is only one possible value of $a$, we will write $\CL_{p/r}$ for $\CL_{p/r}^{(a)}$.

Recall that Theorem~\ref{thm:Dehn}, as advertised in the Introduction, relates the invariants of a knot and its surgeries by the formula:
$$\Zhat_a(Y_{p/r})= \eps  q^{d} \cdot \CL_{p/r}^{(a)} \left[ (x^{\frac{1}{2r}} - x^{-\frac{1}{2r}}) F_K (x,q) \right],$$
which we could also write as
\be
\label{eq:Dehnf}
\Zhat_a(Y_{p/r})= \eps  q^{d} \cdot \CL_{p/r}^{(a)} \left[ (x^{\frac{1}{2r}} - x^{-\frac{1}{2r}}) (x^{\frac{1}{2}} - x^{-\frac{1}{2}})f_K (x,q) \right].
\ee

\begin{proof}[Proof of Theorem~\ref{thm:Dehn}]
As explained in Section~\ref{sec:DehnS}, the surgery $Y_{p/r}$ is obtained by standard gluing from the knot complement $Y$ and the solid torus $\Spr$. We apply the gluing formula \eqref{eq:glue} to these two pieces:
\begin{align}
\label{Zah}
 \Zhat_a(Y_{p/r}) &= (-1)^{\tau} q^{\xi} \sum_n \oint_{|z|=1} \frac{dz}{2\pi i z} \Zhat_{0}(Y; z, n, q)  \Zhat_{a}(\Spr; z, n, q)\\ \notag
 &=(-1)^{\tau} q^{\xi} 
\oint_{|z|=1} \frac{dz}{2\pi i z}F_K(z^2,q) \cdot \sum_n \Zhat_{a}(\Spr; z, n, q). 
 \end{align}
since $\Zhat_0(Y; z, n, q) = F_K(z^2,q)$ is independent of $n$. The values of $\Zhat_{a}(\Spr; z, n, q)$ were computed in \eqref{eq:Zcases}. From there we find that
\be
\label{eq:sumn}
 \sum_{n} \Zhat_{a}(\Spr; z, n, q) = 
 \sign(p)  q^{\alpha(p,r)} \Bigl( \sum_{j \in J_+}q^{-\frac{r}{p}(j + \frac{1}{2} + \frac{1}{2r})^2}z^{2j+1}-\sum_{j \in J_-}  q^{-\frac{r}{p}(j + \frac{1}{2} - \frac{1}{2r})^2}  z^{2j+1} \Bigr)
 \ee
where
$$ J_{\pm} = \bigl\{j \in \Z \ \big | \ jr \equiv a - \tfrac{r\pm 1}{2} \!\!\! \pmod{p} \bigr\}.$$

Let us write
$$ f_K(x, q) = \sum c_{t,v} x^t q^v$$
so that
$$ F_K(x,q) =  \sum c_{t,v} x^{t+\frac{1}{2}} q^v -  \sum c_{t,v} x^{t-\frac{1}{2}} q^v.$$
Plugging \eqref{eq:sumn} into \eqref{Zah}, we see that the integral picks up the monomials with $2t\pm 1 = - (2j+1)$. Therefore, we obtain
\begin{multline}
\label{eq:ZY}
 \Zhat_a(Y_{p/r})=  \sign(p) (-1)^{\tau}q^{\xi + \alpha(p,r)} \sum_v q^v\cdot  \Bigl(\sum_{-t-1 \in J_+} c_{t, v} q^{-\frac{r}{p}(t + \frac{1}{2} - \frac{1}{2r})^2} -  \sum_{-t-1 \in J_-} c_{t, v}q^{-\frac{r}{p}(t + \frac{1}{2} + \frac{1}{2r})^2} \\
 - \sum_{-t \in J_+} c_{t, v} q^{-\frac{r}{p}(t - \frac{1}{2} - \frac{1}{2r})^2} + \sum_{-t \in J_-} c_{t, v} q^{-\frac{r}{p}(t - \frac{1}{2} + \frac{1}{2r})^2}
 \Bigr)
\end{multline}
The conditions on the summation indices all translate into asking for the term $u = t \pm \frac{1}{2} \pm \frac{1}{2r}$ that appears in the corresponding exponent $q^{-\frac{r}{p}u^2}$ to satisfy
$$ ru + a \in p \Z.$$
 Since
 $$ (x^{\frac{1}{2r}} - x^{-\frac{1}{2r}})(x^{\frac{1}{2}} - x^{-\frac{1}{2}}) f_K (x,q) = -\sum c_{t, v} q^v \cdot \bigl( x^{t+ \frac{1}{2} - \frac{1}{2r}} -  x^{t+ \frac{1}{2} + \frac{1}{2r}} -  x^{t- \frac{1}{2} -\frac{1}{2r}} +  x^{t- \frac{1}{2} + \frac{1}{2r}}\bigr) ,$$
 by taking
 $$
  \eps = \sign(p)\cdot (-1)^{\tau + 1}, \ \ \ d = \xi + \alpha(p,r),$$
 we see that \eqref{eq:ZY} implies the desired formula in terms of the Laplace transform. Strictly speaking, we get it for the Laplace transform with $-a$ instead of $a$. However, given the conjugation symmetry of the $\Zhat_a$ invariants, using $a$ or $-a$ gives the same answer.
 \end{proof}

\subsection{Anti-symmetrization}
\label{sec:anti}
Recall from \eqref{eq:Fantisymm} that the series $F_K(x,q)$ is anti-symmetric with respect to the variable $x$. In many cases, it is helpful to write it as the anti-symmetrization of a series $\Fd_K(x,q)$ with only positive powers of $x$:
$$ F_K(x,q) = \frac{1}{2} \cdot(\Fd_K(x,q) - \Fd_K(x^{-1}, q)).$$
If we express $F_K(x,q)$ in terms of its coefficients $f_m(q)$ as in \eqref{eq:FKf}, we have
$$ \Fd_K(x,q) = \sum_{m \geq 1} f_m(q) \cdot x^{\frac{m}{2}}.$$

Observe that, for any Laplace transform $\CL_{p/r}^{(a)}$, we have
$$\CL_{p/r}^{(a)} \left[ (x^{\frac{1}{2r}} - x^{-\frac{1}{2r}}) F_K (x,q) \right]= \CL_{p/r}^{(a)} \left[ (x^{\frac{1}{2r}} - x^{-\frac{1}{2r}}) \Fd_K (x,q) \right].$$

Therefore, when applying the Dehn surgery formula (Theorem~\ref{thm:Dehn}), we could just as well use $\Fd_K(x,q)$ instead of $F_K(x,q)$.

Series of the form $\Fd_K(x,q)$ appear naturally in some circumstances. For example, we will encounter them as stability series for negative torus knots in Section~\ref{sec:stability}.

\newpage \section{Torus knots}
\label{sec:torus}
We now proceed to study the invariants $F_K(x, t)$ for the torus knots $K= T(s,t) \subset S^3$. We will assume that 
$$2 \leq s < t, \ \ \ \gcd(s,t)=1.$$

\subsection{Plumbing presentations}
\label{sec:PlumbedTorus}
For $s$ and $t$ as above, there are unique integers $t' \in (0, t), s' \in (0, s)$ such that $st' \equiv -1 \! \! \pmod{t}$ and $ts' \equiv -1 \!\!\pmod{s}$. These must satisfy the relation
$$ \frac{t'}{t} + \frac{s'}{s} =1 - \frac{1}{st}.$$
We construct a plumbing diagram for $T(s, t)$, with Seifert framing ($\lambda = \lambdasf$) as follows. The diagram consists of a tree with three legs: the central vertex is labeled $-1$, two legs have labels given by the continued fraction representations of $-t/t'$ and $-s/s'$, and the third has just the distinguished vertex, labelled $-st$. We choose the continued fraction representations so that all vertices have negative labels. This ensures that the plumbing satisfies the hypotheses of Lemma~\ref{lem:KnotCo}, and therefore the invariant $\Zhat_0$ is well-defined for the torus knot complement. The framing matrix $M$ for our graph will have one $0$ eigenvalue and the rest all negative eigenvalues.

\begin{example}
For the torus knots $T(2,2l+1)$ we have $t'=l, s'=1$, the continued fraction representations of $-(2l+1)/l$ consists of one $-3$ and $l-1$ copies of $-2$, and the continued fraction representation of $-2$ is just $-2$. For $T(3,4)$, we have $-t/t'= -4$ and $-s/s'=-3/2$, with the latter represented by two copies of $-2$. See Figure~\ref{fig:TorusKnots} for the resulting pictures.
\end{example}

\begin {figure}
\begin {center}
\input{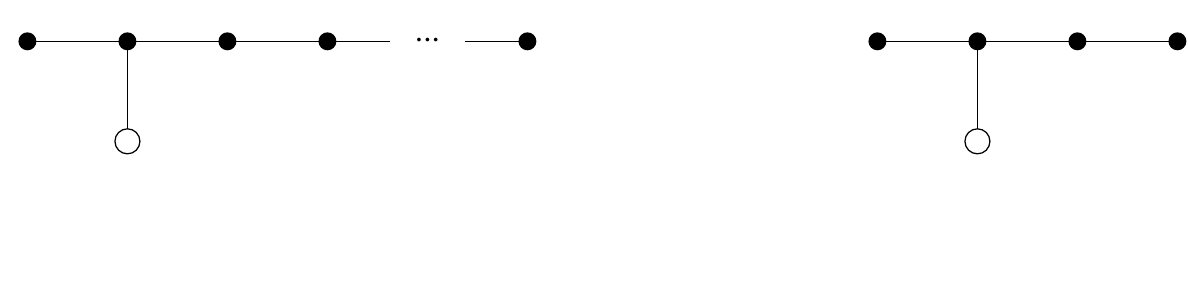_t}
\caption {Plumbing diagrams for the complements of the torus knots $T(2,2l+1)$ and $T(3,4)$.}
\label{fig:TorusKnots}
\end {center}
\end {figure}

Let $p, r \in \Z$ with $p \neq 0$, $r>0$ and $\gcd(p,r)=1$. Moser \cite{Moser} showed that the result of $p/r$ surgery on $T(s,t)$ is a Seifert manifold, fibered over the $S^2$ orbifold with three singular points of orders $s$, $t$, and $|rst-p|$. Its orbifold Euler number is
$$ e = 1 -  \frac{t'}{t} - \frac{s'}{s} - \frac{r}{rst-p} = \frac{p}{st(rst-p)}.$$
The surgeries with values
$$p/r \in \{st-1, st,  st+1\}$$
produce special Seifert manifolds: lens spaces or connected sums of lens spaces. The other surgeries produce generic Seifert manifolds. In the generic case, by Theorem~\ref{thm:euler}, we have that $S^3_{p/r}(T(s,t))$ can be represented by a negative definite plumbing if and only if $e < 0$, that is,
\be
\label{eq:eneg}
\frac{p}{r} < 0 \ \ \text{or} \ \ \frac{p}{r} > st.
\ee

In particular, we will be interested in $-1/r$ surgeries on $T(s,t)$, for $r > 0$. These are the Brieskorn spheres
$$ S^3_{-1/r}(T(s,t)) = \Sigma(s, t, rst+1).$$
They are obtained from the standard gluing 
$$ \bigl(S^3 \setminus \nu T(s,t) \bigr) \ \cup_{T^2} \ \mathbb{S}_{-1/r},$$ 
as in Section~\ref{sec:DehnS}. We will represent the solid torus $\mathbb{S}_{-1/r}$ by the linear plumbing graph
$$ \input{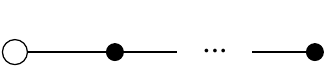_t}$$
having $r-1$ copies of $-2$. We obtain a plumbing representation for $\Sigma(s, t, rst+1)$ from those for the torus knot complement and $\mathbb{S}_{-1/r}$, by adjoining them at their distinguished vertices. The vertex where we did the gluing is now labeled $-st-1$. One can check that the resulting plumbing graph for $\Sigma(s, t, rst+1)$ is negative definite.

\subsection{Calculation using reverse engineering}
Let $Y$ be the complement of the torus knot $T(s,t)$. To compute the invariant $F_K(x,q)= \Zhat_0(Y; x^{1/2}, n, q)$, we could use the plumbing representation above, and do a similar calculation to that for Brieskorn spheres in Proposition~\ref{prop:brieskorn}. However, since we already did the calculation for Brieskorn spheres, it is convenient (and more instructive) to deduce the answer for $T(s,t)$ from that, using the Dehn surgery formula from Theorem~\ref{thm:Dehn}, applied to $-1/r$ surgeries. We refer to the process of calculating a series from knowing its Laplace transforms as {\em reverse engineering}. The following lemma makes this process work.  

\begin{lemma}
\label{lem:uniqueLaplace}
If $F, G \in \Z[x^{1/2}, x^{-1/2}][q^{-1},q]]$ are series such that
$$ F(x, q) = -F(x^{-1}, q), \ \ \ G(x, q) = -G(x^{-1}, q)$$
and
$$\CL_{-1/r}[ (x^{\frac{1}{2r}} - x^{-\frac{1}{2r}})F(x,q)] =  \CL_{-1/r}[ (x^{\frac{1}{2r}} - x^{-\frac{1}{2r}})G(x,q)]$$
for all $r > 0$. Then $F=G$.
\end{lemma}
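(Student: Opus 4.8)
The idea is to show that the Laplace transforms $\CL_{-1/r}$, applied to the ``shifted'' series $(x^{\frac{1}{2r}}-x^{-\frac{1}{2r}})F(x,q)$ for all $r>0$, determine $F$ (subject to the antisymmetry constraint). Since the hypothesis gives equality of these transforms for $F$ and $G$, and the map $F\mapsto (x^{\frac{1}{2r}}-x^{-\frac{1}{2r}})F$ is linear, it suffices by linearity to prove the following: if $H\in\Z[x^{1/2},x^{-1/2}][q^{-1},q]]$ satisfies $H(x,q)=-H(x^{-1},q)$ and $\CL_{-1/r}[(x^{\frac{1}{2r}}-x^{-\frac{1}{2r}})H(x,q)]=0$ for all $r>0$, then $H=0$. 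So I would set $H=F-G$ and work with this reformulated claim.

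First I would write $H(x,q)=\sum_{m\geq 1} h_m(q)\cdot(x^{m/2}-x^{-m/2})$ using the antisymmetry (as in \eqref{eq:FKf}), where $h_m(q)\in\k$. Then $(x^{\frac{1}{2r}}-x^{-\frac{1}{2r}})H(x,q)$ is a sum of monomials $x^u q^v$ with $u\in\{\pm\frac{m}{2}\pm\frac{1}{2r}\}$. For $p=-1$, the Laplace transform $\CL_{-1/r}^{(a)}$ has no congruence obstruction (since $p=\pm 1$ means $p\Z=\Z$ and $ru-a\in\Z$ is automatic for these half-integer or rational $u$ when set up correctly — one must be slightly careful with the value of $a$, but with $p=-1$ there is a unique $\Spinc$ structure), so $\CL_{-1/r}$ simply sends $x^u q^v\mapsto q^{ru^2+v}$. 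Therefore the vanishing hypothesis says that for each $r>0$, the power series in $q$ obtained by collecting, for each $m\geq 1$, the four contributions
\[
q^{v}\Bigl(q^{r(\frac{m}{2}+\frac{1}{2r})^2} - q^{r(\frac{m}{2}-\frac{1}{2r})^2} - q^{r(-\frac{m}{2}+\frac{1}{2r})^2} + q^{r(-\frac{m}{2}-\frac{1}{2r})^2}\Bigr)
\]
from $h_m(q)=\sum_v h_{m,v}q^v$ vanishes identically. Note $r(\frac{m}{2}\pm\frac{1}{2r})^2 = \frac{rm^2}{4}\pm\frac{m}{2}+\frac{1}{4r}$, so the exponent of $q$ in the $m$-th, $r$-th block is $\frac{rm^2}{4}+\frac{1}{4r}+v$ with a sign pattern $+,-,-,+$ attached to the four choices of signs $(\frac{m}{2}\to\pm\frac{m}{2}$, $\frac{1}{2r}\to\pm\frac{1}{2r})$: concretely the block contributes $\bigl(q^{m/2}-q^{-m/2}\bigr)\bigl(q^{1/(2r)\cdot\ ?}\bigr)$... — more cleanly, the four terms combine to $q^{\frac{rm^2}{4}+\frac{1}{4r}+v}\cdot(q^{m/2}-q^{-m/2})\cdot$ (sign), i.e. up to an overall monomial each $m$ contributes a term proportional to $(x^{\frac{1}{2r}}-x^{-\frac{1}{2r}})$-type splitting. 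The key point is that for fixed $r$, as $m$ ranges over $\geq 1$, the leading $q$-exponents $\frac{rm^2}{4}$ grow quadratically in $m$ and are strictly increasing, so distinct values of $m$ contribute to disjoint ranges of $q$-powers once $m$ is large enough — more precisely, I would argue by induction: the smallest power of $q$ that can appear forces $h_{m_0}$ to vanish for the relevant minimal $m_0$, then proceed.

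The cleanest way to finish is to exploit the dependence on $r$. Fix the smallest $m$ with $h_m\neq 0$, say $m=m_0$, and let $v_0$ be the smallest exponent with $h_{m_0,v_0}\neq 0$. For the contribution to $\CL_{-1/r}$ with this $m_0$, the minimal $q$-power is $\frac{rm_0^2}{4}-\frac{m_0}{2}+\frac{1}{4r}+v_0$ (from the sign-$(-)$ term $r(\frac{m_0}{2}-\frac{1}{2r})^2$... wait, signs: the term with exponent $r(\frac m2-\frac1{2r})^2=\frac{rm^2}4-\frac m2+\frac1{4r}$ is the smallest). I would compare this, as a function of $r$, against the contributions of all other $m\neq m_0$ and all other $v$; because $\frac{rm^2}{4}$ separates the $m$'s and because varying $r$ moves the fractional part $\frac{1}{4r}$ while keeping $\frac{rm^2}{4}+v$ in a controlled lattice, one can isolate $h_{m_0,v_0}$ and force it to be zero, a contradiction. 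Then induct on $m$. \textbf{The main obstacle} is precisely this bookkeeping: making the separation-of-exponents argument fully rigorous, since a priori different $(m,v)$ pairs and different $r$ could produce coincidences among the exponents $\frac{rm^2}{4}\pm\frac m2+\frac1{4r}+v$. I expect one resolves it by first fixing $r$ large enough (or choosing two coprime values of $r$) so that the quadratic growth in $m$ dominates and the coincidences cannot occur, using the fact that for each fixed power of $q$ only finitely many $(m,v,r)$ with bounded data contribute (which is exactly the ``well-behaved'' / bounded-below property of elements of $\k$ and of the series $F_K$, established via claim (iii) in the proof of Lemma~\ref{lem:KnotCo}). Alternatively, and perhaps most transparently, one can recover $h_m(q)$ directly: for a single well-chosen $r$, the coefficient of $x^{\frac{m}{2}+\frac{1}{2r}}$-type term... — but the surgery transform collapses $x$, so the $r$-dependence is genuinely needed, and I would present the argument as: vary $r$, extract the asymptotically-leading block, conclude $h_{m_0}=0$, induct.
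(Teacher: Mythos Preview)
Your approach is correct and is essentially the same as the paper's proof: reduce by linearity to $H=F-G$, expand $H(x,q)=\tfrac12\sum_{m\ge 1}h_m(q)(x^{m/2}-x^{-m/2})$, compute that $\CL_{-1/r}\bigl[(x^{1/(2r)}-x^{-1/(2r)})H\bigr]=q^{1/(4r)}\sum_m h_m(q)\bigl(q^{rm^2/4+m/2}-q^{rm^2/4-m/2}\bigr)$, then take the minimal $m_0$ with $h_{m_0}\neq 0$ and the minimal exponent $N$ in $h_{m_0}$, and choose $r$ large enough that the term $c_Nq^{N+rm_0^2/4-m_0/2}$ cannot be cancelled by anything else. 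The ``bookkeeping obstacle'' you flag is resolved exactly as you guess, by taking $r\gg 0$: since $H\in\Z[x^{1/2},x^{-1/2}][q^{-1},q]]$ there is a uniform lower bound on the $q$-exponents in all $h_m$, so the quadratic growth $\tfrac{rm^2}{4}$ separates the $m_0$ block from all $m>m_0$ once $r$ is large; no induction or second value of $r$ is needed.
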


\begin{proof}
Since the Laplace transforms are linear, we might as well consider the difference $H=F-G$, written as
\be 
\label{eq:Hxq}
H(x, q) = \frac{1}{2} \sum_{m \geq 0} h_m(q) (x^\frac{m}{2} - x^{-\frac{m}{2}}),
\ee
with $h_m(q) \in \Z[q^{-1}, q]].$ We have
\begin{align}
0&= \CL_{-1/r}[ (x^{\frac{1}{2r}} - x^{-\frac{1}{2r}})H(x,q)] =  \sum_m h_m(q) \bigl( q^{r(\frac{m}{2} + \frac{1}{2r})^2} -  q^{r(\frac{m}{2} - \frac{1}{2r})^2} \bigr) \\
&= q^{\frac{1}{4r}} \sum_m h_m(q) \bigl( q^{\frac{rm^2}{4}+\frac{m}{2}} - q^{\frac{rm^2}{4}-\frac{m}{2}} \bigr). 
\label{eq:qmm}
\end{align}
If $H\neq 0$, let $m_0$ be the smallest $m$ such that $h_m(q) \neq 0$. Let $N$ be the smallest exponent of $q$ that appears in $h_{m_0}(q)$:
$$h_{m_0}(q) = c_N q^N + c_{N+1} q^{N+1} + \dots$$
Pick $r \gg 0$ so that
$$ \frac{r(m+1)^2}{4} - \frac{m+1}{2} > \frac{rm^2}{4}-\frac{m}{2} +N.$$
Then, there are no terms in the expansion of 
$$\sum_m h_m(q) \bigl( q^{\frac{rm^2}{4}+\frac{m}{2}} - q^{\frac{rm^2}{4}-\frac{m}{2}} \bigr)$$
which could cancel $c_N q^Nq^{r\frac{m^2}{4}-\frac{m}{2}}$. This gives a contradiction, and we conclude that $H=0$.
\end{proof}

From the proof of Lemma~\ref{lem:uniqueLaplace}, it is worth remembering the formula \eqref{eq:qmm} for the Laplace transform applied to $x^{\frac{1}{2r}} - x^{-\frac{1}{2r}}$ times functions of the form \eqref{eq:Hxq}.

As a simple example of reverse engineering, consider the $-1/r$ surgeries on the right-handed trefoil, which are the manifolds
$$ S^3_{-1/r} (\RHT) = \Sigma(2,3,6r+1) = M\left ( -1; \frac{1}{2}, \frac{1}{3}, \frac{r}{6r+1} \right).$$
The corresponding series $\Zhat_0(q)$, for small values of $r$, are shown in Table~\ref{tab:RHsmallsurgeries}. They can be computed using the formulas \eqref{eq:plumbing1} or \eqref{eq:Zbrieskorn}. 

\begin{table}
\centering
\begin{tabular}{ccc}
\hline\hline
\multicolumn{2}{c}{$Y = S^3_{-1/r} ({\RHT})$} & $\phantom{\oint_{\oint_{\oint}}^{\oint^{\oint}}} \Zhat_0(q)$
\\
\hline\hline
$r=1$ & $\Sigma (2,3,7)$ & $q^{1/2}(1-q-q^5+q^{10}-q^{11}+q^{18}+q^{30}-q^{41}+q^{43}-q^{56}-q^{76}+q^{93}-$ \\
 &  & $-q^{96}+q^{115}+q^{143}-q^{166}+q^{170}-q^{195}-q^{231}+q^{260}-q^{265}+q^{296}+\cdots)$ \\[2ex]
$r=2$ & $\Sigma(2,3,13)$ & $q^{1/2} (1-q-q^{11}+q^{16}-q^{23}+q^{30}+q^{60}-q^{71}+q^{85}-q^{98}-$ \\
 &  & $-q^{148}+q^{165}-q^{186}+q^{205}+q^{275}-q^{298}+ \cdots)$ \\[2ex]
$r=3$ & $\Sigma(2,3,19)$ & $q^{1/2} (1-q-q^{17}+q^{22}-q^{35}+q^{42}+q^{90}-q^{101}+q^{127}-q^{140}-$ \\
 &  & $-q^{220}+q^{237}-q^{276}+q^{295}+ \cdots)$ \\[2ex]
$r=4$ & $\Sigma(2,3,15)$ & $q^{1/2} (1-q-q^{23}+q^{28}-q^{47}+q^{54}+q^{120}-q^{131}+q^{169}-q^{182}-q^{292}+ \cdots)$ \\[2ex]
$r=5$ & $\Sigma(2,3,31)$ & $q^{1/2} (1-q-q^{29}+q^{34}-q^{59}+q^{66}+q^{150}-q^{161}+q^{211}-q^{224} + \cdots)$ \\[2ex]
\hline\hline
\end{tabular}
\caption{The invariants $\Zhat_0 (q)$ for $-1/r$ surgeries on the right-handed trefoil.}
\label{tab:RHsmallsurgeries}
\end{table}

From Table~\ref{tab:RHsmallsurgeries} let us read off the first few terms:
\begin{align*}
 \Zhat_0(S^3_{-1/r} ({\RHT}); q) &= q^{1/2}\bigl( (1-q) - q^{6r-1}(1-q^5) - q^{12r-1}(1-q^7) + \dots\bigr)\\
 &= - q^{1-\frac{r}{4}} \bigl( q^{\frac{r}{4}} (q^{\frac{1}{2}} - q^{-\frac{1}{2}}) - q\cdot q^{\frac{25r}{4}} (q^{\frac{5}{2}} - q^{-\frac{5}{2}}) - q^2 \cdot q^{\frac{49r}{4}} (q^{\frac{7}{2}} - q^{-\frac{7}{2}}) + \dots \bigr)
 \end{align*}
 
 Theorem~\ref{thm:Dehn}, applied to $-1/r$ surgeries on the trefoil, tells us that
 $$ \Zhat_0(S^3_{-1/r} ({\RHT}); q) = q^{-\frac{r}{4} - \frac{1}{4r}}  \CL_{-1/r} \left[ (x^{\frac{1}{2r}} - x^{-\frac{1}{2r}}) F_{\RHT} (x,q) \right].$$
 
Thus, from \eqref{eq:qmm} we obtain the first few terms of the series $F_{\RHT}(x, q)$:
\be
\label{eq:firsttrefoil}
 F_{\RHT}(x, q) = -\frac{1}{2} \Bigl(q (x^{\frac{1}{2}} - x^{-\frac{1}{2}}) - q^2(x^{\frac{5}{2}} - x^{-\frac{5}{2}}) - q^3(x^{\frac{7}{2}} - q^{-\frac{7}{2}})+\dots\Bigr)
 \ee

The same reverse engineering method can be applied to get an exact expression for $F_K(x, q)$ for all torus knots. Recall from Proposition~\ref{prop:brieskorn} that the $\Zhat_0(q)$ invariants for Brieskorn spheres are expressed in terms of the false theta functions
\be
\tPsi^{(a)}_p (q)  :=  \sum_{n=0}^\infty \psi^{(a)}_{2p}(n) q^{\frac{n^2}{4p}} 
\ee
where $\psi^{(a)}_{2p}(n)$ is $\pm 1$ if $ n\equiv \pm a \! \! \pmod{2p}$, and $0$ otherwise.

\begin{lemma}
\label{lem:abcde}
Suppose $a, b, c, d, e \in \Z_+$ with
$$ c > d > 0, \ \ d = \frac{a}{c} + \frac{1}{2}, \ \ e =\frac{bc^2}{2a}.$$
Then
\be
\label{eq:abcde}
\tilde \Psi^{(2ar + b+c)}_{c^2r+e} - \tilde \Psi^{(2ar + b-c)}_{c^2r+e} \; = \;
q^{v} \CL_{-1/r} \left[\,
\big( x^{\frac{1}{2r}}-x^{-\frac{1}{2r}} \big) G(x,q) \right]
\ee
where
$$G(x,q) = \frac{1}{2} \sum_{m \in \Z_+} \psi_{2c}^{(2d-1)}(m) \cdot (x^{\frac{m}{2}} - x^{-\frac{m}{2}}) q^{\frac{e}{4c^2}m^2} $$
and
$$ v = \frac{a}{2(b+2ar)}  - \frac{1}{4r}.$$
\end{lemma}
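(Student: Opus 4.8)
The plan is to prove this by a direct computation comparing the two sides of \eqref{eq:abcde}, exploiting the explicit formula \eqref{eq:qmm} for the Laplace transform $\CL_{-1/r}$ applied to an anti-symmetrized series. First I would write out the right-hand side. By \eqref{eq:qmm} (the key formula extracted from the proof of Lemma~\ref{lem:uniqueLaplace}), applied to $H(x,q) = \sum_{m} \psi_{2c}^{(2d-1)}(m)(x^{m/2}-x^{-m/2})q^{\frac{e}{4c^2}m^2}$ with surgery parameter $r$, one gets
$$\CL_{-1/r}\left[(x^{\frac{1}{2r}}-x^{-\frac{1}{2r}})G(x,q)\right] = q^{\frac{1}{4r}}\sum_{m \geq 0} \psi_{2c}^{(2d-1)}(m) \, q^{\frac{e}{4c^2}m^2}\bigl(q^{\frac{rm^2}{4}+\frac{m}{2}} - q^{\frac{rm^2}{4}-\frac{m}{2}}\bigr),$$
the point being that the Laplace transform sends $x^{m/2}q^{\frac{e}{4c^2}m^2} \mapsto q^{-\frac{r}{-1}(m/2)^2}q^{\frac{e}{4c^2}m^2} = q^{\frac{rm^2}{4}+\frac{e}{4c^2}m^2}$ (always nonzero since $p=-1$). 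So the exponent of $q$ in a generic term is $\frac14(r + \frac{e}{c^2})m^2 \pm \frac{m}{2} + \frac{1}{4r}$, and after multiplying by $q^v$ with $v = \frac{a}{2(b+2ar)} - \frac{1}{4r}$ the $\frac{1}{4r}$ cancels.

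Next I would expand the left-hand side using the definition \eqref{Psis} of $\tPsi$. We have $\tPsi^{(\alpha)}_P(q) = \sum_{n \geq 0}\psi^{(\alpha)}_{2P}(n)q^{n^2/4P}$, so $\psi^{(\alpha)}_{2P}(n) = \pm1$ according to $n \equiv \pm\alpha \pmod{2P}$. With $P = c^2r+e$, $\alpha_{\pm} = 2ar+b\pm c$, the difference $\tPsi^{(\alpha_+)}_P - \tPsi^{(\alpha_-)}_P$ is a sum over $n \geq 0$ with signs determined by the residue of $n$ modulo $2P$. The strategy is to reparametrize: write $n = 2P k + \alpha_{\pm}$ or $n = 2Pk - \alpha_{\pm}$ and substitute $\alpha_\pm = 2ar + b \pm c$, $P = c^2r + e$, using the constraints $d = a/c + 1/2$ (i.e. $2ac + c = 2cd \cdot c$, rearranged $b = \frac{2ae}{c^2}$ from $e = bc^2/2a$... actually $2ae = bc^2$) to force the exponents $n^2/4P$ into the shape $\frac14(r+\frac{e}{c^2})m^2 \pm \frac{m}{2}$ plus the constant $v + \frac{1}{4r}$. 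The arithmetic identities $d = \frac{a}{c}+\frac12$ and $e = \frac{bc^2}{2a}$ are exactly what is needed to make $P = c^2 r + e$, $\alpha_\pm$, and the quadratic completion line up; I would verify the constant term works out to $v + \frac{1}{4r} = \frac{a}{2(b+2ar)}$ by direct substitution, noting $\frac{\alpha^2}{4P}$ for $\alpha = 2ar+b$ simplifies using $2ae = bc^2$.

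Then I would match term by term: a term $q^{n^2/4P}$ on the left with $n \equiv \pm(2ar+b+c) - $ something must correspond to a term $\psi^{(2d-1)}_{2c}(m)q^{\frac14(r+e/c^2)m^2 \pm m/2}$ on the right, with $m$ determined by $n$ via $n = cm + (\text{linear in } r)$ or similar. The condition $m \equiv 2d-1 \pmod{2c}$ on the right should translate precisely into $n$ lying in the correct residue class mod $2P$ for the combination $\tPsi^{(\alpha_+)} - \tPsi^{(\alpha_-)}$; here $c > d > 0$ and $d = a/c + 1/2$ guarantee $0 < 2d-1 < 2c$ and that $\alpha_\pm$ fall in the fundamental domain $(0, 2P)$ so no "boundary" corrections (analogous to the $C = 2q^{1/120}$ term in Proposition~\ref{prop:brieskorn}) appear. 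The main obstacle I anticipate is precisely this bookkeeping of residue classes and signs: making sure that the four families of terms (from $\pm\alpha_+$ and $\pm\alpha_-$, or equivalently the two $\tPsi$'s each contributing a $+$ and $-$ sum as in \eqref{eq:2psi}) reorganize into exactly the two families $q^{\cdots + m/2}$ and $q^{\cdots - m/2}$ on the right with the correct sign $\psi^{(2d-1)}_{2c}(m)$, and checking no terms are double-counted or dropped at small $m$. This is the same kind of careful index-chasing as in the proof of Proposition~\ref{prop:brieskorn}, and I would handle it by setting $m = $ (the appropriate linear function of $n$ and $r$) explicitly and verifying the period-$2c$ sign pattern, treating the ranges $n \geq 0$ carefully to confirm the sums start at the right index.
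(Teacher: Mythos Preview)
Your proposal is correct and follows precisely the approach the paper takes: the paper's proof consists only of the remark that this is ``a straightforward calculation using the formula \eqref{eq:qmm},'' together with the observation that the summation index $m$ now runs over all positive integers rather than odd ones. Your outline simply unpacks that calculation in detail—applying \eqref{eq:qmm} to $G$, expanding the left side via \eqref{Psis}, and matching residue classes—which is exactly what the paper leaves implicit.
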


\begin{proof}
This is a  straightforward calculation using the formula \eqref{eq:qmm}. Compared with \eqref{eq:Hxq}, here we have doubled the summation index $m$, so that it now runs over the positive integers.
\end{proof}

At this point we are ready to prove Theorem~\ref{thm:TorusKnots}, which says that, for $K=T(s,t)$, 
\be
\label{eq:FKtorus}
F_K(x,q) = q^{\frac{(s-1)(t-1)}{2}} \cdot \frac{1}{2} \sum_{m > 0}   \eps_m \cdot (x^{\frac{m}{2}} - x^{-\frac{m}{2}})   q^{\frac{m^2 - (st-s-t)^2}{4st}}
\ee
where 
$$\eps_m = \begin{cases}
-1 & \operatorname{if } \ m \equiv st+s+t \ \operatorname{ or } \ st-s-t \!\! \pmod{2st}\\
+1 & \operatorname{if } \ m \equiv st+s-t \ \operatorname{ or }\  st-s+t \!\! \pmod{2st}\\
0 & \operatorname{otherwise.}
\end{cases}$$

\begin{proof}[Proof of Theorem~\ref{thm:TorusKnots}]
We apply Theorem~\ref{thm:Dehn} to $-1/r$ surgery on $K=T(s,t)$, which yields the Brieskorn sphere
$$S^3_{-1/r}(T(s, t)) = \Sigma(s, t, rst+1).$$
We get
\be
\label{eq:ZF}
 \Zhat_0(\Sigma(s, t, rst+1))= \eps  q^{d} \cdot \CL_{-1/r} \left[ (x^{\frac{1}{2r}} - x^{-\frac{1}{2r}}) F_K (x,q) \right],
 \ee
where in this case we can compute $\xi=0$, $\tau=0$ and hence
$$ \eps  = \sign(p)\cdot (-1)^{\tau + 1} = +1,$$
$$ d = \xi + \alpha(-1, r) = - \frac{r}{4} - \frac{1}{4r}.$$

In view of Lemma~\ref{lem:uniqueLaplace}, it suffices to show that when we plug in the expression \eqref{eq:FKtorus} for $F_K(x,q)$, the relation \eqref{eq:ZF} holds. 

We compute the left hand side of \eqref{eq:ZF} by applying Proposition~\ref{prop:brieskorn} to $\Sigma(s, t, rst+1).$ In the notation of Proposition~\ref{prop:brieskorn}, we have
$$ b_1 = s, \ \ b_2=r, \ \ b_3=str+1,$$
hence $b_1b_2b_3 = (st)^2 r+ st$ and
\begin{align*}
 \alpha_1 &=st(st-s-t)r - (s+t), \\
\alpha_2 &=st(st-s-t)r -(s+t) + 2st, \\
\alpha_3 &=st(st+s-t)r +s-t, \\
\alpha_4 &=st(st+s-t)r + s-t+2st.
\end{align*}
Proposition~\ref{prop:brieskorn} says that
 \be
 \label{eq:Zorst}
  \Zhat_0(\Sigma(s,t,rst+1)) = -q^{\Delta} \cdot \tPsi^{(\alpha_1)-(\alpha_2)-(\alpha_3)+(\alpha_4)}_{b_1b_2b_3}(q).
  \ee
 Here, $\Delta=\Delta(s, t, r)$ is given by the formula~\eqref{eq:Delta} applied to the plumbing graph $\tilde \Gamma$ representing $\Sigma(s, t, srt+1)$. This graph can be obtained from the plumbing tree $\Gamma$ for $T(s, t)$, by gluing it to the graph for the solid tours $\mathbb{S}_{-1/r}$, as described in Section~\ref{sec:PlumbedTorus}.
  
Let $\hat \Gamma$ be the linear plumbing graph obtained from $\Gamma$ by deleting the distinguished vertex. Note that $\Gamma'$ represents $S^3$. Let $k$ be the number of vertices in $\Gamma'$, and let $m_1, \dots, m_k$ be the labels of those vertices. Further, let $\eta_1$ and $\eta_2$ be the cardinalities of the first homology of the plumbed manifolds represented by the result of deleting from $\Gamma'$ one of its two terminal vertices. In other words, if we construct a continued fraction from $\Gamma'$ as in \eqref{contfract}, by going along the graph in either direction, then in both cases, the numerator of the fraction will be $\pm 1$; the denominator in one direction will be $\eta_1$, and in the other direction $\eta_2$.

Looking at the formula~\eqref{eq:Delta} in our case, we see that what we denoted $s$ in that formula corresponds to $k+r$ in our current notation, and what was $\sum m_v$ there is now $\sum_{i=1}^k m_i - st -2r+1$. Furthermore, the values $h_i$ from \eqref{eq:Delta} are
$$ h_1=rs^2 + \eta_1, \ \ h_2 = rt^2 +\eta_2, \ \ h_3=1.$$
One can check that
$$ \eta_1 + \eta_2 = 3k + \sum_{i=1}^k m_i  - st.$$
Therefore, we have
$$ \Delta(s,t,r) = \frac{(s-1)(t-1)}{2} - \frac{r}{4} - \frac{st}{4(1+str)} - \frac{(st-s-t)^2}{4st}$$

We now apply Lemma~\ref{lem:abcde} twice, first with the values
$$ a = \frac{st(st-s-t)}{2}, \ b=st-s-t, \ c=e=st, \ d=\frac{(s-1)(t-1)}{2}$$
and then with
$$ a= \frac{st(st+s-t)}{2}, \ b=st+s-t, \ c=e=st,\ d=\frac{(s-1)(t+1)}{2}+1.$$
In both cases we have $$v = \frac{st}{4str+1} - \frac{1}{4r} = -\frac{1}{4r(str+1)}.$$
By taking the difference of the two resulting relations~\eqref{eq:abcde}, and plugging in $F_K$ from \eqref{eq:FKtorus} and $\Zhat_0$ from \eqref{eq:Zorst}, we obtain that the relation~\eqref{eq:ZF} holds.
\end{proof}

If we prefer, for $K=T(s,t)$, we can write $F_K(x,q)$ as in Section~\ref{sec:anti}, as the anti-symmetrization of the series
\be
\Fd_{K}(x,q) =q^{\frac{(s-1)(t-1)}{2}} \sum_{m\geq 1} \eps_m \cdot x^{\frac{m}{2}}  q^{\frac{m^2 - (st-s-t)^2}{4st}}.
\ee
In expanded form, this is
\begin{align}
 \Fd_K(x,q) &=
 \sum_{n\geq 0} q^{stn^2 + (st-s-t)n} x^{stn+\frac{(s-1)(t-1)}{2}} + \sum_{n\geq 1}  q^{stn^2 - (st-s-t)n} x^{stn-\frac{(s-1)(t-1)}{2}+1}\nonumber \\
& -\sum_{n \geq 0} q^{(sn+s-1)(tn+1)} x^{stn+\frac{(s-1)(t+1)}{2}+1} - \sum_{n\geq 1} q^{(sn-s+1)(tn-1)} x^{stn-\frac{(s-1)(t+1)}{2}},
\end{align}
where we assumed that $2 \leq s < t$.

\subsection{More on the trefoil}
The first few terms of $F_K(x,q)$ for the right-handed trefoil $\RHT = T(2,3)$ were already written down in \eqref{eq:firsttrefoil}. Theorem~\ref{thm:TorusKnots} gives the exact formula
\be
F_{\RHT} (x,q) \; = \; \frac{q}{2} \sum_{m=1}^{\infty} \eps_m (x^{\frac{m}{2}} - x^{- \frac{m}{2}}) \, q^{\frac{m^2-1}{24}}
\label{Fxq31unnorm}
\ee
where
\be
\label{eq:epstrefoil}
\eps_m = \begin{cases}
-1 & \operatorname{if } \ m \equiv 1 \ \operatorname{ or } \ 11 \!\! \pmod{12}\\
+1 & \operatorname{if } \ m \equiv 5 \ \operatorname{ or }\  7 \!\! \pmod{12}\\
0 & \operatorname{otherwise.}
\end{cases}
\ee

The Dehn surgery formula (Theorem~\ref{thm:Dehn}) can be applied to this series for the values $p/r \not \in [0,6]$. For fractional $-1/r$ surgeries, we get back the answers from Table~\ref{tab:RHsmallsurgeries}. For some integer surgeries, the results are tabulated in Table~\ref{tab:RHTsurgeries}; they can also be obtained directly from the plumbing formula \eqref{eq:plumbing1}. 

Note that, by the discussion in Section~\ref{sec:PlumbedTorus}, the manifold $S^3_{p/r}(\RHT)$ bounds a negative definite plumbing iff 
$$ p/r \in (-\infty, 0) \cup \{5\} \cup [6, \infty).$$

\begin{table}
\centering
\begin{tabular}{ccc}
\hline\hline
\multicolumn{2}{c}{$Y = S^3_{p} (\RHT)$} & $\phantom{\oint_{\oint_{\oint}}^{\oint^{\oint}}}\Zhat_a(q)$
\\
\hline\hline
$p=10$ &  $M\! \left(-2;\frac{1}{2},\frac{1}{3},\frac{3}{4} \right)$ &
$\frac{q^{-3/20}}{2} (1 + q^5 + q^7 - q^{11} + q^{18} - q^{24} - q^{28} - q^{47} + q^{73} + \cdots)$ \\
 & & $\frac{q^{-13/20}}{2} (-1 + q + q^2 + q^9 - q^{22} - q^{39} - q^{44} + q^{53} - q^{67} + q^{78} + q^{85} + \cdots)$ \\
 & & $\frac{q^{-17/20}}{2} (1 - q^4 + q^7 - q^{10} + q^{21} - q^{26} + q^{33} + q^{59} - q^{61} - q^{95} + \cdots)$ \\
 & & $\frac{q^{-7/20}}{2} (-1 + q - q^3 - q^{14} + q^{15} + q^{34} - q^{42} + q^{49} - q^{71} + q^{80} - q^{92} + \cdots)$ \\
 & & $q^{7/4} (-1 - q^{10} + q^{15} + q^{35} - q^{85} + \cdots)$ \\
 & & $q^{-3/4} (-1 - q^5 + q^{30} + q^{55} - q^{65} - q^{100} + \cdots)$ \\[2ex]
$p=9$ & $M\! \left(-2;\frac{1}{2},\frac{1}{3},\frac{2}{3} \right)$ & $\frac{q^{1/18}}{2} (1 + q - q^8 - q^{11} + q^{25} + q^{30} - q^{51} - q^{58} + q^{86} + q^{95} + \cdots)$ \\
 & & $\frac{q^{-1/2}}{2} (-1 + 2 q - q^3 + q^6 - 2 q^{10} + q^{15} - q^{21} + 2 q^{28} - q^{36} + \cdots)$ \\
 & & $\frac{q^{-11/18}}{2} (1 - q^4 + q^5 + q^{17} - q^{19} - q^{39} + q^{42} + q^{70} - q^{74} + \cdots)$ \\
 & & $q^{-1/2} (-1 - q^3 + q^6 + q^{15} - q^{21} - q^{36} + q^{45} + q^{66} - q^{78} + \cdots)$ \\
 & & $\frac{q^{-5/18}}{2} (-1 - q^2 + q^7 + q^{13} - q^{23} - q^{33} + q^{48} + q^{62} - q^{82} - q^{100} + \cdots)$ \\[2ex]
$p=8$ & $M\! \left(-2;\frac{1}{2},\frac{1}{3},\frac{1}{2} \right)$  &
$\frac{q^{-3/8}}{2} (1 + q - q^2 + q^5 - q^7 + q^{12} - q^{15} + q^{22} - q^{26} + q^{35} + \cdots)$ \\
 & & $\frac{q^{-3/8}}{2} (-1 + q - q^2 + q^5 - q^7 + q^{12} - q^{15} + q^{22} - q^{26} + q^{35} + \cdots)$ \\
 & & $- q^{1/4} (-1 + q^2 - q^{10} + q^{16} - q^{32} + q^{42} - q^{66} + q^{80} + \cdots)$ \\
 & & $q^{-1/4} (-1 + q^4 - q^8 + q^{20} - q^{28} + q^{48} - q^{60} + q^{88} + \cdots)$ \\[2ex]
$p=7$ & $L(7,1)$   & $-2q$, \ \ $q^{8/7}$ \\[2ex]
$\dots$ & $\dots$ & $\dots$ \\[2ex]
$p=-1$ & $\Sigma (2,3,7)$ & $q^{1/2}( 1-q-q^5+q^{10}-q^{11}+q^{18}+q^{30}-q^{41}+q^{43}-q^{56}-q^{76}+q^{93}+\cdots)$ \\[2ex]
$p=-2$ & $M\! \left(-1;\frac{1}{2},\frac{1}{3},\frac{1}{8} \right)$ & $q^{3/4} (1-q^3+q^{10}-q^{23}+q^{25}-q^{44}+q^{65}-q^{94}+q^{98} + \cdots)$ \\
 &  & $-q^{5/4} (1-q^5+q^6-q^{17}+q^{31}-q^{52}+q^{55}-q^{82} + \cdots)$ \\[2ex]
$p=-3$ & $M\! \left(-1;\frac{1}{2},\frac{1}{3},\frac{1}{9} \right)$ & $q+q^5-q^6-q^{18}+q^{20}+q^{40}-q^{43}-q^{71}+q^{75} +\cdots$ \\
 &  & $-q^{4/3} (1+q^2-q^7-q^{13}+q^{23}+q^{33}-q^{48}-q^{62}+q^{82} + \cdots)$ \\[2ex]
$p=-4$ & $M\! \left(-1;\frac{1}{2},\frac{1}{3},\frac{1}{10} \right)$ & $q^{5/4} (1+q^6-q^{28}-q^{58}+q^{62} + \cdots)$ \\
 &  & $-q^{3/2} (1-q^3+q^4-q^{11}+q^{19}-q^{32}+q^{35}-q^{52}+q^{68}-q^{91}+q^{96} + \cdots)$ \\
 &  & $-q^{13/4} (1+q^{12}-q^{14}-q^{38}+ q^{82} + \cdots)$ \\[2ex]
\hline\hline
\end{tabular}
\caption{The invariants $\Zhat_a (q)$ for some integer surgeries on the right-handed trefoil.}
\label{tab:RHTsurgeries}
\end{table}

\subsection{Negative torus knots}
\label{sec:negative}
Recall from \eqref{eq:reversal} that the WRT invariants of a three-manifold $Y$ are related to those of $-Y$ by the change of variables $q \mapsto q^{-1}$. On the other hand, in general, the $\Zhat_a(q)$ invariants of $Y$ and $-Y$ are related in a more complicated fashion: the change $q \mapsto q^{-1}$ involves going from a series converging in the unit disk $|q| < 1$ to one converging for $|q| > 1$; see \cite{CCFGH}. 

Similarly, for a knot $K$ in a homology sphere $\hat Y$, we expect the series $F_K(x,q)$ to be related to that of the same knot in $-\hat Y$ in a complicated way. On the other hand, when $\hat Y = S^3$ or a lens space, the underlying $\Zhat_a(q)$ invariants of the closed manifold are Laurent polynomials (rather than Laurent power series), and we expect things to simplify. Indeed, for plumbed knots $K$ in such manifolds, one can check that, in the series $F_K(x,q)$ from \eqref{eq:FK} and \eqref{eq:FKf}, each coefficient $f_m(q)$ of a power of $x$ is a Laurent polynomial in $q$. 

In particular, this is true for torus knots in $S^3$ (or, more generally, for algebraic knots). For such knots, it makes sense to define the series for the mirror knot by
$$ F_{m(K)}(x,q) : = -F_{K}(x^{-1}, q^{-1}) = F_K(x, q^{-1}),$$
so that
$$ f_{m(K)}(x, q) = f_K(x^{-1},q^{-1}) = f_K(x, q^{-1}).$$
Observe that the series $F_{m(K)}(x,q)$ is no longer an element of some
$$2^{-c} q^{\Delta} \Z[x^{1/2}, x^{-1/2}][q^{-1},q]],$$
because there is no lower bound for the exponents of $q$ over all $x$. Rather, it is a formal power series in both $x$ and $x^{-1}$, i.e. 
$$F_{m(K)}(x,q) \in 2^{-c} q^{\Delta} \Z[q^{-1}, q][[x^{1/2}, x^{-1/2}]].$$
It is worth noting that formal power series in $x$ and $x^{-1}$ form only a vector space, not a ring.

Going back to torus knots, as we noted in the Introduction, the series for $m(T(s, t)) = T(s, -t)$ is the anti-symmetrization of
$$ \Psi(x, q) := \Fd_{T(s, -t)} (x, q) = q^{-\frac{(s-1)(t-1)}{2}} \sum_{m\geq 1} \eps_m \cdot x^{\frac{m}{2}}  q^{-\frac{m^2 - (st-s-t)^2}{4st}}.$$
 
Let us focus on the negative trefoil $\LHT = T(2, -3)$, for which we have
\begin{align*}
 \Psi(x, q) &= q^{-1} \sum_{m\geq 1} \eps_m \cdot x^{\frac{m}{2}}  q^{-\frac{m^2 - 1}{24}} \\
&= q^{-1}x^{1/2}(-1 + q^{-1}x^2 + q^{-2}x^3 - q^{-5}x^5 - \dots)
 \end{align*}
 
We can apply Laplace transforms to this series, multiplied by a suitable factor, as in Theorem~\ref{thm:Dehn}. This makes sense for values $p/r \in (-6, 0)$. The results for $\Zhat_a (q)$ of $S^3_{p/r}(\LHT)$ are summarized in Tables~\ref{tab:surgeriesLHT} and ~\ref{tab:LHsmallsurgeries}. An indication that we have the right definition of $F_{\LHT}(x, q)$ is that the same answers can be obtained from the plumbing formula \eqref{eq:plumbing1}. Note that, by the discussion in Section~\ref{sec:PlumbedTorus}, the manifold $$S^3_{p/r}(\LHT) = - S^3_{-p/r}(\RHT)$$ bounds a negative definite plumbing iff 
$$ p/r \in \{-7\} \cup [-6, 0).$$

\begin{table}
\centering
\begin{tabular}{ccc}
\hline\hline
\multicolumn{2}{c}{$Y = S^3_{p} ({\bf 3_1^{\ell}})$} & $\phantom{\oint_{\oint_{\oint}}^{\oint^{\oint}}} \Zhat_a(q)$
\\
\hline\hline
\rule{0pt}{3ex}    $p=2$ & $M\! \left(-2;\frac{1}{2},\frac{2}{3},\frac{7}{8} \right)$ & $\ldots$ \\[2ex]
$p=1$ & $-\Sigma (2,3,7)$ & $\ldots$ \\[2ex]
$p=-1$ & $\Sigma (2,3,5)$ & $q^{-3/2}(1-q-q^3-q^7+q^8+q^{14}+q^{20}+q^{29}-q^{31}-q^{42}+\cdots)$ \\[2ex]
$p=-2$ & $M\! \left(-2;\frac{1}{2},\frac{2}{3},\frac{3}{4} \right)$ & $-q^{-3/4} (1+q^2-q^3-q^7+q^{17}+q^{25}-q^{28}-q^{38}+q^{58}+q^{72}+\cdots)$ \\
 &  & $q^{-5/4} (1-q+q^6+q^{11}-q^{13}-q^{20}+q^{35}+q^{46}-q^{50}-q^{63}+\cdots)$ \\[2ex]
$p=-3$ & $M\! \left(-2;\frac{1}{2},\frac{2}{3},\frac{2}{3} \right)$ & $-q^{-2/3} (1-q^3+q^9-q^{18}+q^{30}-q^{45}+q^{63}-q^{84}+\cdots)$ \\
 &  & $q^{-1} (1-q+q^2+q^5-q^7-q^{12}+q^{15}+q^{22}-q^{26}-q^{35}+q^{40}+\cdots)$ \\[2ex]
$p=-4$ & $M\! \left(-2;\frac{1}{2},\frac{1}{2},\frac{2}{3} \right)$ & $-q^{-1/2} (1-q+q^5-q^8+q^{16}-q^{21}+q^{33}-q^{40}+q^{56}-q^{65}+\cdots)$ \\
 &  & $q^{-3/4} (1+q^2-q^4+q^{10}-q^{14}+q^{24}-q^{30}+q^{44}-q^{52}+q^{70}+\cdots)$ \\
 &  & $- q^{-3/4} (1-q^2+q^4-q^{10}+q^{14}-q^{24}+q^{30}-q^{44}+q^{52}-q^{70}+\cdots)$ \\[2ex]
$p=-5$ & $-L(5,1)$ & $-2q^{-1/2}, \ \ q^{-7/10}$ \\[2ex]
$p=-6$ & $L(3,1) \# L(2,1)$ & $\ldots$ \\[2ex]
$p=-7$ & $-L(7,1)$ & $\ldots$ \\[2ex]
$p=-8$ &$M\! \left(-1;\frac{1}{2},\frac{1}{2},\frac{2}{3} \right)$  & $\ldots$ \\[2ex]
\hline\hline
\end{tabular}
\caption{The invariants $\Zhat_a (q)$ for some integer surgeries on the left-handed trefoil. }
\label{tab:surgeriesLHT}
\end{table}

\begin{table}
\centering
\begin{tabular}{ccc}
\hline\hline
\multicolumn{2}{c}{$Y = S^3_{-1/r} ({\bf 3_1^{\ell}})$} & $\phantom{\oint_{\oint_{\oint}}^{\oint^{\oint}}} \Zhat_a(q)$
\\
\hline\hline
\rule{0pt}{3ex} $r=1$ & $\Sigma (2,3,5)$ & $q^{-3/2}(1-q-q^3-q^7+q^8+q^{14}+q^{20}+q^{29}-q^{31}-q^{42}+\cdots)$ \\[2ex]
$r=2$ & $\Sigma(2,3,11)$ &
$q^{-3/2} (1 - q - q^9 + q^{14} - q^{19} + q^{26} + q^{50} - q^{61} + q^{71} - q^{84}$ \\
 &  & $- q^{124} + q^{141} - q^{156} + q^{175} + q^{231} - q^{254} + q^{274} - q^{299}+ \cdots)$ \\[2ex]
$r=3$ & $\Sigma(2,3,17)$ &
$q^{-3/2} (1 - q - q^{15} + q^{20} - q^{31} + q^{38} + q^{80} - q^{91} + q^{113}$ \\
 &  & $- q^{126} - q^{196} + q^{213} - q^{246} + q^{265} + \cdots)$ \\[2ex]
$r=4$ & $\Sigma(2,3,23)$ &
$q^{-3/2} (1 - q - q^{21} + q^{26} - q^{43} + q^{50} + q^{110} - q^{121}$ \\
 &  & $+ q^{155} - q^{168} - q^{268} + q^{285} + \cdots)$ \\[2ex]
$r=5$ & $\Sigma(2,3,29)$ &
$q^{-3/2} (1 - q - q^{27} + q^{32} - q^{55} + q^{62} + q^{140} - q^{151} + q^{197} - q^{210} + \cdots)$ \\[2ex]
\hline\hline
\end{tabular}
\caption{The invariants $\Zhat_0 (q)$ for $-1/r$ surgeries on the left-handed trefoil.}
\label{tab:LHsmallsurgeries}
\end{table}

It is helpful to study the normalized version of $\Psi(x, q)$:
\begin{align*}
 \psi(x, q) &= \frac{\Psi(x,q)}{x^{1/2} - x^{-1/2}} \\
&= -x^{-1/2} \Psi(x,q) \cdot (1+ x + x^2 + \dots)\\
&= q^{-1}x(1 - q^{-1}x^2 - q^{-2}x^3 + q^{-5}x^5 + \dots)(1+ x + x^2 + \dots)\\
&= \frac{x}{q} \left( 1 + x + x^2(1-q^{-1}) + x^3 ( 1-q^{-1}) + \dots \right)\\
&=\frac{x}{q} \sum_{m=0}^{\infty} x^m \Bigl( 1- \frac{x}{q} \Bigr) \dots \Bigl( 1- \frac{x}{q^m} \Bigr).
\end{align*}
The last expression is (up to a normalization factor) the Garoufalidis-Le stability series for the trefoil; cf. \cite[p.11]{GaroufalidisLe}. It is obtained by setting $x=q^n$ in the following formula for the colored Jones polynomial of the trefoil, from \cite[Section 1.1.4]{HuynhLe}:
\begin{align}
 \label{JnLHT}
 J_{\LHT, n}(q) &= q^{n-1} \sum_{m=0}^{\infty} q^{mn} (1-q^{n-1})(1-q^{n-2}) \dots (1-q^{n-m})\\
 &=q^{n-1} \sum_{m=0}^{\infty} q^{mn} (q^{n-m})_m.
\notag
 \end{align}
Observe that the sum in \eqref{JnLHT} is finite, because the terms are $0$ for $m \geq n$.

\begin{remark}
The papers \cite{GaroufalidisLe} and \cite{HuynhLe} give these formulas for the right-handed, rather than the left-handed, trefoil. This is because their conventions for the colored Jones polynomial differ from ours by $q \leftrightarrow q^{-1}$. See Section~\ref{sec:conventions}. 
\end{remark}

\begin{remark}
In the literature there is another well-known formula for the colored Jones polynomial of the trefoil---its cyclotomic expansion \eqref{eq:LHTcyclo}, whose terms involve two Pochhammer symbols instead of one:
$$J_{\LHT,n}(q)= \sum_{m =0}^{\infty} q^m (q^{n+1})_m (q^{1-n})_m.$$
We could set $q^n = x$ and get a series as in \eqref{eq:Fhabiro}:
$$ C_K(x,q)=\sum_{m= 0}^{\infty} q^m(qx)_m (qx^{-1})_m.$$ 
We could try to apply the Dehn surgery formula~\eqref{eq:Dehnf} to $C_K(x,q)$ instead of $f_K(x,q)$, and see if we recover the invariants $\Zhat_a(q)$ of the surgeries. Interestingly, we get the right answer for the $-1$ surgery (the case of the Poincar\'e sphere), but not for other surgeries.
\end{remark}

\subsection{Stability series}
\label{sec:stability}
In this Section we prove Theorem~\ref{thm:EvenStability}, relating $\Psi(x,q)$ to the stability series for the negative torus knot. Recall the definition of stability series from \eqref{eq:stability1},  \eqref{eq:stability2}. 
In \cite{GaroufalidisLe}, Garoufalidis and Le studied the stability series 
$$\Phi(x,q)=\sum_j \Phi_j(q)x^j$$
for a version $\Jhat_{K, n}$ of the colored Jones polynomial. Specifically, $\Jhat_{K, n}$ is obtained from the unnormalized version $\tJ_{K, n}$ by dividing by its lowest monomial, so that $\Jhat_{K, n}$ starts in degree $0$. Of particular interest in the literature has been $\Phi_0(q)$, which consists of the lowest degree terms of $\Jhat_{K, n}$, and is called the {\em tail} of the colored Jones polynomials. The highest degree terms give the {\em head}, which can be obtained from the tail of the mirror knot by taking $q \mapsto q^{-1}$. See \cite{DasbachLin}, \cite{ArmondDasbach}.

Garoufalidis and Le showed that stability series exist for alternating knots; cf. \cite[Theorem 1.4]{GaroufalidisLe}. However, stability series do not exist for all knots. For example, it was observed by Armond and Dasbach  in \cite[Proposition 6.1]{ArmondDasbach} that the positive torus knots $T(s, t)$ with $s, t > 2$ do not have a tail; rather, in that case, the even colored Jones polynomials $\Jhat_{2n}$ have one tail, and the odd ones $\Jhat_{2n-1}$ have a different tail. On the other hand, by \cite[Theorem 1.17]{GaroufalidisLe}, the colored Jones polynomials of negative knots admits stability series.\footnote{Because of the difference in conventions for the colored Jones polynomial, the results stated in \cite{ArmondDasbach} and \cite{GaroufalidisLe} for positive knots apply to negative knots in our setting, and vice versa.}

\begin{proof}[Proof of Theorem~\ref{thm:EvenStability}]
Let $K = T(s, t)$. We use Morton's formula \cite{Morton} for $J_{K, n}(q)$, as rehashed by Hikami in \cite[Theorem 1]{HikamiDifference}:
\be
\label{eq:hikami}
J_{K, n}(q)= -\frac{q^{-\frac{stn}{4}} q^{\frac{(s-1)(t-1)}{2}}}{q^{\frac{n}{2}} - q^{-\frac{n}{2}}} \sum_{k=0}^{stn} \eps_{stn-k} q^{\frac{k^2 - (st-s-t)^2}{4st}},
\ee
where the values $\eps_m$ are as in \eqref{eq:epsem}. By replacing $q$ with $q^{-1}$ and multiplying by the quantum integer $[n]$ we obtain the unnormalized Jones polynomial for the mirror $m(K)=T(s, -t)$:
\be
\label{eq:tJm}
 \tJ_{m(K), n} = \frac{q^{\frac{stn}{4}} q^{-\frac{(s-1)(t-1)}{2}}}{q^{\frac{1}{2}} - q^{-\frac{1}{2}}} \sum_{k=0}^{stn} \eps_{stn-k} q^{-\frac{k^2 - (st-s-t)^2}{4st}}.
 \ee
 After changing variables to $m = stn-k$, we get
 \be
\label{eq:tJm2}
 (q^{\frac{1}{2}} - q^{-\frac{1}{2}}) \cdot \tJ_{m(K), n} =  q^{-\frac{(s-1)(t-1)}{2}} \sum_{m=0}^{stn} \eps_{m} q^{\frac{mn}{2}} q^{-\frac{m^2 - (st-s-t)^2}{4st}}.
 \ee
Clearly, the stability series of this is given by $\Psi(x,q)$ from \eqref{eq:Psixq}.
\end{proof}

\begin{remark}
While for negative torus knots we have a direct relation between $\Fd_K(x,q)$ and the stability series, we cannot expect this to hold for arbitrary knots. Indeed, as noted above, stability series do not even exist for all knots. Even when they do, e.g. for the positive torus knots $T(2,t)$, the same relation does not hold. 
\end{remark}

\begin{remark}
From the formula \eqref{eq:hikami} we can extract the stability series for the even and odd colored Jones polynomials of the positive torus knot $K=T(s,t)$. For simplicity, we will work with the normalized colored Jones polynomials $J_{K,n}(q)$, shifted to start in degree $0$. When $n$ is even, these admit a stability series $\Phi(x,q)=\sum_j \Phi_j(q)x^j$ with
$$ \Phi_0(q) = \sum_{m=0}^{\infty} \eps_{m} q^{\frac{m^2 - (st-s-t)^2}{4st}} = q^{-\frac{(s-1)(t-1)}{2}} \cdot \Fd_K(1, q)$$
and
$$\Phi_j(q) = 0 \ \text{for } j > 0.$$
If we want the series for the unnormalized versions of colored Jones, we divide $\Phi(x,q)$ by $1-x$, that is, we multiply it by $1+x +x^2 + \dots$ We get the same tail $\Phi_0(q)$ as before, but this time it is replicated in all degrees $j \geq 0$.

When $n$ is odd, for the normalized versions $J_{K,n}(q)$ we get the stability series $\Upsilon(x,q)=\sum_j \Upsilon_j(q)x^j$ with
$$ \Upsilon_0(q) = \sum_{m=0}^{\infty} \eps_{st-m} q^{\frac{m^2 - (s-t)^2}{4st}}, \ \ \Upsilon_j(q) = 0 \ \text{for } j > 0.$$
\end{remark}

\begin{remark}
When $s=2$, the torus knot $T(2,t)$ is alternating. In that case it is easy to see that $\Phi_0(q) = \Upsilon_0(q)$, so the odd and even stability series coincide. 

In particular, for the right-handed trefoil $\RHT=T(2,3)$, the tail of $J_{\RHT, n}(q)$ is
$$\Phi_0(q) = \sum_{m=0}^{\infty} \eps_{m} q^{\frac{m^2 - 1}{24}}=\sum_{k \in \Z} (-1)^k q^{k(3k-1)/2}=(q)_{\infty},$$
which is Euler's pentagonal series from \eqref{eq:Euler}.
\end{remark}

\newpage \section{Resurgence}
\label{sec:resurgence}
Resurgence is the process of recovering non-perturbative features of a function from its asymptotic (perturbative) expansion. This is very useful in quantum mechanics and quantum field theory. For introductions to resurgence, see  \cite{Costin}, \cite{Dorigoni}, or \cite{Marino}. 

We are interested in applying resurgence analysis to the Chern-Simons functional. This was done for closed $3$-manifolds in \cite{GMP}, and we will show how the same techniques can be used for knot complements. 

\subsection{Closed three-manifolds}
Let us briefly review how resurgence was applied in \cite{GMP} to the Chern-Simons functional on some closed three-manifolds $Y$. This gave a construction of the invariants $\Zhat_a(Y; q)$ for those manifolds. The examples in \cite{GMP} were the Brieskorn spheres $\Sigma(2,3,5)$ and $\Sigma(2,3,7)$. More Seifert fibered examples were analyzed in  \cite{CCFGH} and \cite{Chung}. In principle, resurgence can be done for any $Y$, but it is not a completely algorithmic procedure, and it is difficult to carry out in practice. 

As usual, we will assume that $Y$ is a rational homology sphere. Recall that the set of labels $a$ for the invariants $\Zhat_a(Y; q)$ is $\Spinc(Y)$; if we take into account the conjugation symmetry, we could say it is $\Spinc(Y)/\Z_2$. Noncanonically, this can be identified with 
$$T:=H_1(Y; \Z) /\Z_2=\cM_{\operatorname{flat}}^{\operatorname{ab}}(Y; \su),$$
the moduli space of Abelian flat $\su$ connections on $Y$. (Equivalently, we could consider Abelian flat $\sl$ connections.)

Let us also introduce
$$\cM_{\operatorname{flat}}(Y; \su),$$
the moduli space of all flat $\su$ connections on $Y$.

The analysis in \cite{GMP} starts by considering the asymptotic expansion of the Chern-Simons partition function as $k \to +\infty$:
$$ Z_{\CS}(Y; k) \approx \sum_{\alpha \in \pi_0(\cM_{\operatorname{flat}}(Y; \su))} e^{2\pi i k \CS(\alpha)} \Zpert_{\alpha}(k).$$
It is convenient to change variables to 
$$\hbar=\frac{2\pi i}{k} \to 0$$
and write $\Zpert_{\alpha}$ in terms of $\hbar$. We obtain a trans-series of the form
$$ Z_{\CS}(Y; k) \approx \sum_{\alpha \in \pi_0(\cM_{\operatorname{flat}}(Y; \su))} e^{-4\pi^2\CS(\alpha)/\hbar } \Zpert_{\alpha}(\hbar),$$
where
\be
\label{eq:Zh}
 \Zpert_{\alpha}(\hbar)= \hbar^{\delta_{\alpha}}  \, \sum_{n=0}^{\infty} c_n^{(\alpha)} \hbar^n
 \ee
for some $\delta_{\alpha} \in \Q$. (For example, when $\theta$ is the trivial flat connection, then $\Zpert_{\theta}(\hbar)$ is the Ohtsuki series from \cite{OhtsukiIntegral} and \cite{OhtsukiRational}.)

We will use the notation $a$ for Abelian flat connections (elements of $T$). For Abelian flat connections on rational homology spheres, the value $\delta_{a}$ is a half-integer, so we could take it to be $1/2$ for simplicity. From $ \Zpert_{a}(\hbar)$ we construct its Borel transform
$$\BZ_a(\xi) = \sum_{n=0}^{\infty}  \frac{c_n^{(a)}}{\Gamma (n + \frac{1}{2})}\xi^{n-\frac{1}{2}},$$
where $\Gamma$ is the gamma function. We analytically continue $\BZ_a(\xi)$ for $\xi$ in some open subset of $\C$, and denote the result by $\tBZ_a$. The poles $\xi_a$ of the Borel transform in the complex plane will be exactly $2\pi$ times the values $\xi_a$ of the $\CS$ functional at all (Abelian and non-Abelian) flat $\sl$ connections on $Y$.

Then, we do an inverse Borel transform 
$$Z_a(\hbar) = \int_{\R_+}  e^{-(\xi-\xi_a)/\hbar} \tBZ_a(\xi) \frac{d\xi}{\sqrt \pi \xi }.$$

We could also replace the integration contour $\R_+$ by another contour. This choice of contour (which can be a linear combination of curves) corresponds to a prescription for Borel resummation. Going between contours  is related to the residues $n_{ab}$ around each pole $b$ of $\tBZ_a$. In the examples studies in \cite{GMP}, the new contour was taken to be $1/2$ the union of two half-lines $ie^{i\epsilon}\R_+ \cup ie^{-i\epsilon}\R_+$, for $\epsilon > 0$ small.

By analyzing the poles and residues of $\tBZ_a$, and regularizing the infinite sums that come out of this process, in good cases one obtains a closed form expression for $Z_a(\hbar)$. We then set $q = e^{ \hbar}$ in $Z_a$, expand in $q$ and get a $q$-series $Z_a(q)$. Finally, from this, we use an S-transform 
\be
\label{eq:stransf}
Z_a(q) = \sum_b S_{ab} \Zhat_b(q).
\ee
to turn $Z_a(q)$ into the desired invariants $\Zhat_a(q)$. The values $S_{ab}$ in \eqref{eq:stransf} are as in Equation~\eqref{eq:Sab}.

\subsection{Resurgence for knot complements}
Let $Y = S^3 \setminus \nu K$ be a knot complement. We denote by $\Char(Y)$ the $\sl$ character variety of $Y$, and by $\Char(\del Y)$ that of the boundary $\del Y \cong T^2$. The image of the restriction map
$$ r: \Char(Y) \; \to \; \Char(\del Y) \cong (\C^* \times \C^*)/\Z_2$$
has some one-dimensional and (possibly) some zero-dimensional components. The closure of the one-dimensional components, lifted to the double cover $\C^* \times \C^*$, is the zero locus of a polynomial $A(x, y)$, called the {\em A-polynomial} of the knot \cite{Apolynomial}. Here, $x, y \in \C^*$ are the eigenvalues of the holonomies around the meridian and longitude, respectively. 

For each fixed value of $x$, let $\alpha$ label the different $\sl$ flat connections with meridian holonomy $x$.  
(In fact, for many simple knots, such connections are uniquely determined by the value of $x$ and the choice of a branch of the A-polynomial curve $A(x,y) = 0$.) Around each connection $\alpha$, the trans-series expansion of the Chern-Simons functional on the knot complement produces a perturbative series just as in \eqref{eq:Zh}:
\be
\Zpert_{\alpha}(\hbar)= \hbar^{\delta_{\alpha}}  \, \sum_{n=0}^{\infty} c_n^{(\alpha)} \hbar^n
\label{Zgenasympt}
\ee
For example, for the figure-eight, the A-polynomial curve has three branches (illustrated in Figure~\ref{fig:Acurve}), corresponding to the hyperbolic connection, its conjugate, and the Abelian connections. At $x=1$ we have
\be
\delta_{\alpha} \; = \;
\begin{cases}
0, & \alpha = \text{geom.} \\
\frac{3}{2}, & \alpha = \text{abel.} \\
0, & \alpha = \text{conj.}
\end{cases}
\ee
For $\alpha = \text{geom}$ or $\alpha = \text{conj}$, the perturbative coefficients $c_n^{(\alpha)}$
were computed in \cite{Dimofte:2009yn} and extensively used in subsequent developments ({\it e.g.} in 3d-3d correspondence). However, the case $\alpha = \text{abel}$ has received less attention.

\begin{figure}[ht]
\centering
\includegraphics[trim={0 0in 0 0in},clip,width=4.5in]{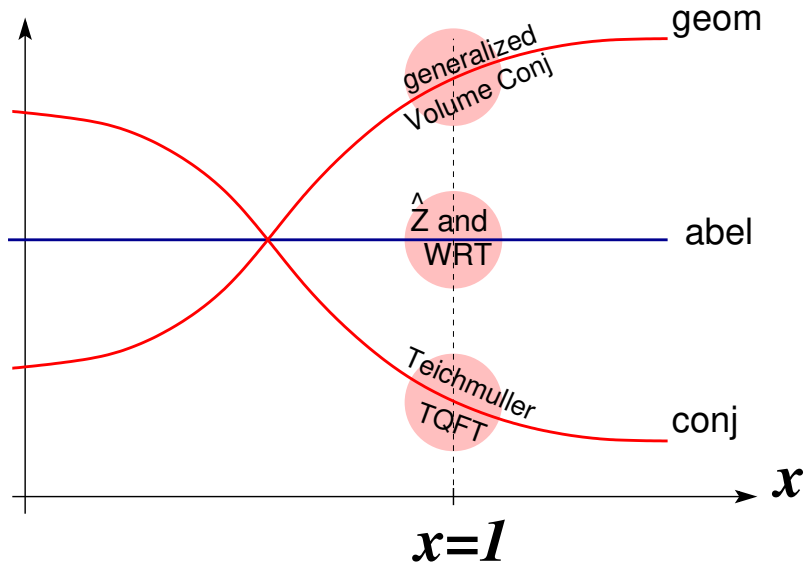}
\caption{An illustration of the different branches of the A-polynomial curve.
For the figure-8 knot there are three branches and, correspondingly, three asymptotic expansions \eqref{Zgenasympt} for every fixed value of $x$.
In particular, near $x=1$ these three asymptotic expansions are related to the generalized volume conjecture,
the asymptotics of $\Zhat$ and WRT invariants, and the Teichm\"uller TQFT, respectively.}
\label{fig:Acurve}
\end{figure}

For general hyperbolic knots, the series \eqref{Zgenasympt} for the hyperbolic connection has been studied in relation to the  generalized volume conjecture \cite{Gukov:2003na}, and that for its conjugate in relation to the Teichm\"uller TQFT \cite{AndersenKashaev}.

Our interest here is to do resurgence as for closed $3$-manifolds, but with an additional parameter $x$. Thus, we will start with the asymptotics \eqref{Zgenasympt} around the Abelian flat connection, and the outcome will be the series $F_K(x,q)$, which plays the role of $\Zhat_a(q)$. Since there is only one Abelian branch (i.e., for each fixed $x$, we have only one Abelian flat connection), there will be a unique series $F_K(x,q)$.

The asymptotics around the Abelian flat connection are the basis of the Melvin-Morton conjecture \cite{MelvinMorton}, proved in \cite{BNG} and \cite{RozanskyContribution}, and extended by Rozansky in \cite{Rozansky96}, \cite{RozanskyMMR}.

Specifically, the perturbative $\hbar$-expansion of the Chern-Simons functional on the knot complement $Y = S^3 \setminus \nu K$ is proportional to the colored Jones polynomial of $K$:
\be
Z_{\CS}(Y; k) \; \sim \; J_n (q = e^{\hbar}) \; = \; \sum_{m=0}^{\infty} R_m (x) \hbar^m
\label{pertknotcompl}
\ee
where, as usual, $x = q^n = e^{n \hbar}$.
Rozansky proved \cite{RozanskyMMR} that $R_m (x)$ are rational functions, such that
\be
R_m (x) \; = \; \frac{P_m (x)}{\Delta_K (x)^{2m+1}}
\label{Rmrational}
\ee
where $P_m (x) \in \mathbb{Q} [x^{\pm1}]$ are Laurent polynomials, $P_0 (x)=1$,
and $\Delta_K (x)$ is the Alexander polynomial of $K$.

The Alexander polynomial and the first few polynomials $P_m (x)$ for some simple knots are listed in the following table:

\begin{table}[htb]
\centering
\renewcommand{\arraystretch}{1.2}
\begin{tabular}{|@{\quad}l@{\quad}|@{\quad}l@{\quad}|@{\quad}l@{\quad}|@{\quad}p{5.7cm}@{\quad}|}
\hline knot $K$ & $\Delta_K (x)$ & $P_1 (x)$ & $P_2 (x)$
\\
\hline
\hline unknot & $1$ & $0$ & $0$ \\
\hline $3_1$ & $- 1 + x^{-1} + x$ & $2 - 2x^{-1} -2x + x^{-2} + x^2$ &\begin{footnotesize}$9 - 6x^{-1}-6 x +\frac{7x^{-2}}{2}+\frac{7 x^2}{2} -{2}x^{-3}-2 x^3+ \frac{x^4}{2}+\frac{x^{-4}}{2}$ \end{footnotesize} \\
\hline $4_1$ & $3 - x^{-1} - x$ & $0$ & $5 -4x^{-1}-4 x+ x^{-2}+x^{2}$ \\
\hline
\end{tabular}
\label{tab:AlexP}
\end{table}

From \eqref{pertknotcompl}--\eqref{Rmrational} we get an asymptotic expansion
\begin{align}
J_n (e^{\hbar})
& = \frac{1}{\Delta_K (x)} + \frac{P_1 (x)}{\Delta_K (x)^{3}} \hbar + \frac{P_2 (x)}{\Delta_K (x)^{5}} \hbar^2 + \ldots \label{Jnpertexp} \\
& = \sum_{m,j=0}^{\infty} c_{m+j,j} \, (\hbar n)^j \hbar^{m} \; = \; \sum_{m=0}^{\infty} \sum_{j=0}^m c_{m,j} \, n^j \hbar^m
\notag
\end{align}
 
The coefficients $c_{m,j}$ that appear in the above expansion are  Vassiliev invariants of the knot; cf.  \cite{BarNatan}, \cite{BirmanLin}. As proposed in Conjecture~\ref{conj:Borel}, the series $f_K (x,q)$ should  be a repackaging of these coefficients, obtained through resurgence via Borel resummation:
\be
\xymatrixcolsep{9pc}\xymatrix{
\boxed{~{\text{Vassiliev} \atop \text{invariants $c_{m,j}$}}~} \quad \ar@/^/[r]^{\text{resurgence}} &
\quad \boxed{~\phantom{\oint} f_K (x,q) \phantom{\oint}~} \ar@/^/[l]^{\text{$x = q^n$ and $q = e^{\hbar} \to 1$}}}
\ee

The resummation of a double series with variables $\hbar$ and $n$ into a series with variables $q$ and $x$ is a problem in {\it parametric resurgence}. Parametric resurgence has been used in the Mathieu equation, in matrix models, and in other problems of mathematical physics; see \cite{Dunne:2016qix,Ahmed:2017lhl,Ahmed:2018gbt}. In our case, the resurgence will be in the variable $\hbar$ (which upon resummation turns into $q$) and the role of parameter can be played either by $n$ or $x = e^{\hbar n}$; these should give the same answer.

\begin{figure}[ht]
\centering
\includegraphics[trim={0 0in 0 0in},clip,width=3.0in]{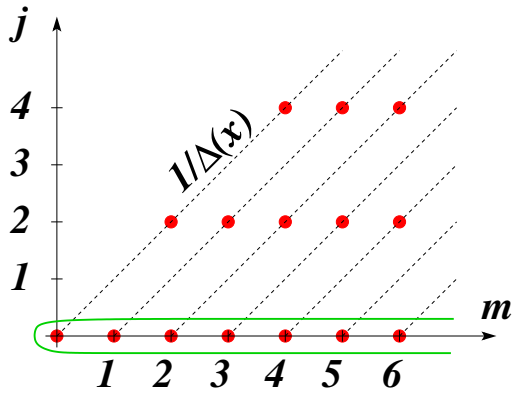}
\caption{The Vassiliev invariants $c_{m,j}$ are non-zero only for $j \le m$.
Encircled in green are the terms $\sum_m c_{m,0} \hbar^m$,
whose resummation gives the $q$-series $f_K (x=1,q)$.}
\label{fig:cmj}
\end{figure}

Observe that in \eqref{Jnpertexp} we used the {\it normalized} (or, {\it reduced}) version $J_n(q)$ of the colored Jones polynomial.
Its resummation should give a function $f_K (x,q)$ symmetric under $x \leftrightarrow x^{-1}$:
\be
f_K (x^{-1},q) \; = \; f_K (x,q)
\ee
If, instead, in \eqref{Jnpertexp} we used the unnormalized (unreduced) version $\tJ_n(q)$ of the colored Jones polynomial,
then the result of the resummation would be
\be
\FKU (x,q) \; := \; \frac{F_K (x,q)}{q^{1/2} - q^{- 1/2}}  \; = \; \frac{x^{1/2} - x^{-1/2}}{q^{1/2} - q^{- 1/2}} \, f_K (x,q)
\ee
antisymmetric under $x \leftrightarrow x^{-1}$. In practice, it is helpful to multiply this with the overall factor of $q^{1/2} - q^{- 1/2}$ and work with $F_K(x,q)$.

In Sections~\ref{sec:ResurgentTrefoil} and \ref{sec:recursion} we will present evidence for Conjecture~\ref{conj:Borel} by explicitly resumming \eqref{Jnpertexp} for a few simple knots and uncovering the knot invariant $F_K (x,q)$, or some of its terms.

\subsection{Relation to the Alexander polynomial}
\begin{table}
\centering
\begin{tabular}{ccc}
\hline\hline
Knot &  & $\phantom{\oint_{\oint_{\oint}}^{\oint^{\oint}}} 2 f_K (x, q \to 1)$
\\
\hline\hline
${\bf 3_1}$ & & $x+\frac{1}{x}+x^2+\frac{1}{x^2}-x^4-\frac{1}{x^4}-x^5-\frac{1}{x^5}+x^7+\frac{1}{x^7}+x^8+\frac{1}{x^8}
-x^{10}-\frac{1}{x^{10}}
+\ldots$ \\[2ex]
${\bf 4_1}$ & & $-x-\frac{1}{x}-3 x^2-\frac{3}{x^2}-8 x^3-\frac{8}{x^3}-21 x^4-\frac{21}{x^4}-55 x^5-\frac{55}{x^5}-144 x^6-\frac{144}{x^6}$ \\
 & & $-377 x^7-\frac{377}{x^7}-987 x^8-\frac{987}{x^8}-2584 x^9-\frac{2584}{x^9}-6765 x^{10}-\frac{6765}{x^{10}}
+\ldots$ \\[2ex]
${\bf 5_1}$ & & $x^2+\frac{1}{x^2}+x^3+\frac{1}{x^3}-x^7-\frac{1}{x^7}-x^8-\frac{1}{x^8}+x^{12}+\frac{1}{x^{12}}+x^{13}+\frac{1}{x^{13}}
-x^{17}-\frac{1}{x^{17}}
+\ldots$ \\[2ex]
${\bf 5_2}$ & & $\frac{x}{2}+\frac{1}{2 x}+\frac{3 x^2}{4}+\frac{3}{4 x^2}+\frac{5 x^3}{8}+\frac{5}{8 x^3}+\frac{3 x^4}{16}+\frac{3}{16 x^4}
-\frac{11 x^5}{32}-\frac{11}{32 x^5}-\frac{45 x^6}{64}-\frac{45}{64 x^6}$ \\
 & & $-\frac{91 x^7}{128}-\frac{91}{128 x^7}-\frac{93 x^8}{256}-\frac{93}{256 x^8}+\frac{85 x^9}{512}+\frac{85}{512 x^9}
 +\frac{627 x^{10}}{1024}+\frac{627}{1024 x^{10}}
+\ldots$
\\[1ex]
\hline\hline
\end{tabular}
\caption{The specialization of $f_K (x,q)$ in the limit $q \to 1$ for knots with up to 5 crossings.}
\label{tab:FAlex}
\end{table}

\begin{table}
\centering
\begin{tabular}{ccc}
\hline\hline
Knot &  & $\phantom{\oint_{\oint_{\oint}}^{\oint^{\oint}}}
2 F_K (x,q \to 1)$
\\
\hline\hline
${\bf 3_1}$ & & $-\sqrt{x}+\frac{1}{\sqrt{x}}+x^{5/2}-\frac{1}{x^{5/2}}+x^{7/2}-\frac{1}{x^{7/2}}
-x^{11/2}+\frac{1}{x^{11/2}}-x^{13/2}+\frac{1}{x^{13/2}}
+\ldots$ \\[2ex]
${\bf 4_1}$ & & $\sqrt{x}-\frac{1}{\sqrt{x}}+2 x^{3/2}-\frac{2}{x^{3/2}}+5 x^{5/2}-\frac{5}{x^{5/2}}
+13 x^{7/2}-\frac{13}{x^{7/2}}+34 x^{9/2}-\frac{34}{x^{9/2}}$ \\
 & & $+89 x^{11/2}-\frac{89}{x^{11/2}}
+233 x^{13/2}-\frac{233}{x^{13/2}}+610 x^{15/2}-\frac{610}{x^{15/2}}
+\ldots$ \\[2ex]
${\bf 5_1}$ & & $-x^{3/2}+\frac{1}{x^{3/2}}+x^{7/2}-\frac{1}{x^{7/2}}+x^{13/2}-\frac{1}{x^{13/2}}
-x^{17/2}+\frac{1}{x^{17/2}}-x^{23/2}+\frac{1}{x^{23/2}}
+\ldots$ \\[2ex]
${\bf 5_2}$ & & $-\frac{\sqrt{x}}{2}+\frac{1}{2 \sqrt{x}}-\frac{x^{3/2}}{4}+\frac{1}{4 x^{3/2}}
+\frac{x^{5/2}}{8}-\frac{1}{8 x^{5/2}}+\frac{7 x^{7/2}}{16}-\frac{7}{16 x^{7/2}}+\frac{17 x^{9/2}}{32}-\frac{17}{32 x^{9/2}}$ \\
 & & $+\frac{23 x^{11/2}}{64}-\frac{23}{64 x^{11/2}}
+\frac{x^{13/2}}{128}-\frac{1}{128 x^{13/2}}-\frac{89 x^{15/2}}{256}+\frac{89}{256 x^{15/2}}
+\ldots$ \\[1ex]
\hline\hline
\end{tabular}
\caption{The specialization of $F_K(x,q)$ in the limit $q\to 1$ for knots with up to 5 crossings.}
\label{tab:FAlexunnormalized}
\end{table}

 The Alexander polynomial  $\Delta_K (x)$ of a knot $K$ is symmetric under $x \leftrightarrow x^{-1}$ and takes values in $\Z [x,x^{-1}]$.

We define $$\se(1/\Delta_K (x)) \in \mathbb{Z} [[x,x^{-1}]]$$ to be the half-sum of the power series expansions
in $x$ and $1/x$ at $x=0$ and $x = \infty$, respectively; compare Section~\ref{sec:conventions}. Then, from \eqref{Jnpertexp} and the symmetry properties of $f_K$ and $F_K$, we see that we should expect the relation
\be
\lim_{q \to 1} f_K (x,q) \; = \; \se\Bigl(\frac{1}{\Delta_K (x)}\Bigr)
\ee
and its unnormalized version:
\be
\lim_{q \to 1} \; F_K(x,q) \; = \; \se\left(\frac{x^{1/2} - x^{- 1/2}}{\Delta_K (x)}\right)
\ee
Both versions of this relation are illustrated in Tables~\ref{tab:FAlex} and \ref{tab:FAlexunnormalized}.

\subsection{An example: the trefoil}
\label{sec:ResurgentTrefoil}
We now explain how resurgence works for a specific example, the right-hand trefoil $K =\RHT=T(2,3)$. 
In our conventions (cf. Section~\ref{sec:conventions}), from the formula~\eqref{eq:hikami} we get that $K$ has the following normalized colored Jones polynomials
\begin{align}
J_1 (q) & = 1
\nonumber \\
J_2 (q) & = q^{-1} + q^{-3} - q^{-4}
\label{Jntref} \\
J_3 (q) & = q^{-2} + q^{-5} - q^{-7} + q^{-8} - q^{-9} - q^{-10} + q^{-11}
\nonumber \\
& \vdots
\nonumber
\end{align}
By setting $q=e^{\hbar}$, expanding in $\hbar$, and then looking at the coefficient of each $\hbar^k$ as we vary $n$, we find polynomial expressions in $n$. From here we can compute the first few terms in the expansion \eqref{Jnpertexp},
\begin{align}
J_n (q = e^{\hbar})
& = 1
\nonumber \\
& + (1-n^2) \hbar^2
\nonumber \\
& + (-2 + 2n^2) \hbar^3
\label{Jnmtrefoilexp} \\
& + \left( \frac{73}{12} - 7 n^2 + \frac{11}{12} n^4 \right) \hbar^4
\nonumber \\
& + \left( - \frac{43}{2} + \frac{79}{3} n^2 - \frac{29}{6} n^4 \right) \hbar^5
\nonumber \\
& + \ldots
\nonumber
\end{align}
We could also write this ``diagonally,'' in terms of the variable $n \hbar$ instead of $\hbar$. The ``first diagonal'' of this double series indeed agrees with the power series expansion of $\frac{1}{\Delta (e^{n \hbar})}$,
\be
\frac{1}{e^{n\hbar} + e^{- n\hbar} - 1} \; = \; 1 - (n\hbar)^2 + \frac{11}{12} (n\hbar)^4 - \frac{301}{360} (n\hbar)^6 + \ldots
\ee
This works even better: for other diagonals, we find a polynomial $P_m(x)$ divided by a corresponding power of $\Delta(e^{n\hbar})$, so we can find the polynomials $P_m(x)$ experimentally.

In fact, the problem of finding $P_m(x)$ for torus knots was tackled by Rozansky in \cite[Section 2]{Rozansky96}. Following his methods,
we obtain the following result, close to his Equation (2.2):
\be
J_{\RHT, n}(q) \; = \; q^{\frac{23}{24}}
\sum_{m=0}^{\infty} \frac{1}{m!} \left( \frac{\hbar}{24} \right)^m
\frac{\partial^{2m}}{ \partial y^{2m}} \frac{e^y x^{\frac{1}{2}} - e^{-y} x^{-\frac{1}{2}}}{1 + (e^y x^{\frac{1}{2}} - e^{-y} x^{-\frac{1}{2}})^2}\Big|_{y=0}
\label{Jnmtrefalaroz}
\ee
When all factors of $q = e^{\hbar}$ and $x = e^{n \hbar} = q^n$ are expanded in $\hbar$ and $n$,
this agrees with \eqref{Jnmtrefoilexp}.

We will do resurgence in $\hbar$ with parameter $x$. The simplest case is $x=1$, which corresponds to $n=0$.
In the notations of \eqref{Jnpertexp}, the perturbative series at $n=0$ is $\sum_{m=0}^{\infty} c_{m,0} \hbar^m$.
Following \cite{GMP}, let us introduce
\be
\Zpert (\hbar) \; = \; \sum_{m=0}^{\infty} a_m \hbar^{m + \frac{1}{2}}
\; := \;
\sqrt{\hbar} q^{- \frac{23}{24}} \sum_{m=0}^{\infty} c_{m,0} \hbar^m
\ee
and its Borel transform
\be
BZ (\xi) \; := \; \sum_{m=0}^{\infty} \frac{a_m}{\Gamma (m + \frac{1}{2})} \xi^{m - \frac{1}{2}}
\ee
Using $\Gamma (m+\frac{1}{2}) = \frac{\sqrt{\pi}}{4^m} \cdot \frac{(2m)!}{m!}$,
we learn that
\be
BZ (\xi) \; = \; \frac{1}{\sqrt{\xi}} \sum_{m=0}^{\infty}
a_m \frac{4^m}{\sqrt{\pi}} \cdot \frac{m!}{(2m)!} \xi^m
\; = \; \frac{1}{\sqrt{\pi \xi}} \sum_{m=0}^{\infty}
b_m \frac{m!}{(2m)!} \left( - \frac{\xi}{6} \right)^m
\label{BZtrefcompl}
\ee
where 
$$ b_m = (-24)^m a_m.$$
 In this form, the right-hand side of \eqref{BZtrefcompl}
can be experimentally related to the logarithmic derivative of the simple rational function
\be
\frac{z-z^{-1}}{1+(z-z^{-1})^2} \; = \;
\frac{z^2 - z^{-2}}{z^3 + z^{-3}} \; = \;
\frac{1}{2} \sum_{m=0}^{\infty} \eps_m \left( z^m - z^{-m} \right)
\label{zzzzzz}
\ee
where $\eps_m$ are precisely the coefficients introduced in \eqref{eq:epstrefoil}. Observe that the Alexander polynomial of the trefoil appears in the denominator of \eqref{zzzzzz}. Alternatively, we could get the same rational fraction from Rozansky's formula \eqref{Jnmtrefalaroz}.

As we shall see shortly, the rational function in \eqref{zzzzzz} is basically the sought-after Borel transform of the original series in $\hbar$. We can relate it to \eqref{BZtrefcompl} by writing $z = e^y$
and differentiating with respect to $y$. We get
$$
\frac{1}{2} \sum_{m=0}^{\infty} m \eps_m \left( z^m + z^{-m} \right)
\; = \; - \frac{z(z^2+1)(z^4-3z^2+1)}{(z^4-z^2+1)^2}
\; = \;
2 \sum_{m=0}^{\infty} b_m \frac{m!}{ (2m)! } \left( -y^{2} \right)^m
$$
Comparing this with the right-hand side of \eqref{BZtrefcompl}, we see that the two expressions match if we identify
\be
y^2 = \frac{\xi}{6}
\label{yviaxi}
\ee
In other words, the exact Borel transform is
\be
\widetilde{BZ}(\xi) \; = \; - \frac{1}{2 \sqrt{\pi \xi}}
\frac{z(z^2+1)(z^4-3z^2+1)}{(z^4-z^2+1)^2}
\ee
with the above identification of variables $z = e^y = e^{\sqrt{\xi/6}}$.

Now, what remains is to perform the inverse Borel transform, {\it i.e.} an integral over $\xi$.
In general, if the Borel transform has the form
\be
\widetilde{BZ} (\xi) \; = \; \frac{1}{\sqrt{\pi \xi}} \sum_{m} c_m e^{- 2 \sqrt{\frac{\pi i m \xi}{p}}}
\ee
then the inverse Borel transform gives the $q$-series ({\it cf.} eq.(3.38) in \cite{GMP}):
\be
f(q) \; = \;
\sqrt{\frac{2\pi i}{\hbar}} \int_{i\mathbb{R}_+}  \, \widetilde{BZ} (\xi) \, e^{- \frac{2\pi i \xi}{\hbar}} d \xi
\; = \; \sum_{m} c_m q^{\frac{m}{p}}
\label{FfromBZnice}
\ee
In other words, the inverse Borel transform acts as a familiar to us ``Laplace transform'' with respect to powers of $z$.
Luckily, our expression for $\widetilde{BZ}(\xi)$ is most conveniently presented in terms of the variable $z$ anyway.
Therefore, applying \eqref{FfromBZnice} we see that in our case $p=24$ and
\be
f(q) \; = \; \frac{q}{2} \sum_{m} m \eps_m \, q^{\frac{m^2-1}{24}}
\ee
This is precisely the answer we got for $f_K(1,q)$ of the trefoil, obtained from \eqref{Fxq31unnorm} by dividing by $x^{1/2} - x^{-1/2}$, and then taking the limit as $x\to 1$. 

The resurgent analysis above, in fact, extends to general values of $x$. Indeed, using the simple change of variables \eqref{yviaxi}, we can write the $\xi$-integral \eqref{FfromBZnice}
as a $y$-integral which, in turn, following Rozansky's formula~\eqref{Jnmtrefalaroz}, is a limit ($x \to 1$) of a more general family of integrals parametrized by $x$ ({\it cf.} \cite{Rozansky96}):
\be
f_K(x,q) \; = \; \frac{q^{\frac{23}{24}} }{x^{\frac{1}{2}} - x^{-\frac{1}{2}} }
\sqrt{\frac{6}{\pi \hbar}} \int_{\mathbb{R}} dy \, e^{- 6 y^2/\hbar} \,
\frac{e^y x^{\frac{1}{2}} - e^{-y} x^{-\frac{1}{2}}}{1 + (e^y x^{\frac{1}{2}} - e^{-y} x^{-\frac{1}{2}})^2}
\label{FtrefAlexint}
\ee
On the one hand, for all values of $x$, such integrals have the form of the inverse Borel transform. With the help of the familiar formulae \eqref{zzzzzz} and \eqref{FfromBZnice}, we get the answer in   \eqref{Fxq31unnorm}:
\be
F_K (x,q) \; = \; ( x^{\frac{1}{2}} - x^{-\frac{1}{2}}) f_K(x,q) \; = \;
\frac{q}{2} \sum_{m=1}^{\infty} \eps_m (x^{\frac{m}{2}} - x^{-\frac{m}{2}}) \, q^{\frac{m^2-1}{24}}.
\ee
On the other hand, using the general formula,
\be
\int_{\mathbb{R}} e^{- y^2/\hbar} f(y) dy\; = \;
\sqrt{\pi \hbar} \sum_{m=0}^{\infty} \frac{f^{(2m)} (0)}{m!} \left( \frac{\hbar}{4} \right)^m
\ee
we immediately reproduce the perturbative $\hbar$-expansion \eqref{Jnmtrefalaroz} from \eqref{FtrefAlexint}.
This means that the integrand
$$\widetilde{BZ}(x, y) \sim \frac{e^y x^{1/2} - e^{-y} x^{-1/2}}{1 + (e^y x^{1/2} - e^{-y} x^{-1/2})^2}$$
is indeed the Borel transform of the original perturbative series~\eqref{Jnmtrefoilexp}.

\newpage \section{Recursion}
\label{sec:recursion}

\subsection{The general principle}
Consider the character variety of a surface, $\Char(\Sigma) = \mathcal{M}_{\text{flat}} (\Sigma, G_{\C})$. 
Quantization replaces the algebra of functions on $\Char(\Sigma)$ by an algebra of operators. In particular, for $\Sigma = T^2$ and $G=SU(2)$ the classical $\C^*$-valued holonomy eigenvalues $x$ and $y$ become operators (which, for simplicity, we denote by the same letters)
that no longer commute, but rather ``$q$-commute'':
\be
y x = q x y.
\ee
Correspondingly, as in the standard deformation quantization, the A-polynomial of a knot, $A(x,y)$, turns into a $q$-difference operator $$\hA = \hA_q (x,y).$$ This is called the {\em quantum (or noncommutative) A-polynomial}.

It was proposed independently in \cite{Garoufalidis} and \cite{Gukov:2003na} that the colored Jones function
$$ J_K : \mathbb{N} \to \Z[q^{-1}, q], \ \ n \mapsto J_{K, n}(q)$$
satisfies a recursion given by $\hA$. Here, the variable $x$ acts by multiplication by $q^n$, and the variable $y$ acts by shifting the index $n$ by $1$, i.e., taking $J_n(q)$ to $J_{n+1}(q)$.

Mathematically, it was proved that the colored Jones function is $q$-holonomic, that is, it satisfies a finite linear recursion with polynomial coefficients \cite{GaroufalidisLeHolonomic}. The $\hA$-polynomial is then defined 
as the generator of its recurrence ideal \cite{Garoufalidis}, and the AJ conjecture says that its specialization to $q=1$ is the usual A-polynomial. The AJ conjecture remains open in general, although it has been proved for several families of knots; see \cite{LeTran} and \cite{LeZhang}. 

In practice, the operator $\hA$ can be computed either numerically \cite{GaroufalidisLeHolonomic} by looking for a recursion satisfied by $J_n (q)$,
or via deformation quantization \cite{Dimofte:2010ep} of the algebra of functions on $(\mathbb{C}^* \times \mathbb{C}^*) / \Z_2$, or via the B-model (``topological recursion'') applied to the classical curve $A (x,y) = 0$ \cite{Gukov:2011qp}. See {\it e.g.} \cite{Gukov:2012jx} for a review.

Recall that Conjecture~\ref{conj:Borel} says that the series $f_K(x,q)$ is obtained from the colored Jones polynomials by resurgence through Borel resummation. Since the colored Jones function is annihilated by $\hA_q (x,y)$, one expects that so is $f_K(x,q)$, where now $x$ acts by multiplication by $x$ and $y$ takes the variable $x$ to $xq$.

In fact, it was argued in \cite{Gukov:2003na} that any partition function of $\sl$ Chern-Simons theory on the knot complement should be annihilated by $\hA$. Later, this was justified in the framework of 3d-3d correspondence. In particular, this applies to $f_K(x,q)$ of our interest here, leading to the first claim in Conjecture~\ref{conj:Aq}:
\be
\label{eq:haha}
\hA \; f_K(x,q) \; = \; 0.
\ee
Usually, this equation is solved for the normalized version of the colored Jones polynomial or some
other $SL(2,\C)$ partition function. Here, it is convenient to work with it in its unnormalized form: by conjugating $\hA$ with $x^{\frac{1}{2}} - x^{-\frac{1}{2}}$, we obtain an operator $\tA$ which annihilates $\tJ_n(q)$. We can rephrase \eqref{eq:haha} as
\be
\label{eq:tata}
\tA \; F_K(x,q) \; = \; 0
\ee
Let us write
\be
\label{Ffgeneralform}
F_K (x,q) =\frac{1}{2} \sum_{m \geq 1}  f_m(q) \cdot (x^{\frac{m}{2}} - x^{-\frac{m}{2}})\ee 
as in \eqref{eq:FKf}, where $f_m(q)$ can be non-zero only for $m$ odd. Then, Equation~\eqref{eq:tata}, after clearing denominators, becomes
\be
a_0 f_{m} + a_1 f_{m+1} + a_2 f_{m+2} + \ldots + a_n f_{m+n} = 0, \label{newrecursion}
\ee
where $f_i=f_i(q)$ and $a_i=a_i(q)$ are Laurent power series in $q$. This form of the $q$-difference equation appears to be new and has not been discussed in the literature so far.

To start the recursion given by \eqref{newrecursion}, we need to know the initial values $f_1(q), \dots, f_n(q)$.
In view of \eqref{Jnpertexp}, we should look for a solution of \eqref{Ffgeneralform} of the form 
\be
\label{FP}
F_K (x,q=e^{\hbar}) \; = \;  (x^{1/2} - x^{-1/2})\cdot \Bigl( \frac{1}{\Delta_K (x)} + \frac{P_1 (x)}{\Delta_K (x)^{3}} \hbar
+ \frac{P_2 (x)}{\Delta_K (x)^{5}} \hbar^2 + \frac{P_3 (x)}{\Delta_K (x)^{7}} \hbar^3 + \ldots \Bigr).
\ee
We will first apply the recursion \eqref{eq:tata} to the right hand side of \eqref{FP}. This will determine the Laurent polynomials $P_k (x)$ up to an arbitrary order $k$. Indeed, solving the recursion order-by-order in $\hbar$, we obtain a linear PDE for each $P_k (x)$. A convenient way to fix the corresponding ``integration constant'' is to require that the value of $P_k (x)$ at $x=1$ ({\it i.e.} $n=0$) matches the coefficient of $\hbar^k$ in \eqref{Jnpertexp}. This coefficient $c_{k, 0}$ can be found by expanding the colored Jones polynomials in $\hbar$, as we did for the trefoil in \eqref{Jnmtrefoilexp}.

Once we know the polynomials $P_k(x)$, we read off the coefficients of $x^{m/2}$ (as power series in $\hbar$) from the right hand side of \eqref{FP}, for $m=1, \dots, n$. We then convert these power series in $\hbar$ into Laurent power series in $q=e^{\hbar}$. In general, this conversion means doing resurgence, but in the two examples below (the trefoil and the figure-eight) the power series in $\hbar$ are seen experimentally to be just finite Laurent polynomials in $q = e^{\hbar}$. These give the desired initial conditions $f_1(q), \dots, f_n(q)$, allowing the recursion \eqref{Ffgeneralform} to start. The result of the recursion is the series $F_K(x,q)$.

\begin{remark}
Both versions $F_K (x,q)$ and $f_K(x,q)$ could be used for the recursion analysis. However, since Tables~\ref{tab:FAlex} and \ref{tab:FAlexunnormalized} suggest that the unnormalized version $F_K(x,q)$ may be easier to deal with, we chose to use that one. 
\end{remark}

\begin{remark}
The coefficients of each power of $x$ in $F_K(x,q)$ are finite Laurent polynomials in $q$ for all algebraic knots (compare Section~\ref{sec:negative}), and for the figure-eight knot. However, this cannot be true in general. For example, the knot $K={\bf 5_2}$ has non-monic Alexander polynomial; hence, the specialization $q \to 1$ of $F_K(x,q)$, which is $\se(1/\Delta(x))$, does not have integer coefficients. This can be seen from Table~\ref{tab:FAlexunnormalized}. Therefore, in this case, we expect the coefficients of the powers of $x$ in $F_K(x,q)$ to be infinite Laurent power series in $q$, rather than Laurent polynomials.
\end{remark}

\subsection{The trefoil} 
The $\hA$-polynomial for the right-handed trefoil $K = \RHT$ can be read off, for example, from \cite[Section 3.2]{Garoufalidis} or \cite[Example 6]{Gukov:2012jx}. Conjugating it with $x^{\frac{1}{2}} - x^{- \frac{1}{2}}$, we find the recursion relation \eqref{eq:tata} for $F_K(x,q)$ in this case:
\be
\alpha (x; q) F_K (x , q)
+ \beta (x; q) F_K (x q , q)
+ \gamma (x; q) F_K (x q^2 , q)
\; = \; 0
\label{twosteprecF}
\ee
where
\begin{align}
\alpha (x;q) & = \frac{q^3 x^2 - 1}{q^4 x^3 (qx^2 - 1)}
\; = \; \tfrac{1}{x^3} - \tfrac{2 (x^2-2)}{x^3 (x^2-1)} \hbar
+ \tfrac{2 (x^4-6 x^2+4)}{x^3 (x^2-1)^2} \hbar^2 + O \left( \hbar^3 \right) \nonumber \\
\beta (x;q) & = \frac{q^5 x^5 - q^2 x^3 - q x^2  + 1}{q^{9/2} x^3 (qx^2 - 1) }
\; = \; \tfrac{x^3 - 1}{x^3} - \tfrac{x^5 - 5 x^3 - 9 x^2 + 9}{2 x^3 (x^2 - 1)} \hbar + \\
& + \tfrac{x^7-42 x^5-81 x^4+25 x^3+162 x^2-81}{8 (x-1)^2 x^3 (x+1)^2} \hbar^2
+ O \left( \hbar^3 \right) \nonumber \\
\gamma (x;q) & = -1 \nonumber
\end{align}

We apply this recurrence to find the first few Laurent polynomials $P_k (x)$. We use the initial conditions $$P_1 (1) =0, \ \ P_2 (1) =1,\  \ P_3 (1) =-2,\ \ P_4 (1) =\frac{73}{12}, \ \ \dots$$
which follow from \eqref{Jnmtrefoilexp}. The results of the recursion are tabulated in Table~\ref{tab:Pntrefoil}.

Plugging the polynomials $P_k(x)$ into ~\eqref{FP}, we obtain
\begin{align*}
-2F_K(x, e^{\hbar}) &= 
 (x^{1/2} - x^{-1/2} - x^{5/2} + x^{-5/2} - x^{7/2} + x^{-7/2} + \dots)\\
&+ \hbar (x^{1/2} -x^{-1/2} - 2 x^{5/2} + 2 x^{-5/2} - 3 x^{7/2} + 3 x^{-7/2} +\dots ) \\
& + \frac{\hbar^2}{2} (x^{1/2} - x^{-1/2} - 4 x^{5/2} + 4 x^{-5/2} - 9 x^{7/2} + 9 x^{-7/2}+\dots) \\
& + \frac{\hbar^3}{6} (x^{1/2} - x^{-1/2} - 8 x^{5/2} + 8 x^{-5/2} - 27 x^{7/2} +27 x^{-7/2}+\dots) \\
 &+ \frac{\hbar^4}{24} (x^{1/2} - x^{-1/2} - 16 x^{5/2} + 16 x^{-5/2} - 81 x^{7/2} + 81 x^{-7/2}+\dots)  \\
 & + \frac{\hbar^5}{120} (x^{1/2} -x^{-1/2} - 32 x^{5/2} + 32 x^{-5/2} - 243 x^{7/2} + 243 x^{-7/2}+\dots) \\
&  +\dots
\end{align*}

From here, we find the initial conditions 
$$ f_1 = -q, \ \ f_3=0, \ \ f_5 = q^2, \ \ f_7 = q^3,\ \  f_9=0 $$
for the recursion \eqref{newrecursion}, which is
\be
\label{eq:rectrefoil}
f_{m+10} = \frac{q^3}{1 - q^{\frac{m}{2} + \frac{9}{2}} } \left[ f_m (q^{\frac{m}{2} + \frac{3}{2}} - q^{m+2}) + f_{m+4} (q^{m+5} - q^{\frac{m}{2} + \frac{1}{2}}) + f_{m+6} ( 1- q^{\frac{m}{2} + \frac{1}{2}})
\right]. \ee
Note that the steps in $m$ are multiples of 2 in our notation (only odd values of $m$ give nonzero terms), so this is a $5$-step recursion.

Solving the recursion \eqref{eq:rectrefoil} experimentally up to any desired order $m$, we find that
\be
\label{eq:fme}
f_m (q) \; = \eps_m q^{\frac{m^2 + 23}{24}}.
\ee
This is what we expected from~\eqref{Fxq31unnorm}. Of course, a posteriori, one can also check directly that the functions in \eqref{eq:fme} satisfy the recursion \eqref{eq:rectrefoil}.

\begin{table}
\centering
\begin{tabular}{rl}
\hline\hline
$P_1 \; =$ & $x^2+\frac{1}{x^2}-2 x-\frac{2}{x}+2 \phantom{\oint_{\oint_{\oint}}^{\oint^{\oint}}}$ \\[2ex]
$P_2 \; =$ & $\frac{x^4}{2}+\frac{1}{2 x^4}-2 x^3-\frac{2}{x^3}+\frac{7 x^2}{2}+\frac{7}{2 x^2}-6 x-\frac{6}{x}+9$ \\[2ex]
$P_3 \; =$ & $\frac{x^6}{6}+\frac{1}{6 x^6}-x^5-\frac{1}{x^5}+\frac{7 x^4}{3}+\frac{7}{3 x^4}-\frac{17 x^3}{3}
-\frac{17}{3 x^3}+\frac{46 x^2}{3}+\frac{46}{3 x^2}-\frac{49 x}{3}-\frac{49}{3 x}+\frac{25}{3}$ \\[2ex]
$P_4 \; =$ & $\frac{x^8}{24}+\frac{1}{24 x^8}-\frac{x^7}{3}-\frac{1}{3 x^7}+\frac{7 x^6}{8}+\frac{7}{8 x^6}-3 x^5-\frac{3}{x^5}+\frac{117 x^4}{8}+\frac{117}{8 x^4}-16 x^3-\frac{16}{x^3}$ \\
& $-\frac{193 x^2}{12}-\frac{193}{12 x^2}-\frac{82 x}{3}-\frac{82}{3 x}+\frac{201}{2}$ 
\\[1ex]
\hline\hline
\end{tabular}
\caption{Laurent polynomials $P_k (x)$ for the trefoil knot $K = {\bf 3_1}$.}
\label{tab:Pntrefoil}
\end{table}

\subsection{Figure-eight}
\label{sec:f8}
The normalized version of the colored Jones polynomial for the figure-eight knot $K={\bf 4_1}$ is given by the following formula; cf. \cite[Section 6.2]{GaroufalidisLeHolonomic}:
\be
\label{eq:cJ8}
 J_n(q) = 1 + \sum_{m=1}^{n-1} \prod_{j=1}^m (q^n + q^{-n} - q^j- q^{-j}).
 \ee
The first few polynomials are
\begin{align}
J_1(q) & = 1
\nonumber \\
J_2(q) & = q^{-2} - q^{-1} + 1 - q + q^2
\label{fig8Jones} \\
J_3(q) & = q^{-6} - q^{-5} - q^{-4} + 2 q^{-3} - q^{-2} - q^{-1} + 3 - q - q^2 + 2 q^3 - q^4 - q^5 + q^6
\nonumber \\
& \vdots
\nonumber
\end{align}
Expanding them in powers of $\hbar$ we get
\begin{align}
J_n (q = e^{\hbar})
& = 1
\nonumber \\
& + (- 1+n^2) \hbar^2
\label{Jnfig8exp} \\
& + \left( \frac{47}{12} - 5 n^2 + \frac{13}{12} n^4 \right) \hbar^4
\nonumber \\
& + \left( -\frac{12361}{360} + \frac{571}{12} n^2 -\frac{173}{12} n^4 + \frac{421}{360} n^6 \right) \hbar^6
\nonumber \\
& + \ldots
\nonumber
\end{align}
Thus, the coefficients $c_{k, 0}$ in \eqref{Jnpertexp} are:
\be
\label{eq:ck08}
1 \,, \quad -1 \,, \quad \frac{47}{12} \,, \quad -\frac{12361}{360} \,, \quad \ldots
\ee
In fact, in this case, a neat way to find all the coefficients $c_{k, 0}$ is to consider the following function, obtained from \eqref{eq:cJ8} by replacing $q^n$ and $q^{-n}$ with $1$, and taking the summation over $m$ to infinity:
\be
J_0 (q) \; := \; 1+ \sum_{m=1}^{\infty} \prod_{j=1}^m (1 - q^j) (1 - q^{-j})
\ee
Setting $q = e^{\hbar}$ and expanding this function in $\hbar$ produces the coefficients $c_{k, 0}$.

The $\hA$-polynomial of the figure-eight knot appears, for example, in \cite[Section 3.2]{Garoufalidis} or \cite[Section 3.2]{Gukov:2011qp}. We find that the series $F_K(x,q)$ should obey a 3-step recursion relation:
\be
\alpha (x; q) F_K (x , q)
+ \beta (x; q) F_K (x q , q)
+ \gamma (x; q) F_K (x q^2 , q)
+ F_K (x q^3 , q)
\; = \; 0
\label{threesteprecF}
\ee
where
\begin{align}
\alpha (x;q) & = -\frac{(q^2 x+1) (q^5 x^2-1)}{q^{5/2} (q x+1) (q x^2-1)}  \nonumber \\
& = -1 - \tfrac{5 x^2 - 2 x + 5}{ 2 (x^2-1)} \hbar
- \tfrac{25 x^2 - 58 x + 25}{8 (x-1)^2} \hbar^2 + O \left( \hbar^3 \right) \nonumber \\
\beta (x;q) & = \frac{(q^5 x^2-1 ) (q x (q x (q (x (q x-2)-1)+x+1)+q-x-2)+1)}{q^4 x^2 (q x^2-1)} \\
& = \tfrac{x^4-x^3-x^2-x+1}{x^2} + \tfrac{2 (2 x^6-2 x^5-x^4-x^2-2 x+2)}{x^2 (x^2-1)} \hbar
+ \tfrac{8 x^8-7 x^7-15 x^6+11 x^5+10 x^4+11 x^3-15 x^2-7 x+8}{x^2 (x^2-1)^2} \hbar^2
+ O \left( \hbar^3 \right) \nonumber \\
\gamma (x;q) & = -\frac{(q^2 x+1) (q x (q (q x (q (q^2 x-1) (q^2 x+q-1)-1)-2)+1)+1)}{q^{9/2} x^2 (q x+1)} \nonumber \\
& = (-x^2-\tfrac{1}{x^2}+x+\tfrac{1}{x}+1)
-\tfrac{9 x^5 + 4 x^4 - 2 x^3 + 2 x^2 - 4 x - 9}{2 x^2 (x+1)} \hbar
+ O \left( \hbar^2 \right) \nonumber
\end{align}

This recursion, together with the initial conditions given by \eqref{eq:ck08}, produces the Laurent polynomials $P_k(x)$ listed in Table~\ref{tab:Pnfigure8}. Using this method, we can produce explicit expressions for $P_k(x)$ up to any desired order $k$.

\begin{table}
	\centering
	\begin{tabular}{rl}
		\hline\hline
		$P_1 \; =$ & $0 \phantom{\oint_{\oint_{\oint}}^{\oint^{\oint}}}$ \\[2ex]
		$P_2 \; =$ & $x^2+\frac{1}{x^2}-4 x-\frac{4}{x}+5$ \\[2ex]
		$P_3 \; =$ & $0$ \\[2ex]
		$P_4 \; =$ & $\frac{x^6}{12}+\frac{1}{12 x^6}+\frac{2 x^5}{3}+\frac{2}{3 x^5}-\frac{3 x^4}{4}-\frac{3}{4 x^4}-\frac{98 x^3}{3}-\frac{98}{3 x^3}+\frac{293 x^2}{2}+\frac{293}{2 x^2}-\frac{862 x}{3}-\frac{862}{3 x}+\frac{4211}{12}$ \\
		& $~$ \\[2ex]
		$P_5 \; =$ & $0$ \\[2ex]
		$P_6 \; =$ & $\frac{(x^2-3 x+1)^3}{360 x^{10}} (x^{14}+101 x^{13}+3160 x^{12}+12171 x^{11}+8061 x^{10}-102498 x^9+214337 x^8$ \\
		& $~~~~~~~~  -258305 x^7+214337 x^6-102498 x^5+8061 x^4+12171 x^3+3160 x^2+101 x+1)$ \\[2ex]
		$P_7 \; =$ & $0$ \\[2ex]
		$P_8 \; =$ & $\frac{(x^2-3 x+1)^4}{20160 x^{14}} (x^{20}+476 x^{19}+67393 x^{18}+1645236 x^{17}+14061303 x^{16}+8176392 x^{15}$\\ 
		& $~~~~~~~~ -41755650 x^{14}-127433568 x^{13}+583375485 x^{12}-1066253508 x^{11}+1267004367 x^{10}$\\
		& $~~~~~~~~-1066253508 x^9+583375485 x^8-127433568 x^7 -41755650 x^6+8176392 x^5+14061303 x^4$\\
		& $~~~~~~~~+1645236 x^3+67393 x^2+476 x+1)$ \\[2ex]
		$P_9 \; =$ & $0$ \\[2ex]
		$P_{10} \; =$ & $\frac{(x^2-3 x+1)^5}{1814400 x^{18}} (x^{26}+2003 x^{25}+1134523 x^{24}+91512582 x^{23}+2727924123 x^{22} +26367610587 x^{21}$ \\
		& $~~~~~~~~ +80642770303 x^{20} -185974355518 x^{19}-170592137312 x^{18}+55832596182 x^{17}$ \\
		& $~~~~~~~~ +2753722904868 x^{16}-8501480211618 x^{15}+14284755783843 x^{14}-16668636494613 x^{13}$ \\
		& $~~~~~~~~ +14284755783843 x^{12}-8501480211618 x^{11}+2753722904868 x^{10}+55832596182 x^9$ \\
		& $~~~~~~~~-170592137312 x^8-185974355518 x^7+80642770303 x^6+26367610587 x^5+2727924123 x^4$\\
		& $~~~~~~~~+91512582 x^3+1134523 x^2+2003 x+1)$ \\[2ex]		
		& $\phantom{~~~~~} \vdots$
		\\[1ex]
		\hline\hline
	\end{tabular}
	\caption{Laurent polynomials $P_k (x)$ for the figure-8 knot $K = {\bf 4_1}$. }
	\label{tab:Pnfigure8}
\end{table}

This leads to the first terms of the series $F_{K} (x,q=e^{\hbar})$ in terms of $\hbar$ and $x$.

\begin{footnotesize}
\begin{multline}
2F_{K} (x, e^{\hbar}) \; = \; \\
= \left( x^{1/2}-\frac{1}{x^{1/2}} + 2 x^{3/2} -\frac{2}{x^{3/2}} +5 x^{5/2} -\frac{5}{x^{5/2}} +13 x^{7/2}-\frac{13}{x^{7/2}}
+34 x^{9/2} -\frac{34}{x^{9/2}} +89 x^{11/2} -\frac{89}{x^{11/2}} +233 x^{13/2} -\frac{233}{x^{13/2}} + \ldots \right) \\
+ \hbar^2 \left( x^{5/2}-\frac{1}{x^{5/2}}+10 x^{7/2}-\frac{10}{x^{7/2}} +64 x^{9/2} -\frac{64}{x^{9/2}}
+331 x^{11/2} -\frac{331}{x^{11/2}} +1505 x^{13/2}-\frac{1505}{x^{13/2}}+ \ldots \right) \\
+ \hbar^4 \left( \frac{x^{5/2}}{12}-\frac{1}{12 x^{5/2}}+\frac{17 x^{7/2}}{6}-\frac{17}{6 x^{7/2}}
+\frac{142 x^{9/2}}{3}-\frac{142}{3 x^{9/2}}+\frac{6115 x^{11/2}}{12}-\frac{6115}{12 x^{11/2}}
+\frac{50057 x^{13/2}}{12}-\frac{50057}{12 x^{13/2}}+ \ldots \right) \\
+ \hbar^6 \left( \frac{x^{5/2}}{360}-\frac{1}{360 x^{5/2}}+\frac{13 x^{7/2}}{36}-\frac{13}{36 x^{7/2}}
+\frac{818 x^{9/2}}{45}-\frac{818}{45 x^{9/2}}+\frac{154891 x^{11/2}}{360}-\frac{154891}{360 x^{11/2}}
+\frac{472573 x^{13/2}}{72}-\frac{472573}{72 x^{13/2}}+ \ldots \right) \\
+ \hbar^8 \Bigl( \frac{x^{5/2}}{20160}-\frac{1}{20160 x^{5/2}}+\frac{257 x^{7/2}}{10080}-\frac{257}{10080 x^{7/2}}
+\frac{10781 x^{9/2}}{2520}-\frac{10781}{2520 x^{9/2}}
+\frac{916439 x^{11/2}}{4032}-\frac{916439}{4032 x^{11/2}}+\frac{19085471 x^{13/2}}{2880}\\
-\frac{19085471}{2880 x^{13/2}}+ \ldots \Bigr) \\
+ \hbar^{10} \Bigl( \frac{x^{5/2}}{1814400}-\frac{1}{1814400 x^{5/2}}+\frac{41 x^{7/2}}{36288}-\frac{41}{36288 x^{7/2}}
+\frac{9608 x^{9/2}}{14175}-\frac{9608}{14175 x^{9/2}}+\frac{147178651 x^{11/2}}{1814400}-\frac{147178651}{1814400 x^{11/2}}\\
+\frac{47916623 x^{13/2}}{10368}-\frac{47916623}{10368 x^{13/2}}+ \ldots \Bigr)  \\
+ \ldots
\notag
\end{multline}
\end{footnotesize}

By analyzing the coefficients of $x^{m/2}$ for $|m| \leq 13$ in this expression, we find that they are polynomials in $q = e^{\hbar}$. Precisely, we get
\begin{align*}
f_1 & = 1,  \nonumber \\
f_3 & = 2,  \nonumber \\
f_5 & =  1/q + 3+ q,  \nonumber \\
f_7 & = 2/q^2 + 2/q +5+ 2 q + 2 q^2,  \\
f_9 & = 1/q^4 + 3/q^3 + 4/q^2 + 5/q + 8+ 5 q + 4 q^2 + 3 q^3 + q^4,  \nonumber \\
f_{11} & = 2/q^6 + 2/q^5 + 6/q^4 + 7/q^3 + 10/q^2 + 10/q + 15+ 10 q + 10 q^2 + 7 q^3 + 6 q^4 + 2 q^5 + 2 q^6, \nonumber \\
f_{13} & = 1/q^9 + 3/q^8 + 4/q^7 + 7/q^6 + 11/q^5 + 15/q^4 + 18/q^3 + 21/ q^2 + 23/q + 27+23 q \nonumber \\
& ~~+ 21 q^2 + 18 q^3 + 15 q^4 + 11 q^5 + 7 q^6 + 4 q^7 + 3 q^8 + q^9, \nonumber
\end{align*}

These will act as initial conditions for the following 7-step recursion in terms of $f_m(q)$:
\begin{multline}
f_{m+14} \; = \; -\frac{q^{-\frac{m}{2}-\frac{11}{2}}}{q^{\frac{m}{2}+\frac{13}{2}}-1} \Big[
f_m (q^{\frac{m}{2}+\frac{17}{2}}- q^{m+9}) + f_{m+2} (q^{\frac{m}{2}+\frac{15}{2}} - q^{\frac{m}{2}+\frac{17}{2}}
+  q^{m+9} -  q^{m+10})
\\
+ f_{m+4}(- q^{\frac{m}{2}+\frac{11}{2}} -  q^{\frac{m}{2}+\frac{17}{2}} - q^{\frac{m}{2}+\frac{19}{2}} + q^{\frac{3 m}{2}+\frac{21}{2}} + q^{m+8} + q^{m+9} + q^{m+12})
\\
+ f_{m+6}(- q^{\frac{m}{2}+\frac{9}{2}} +  q^{\frac{m}{2}+\frac{11}{2}} -  q^{\frac{m}{2}+\frac{15}{2}}
-  q^{\frac{m}{2}+\frac{17}{2}} +  q^{\frac{3 m}{2}+\frac{25}{2}} +  q^{m+9} + q^{m+10}
-  q^{m+12} +  q^{m+13})
\\
+ f_{m+8} (q^{\frac{m}{2}+\frac{11}{2}} +  q^{\frac{m}{2}+\frac{13}{2}} -  q^{\frac{m}{2}+\frac{17}{2}} + q^{\frac{m}{2}+\frac{19}{2}} -  q^{\frac{3 m}{2}+\frac{31}{2}} - q^{m+8} +  q^{m+9} -   q^{m+11} -q^{m+12})
\\
+ f_{m+10} (q^{\frac{m}{2}+\frac{9}{2}} +  q^{\frac{m}{2}+\frac{11}{2}} +   q^{\frac{m}{2}+\frac{17}{2}} -  q^{\frac{3 m}{2}+\frac{35}{2}} -  q^{m+9} -  q^{m+12} - q^{m+13})
\\
+ f_{m+12} (q^{\frac{m}{2}+\frac{11}{2}} -  q^{\frac{m}{2}+\frac{13}{2}} +  q^{m+11} -   q^{m+12}) - f_{m+4} q^7 - f_{m+6} q^5 + f_{m+8} q^2 + f_{m+10} \Big]
\end{multline}

In this way we can determine $F_{K}(x,q)$ for the figure-eight knot up to any desired order. Written as in \eqref{eq:41}, it will be the anti-symmetrization of a series $\Xi(x,q)$, whose first terms are
\begin{align}
\label{eq:Xixq}
\Xi(x,q) &= x^{1/2} + 2 x^{3/2} + (q^{-1} + 3+ q) x^{5/2} + (2q^{-2} + 2q^{-1}+ 5 +  2 q + 
    2 q^2) x^{7/2} \\
& \phantom{=} + (q^{-4} + 3q^{-3} + 4q^{-2} + 5q^{-1}+ 8 +  5 q + 4 q^2 + 
    3 q^3 + q^4) x^{9/2} \notag \\
 & \phantom{=}    + (2q^{-6} + 2q^{-5} + 6q^{-4} + 7q^{-3} + 10q^{-2} + 10 q^{-1} + 15 + 10 q + 10 q^2 + 7 q^3 + 6 q^4 + 2 q^5 + 2 q^6) x^{11/2}  \notag \\
  & \phantom{=}    +     \dots \notag
    \end{align}

\subsection{Surgeries on the figure-eight}
\label{sec:surgeries8}
Thurston \cite{Thurston} proved that all but nine values of $p/r \in \Q$ produce hyperbolic surgeries on the figure-eight knot. The nine exceptional surgeries are for the coefficients
$$ \frac{p}{r} \in \bigl \{ -4, -3, -2, -1, 0, 1, 2, 3, 4 \bigr \}.$$
The $\pm 4$ and $0$ surgeries are toroidal, and the $\pm 1$, $\pm 2$ and $\pm 3$ surgeries are Seifert fibered:
$$ S^3_1({\bf 4_1}) = - S^3_{-1}({\bf 4_1}) = M\! \left(-1; \frac{1}{2}, \frac{1}{3}, \frac{1}{7} \right),$$
$$ S^3_2({\bf 4_1}) = - S^3_{-3}({\bf 4_1}) = M\! \left(-1; \frac{1}{2}, \frac{1}{4}, \frac{1}{5} \right),$$
 $$  S^3_3({\bf 4_1}) = - S^3_{-3}({\bf 4_1}) = M\! \left(-1; \frac{1}{3}, \frac{1}{3}, \frac{1}{4} \right).$$
See  Figure 6 in \cite{Boyle}.

The $+1$, $+2$ and $+3$ surgeries bound negative definite plumbings. Thus, for those we can apply the plumbing formula \eqref{eq:plumbing1} to compute the invariants $\Zhat_a(q)$. The results are shown  
in Table~\ref{tab:figure8}.

\begin{table}
\centering
\begin{tabular}{ccc}
\hline\hline
\multicolumn{2}{c}{$Y = S^3_p ({\bf 4_1})$} & $\phantom{\oint_{\oint_{\oint}}^{\oint^{\oint}}} \Zhat_a(q)$
\\
\hline\hline
$p=1$ & $\Sigma (2,3,7)$ &
$q^{1/2} (1 - q - q^5 + q^{10} - q^{11} + q^{18} + q^{30} - q^{41} + q^{43} - q^{56} - q^{76}
+\cdots)$ \\[2ex]
$p=2$ & $M\!\left(-1;\frac{1}{2},\frac{1}{4},\frac{1}{5} \right)$ &
$q^{1/4} (1 - q + q^{12} - q^{19} + q^{21} - q^{30} + q^{63} - q^{78} + q^{82} - q^{99} + q^{154}
+\cdots)$ \\
 &  & $q^{7/4} (-1 + q^3 - q^4 + q^9 - q^{31} + q^{42} - q^{45} + q^{58} - q^{102} + q^{121}
+\cdots)$ \\[2ex]
$p=3$ & $M\! \left(-1;\frac{1}{3},\frac{1}{3},\frac{1}{4} \right)$ &
$1 - q + q^6 - q^{11} + q^{13} - q^{20} + q^{35} - q^{46} + q^{50} - q^{63} + q^{88} - q^{105}
+\cdots)$ \\
 &  & $q^{5/3} (-1 + q^3 - q^{21} + q^{30} - q^{66} + q^{81} - q^{135} + q^{156} - q^{228} + q^{255} + \cdots)$ \\[2ex]
\hline\hline
\end{tabular}
\caption{The invariants $\Zhat_a (q)$ for negative definite Seifert fibered surgeries on the figure-8 knot.}
\label{tab:figure8}
\end{table}

Using modularity analysis, from these answers one can also obtain $\Zhat_a(q)$ for the reverse manifolds, which are the $-3$, $-2$, and $-1$ surgeries. In particular, for $$S^3_{-1}({\bf 4_1})=-\Sigma(2,3,7)$$ the  invariant was computed in \cite[Equation (7.21)]{CCFGH}, yielding (up to a power of $q$) Ramanujan's mock theta function $F_0(q)$ of order $7$: 
\begin{align}
\label{eq:-237}
\Zhat_0(-\Sigma(2,3,7)) &=-q^{-1/2} \sum_{n \geq 0} \frac{q^{n^2}}{(q^{n+1})_n} \\
&= -q^{-1/2}(1 + q + q^3 + q^4 + q^5 + 2 q^7 + q^8 + 2 q^9 + q^{10} + 2 q^{11} + q^{12} + 3 q^{13} 
+ \dots) \notag
\end{align}

\begin{remark}
Equation (7.21) in \cite{CCFGH} had a factor of $q^{-1/168}$ instead of $q^{-1/2}$. This is related to the modularity properties of the function, but $q^{-1/2}$ is the correct factor to ensure that $\Zhat_0(q)$ converges to the WRT invariants. 
\end{remark}

Conjecture~\ref{conj:Dehn} says that we can compute $\Zhat_a(S^3_{p/r}({\bf 4_1}))$ for a whole range of surgeries, using the calculation of $F_{\bf 4_1}(x,q)$ in Section~\ref{sec:f8} and the formula
\be
\label{eq:zyp}
\Zhat_a(Y_{p/r})= \eps q^d \cdot \CL_{p/r}^{(a)} \left[ (x^{\frac{1}{2r}} - x^{-\frac{1}{2r}}) F_K (x,q) \right].
\ee
In general, suppose that for a series $F_K(x,q)$, the lowest powers of $q$ in the coefficients $f_m(q)$ of $x^{m/2}$ have exponents of the order of $cm^2$, for some $c \in \R$. Then, after we apply the Laplace transform, the lowest powers of $q$ have exponents of the order
$$ \left( \frac{m}{2} \pm \frac{1}{2r} \right)^2 \cdot \frac{r}{p} + c{m^2}.$$
In order to get Laurent power series in $q$, we need to have a lower bound on these values. This can be guaranteed by asking that
\be
\label{eq:crp}
 4c + \frac{r}{p} > 0.
 \ee
The  values of $p/r$ that satisfy \eqref{eq:crp} are the range of applicability for the surgery formula \eqref{eq:zyp}. For example, for the trefoils $\RHT$ and $\LHT$ we had $c = 1/24$ and $c=-1/24$, respectively. For the figure-eight knot, by calculating more terms in \eqref{eq:Xixq}, we find experimentally that $c=-1/16$, which means that we should be able to apply \eqref{eq:zyp} for
$$ \frac{p}{r} \in (-4, 0).$$
 
In particular, we recover the answer for the $-1$ surgery in \eqref{eq:-237}. We can also compute the invariants $\Zhat_a(q)$ for some hyperbolic manifolds, for example for the $-1/r$ surgeries on ${\bf 4_1}$, for $r > 1$. Note that Conjecture~\ref{conj:Dehn} does not specify the values of $\eps$ and $d$ in \eqref{eq:zyp}. However, by analogy with what happens for torus knots, in the case of $-1/r$ surgeries we take them to be
$$\eps = 1,  \ \ \ d = \alpha(1,r) = -\frac{r}{4} - \frac{1}{4r}.$$ 
 The results are given in Table~\ref{tab:figeightsmallsurgeries}.

\begin{table}
	\centering
	\begin{tabular}{cc}
		\hline\hline
		$Y = S^3_{-1/r} ({\bf 4_1})$ & $\phantom{\oint_{\oint_{\oint}}^{\oint^{\oint}}} \Zhat_a(q)$
		\\
		\hline\hline
		\rule{0pt}{3ex}
		$r=2$ & $-q^{-1/2}(1-q+2 q^3-2 q^6+q^9+3 q^{10}+q^{11}-q^{14}-3 q^{15}-q^{16}+2 q^{19}$ \\
		&  $+2 q^{20}+5 q^{21}+2 q^{22}+2 q^{23}-2 q^{26}-2 q^{27}-5 q^{28}-2 q^{29}-2 q^{30}+\cdots)$ \\[2ex]
		$r=3$ & $-q^{-1/2}( 1-q+2 q^5-2 q^8+q^{15}+3 q^{16}+q^{17}-q^{20}-3 q^{21}-q^{22}+2 q^{31}$ \\
		&  $+2 q^{32}+5 q^{33}+2 q^{34}+2 q^{35}-2 q^{38}-2 q^{39}-5 q^{40}-2 q^{41}-2 q^{42}+ \cdots)$ \\[2ex]
		$r=4$ & $ -q^{-1/2}(1-q+2 q^7-2 q^{10}+q^{21}+3 q^{22}+q^{23}-q^{26}-3 q^{27}-q^{28}+2 q^{43}$ \\
		&  $+2 q^{44}+5 q^{45}+2 q^{46}+2 q^{47}-2 q^{50}-2 q^{51}-5 q^{52}-2 q^{53}-2 q^{54}+ \cdots)$ \\[2ex]
		$r=5$ & $-q^{-1/2}(1-q+2 q^9-2 q^{12}+q^{27}+3 q^{28}+q^{29}-q^{32}-3 q^{33}-q^{34}+2 q^{55}$ \\
		&  $+2 q^{56}+5 q^{57}+2 q^{58}+2 q^{59}-2 q^{62}-2 q^{63}-5 q^{64}-2 q^{65}-2 q^{66}+ \cdots)$ \\[2ex]
		$r=6$ & $-q^{-1/2}(1-q+2 q^{11}-2 q^{14}+q^{33}+3 q^{34}+q^{35}-q^{38}-3 q^{39}-q^{40}+2 q^{67}$ \\
		&  $+2 q^{68}+5 q^{69}+2 q^{70}+2 q^{71}-2 q^{74}-2 q^{75}-5 q^{76}-2 q^{77}-2 q^{78}+q^{112}+ \cdots)$ \\[2ex]
		$r=7$ & $-q^{-1/2}( 1-q+2 q^{13}-2 q^{16}+q^{39}+3 q^{40}+q^{41}-q^{44}-3 q^{45}-q^{46}+2 q^{79}$ \\
		&  $+2 q^{80}+5 q^{81}+2 q^{82}+2 q^{83}-2 q^{86}-2 q^{87}-5 q^{88}-2 q^{89}-2 q^{90}+\cdots )$ \\[2ex]
		$r=8$ & $ -q^{-1/2}(1-q+2 q^{15}-2 q^{18}+q^{45}+3 q^{46}+q^{47}-q^{50}-3 q^{51}-q^{52}+2 q^{91}$ \\
		&  $+2 q^{92}+5 q^{93}+2 q^{94}+2 q^{95}-2 q^{98}-2 q^{99}-5 q^{100}-2 q^{101}-2 q^{102}+ \cdots)$ \\[2ex]
		$r=9$ & $ -q^{-1/2}(1-q+2 q^{17}-2 q^{20}+q^{51}+3 q^{52}+q^{53}-q^{56}-3 q^{57}-q^{58}+2 q^{103}$ \\
		&  $+2 q^{104}+5 q^{105}+2 q^{106}+2 q^{107}-2 q^{110}-2 q^{111}-5 q^{112}-2 q^{113}-2 q^{114}+ \cdots)$ \\[2ex]
		$r=10$ & $ -q^{-1/2}(1-q+2 q^{19}-2 q^{22}+q^{57}+3 q^{58}+q^{59}-q^{62}-3 q^{63}$ \\
		&  $-q^{64}+2 q^{115}+2 q^{116}+5 q^{117}+2 q^{118}+2 q^{119}-2 q^{122}-2 q^{123}-5 q^{124}+ \cdots)$ \\[2ex]
		\hline\hline
	\end{tabular}
	\caption{The invariants $\Zhat_0 (q)$ for some hyperbolic $-1/r$ surgeries on the figure-eight knot.}
	\label{tab:figeightsmallsurgeries}
\end{table}

\newpage \section{Comments on physics and categorification}
\label{sec:categorify}
As mentioned in the Introduction and in Remark~\ref{rem:hblock}, we expect the series $\Zhat_a(q)$ to admit a categorification, i.e., a homology theory $\mathcal{H}^{*,*}_{\text{BPS}}(Y)$ whose (graded) Euler characteristic gives $\Zhat_a(q)$. Relatively little is known about this categorification. We will discuss here  some clues in this direction, their relation to physics and to the work in the current paper.

\subsection{Plumbed $3$-manifolds} 
Since we have the formula \eqref{eq:plumbing1} for $\Zhat_a(q)$ for (weakly) negative definite plumbed manifolds, it is natural to ask for a similar formula for $\mathcal{H}^{*,*}_{\text{BPS}}(Y)$ in that case. In fact, in this paper we gave an equivalent version of \eqref{eq:plumbing1}, namely Equation~\eqref{eq:newformula}, in terms of contributions from vertices and edges, according to the rules \eqref{vertexrulenew} and \eqref{edgerulenew}. This formula may be easier to categorify. Indeed, the key property is that the contributions are ``local'': for every building block (vertex or edge), its contribution depends only on that block and its nearest neighbors, not on the rest of the construction.

Physically, the origin of this simple but important property has to do with the fact that each basic building block of $Y$ corresponds to a particular building block of the corresponding 3d $\cN=2$ theory $T[Y]$.
This has an implication to the categorification of the $q$-series invariants $\Zhat_a (q)$.
In the context of 3d-3d correspondence or, equivalently, in the fivebrane system
\begin{multline}
\Zhat_a(Y; q)
 =
\boxed{~{\text{6d $\cN = (0,2)$ theory}~ \atop \text{on}~D^2 \times_q S^1 \times  Y }~}
= 
\boxed{~{\text{3d $\cN=2$ theory}~ T[Y] \atop \text{on}~D^2 \times_q S^1}~}
 = 
\raisebox{-0.6in}{\includegraphics[width=1.8in]{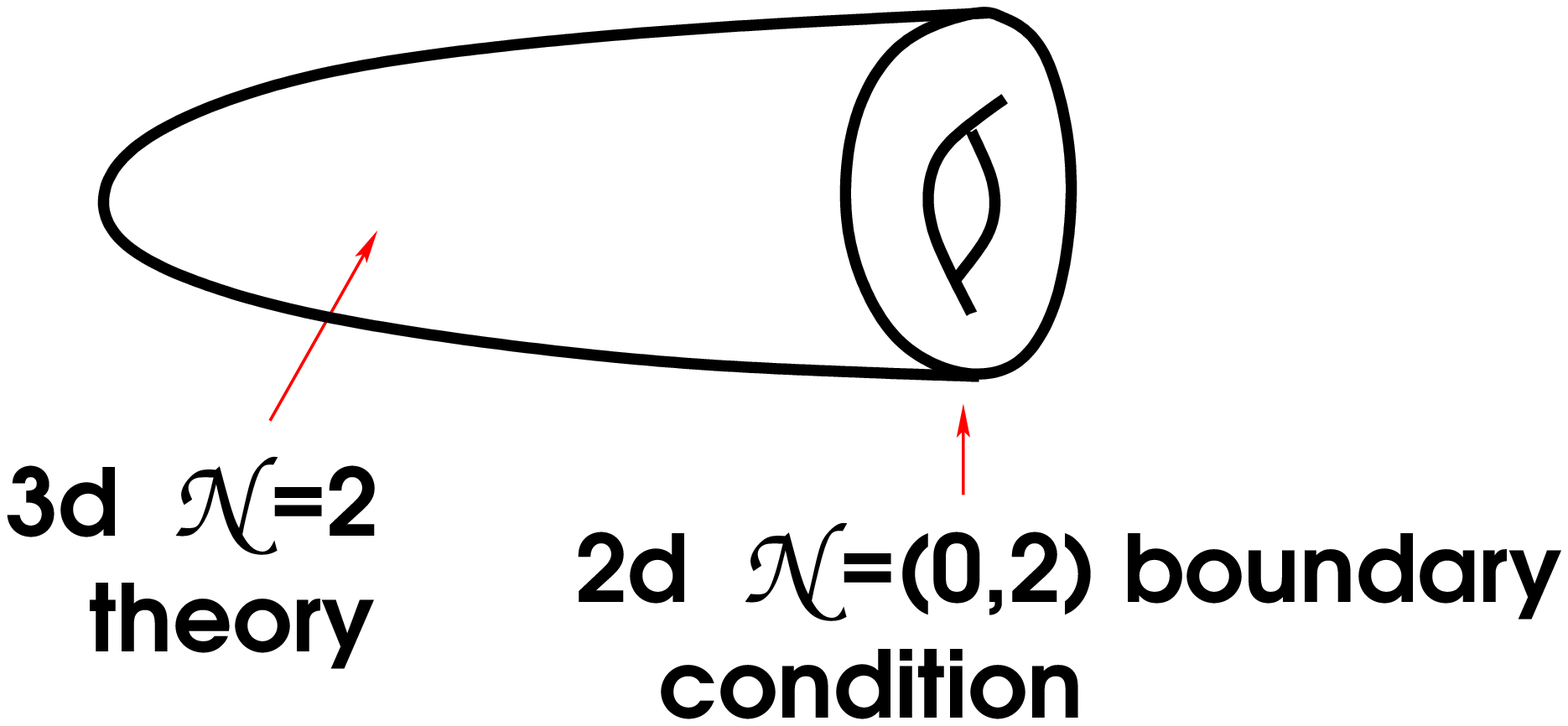}}
\label{6dsetup}
\end{multline}
categorification is achieved by passing from the BPS $q$-series to the space of BPS states $\cH_{\text{BPS}}^{*, *}$. 
At the level of the Poincar\'e polynomial, this corresponds to turning on a fugacity $t$ (sometimes denoted $y$)
that keeps track of a $U(1)_{\beta}$ symmetry that ``locks'' with the R-symmetry $U(1)_R$
(see \cite{Gukov:2004hz, AwataVolume} for more details).

Important for us here is that turning on the fugacity $t$ does not spoil the local nature of the gluing rules.
Therefore, we only need to know how $t$ enters  \eqref{vertexrulenew}--\eqref{edgerulenew} .

The categorification / $t$-deformation of the vertex factor is relatively simple and has already appeared in \cite{GPPV}.
It follows from the fact that $T[Y, G]$ is a quiver-like gauge theory, such that each vertex contributes
a gauge symmetry $G$ with $\cN=2$ supersymmetric Chern-Simons coupling at level $a$ and a chiral multiplet in
the adjoint representation:
\be
\text{vertex}
\qquad \overset{\displaystyle{a}}{\bullet}
\quad = \quad
\boxed{\begin{array}{c}
~\text{3d } \cN=2 \text{ super-Chern-Simons} \\
~\text{with } G_a \text{ and an adjoint chiral}
\end{array}}
\label{Tvertex}
\ee
In this paper, we are only interested in the case $G = SU(2)$.
Note that \eqref{vertexrulenew} indeed equals a 2d-3d half-index of this simple theory
with Neumann boundary conditions \cite{GPPV}.
Its ``refinement'' with a $t$-variable turned on looks like
\be
\text{vertex}
\qquad \overset{\displaystyle{a}}{\bullet}
\quad = \quad- q^{-m_vn_v^2-\frac{m_v}{4}-a_vn_v} z_v^{2m_v n_v+a_v}
\frac{\overbrace{(q;q)_{\infty} (z_v^2;q)_{\infty} (z_v^{-2};q)_{\infty}}^{SU(2) \text{ gauge}}}{\underbrace{(-qt;q)_{\infty} (-qtz_v^2;q)_{\infty} (-qtz_v^{-2};q)_{\infty}}_{\text{adjoint chiral}}}
\label{vertextdeformed}
\ee
Specializing to $t=-1$ returns \eqref{vertexrulenew}, as it should.

An edge connecting two vertices decorated with Euler numbers $a$ and $b$ contributes to $T[Y,G]$
interactions between 3d $\cN=2$ super-Chern-Simons theories $G_a$ and $G_b$. We do not have a formula for this yet.

\subsection{Relation to log-VOAs}
In Remark~\ref{rem:unred} we noted that there should also be an ``unreduced'' version of $\Zhat_a(Y; q)$, denoted $\Zhat^{\unred}_a(Y; q)$. Among other things, the physical setup \eqref{6dsetup} suggests that, for closed 3-manifolds, the $q$-series invariants $\Zhat^{\unred}_a(Y;q)$ should be related to characters of 2d chiral algebras (non-strongly-finite for hyperbolic $Y$); compare \cite{GPV} and \cite[Section 5]{CCFGH}. 
Recall that a character of a VOA module $M$ is defined as
$$
\chi [M] \; = \; \text{Tr}_M \, q^{L_0 - \frac{c}{24}}
$$
where $L_0$ is the conformal vector and $c$ is the central charge.
Therefore, it is natural to expect that a categorification of $\Zhat^{\unred}_a(Y;q) = \chi [M_a]$
is given by $M_a$ (or, rather, by its Felder resolution \cite{Felder}).

For example, in the case of Brieskorn spheres $Y=\Sigma(b_1, b_2, b_3)$
the corresponding algebra was found in \cite{CCFGH}
to be the so-called logarithmic $(1,p)$ singlet VOA \cite{Flohr:1995ea,Adamovic:2007qs}
with $p = b_1 b_2 b_3$ and central charge $c = 13 - 6 (p + p^{-1})$.
Namely, $\Zhat^{\unred}_0 (q)$ is a character of the atypical module
$M_{1,\alpha_1} \oplus M_{1,\alpha_2} \oplus M_{1,\alpha_3} \oplus M_{1,\alpha_4}$
where $\alpha_i$, $i=1, \ldots, 4$, are as in Proposition~\ref{prop:brieskorn},
and each $M_{1,\alpha}$ admits a Felder resolution
in terms of the standard Fock modules $\mathcal{F}_{\lambda}$ (a.k.a. Feigin-Fuchs modules),
see {\it e.g.} \cite{Ridout:2013pwa}.
In particular,
$$
\chi [M_{1,\alpha}] \; = \; \sum_{n \ge 0}
\left( \chi [\mathcal{F}_{\lambda_{-2n,p-\alpha}}] - \chi [\mathcal{F}_{\lambda_{-2n-1,\alpha}}] \right)
$$
where $\lambda_{r,s} = - \frac{r-1}{2} \sqrt{2p} + \frac{s-1}{\sqrt{2p}}$.

It would be interesting to identify log-VOAs that correspond to other types of 3-manifolds, such as  the hyperbolic surgeries on the figure-eight knot for which we computed $\Zhat_a(q)$ in Section~\ref{sec:surgeries8}.

\subsection{Knot complements}
The series $F_K(x,q)$ is the analogue of $\Zhat_a(Y, q)$ for knot complements. It has the same physical interpretation \eqref{6dsetup}, with $Y = S^3 \setminus \nu K$. We also expect it to admit a categorification:  this could be a triply-graded homology theory (whose Poincar\'e polynomial is in the variables $x$, $q$, and $t$), or perhaps a more complicated algebraic object. Furthermore:
\begin{itemize}
\item If we could find the categorification of the edge contribution for plumbing graphs, then the same formula as in the closed case would give a categorification of $F_K(x,q)$ for plumbed knot complements, in the spirit of \eqref{eq:newformula2};
\item The series $F_K(x,q)$ should give characters of some VOA modules, and the categorification of $F_K(x,q)$ should be related to Felder resolutions;
\item At least for simple knots, the categorification of $F_K(x,q)$ should produce a Poincar\'e polynomial that satisfies a recurrence given by a categorified $\hA$ operator, as in \cite{AwataVolume}.
\end{itemize}

The physical formulation \eqref{6dsetup} of the two-variable series $F_{K} (x,q)$ for knot complements is useful for producing concrete computations and, perhaps more importantly, for understanding some of its general properties. However, the information also goes in the opposite direction and many results of the present paper shed light on
various aspects of the ``knot complement theory'' $T[S^3 \setminus \nu K]$.
Until now, a complete formulation of such theory remained elusive for most knots,
even though it was clear since the early days of 3d-3d correspondence
that its vacua should correspond to all $\sl$ flat connections on $S^3 \setminus \nu K$, abelian and non-abelian.
Another longstanding open problem in 3d-3d correspondence was the ``gluing'' of knot complement
theories that corresponds to gluing along torus boundaries.
This is precisely the main subject of the present paper and, in particular, Theorem~\ref{thm:glue}
can be viewed as an answer to this question at the level of the half-index $\Zhat_a(Y;q)$.

We also learned here that 2d $\cN=(0,2)$ boundary conditions in \eqref{6dsetup} should be labeled by $\Spinc$ structures
and that in the limit $q \to 1$ the partition function \eqref{6dsetup} should equal the symmetric expansion of  $1/\Delta_K(x)$. We expect that the $q \to 1$ limit admits an interpretation as a topologically twisted index
of the knot complement theory $T[S^3 \setminus \nu K]$ on $D^2 \times S^1$, similar to \cite{GPV}.

\subsection{Mathematical ingredients}
In Sections~\ref{sec:resurgence} and \ref{sec:recursion}, we noted that $F_K(x,q)$ is obtained from the colored Jones polynomials of $K$ by resurgence and/or recurrence relations. Hence, we expect that one of the ingredients in the categorification of $F_K(x,q)$ should be the categorification of the colored Jones polynomials. In fact, for the un-normalized colored Jones polynomial $\tJ_n(q)$, there are at least two different homology theories in the literature:
\begin{itemize}
\item One due to Khovanov \cite{KhovanovColored}, and extended by Beliakova and Wehrli using Bar-Natan style techniques \cite{BeliakovaWehrli}. Khovanov's colored homology is finite dimensional, for every knot;

\item Another one which is infinite dimensional, even for the unknot. This theory was constructed independently by Cooper-Krushkal \cite{CooperKrushkal}, Frenkel-Stroppel-Sussan \cite{FSS}, Rozansky \cite{RozanskyJW}, and Webster \cite{Webster, Webster2}. 
\end{itemize}

The two theories are expected to have the same reduced version (except for the grading). They also agree for $n=2$, giving Khovanov homology. 

To categorify $F_K(x,q)$, one should also develop ways to extract information from the categorifications of $\tJ_n(q)$ for all $n$. As far as we know, the only mathematical work in this direction is that of Rozansky \cite{Rozansky}, who categorified the tail $\Phi_0$ of the colored Jones polynomials for adequate knots. Recall that $\Phi_0$ is the first term in the stability series $\Phi(x,q)$ discussed in Section~\ref{sec:stability}. By Theorems~\ref{thm:TorusKnots} and \ref{thm:EvenStability}, the stability series for negative torus knots is closely related to its series $F_K(x,q)$. Thus, a natural (and perhaps tractable) problem would be to categorify $F_K(x,q)$ for negative torus knots.

\newpage
\bibliographystyle{custom}
\bibliography{biblio}

\end{document}